\def\std{{\raisebox{\depth}{\scalebox{1.45}[-1]{\mbox{$\mathbb \Delta$}}}}_{\!}}
\def\dst{{}_{\,}\reflectbox{\mbox{$\std$}}_{\!}}
\numberwithin{equation}{section}
\newtheorem{proposition}{Proposition}[section]
\newtheorem{definition}[proposition]{Definition}
\newtheorem{lemma}[proposition]{Lemma}
\newtheorem{theorem}[proposition]{Theorem}
\newtheorem{remark}[proposition]{Remark}
\newtheorem{example}[proposition]{Example}
\newtheorem{corollary}[proposition]{Corollary}
\title{\bf Metric compatibility and Levi-Civita Connections on Quantum Groups}
\author{{\Large Paolo Aschieri}\footnote{paolo.aschieri@uniupo.it}~~${}^{1,2}$ }
\author{{\Large Thomas Weber}\footnote{thomas.weber@unito.it}~~${}^{2,3}$}
\affil{\large{
\centerline{$^1${\sl Dipartimento di Scienze e Innovazione 
Tecnologica, Universit\`a del Piemonte Orientale}}

\centerline{{ \sl
Viale T. Michel - 15121, Alessandria, Italy}}

\vskip 0.1 cm

\centerline{$^2${\sl Istituto Nazionale di Fisica Nucleare, Sezione di Torino,
  via P. Giuria 1, 10125 Torino}}


\vskip 0.5 cm

\centerline{\sl
$^3${ Dipartimento di Matematica ``Giuseppe Peano", Universit\`a 
degli Studi di Torino}}

\centerline{\sl  Via Carlo Alberto 10, 10123 Torino, Italy}
}
}
\date{\today}
\begin{document}

\maketitle

\begin{abstract}

  Arbitrary connections on a generic Hopf algebra $H$ are studied and
shown to extend to connections on tensor fields. On this ground a general
definition of metric compatible connection is proposed. This leads to a sufficient
criterion for the existence and uniqueness of the Levi-Civita
connection, that of invertibility of an
$H$-valued matrix.
Provided invertibility for one metric, existence and uniqueness of the
Levi-Civita connection for all metrics conformal to the initial one is proven.
This class consists of metrics which are neither central (bimodule maps) nor
equivariant, in general.
For central and bicoinvariant metrics the invertibility condition is further
simplified to a metric independent one.
 Examples include metrics on $SL_q(2)$.


\end{abstract}

\tableofcontents

\section{Introduction}
Quantum groups are leading examples for the study of noncommutative
geometry. Their differential geometry has been developed in a seminal
paper in \cite{Woronowicz1989}, followed by explicit constructions for
unitary, orthogonal and symplectic quantum groups in \cite{Jurco},
\cite{Watamura}, and their classification in \cite{Sch}.  
Similarly, in the study of noncommutative Riemannian geometry quantum groups are
a testing ground for different approaches 
to the fundamental theorem of Riemannian geometry, stating existence and uniqueness
of the Levi-Civita connection for any metric tensor. This is presently under active
investigation in noncommutative geometry.  Indeed, while noncommutative differential
geometry on quantum groups and homogeneous spaces is relatively
well-understood, there is no obvious noncommutative Riemannian
theory. This is already seen when introducing metrics on quantum algebras.
In the literature there are indeed two approaches, the first one is to
require metrics
to be compatible with the noncommutative structure,
e.g. to be central  ($a\mathbf{g} = \mathbf{g}a$ for all elements $a$ in
the noncommutative algebra $A$), otherwise stated, to be a bimodule map.
This is quite natural, as an analogy, if we have a compact manifold that
has also the structure of a simple Lie group
the first metric we would consider is the Killing one. 
The other approach is to consider arbitrary metrics, not necessarily
compatible with the noncommutative structure. This is useful when
$\mathbf{g}$ is a dynamical field, like in
gravity. Indeed, one motivation for noncommutative Riemannian geometry
is the study of gravity theories on quantum spacetimes.
We face a similar situation for connections. The metric compatibility condition  
$\nabla\mathbf{g}=0$ for a metric $\mathbf{g}\in \mathfrak{X}\otimes_A\mathfrak{X}$ requires
extending a connection on the $A$-bimodule of vector fields $\mathfrak{X}$ to the tensor product $\mathfrak{X}\otimes_A\mathfrak{X}$. This is typically done by
considering bimodule connections \cite{D-VMichor, Mourad}; again, a connection,
treated as a dynamical field, is not  a bimodule
connection in general.

Noncommutative Riemannian geometries with metrics compatible with the
quantum algebra structure have been considered in \cite{Madore}, 
\cite{MajidBeggs2011}, \cite{BhowmickLandi},
\cite{BhowmickLandi2}, see also the recent monograph
\cite{MajidBeggsBook}.
Levi-Civita connections for $H$-equivariant metrics on algebras $A$
with a triangular Hopf algebra symmetry have been studied in
\cite{Weber}, while in \cite{AIL} a compatibility
condition in the form of a differential equation for metrics on the
quantum group $S^3_q$ has been considered.

On the other hand, a selected class of noncommutative algebras allows
for Levi-Civita connections of arbitrary metrics.
See \cite{NCG1} for the noncommutative hyperplane  $\mathbb{R}^n_\theta$,  \cite{Rosenberg13}
for the associated noncommutative torus, \cite{Arnlind} for the
noncommutative $\theta$-sphere in three dimensions,
\cite{AschieriCartanStructure} for the general case of a
noncommutative algebra with triangular Hopf algebra symmetry, which encompasses the
case of  triangular quantum groups.
\\

In this paper we study the Riemannian geometry of quantum groups 
(Hopf algebras): no (quasi)triangularity  is assumed. 
We simply rely on the canonical braiding $\sigma^\mathcal{W}$ of bicovariant $H$-bimodules
with  $H$ the Hopf algebra.

A first result, relaxing the triangularity condition of
\cite{AschieriCartanStructure}, is a general definition of metric
compatible connection. The condition $\nabla\mathbf{g}=0$ for a
metric $\mathbf{g}\in \mathfrak{X}\otimes_H\mathfrak{X}$ requires
extending a connection on the bicovariant bimodule of vector fields
$\mathfrak{X}$ to the tensor product
$\mathfrak{X}\otimes_H\mathfrak{X}$. We present this sum of
connections construction for arbitrary  connections. This is done by extending the tensor product of $H$-linear maps to $\Bbbk$-linear
maps (more precisely internal homomorphisms of the monoidal category of
bicovariant bimodules with tensor product over the ground field $\Bbbk$,
not $H$). It also requires lifting the canonical braiding $\sigma^\mathcal{W}$ on the tensor product over $H$ of bicovariant
bimodules to the tensor product over $\Bbbk$. 
Furthermore, while in the commutative case (as well as in the triangular case) the
metric compatibility condition reads $\nabla \mathbf{g}=0$, where
$\nabla$ is the connection on $\mathfrak{X}\otimes_H\mathfrak{X}$ or equivalently
its restriction to the subbimodule of braided symmetric tensor fields
$\mathfrak{X}\vee\mathfrak{X}$, in the present more general setting
the metric compatibility condition requires the construction of the connection on
$\mathfrak{X}\vee\mathfrak{X}$. This latter is obtained from that on
$\mathfrak{X}\otimes_H\mathfrak{X}$ by restricting the domain and then
projecting to the direct summand
$\mathfrak{X}\vee\mathfrak{X}\subseteq
\mathfrak{X}\otimes_H\mathfrak{X}$.

Based on this metric compatibility condition we give a sufficient
criterion for the existence and uniqueness of the Levi-Civita
connection via the invertibility of a right $H$-linear map 
$\Phi_\mathbf{g}:(\Omega^1\vee\Omega^1)\otimes_H\mathfrak{X}
    \rightarrow\Omega^1\otimes_H(\mathfrak{X}\vee\mathfrak{X})$ of free
$H$-modules (an $H$-valued matrix).
This map depends on $\mathbf{g}$ and was inspired by the one
introduced in \cite{BhowmickCov}; here  however $\mathbf{g}$ is an arbitrary metric, neither central nor  bicoinvariant.
The differential equations defining Levi-Civita connections are then
solved by the purely algebraic condition of invertibility of $\Phi_\mathbf{g}$.

Furthermore, we show that if the map $\Phi_\mathbf{g}$ is invertible for a metric $\mathbf{g}$, it is also invertible for all metrics conformally equivalent to $\mathbf{g}$ (the scale factor being an arbitrary invertible element in the algebra). 
For $\sigma^\mathcal{W}$-central metrics (like e.g. bicovariant central metrics, see Definition~\ref{DefSigmaCentral}), the invertibility of $\Phi_\mathbf{g}$ is reduced to the study of the invertibility of a map $(\Omega^1\vee\Omega^1)\otimes_H\Omega^1\to
\Omega^1\otimes_H(\Omega^1\vee\Omega^1)$ depending only on the
braiding $\sigma^\mathcal{W}$. This allows to prove existence and
uniqueness of the Levi-Civita connection for any metric conformally
equivalent to a $\sigma^\mathcal{W}$-central one, provided the above
isomorphism holds. We thus obtain existence and uniqueness of the
Levi-Civita connection on a quantum group for this class of metrics
that are neither central nor equivariant. This includes the example of
metrics on $SL_q(2)$. 
As a byproduct, these results {provide a way out to}  a gap in \cite[Thm. 7.9]{BhowmickCov}
 where existence and uniqueness of Levi-Civita connections is claimed
 for bicoinvariant metrics on quantum groups, but implicitly assuming centrality of the metric (cf. Remark \ref{remBM}).
 \\

The paper is organized as follows.
In Section~\ref{Sec2} we give the categorical background necessary to
construct the sum of connections. For this we recall the known theory of
comodules (respectively covariant modules) and their closed monoidal categories
in Section~\ref{Sec2.1} (respectively Section~\ref{Sec2.2}).
The first original contribution of the paper is in Section~\ref{Sec2.3}, where
we introduce a tensor product of internal homomorphisms of the monoidal category 
$({}_H^H\mathcal{M}_H^H,\otimes)$ of bicovariant bimodules with respect to the 
tensor product $\otimes$ over the ground field $\Bbbk$. The latter is obtained from a
lifting of the braiding $\sigma^\mathcal{W}$.
Section~\ref{Sec3}, on the differential geometry on quantum groups, recalls  the
construction of the braided exterior algebra (Section~\ref{Sec3.1})
and of the
bicovariant differential calculus (Section~\ref{Sec3.2}).
In Section~\ref{Sec4} we study connections, with basic definitions
in Section~\ref{Sec4.1}, curvature and torsion in Section~\ref{Sec4.2} and duality of
connections in Section~\ref{Sec4.3}. One of the main contributions of this paper,
the sum of connections on bicovariant
bimodules (or braided derivation formula for connections), is discussed in Section~\ref{Sec4.4}, with an explicit expression of its
curvature in Section~\ref{Sec4.5}. Finally, in Section~\ref{Sec5} we introduce metrics and
metric-compatibility via the sum of connections and their
projections to symmetric tensors. Adding the torsion condition we
obtain the notion of Levi-Civita
connection for arbitrary metrics on quantum groups. We then prove the
mentioned sufficient conditions for existence and uniqueness of Levi-Civita connections.
In particular on $SL_q(2)$ there is a unique Levi-Civita connection
for each metric which is conformally equivalent to  central and
bicoinvariant ones. We further discuss the class of examples arising
from braided derivations on cotriangular Hopf algebras.

\section{Covariant Modules and Rational Morphisms}\label{Sec2}

We work over a base field $\Bbbk$. Linear (vector) spaces,
algebras, Hopf algebras etc. are understood over $\Bbbk$, similarly linear maps.
The tensor product of vector spaces is denoted $\otimes$.
Let $H$ be a Hopf algebra with coalgebra structure $(\Delta,\epsilon)$
and antipode $S$. We shall always assume $S$ to be 
invertible, with inverse denoted $\overline{S}$.
We use Sweedler's notation $\Delta(h)=h_1\otimes h_2$ for the coproduct of an
element $h\in H$ and similarly write
$h_1\otimes h_2\otimes h_3:=
(\Delta\otimes\mathrm{id}_H)(\Delta(h))=(\mathrm{id}_H\otimes\Delta)(\Delta(h))$ etc.
for higher coproducts.

Linear maps between $H$-modules form an
$H$-module under the $H$-adjoint action (if $\phi: M\to N$, $h\in H$, the
adjoint action is $h\triangleright
\phi$, with  $(h\triangleright
\phi)(m)=h_1\triangleright(\phi (S(h_2)\triangleright m))$ for all
$m\in M$). On the contrary, linear maps between $H$-comodules do not
in general form an $H$-comodule and one has to restrict to the subspace
of rational morphisms. These are linear maps which admit adjoint
coactions. We recall this notion in Section~\ref{Sec2.1},
following \cite{Ulbrich, SVO99, CaenGu07}.
In Section~\ref{Sec2.2} we study compatible $H$-modules and comodules,
called covariant modules or Hopf modules.
 We recall that they are free $H$-modules.
As proven in \cite{Schauenburg94},
bicovariant bimodules form a braided monoidal category (with tensor
product $\otimes_H$) and we observe that
this category is closed monoidal when considering the internal Hom-functor of right $H$-linear
maps between bicovariant bimodules.
The tensor product of  right $H$-linear
maps is extended to linear maps
in Section~\ref{Sec2.3} via lifting the braiding  $M\otimes_HN\to
N\otimes_H M$ to a map  $M\otimes N\to N\otimes M$, for $M,N$
bicovariant bimodules. 
Closed braided monoidal categories are treated in several textbooks,
see for example \cite{Ka95,Ma95}.

\subsection{Hopf Algebra Comodules and Rational Morphisms}\label{Sec2.1}
Recall that a right $H$-comodule is a linear space $M$
endowed with a linear map $\delta_M\colon M\rightarrow M\otimes H$ satisfying
\begin{equation*}
    (\delta_M\otimes\mathrm{id}_M)\circ\delta_M
    =(\mathrm{id}_M\otimes\Delta)\circ\delta_M
    \text{ and }
    (\mathrm{id}_M\otimes\epsilon)\circ\delta_M
    =\mathrm{id}_M~.
\end{equation*}
Similarly we define a left $H$-comodule structure on $M$ via the
linear map $\lambda_M\colon M\rightarrow H\otimes M$ satisfying
\begin{equation*}
    (\mathrm{id}_H\otimes\lambda_M)\circ\lambda_M
    =(\Delta\otimes\mathrm{id}_M)\circ\lambda_M
    \text{ and }
    (\epsilon\otimes\mathrm{id}_M)\circ\lambda_M
    =\mathrm{id}_M~.
\end{equation*}
The maps $\delta_M$ and $\lambda_M$ are called right and left
$H$-coaction, respectively.
In analogy with Sweedler's notation we write $\delta_M(m)=m_0\otimes m_1$ and
$\lambda_M(m)=m_{-1}\otimes m_0$ for $m\in M$.
We also define
\begin{equation*}
    m_0\otimes m_1\otimes m_2
    :=(\delta_M\otimes\mathrm{id}_H)(\delta_M(m))
    =(\mathrm{id}_M\otimes\Delta)(\delta_M(m))
\end{equation*}
and similarly for $\lambda_M$.
For a right comodule $M$ the vector space of right
coinvariant (coaction-invariant) elements is denoted by
$M^{\mathrm{co}H}:=\{m\in M~|~\delta_M(m)=m\otimes 1\}$
and the vector space of left coinvariant elements of a left $H$-comodule $M$
by ${}^{\mathrm{co}H}M:=\{m\in M~|~\lambda_M(m)=1\otimes m\}$.
Morphisms of right $H$-comodules are right $H$-colinear maps $f\colon M\rightarrow N$, i.e., 
$$
\delta_N\circ f
=(f\otimes\mathrm{id}_H)\circ\delta_M~.
$$
Analogously one defines morphisms of left $H$-comodules as left $H$-colinear maps.
The category of right $H$-comodules is denoted by
$\mathcal{M}^H$, that of left $H$-comodules ${}^H\mathcal{M}$.
$H$-bicomodules are right and left 
$H$-comodules with commuting coactions, their
morphisms are $H$-bicolinear maps. We denote the category of $H$-bicomodules by
${}^H\mathcal{M}^H$. Since the right and left coactions on an $H$-bicomodule
$M$ commute, the notation
\begin{equation*}
    m_{-1}\otimes m_0\otimes m_1
    :=(\mathrm{id}_H\otimes\delta_M)(\lambda_M(m))
    =(\lambda_M\otimes\mathrm{id}_H)(\delta_M(m))
\end{equation*}
is well-defined for any $m\in M$. For right/left/bi $H$-comodules $M,N$ we denote
the vector space of their morphisms by $\mathrm{Hom}^H(M,N)$,
${}^H\mathrm{Hom}(M,N)$ and ${}^H\mathrm{Hom}^H(M,N)$,
respectively.

It is well-known that the categories of $H$-comodules are monoidal 
(see e.g. \cite{Ka95,Ma95}). For  right $H$-comodules $M,N$ 
the tensor product vector space $M\otimes N$ is a right $H$-comodule
via the diagonal right $H$-coaction
\begin{equation*}
    \delta_{M\otimes N}\colon M\otimes N\to  M\otimes N\otimes H~,~~
    m\otimes n\mapsto m_0\otimes n_0\otimes m_1n_1~.
\end{equation*}
The monoidal unit is the field $\Bbbk$ with the trivial right $H$-coaction
$\Bbbk\to \Bbbk\otimes H,~\chi\mapsto\chi\otimes 1_H$. 
Similarly  for left $H$-comodules $M,N$, $M\otimes N$ is a left
$H$-comodule with diagonal left $H$-coaction
\begin{equation*}
    \lambda_{M\otimes N}\colon
    M\otimes N\to H\otimes M\otimes N~,~~ m\otimes n\mapsto m_{-1}n_{-1}\otimes m_0\otimes n_0~.
\end{equation*}
Since the right and left diagonal coactions of $H$-bicomodules commute we obtain
the monoidal category $({}^H\mathcal{M}^H,\otimes,\Bbbk)$ of $H$-bicomodules.

Let $\mathrm{Hom}(M,N)$ be the vector space of linear maps
from the vector space $M$ to $N$.
When $M,N$ are right $H$-comodules we consider the linear map
$\delta^\mathrm{Ad}\colon\mathrm{Hom}(M,N)\rightarrow\mathrm{Hom}(M,N\otimes H)$
defined for any $\phi\in\mathrm{Hom}(M,N)$ by
\begin{equation}\label{coadr}
    \delta^\mathrm{Ad}(\phi)(m):=\phi(m_0)_0\otimes\phi(m_0)_1S(m_1)
\end{equation}
for all $m\in M$. Here, linearity of $\delta^\mathrm{Ad}(\phi)$
follows from 
linearity of $\delta_M$, $\delta_N$, $\phi$ and $S$. Thus $\delta^\mathrm{Ad}$ is well-defined. Similarly, for two left $H$-comodules
$M,N$ we define the linear map
$\lambda^\mathrm{Ad}\colon\mathrm{Hom}(M,N)\rightarrow\mathrm{Hom}(M,H\otimes N)$
for all $\phi\in\mathrm{Hom}(M,N)$ by
\begin{equation}\label{coadl}
    \lambda^\mathrm{Ad}(\phi)(m):=\phi(m_0)_{-1}\overline{S}(m_{-1})\otimes\phi(m_0)_0~,
\end{equation}
where $m\in M$.

Consider two vector spaces $M,N$. Since $H$ is a free 
module over $\Bbbk$ (it would suffice to consider $H$ projective over a commutative
unital ring $\Bbbk$) we can view $\mathrm{Hom}(M,N)\otimes H$
as a vector subspace of $\mathrm{Hom}(M,N\otimes H)$ via the inclusion
\begin{equation}\label{incinNH}
    \mathrm{Hom}(M,N)\otimes H
    \hookrightarrow\mathrm{Hom}(M,N\otimes H)
\end{equation}
defined by $(\phi\otimes h)(m):=\phi(m)\otimes h$ for all 
$\phi\in\mathrm{Hom}(M,N)$, $h\in H$ and $m\in M$
(cf. \cite{Bourbaki} Chapter~2, §4.2 Proposition~2). 
Similarly, $H\otimes\mathrm{Hom}(M,N)$ can be understood as a vector subspace
of $\mathrm{Hom}(M,H\otimes N)$ via the inclusion
$
    H\otimes\mathrm{Hom}(M,N)
    \hookrightarrow\mathrm{Hom}(M,H\otimes N)
$
defined by $(h\otimes\phi)(m):=h\otimes\phi(m)$
for all $\phi\in\mathrm{Hom}(M,N)$, $h\in H$ and $m\in M$.
We shall frequently identify $\mathrm{Hom}(M,N)\otimes H$ and $
H\otimes\mathrm{Hom}(M,N)$ with their images in 
    $\mathrm{Hom}(M,N\otimes H)$ and in $\mathrm{Hom}(M,H\otimes N)$.
\begin{definition}[Rational Morphisms \cite{Ulbrich}]
For right $H$-comodules $M,N$ we define the vector space of
right rational morphisms as the preimage
\begin{equation*}
    \mathrm{HOM}^{\mathrm{Ad}}(M,N)
    :=(\delta^\mathrm{Ad})^{-1}(\mathrm{Hom}(M,N)\otimes H~.
\end{equation*}
For left $H$-comodules $M,N$ we define the vector space of
left rational morphisms as the preimage
\begin{equation*}  
    {}^{\mathrm{Ad}}\mathrm{HOM}
    :=(\lambda^\mathrm{Ad})^{-1}(H\otimes\mathrm{Hom}(M,N))~.
\end{equation*}
On $H$-bicomodules $M,N$ we define the vector space of rational morphisms as the
intersection
\begin{equation*}
    \mathrm{HOM}(M,N):=\mathrm{HOM}^{\mathrm{Ad}}(M,N)\cap
    {}^{\mathrm{Ad}}\mathrm{HOM}(M,N)~.
\end{equation*}
\end{definition}
\noindent
For $\phi\in\mathrm{HOM}(M,N)$ we write
$\delta^\mathrm{Ad}(\phi)=\phi_0\otimes\phi_1\in\mathrm{Hom}(M,N)\otimes H$
and $\lambda^\mathrm{Ad}(\phi)=\phi_{-1}\otimes\phi_0\in H\otimes\mathrm{Hom}
(M,N)$.
In the following lemma 
we recall that the vector spaces of rational morphisms are $H$-comodules,
that $H$-colinear maps are equivalently $H$-coinvariant (coaction
invariant) rational morphisms and that evaluation and composition are in particular
$H$-colinear maps.
\begin{lemma}\label{lemma01}
Let $M,N$ be right $H$-comodules. 
\begin{enumerate}
\item[i.)] $\mathrm{HOM}^\mathrm{Ad}(M,N)$ is a right $H$-comodule.
\item[ii.)] The evaluation map $\mathrm{ev}\colon\mathrm{HOM}^\mathrm{Ad}(M,N)
  \otimes M\rightarrow N$  is right $H$-colinear, i.e. 
\begin{equation*}
\begin{tikzcd}
    \mathrm{HOM}^\mathrm{Ad}(M,N)\otimes M
    \arrow{rr}{\mathrm{ev}}
    \arrow{d}[swap]{\delta_{_{\mathrm{HOM}^\mathrm{Ad}(M,N)\otimes M}}}
    & & N
    \arrow{d}{\delta_N}\\
    \mathrm{HOM}^\mathrm{Ad}(M,N)\otimes M\otimes H
    \arrow{rr}{\mathrm{ev}\otimes\mathrm{id}_H}
    & & N\otimes H
\end{tikzcd}
\end{equation*}
commutes, that is,
$\phi(m)_0\otimes\phi(m)_1=\phi_0(m_0)\otimes\phi_1m_1$ for all
$\phi\in\mathrm{HOM}^\mathrm{Ad}(M,N)$, $m\in M$.

\item[iii.)]
  $\mathrm{Hom}^H(M,N)=\mathrm{HOM}^\mathrm{Ad}(M,N)^{\mathrm{co}H}$,
  i.e. for $\phi\in\mathrm{Hom}(M,N)$ the $H$-colinearity  property, for all $m\in M$, $\phi(m)_0\otimes\phi(m)_1
=\phi(m_0)\otimes m_1$  holds if and only if $\delta^\mathrm{Ad}(\phi)=\phi\otimes 1$.

\item[iv.)]  The composition of right rational morphisms gives 
right rational morphisms,
   $\circ\colon
  \mathrm{HOM}^\mathrm{Ad}(N,O)\otimes
  \mathrm{HOM}^\mathrm{Ad}(M,N)\to \mathrm{HOM}^\mathrm{Ad}(M,O)$,
and it is a right $H$-colinear map:
\begin{equation*}
    (\psi\circ\phi)_0\otimes(\psi\circ\phi)_1
    =\psi_0\circ\phi_0\otimes\psi_1\phi_1
\end{equation*}
for all $\phi\in\mathrm{HOM}^\mathrm{Ad}(M,N)$ and
$\psi\in\mathrm{HOM}^\mathrm{Ad}(N,O)$.
\end{enumerate}
Let $M,N$ be left $H$-comodules. 
\begin{enumerate}
\item[i'.)] ${}^\mathrm{Ad}\mathrm{HOM}(M,N)$ is a left $H$-comodule.

\item[ii'.)] The evaluation map $\mathrm{ev}\colon{}^\mathrm{Ad}\mathrm{HOM}
(M,N)\otimes M\rightarrow N$ if left $H$-colinear, i.e.
\begin{equation*}
\begin{tikzcd}
    {}^\mathrm{Ad}\mathrm{HOM}(M,N)\otimes M
    \arrow{rr}{\mathrm{ev}}
    \arrow{d}[swap]{\lambda_{_{{}^\mathrm{Ad}\mathrm{HOM}(M,N)\otimes M}}}
    & & N
    \arrow{d}{\lambda_N}\\
    H\otimes{}^\mathrm{Ad}\mathrm{HOM}(M,N)\otimes M
    \arrow{rr}{\mathrm{id}_H\otimes\mathrm{ev}}
    & & H\otimes N
\end{tikzcd}
\end{equation*}
commutes, that is,
$\phi(m)_{-1}\otimes\phi(m)_0=\phi_{-1}m_{-1}\otimes\phi_0(m_0)$ for
all $\phi\in  {}^\mathrm{Ad}\mathrm{HOM}(N,O)$ and  $m\in M$.

\item[iii'.)] ${}^H\mathrm{Hom}(M,N)={}^{\mathrm{co}H}({}^\mathrm{Ad}\mathrm{HOM}(M,N))$, i.e. for $\phi\in\mathrm{Hom}(M,N)$ we have $\phi(m)_{-1}\otimes\phi(m)_0
=m_{-1}\otimes\phi(m_0)$ for all $m\in M$ if and only if $\lambda^\mathrm{Ad}(\phi)=1\otimes\phi$.
\item[iv'.)] The composition of left rational morphisms gives a left rational 
morphism, $\circ\colon
  {}^\mathrm{Ad}\mathrm{HOM}(N,O)\otimes
  {}^\mathrm{Ad}\mathrm{HOM}(M,N)\to {}^\mathrm{Ad}\mathrm{HOM}(M,O)$,
and is a left $H$-colinear map:
\begin{equation*}
    (\psi\circ\phi)_{-1}\otimes(\psi\circ\phi)_0
    =\psi_{-1}\phi_{-1}\otimes\psi_0\circ\phi_0
\end{equation*}
for all $\phi\in{}^\mathrm{Ad}\mathrm{HOM}(M,N)$ and
$\psi\in{}^\mathrm{Ad}\mathrm{HOM}(N,O)$.
\end{enumerate}
Let $M,N$ be $H$-bicomodules.
\begin{enumerate}
\item[i''.)] $\mathrm{HOM}(M,N)$ is an $H$-bicomodule.
  
\item[ii''.)] The evaluation map $\mathrm{ev}\colon\mathrm{HOM}(M,N)
  \otimes M\rightarrow N$ is $H$-bicolinear.

\item[iii''.)]
  ${}^H\mathrm{Hom}^H(M,N)={}^{\mathrm{co}H}({}^\mathrm{Ad}\mathrm{HOM}(M,N))^{\mathrm{co}H}$,
  i.e., $H$-bicolinear maps are equivalent to $H$-bicoinvariant
  rational morphisms.

\item[iv''.)] The composition map
$\circ\colon
  \mathrm{HOM}(N,O)\otimes
  \mathrm{HOM}(M,N)\to \mathrm{HOM}(M,O)$
  is $H$-bicolinear. 
\end{enumerate}

\end{lemma}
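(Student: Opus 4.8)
The plan is to prove all twelve assertions in Lemma~\ref{lemma01} by a systematic unwinding of the definitions \eqref{coadr} and \eqref{coadl}, exploiting the symmetry between the right-handed statements (i.)--iv.)), the left-handed statements (i'.)--iv'.)) and the bicomodule statements (i''.)--iv''.)). In fact only the right-handed block needs a genuine argument: the left-handed block follows by applying the right-handed results to the Hopf algebra $H^{\mathrm{cop}}$ (equivalently, by the symmetry exchanging $\Delta\leftrightarrow\Delta^{\mathrm{op}}$, $S\leftrightarrow\overline{S}$, $\delta\leftrightarrow\lambda$), and the bicomodule block then follows by combining the two one-sided statements once one checks that the adjoint coactions $\delta^{\mathrm{Ad}}$ and $\lambda^{\mathrm{Ad}}$ commute on $\mathrm{Hom}(M,N)$ — this last commutation is a direct Sweedler computation using that the left and right coactions on $M$ and on $N$ commute, together with coassociativity. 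So I would state the right-handed arguments in full and indicate the transport for the other two blocks.

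For i.), I would first show that $\mathrm{HOM}^{\mathrm{Ad}}(M,N)$ is closed under the putative coaction, i.e. that $\delta^{\mathrm{Ad}}$ restricted there lands in $\mathrm{HOM}^{\mathrm{Ad}}(M,N)\otimes H$; this amounts to verifying the coassociativity identity $(\delta^{\mathrm{Ad}}\otimes\mathrm{id}_H)\circ\delta^{\mathrm{Ad}}=(\mathrm{id}\otimes\Delta)\circ\delta^{\mathrm{Ad}}$ on $\mathrm{Hom}(M,N)$, which is a bookkeeping computation: apply \eqref{coadr} twice to a generic $\phi$ evaluated on $m$, use coassociativity of $\delta_M$ and $\delta_N$ and the antihomomorphism property $S(m_1)_1\otimes S(m_1)_2 = S(m_2)\otimes S(m_1)$ to move the antipodes past each other. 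The counit axiom $(\mathrm{id}\otimes\epsilon)\circ\delta^{\mathrm{Ad}}=\mathrm{id}$ is immediate from $\epsilon(S(m_1)) = \epsilon(m_1)$ and the counit axioms for $\delta_M,\delta_N$. For ii.) I would simply expand both composites of the square on a generic $\phi\otimes m$: the clockwise path gives $\phi(m)_0\otimes\phi(m)_1$, the counterclockwise path gives $\phi_0(m_0)\otimes\phi_1 m_1$, and equality is exactly the statement that $\delta^{\mathrm{Ad}}(\phi)(m) = \phi(m_0)_0\otimes\phi(m_0)_1 S(m_1)$ once one multiplies back by $m_2$ and uses $S(m_1)m_2 = \epsilon(m_1)$. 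Item iii.) is then the special case: $\delta^{\mathrm{Ad}}(\phi)=\phi\otimes 1$ unwinds via \eqref{coadr} to $\phi(m_0)_0\otimes\phi(m_0)_1 S(m_1) = \phi(m)\otimes 1$, and applying $\mathrm{id}\otimes S^{-1}$-type manipulations (or multiplying by $m_1$ in the right slot) converts this to the colinearity identity $\phi(m)_0\otimes\phi(m)_1 = \phi(m_0)\otimes m_1$; conversely colinearity substituted into \eqref{coadr} yields $\phi(m_0)\otimes m_1 S(m_2) = \phi(m)\otimes 1$. For iv.) I would compute $\delta^{\mathrm{Ad}}(\psi\circ\phi)(m) = (\psi\circ\phi)(m_0)_0\otimes(\psi\circ\phi)(m_0)_1 S(m_1)$, insert the factor $S(\phi(m_0)_1)\phi(m_0)_1 = \epsilon(\phi(m_0)_1)$ to split it, recognize the two pieces as $\psi_0$ applied to $\phi_0(m)$ and the product $\psi_1\phi_1$, thereby simultaneously proving that $\psi\circ\phi$ is rational and identifying its coaction; $H$-colinearity of $\circ$ is then literally this identity.

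The main obstacle is purely notational rather than conceptual: keeping Sweedler indices straight when two or three adjoint coactions are nested, since each application of $\delta^{\mathrm{Ad}}$ introduces an antipode that must be commuted past later coproducts using $\Delta\circ S = (S\otimes S)\circ\tau\circ\Delta$. I would isolate the one genuinely reusable computation — coassociativity of $\delta^{\mathrm{Ad}}$ — prove it once carefully, and then observe that ii.), iii.), iv.) are all corollaries obtained by evaluating, restricting to coinvariants, or composing, so that no further antipode gymnastics are needed. The transport to the left-handed and bicomodule cases costs only the remark that $\overline{S}$ is the antipode of $H^{\mathrm{cop}}$ and that the two adjoint coactions commute, the latter being checked by the same style of computation on one line. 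I do not expect any subtlety from the inclusions \eqref{incinNH}: they are honestly injective because $H$ is free over $\Bbbk$, so ``landing in $\mathrm{Hom}(M,N)\otimes H$'' is an unambiguous condition and the coaction restricted to $\mathrm{HOM}^{\mathrm{Ad}}$ is well-defined with values in $\mathrm{HOM}^{\mathrm{Ad}}\otimes H$ precisely by the coassociativity just proven.
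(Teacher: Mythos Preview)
Your approach to the one-sided blocks i.)--iv.) and i'.)--iv'.) is correct and matches the paper, which simply cites \cite{Ulbrich} and \cite{CaenGu07} for these and observes the left--right symmetry.

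There is however a genuine gap in your treatment of the bicomodule block, specifically i''.). Checking that $\delta^{\mathrm{Ad}}$ and $\lambda^{\mathrm{Ad}}$ commute as maps $\mathrm{Hom}(M,N)\to\mathrm{Hom}(M,H\otimes N\otimes H)$ is indeed a Sweedler computation, and the paper does exactly this. But commutation alone does \emph{not} yield that $\delta^{\mathrm{Ad}}(\mathrm{HOM}(M,N))\subseteq\mathrm{HOM}(M,N)\otimes H$. Concretely: write $\delta^{\mathrm{Ad}}(\phi)=\sum_i\phi^{(i)}\otimes h_i$ with the $h_i$ linearly independent; you need each $\phi^{(i)}\in{}^{\mathrm{Ad}}\mathrm{HOM}(M,N)$. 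Commutation gives that $\sum_i\lambda^{\mathrm{Ad}}(\phi^{(i)})\otimes h_i=(\mathrm{id}_H\otimes\delta^{\mathrm{Ad}})\lambda^{\mathrm{Ad}}(\phi)$, and since $\phi\in{}^{\mathrm{Ad}}\mathrm{HOM}$ the right-hand side lies in $H\otimes\mathrm{Hom}(M,N\otimes H)$ while the left-hand side lies in $\mathrm{Hom}(M,H\otimes N)\otimes H$. So both lie in the intersection
\[
F:=\bigl(H\otimes\mathrm{Hom}(M,N\otimes H)\bigr)\cap\bigl(\mathrm{Hom}(M,H\otimes N)\otimes H\bigr)\subseteq\mathrm{Hom}(M,H\otimes N\otimes H).
\]
The missing step is that $F=H\otimes\mathrm{Hom}(M,N)\otimes H$: only then can you apply a dual functional $\alpha^\ell$ to the last $H$-factor and read off $\lambda^{\mathrm{Ad}}(\phi^{(\ell)})\in H\otimes\mathrm{Hom}(M,N)$. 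This equality is not automatic, since for infinite-dimensional $M$ the inclusion $\mathrm{Hom}(M,N)\otimes H\hookrightarrow\mathrm{Hom}(M,N\otimes H)$ is typically proper. The paper devotes a paragraph to this, completing $\{h_i\}$ to a basis of $H$ and using the dual functionals to show that any element of $F$ decomposes in $H\otimes\mathrm{Hom}(M,N)\otimes H$. This is the one place in the lemma where the argument is not a pure Sweedler manipulation, and your sketch omits it.
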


\begin{proof} For i.) see \cite{Ulbrich} \S 2,  ii.) is
  straightforward and implies iii.) (just multiply the equality
  $\phi_0(m_0)\otimes\phi_1m_1=\phi(m_0)\otimes m_1$ by $S(m_2)$
  from the right). iv.)  is in
  \cite{CaenGu07}  Cor.~1.6;  i'.)-iv'.) are
  similarly proven. 
We prove  i''.), then $ii''.)$ - $iv''.)$ are straightforward.
In order to prove that $\delta^\mathrm{Ad}(\mathrm{HOM}(M,N))\subseteq\mathrm{HOM}(M,N)\otimes
H$
and $\lambda^\mathrm{Ad}(\mathrm{HOM}(M,N)) \subseteq H\otimes
\mathrm{HOM}(M,N)$,
we consider the diagram
\begin{equation}\label{eq01}
\begin{tikzcd}
\mathrm{HOM}(M,N) \arrow{rr}{\delta^\mathrm{Ad}}
\arrow{d}[swap]{\lambda^\mathrm{Ad}}
& & \mathrm{HOM}^\mathrm{Ad}(M,N)\otimes H
\arrow{rr}{\lambda^\mathrm{Ad}\otimes\mathrm{id}_H}
& &  \mathrm{Hom}(M,H\otimes N)\otimes H~~
\arrow[d, hook]\\
H\otimes{}^\mathrm{Ad}\mathrm{HOM}(M,N)
\arrow{rr}{\mathrm{id}_H\otimes\delta^\mathrm{Ad}}
& &
H\otimes\mathrm{Hom}(M,N\otimes H)
\arrow[rr, hook]
& &\mathrm{Hom}(M,H\otimes N\otimes H)~,
\end{tikzcd}
\end{equation}
where in $\mathrm{id}_H\otimes\delta^\mathrm{Ad}$ and
$\lambda^\mathrm{Ad}\otimes\mathrm{id}_H$ we used  definitions
\eqref{coadr} and \eqref{coadl}, while the hooked arrows denote the
canonical vector space inclusions, that will be implicitly understood. This diagram is  commutative:
\begin{align*}
    ((\mathrm{id}_H\otimes\delta^\mathrm{Ad})\circ\lambda^\mathrm{Ad})(\phi)(m)
     =&\phi_{-1}\otimes\phi_0(m_0)_0\otimes\phi_0(m_0)_1S(m_1)\\
  =&\phi(m_0)_{-1}\overline{S}(m_{-1})\otimes\phi(m_0)_0\otimes\phi(m_0)_1S(m_1)\\
       =&\phi_0(m_0)_{-1}\overline{S}(m_{-1})
    \otimes\phi_0(m_0)_0
    \otimes\phi_1\\
    =&((\lambda^\mathrm{Ad}\otimes\mathrm{id}_H)\circ\delta^\mathrm{Ad})(\phi)(m)
\end{align*}
for all $\phi\in\mathrm{HOM}(M,N)$ and $m\in M$.
Commutativity of (\ref{eq01}) implies that both
$((\lambda^\mathrm{Ad}\otimes\mathrm{id}_H)\circ\delta^\mathrm{Ad})(\mathrm{HOM}(M,N))$
and
$((\mathrm{id}_H\otimes\delta^\mathrm{Ad})\circ\lambda^\mathrm{Ad})(\mathrm{HOM}(M,N))$
are subspaces of 
$$
F:=(H\otimes\mathrm{Hom}(M,N\otimes H))\cap(\mathrm{Hom}(M,H\otimes N)\otimes H)~.
$$
Clearly $H\otimes\mathrm{Hom}(M,N)\otimes H\subseteq F$. 
We prove it is equal by showing the other inclusion: $F\subseteq H\otimes\mathrm{Hom}(M,N)\otimes H$. 
An arbitrary element of $F$ can be written as $\sum\nolimits_{i\in I}f^i\otimes h_i=\sum\nolimits_{j\in J}k^j\otimes g_j\in F$
with $h_i,k^j\in H$, $f^i\in\mathrm{Hom}(M,H\otimes N)$, $g_j\in\mathrm{Hom}(M,N\otimes H)$
and finite index sets $I,J$. Without loss of generality we can assume
that the elements $h_i, {i\in I}$
are linearly independent. Complete $\{h_i\}_{i\in I}$ to an  basis
$\{h_r\}_{r\in\mathcal{I}}$ of $H$ with (possibly infinite) index set 
$\mathcal{I}\supseteq I$. Consider linear functionals
$\{\alpha^i\}_{i\in I}\subseteq H^*$ such that
$\alpha^i(h_r)=\delta^i_r$  for all $i\in I,r\in\mathcal{I}$.
Let $\ell\in I$,  applying $\mathrm{id}_{H\otimes N}\otimes\alpha^\ell$
to $\sum\nolimits_{i\in I}f^i\otimes h_i=\sum\nolimits_{j\in J}k^j\otimes g_j$ gives
$f^\ell=\sum\nolimits_{j\in J}k^j\otimes(\mathrm{id}_N\otimes\alpha^\ell)\circ g_j
=\sum\nolimits_{j\in J}k^j\otimes\xi^\ell_j$, where we defined $\xi^\ell_j
:=(\mathrm{id}_N\otimes\alpha^\ell)\circ g_j\in\mathrm{Hom}(M,N)$ for all $\ell,j\in I$
(finitely many). Then $\sum\nolimits_{i\in I}f^i\otimes h_i
=\sum\nolimits_{i\in I}\sum\nolimits_{j\in J}k^j\otimes\xi^i_j\otimes h_i\in H\otimes
\mathrm{Hom}(M,N)\otimes H$, proving $F\subseteq
H\otimes\mathrm{Hom}(M,N)\otimes H$.

Thus, 
$((\lambda^\mathrm{Ad}\otimes\mathrm{id}_H)\circ\delta^\mathrm{Ad})(\mathrm{HOM}(M,N))$
and
$((\mathrm{id}_H\otimes\delta^\mathrm{Ad})\circ\lambda^\mathrm{Ad})(\mathrm{HOM}(M,N))$
are subspaces of  $H\otimes\mathrm{Hom}(M,N)\otimes H$, which implies that 
$\delta^\mathrm{Ad}(\mathrm{HOM}(M,N))\subseteq\mathrm{HOM}(M,N)\otimes H$
and $\lambda^\mathrm{Ad}(
\mathrm{HOM}(M,N))\subseteq H\otimes\mathrm{HOM}(M,N)$. In summary, $\mathrm{HOM}(M,N)$
is a right and left $H$-comodule with commuting coactions, i.e. an $H$-bicomodule.
\end{proof}
We call $\delta^\mathrm{Ad}\colon\mathrm{HOM}^\mathrm{Ad}(M,N)
\rightarrow\mathrm{HOM}^\mathrm{Ad}(M,N)\otimes H$
and $\lambda^\mathrm{Ad}\colon{}^\mathrm{Ad}\mathrm{HOM}(M,N)
\rightarrow H\otimes{}^\mathrm{Ad}\mathrm{HOM}(M,N)$
the \textit{right and left adjoint $H$-coaction} on rational morphisms, respectively.
In \cite{SVO99} it is shown that $\mathrm{HOM}^\mathrm{Ad}(M,N)$ is
the largest $H$-comodule in $\mathrm{Hom}(M,N)$. If $H$ is
finite-dimensional all morphisms are rational. 
The previous lemma allows to define the internal $\mathrm{Hom}$-functor
\begin{equation*}
    (\mathcal{M}^H)^\mathrm{op}\times\mathcal{M}^H\to \mathcal{M}^H~~,~~(M,N)\mapsto
    (\mathrm{HOM}^\mathrm{Ad}(M,N),\delta^\mathrm{Ad})~,
\end{equation*}
which on morphisms $(g^\mathrm{op}\colon M\rightarrow M' ,h\colon N\rightarrow N')$
in $(\mathcal{M}^H)^\mathrm{op}\times\mathcal{M}^H$
reads
\begin{equation*}
    \mathrm{HOM}^\mathrm{Ad}(g^\mathrm{op},h)
    \colon\mathrm{HOM}^\mathrm{Ad}(M,N)\to
    \mathrm{HOM}^\mathrm{Ad}(M',N')~,~~ \phi\mapsto h\circ \phi\circ g
\end{equation*}
(as usual ${\mathcal C}^\mathrm{op}$
denotes the opposite category of $\mathcal{C}$, hence
 $g^\mathrm{op}\colon M\rightarrow M'$ in ${\mathcal C}^\mathrm{op}$
 is $g\colon M'\rightarrow M$ in ${\mathcal C}$ and
 $g^\mathrm{op}\circ_{\mathcal{C}^\mathrm{op}}{g'}^\mathrm{op}=(g'\circ_{\mathcal{C}}
 g)^\mathrm{op}$
).
In particular,  $\mathrm{HOM}^\mathrm{Ad}(f^\mathrm{op},g)$ is $H$-colinear
because of iii.) and iv.). Similarly, we have the functors
\begin{equation*}
  \begin{split}
    ({}^H\mathcal{M})^\mathrm{op}\times{}^H\mathcal{M}\to{}^H\mathcal{M}~~,~~&(M,N)\mapsto
    ({}^\mathrm{Ad}\mathrm{HOM}(M,N),\lambda^\mathrm{Ad})~,\\
    ({}^H\mathcal{M}^H)^\mathrm{op}\times{}^H\mathcal{M}^H\to{}^H\mathcal{M}^H,~~&(M,N)\mapsto
    (\mathrm{HOM}(M,N),\delta^\mathrm{Ad},\lambda^\mathrm{Ad})~.
\end{split}
\end{equation*}

As for the category of vector spaces these functors are right adjoint
to the tensor product functors $\otimes$, thus we obtain closed monoidal categories.
For completeness we provide a proof of this known fact.
\begin{proposition}\label{prop05}
$(\mathcal{M}^H,\otimes,\mathrm{HOM}^\mathrm{Ad})$,
$({}^H\mathcal{M},\otimes,{}^\mathrm{Ad}\mathrm{HOM})$
and $({}^H\mathcal{M}^H,\otimes,\mathrm{HOM})$
are closed monoidal categories.
\end{proposition}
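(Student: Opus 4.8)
The plan is to prove that each of the three internal Hom functors is right adjoint to tensoring, so that the asserted closedness is exactly the statement that $-\otimes N\dashv\mathrm{HOM}^\mathrm{Ad}(N,-)$ on $\mathcal{M}^H$, and similarly $-\otimes N\dashv{}^\mathrm{Ad}\mathrm{HOM}(N,-)$ on ${}^H\mathcal{M}$ and $-\otimes N\dashv\mathrm{HOM}(N,-)$ on ${}^H\mathcal{M}^H$, for every object $N$. I would start from the ordinary tensor--hom adjunction over $\Bbbk$: currying $f\mapsto\tilde f$, with $\tilde f(m)(n):=f(m\otimes n)$, is a bijection $\mathrm{Hom}(M\otimes N,P)\cong\mathrm{Hom}(M,\mathrm{Hom}(N,P))$ natural in all three variables, with inverse $g\mapsto\hat g$, $\hat g(m\otimes n):=g(m)(n)$. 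The whole proposition then reduces to the single claim that this bijection restricts to one between $H$-colinear maps $M\otimes N\to P$ and $H$-colinear maps $M\to\mathrm{HOM}^\mathrm{Ad}(N,P)$ (and the left and two-sided analogues): once that is known, naturality in $M,P$ and the description of $\mathrm{HOM}^\mathrm{Ad}$ on morphisms as post- and pre-composition are inherited verbatim from the $\Bbbk$-linear case, yielding the adjunction.

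The heart of the argument is a single Sweedler computation. Given $f\in\mathrm{Hom}^H(M\otimes N,P)$, feed $\tilde f(m)$ into the adjoint coaction \eqref{coadr}; using $H$-colinearity of $f$ for the diagonal coaction on $M\otimes N$, coassociativity of $\delta_N$, and the antipode axiom one finds
\begin{align*}
\delta^\mathrm{Ad}(\tilde f(m))(n)
&=f(m\otimes n_0)_0\otimes f(m\otimes n_0)_1 S(n_1)
=f(m_0\otimes n_0)\otimes m_1 n_1 S(n_2)\\
&=f(m_0\otimes n)\otimes m_1
=\tilde f(m_0)(n)\otimes m_1 .
\end{align*}
Since $\delta_M(m)=m_0\otimes m_1$ is a finite sum in $M\otimes H$ and each $\tilde f(m_0)$ is a genuine linear map $N\to P$, this exhibits $\delta^\mathrm{Ad}(\tilde f(m))=\tilde f(m_0)\otimes m_1$ as an element of $\mathrm{Hom}(N,P)\otimes H$ sitting inside $\mathrm{Hom}(N,P\otimes H)$, so $\tilde f(m)\in\mathrm{HOM}^\mathrm{Ad}(N,P)$; and the same identity, rewritten as $\delta^\mathrm{Ad}\circ\tilde f=(\tilde f\otimes\mathrm{id}_H)\circ\delta_M$, says precisely that $\tilde f\colon M\to\mathrm{HOM}^\mathrm{Ad}(N,P)$ is $H$-colinear. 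I expect this ``rationality'' point to be the only genuinely non-formal step: one must see that $H$-colinearity of $f$ forces $\delta^\mathrm{Ad}(\tilde f(m))$ into the \emph{algebraic} tensor product and not merely into the larger space $\mathrm{Hom}(N,P\otimes H)$, and this is exactly what the telescoping of $n_1 S(n_2)$ against the counit achieves --- it is also the structural reason why the internal Hom must be cut down to rational morphisms.

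The reverse direction needs no computation: for $g\in\mathrm{Hom}^H(M,\mathrm{HOM}^\mathrm{Ad}(N,P))$ we have $\hat g=\mathrm{ev}\circ(g\otimes\mathrm{id}_N)$, the composite of the $H$-colinear map $g\otimes\mathrm{id}_N$ with the $H$-colinear evaluation of Lemma~\ref{lemma01} ii.), hence $H$-colinear. As $f\mapsto\tilde f$ and $g\mapsto\hat g$ are already mutually inverse as set maps, they restrict to mutually inverse natural bijections $\mathrm{Hom}^H(M\otimes N,P)\cong\mathrm{Hom}^H(M,\mathrm{HOM}^\mathrm{Ad}(N,P))$, proving $(\mathcal{M}^H,\otimes,\mathrm{HOM}^\mathrm{Ad})$ closed monoidal. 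For $({}^H\mathcal{M},\otimes,{}^\mathrm{Ad}\mathrm{HOM})$ I would run the identical two steps with $\lambda^\mathrm{Ad}$ from \eqref{coadl}, $\overline{S}$ and Lemma~\ref{lemma01} ii'.) in place of $\delta^\mathrm{Ad}$, $S$ and ii.), the core identity now reading $\lambda^\mathrm{Ad}(\tilde f(m))=m_{-1}\otimes\tilde f(m_0)$. The bicomodule case is the conjunction of the two: if $f$ is $H$-bicolinear then $\tilde f(m)$ is both right and left rational, so $\tilde f(m)\in\mathrm{HOM}(N,P)$, and $\tilde f$ is bicolinear, while the reverse direction uses Lemma~\ref{lemma01} ii''.); this yields the natural isomorphism ${}^H\mathrm{Hom}^H(M\otimes N,P)\cong{}^H\mathrm{Hom}^H(M,\mathrm{HOM}(N,P))$ and finishes the proof.
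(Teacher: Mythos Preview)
Your proof is correct and follows essentially the same approach as the paper: both restrict the $\Bbbk$-linear tensor--hom adjunction via the identical Sweedler computation $\delta^\mathrm{Ad}(\tilde f(m))(n)=\tilde f(m_0)(n)\otimes m_1$, and both reduce the left and bicomodule cases to analogous arguments. The only cosmetic difference is that for the reverse direction the paper verifies colinearity of $\hat g$ by a direct computation, whereas you factor $\hat g=\mathrm{ev}\circ(g\otimes\mathrm{id}_N)$ and invoke Lemma~\ref{lemma01}~ii.); these are the same argument in slightly different clothing.
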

\begin{proof}
Let $M,N, O$ be modules in $\mathcal{M}^H$. Similarly to
\cite{CaenGu07}~Prop.~1.2, the isomorphism  of vector spaces $\hat{}: \mathrm{Hom}(M\otimes
N,O)\to\mathrm{Hom}(M,\mathrm{Hom}(N,O))$
given by $\hat f(m)(n)= f(m\otimes n)$ restricts to the isomorphism of vector spaces
\begin{equation}\label{hatiso}
\hat{}\,: \mathrm{Hom}^H(M\otimes
N,O)\to\mathrm{Hom}^H(M,\mathrm{HOM}^\mathrm{Ad}(N,O))~.
\end{equation}
Indeed $H$-colinearity of any $f\in \mathrm{Hom}^H(M\otimes
N,O)$
implies that, for all $m\in M$, $n\in N$,
\begin{align*}
    \delta^\mathrm{Ad}(\hat{f}(m))(n)
    &=\hat{f}(m)(n_0)^{}_0\otimes\hat{f}(m)(n_0)^{}_1S(n_1)\\
    &=f(m\otimes n_0)^{}_0\otimes f(m\otimes n_0)^{}_1S(n_1)\\
    &=f(m_0\otimes n_0)\otimes m_1n_1S(n_2)\\
    &=\hat{f}(m_0)(n)\otimes m_1~,
\end{align*}
that is,
$\delta^\mathrm{Ad}(\hat f(m))=\hat f(m_0)\otimes m_1\in
\mathrm{HOM}^\mathrm{Ad}(N,O)\otimes H$. Hence $\hat f(m)\in
\mathrm{HOM}^\mathrm{Ad}(N,O)$. Furthermore, the equality
$\delta^\mathrm{Ad}(\hat f(m))=\hat f(m_0)\otimes m_1=(\hat f\otimes \mathrm{id})\delta(m)$ shows that
$\hat f$ is an $H$-colinear map. This shows that the restriction in
\eqref{hatiso} is well defined. We are left to prove surjectivity. 
Since $\hat{}: \mathrm{Hom}(M\otimes
N,O)\to\mathrm{Hom}(M,\mathrm{Hom}(N,O))$ is surjective we show
that for any $\hat f\in\mathrm{Hom}^H(M,\mathrm{HOM}^\mathrm{Ad}(N,O))$ we have $f\in \mathrm{Hom}^H(M\otimes
N,O)$. Indeed $$\delta(f(m\otimes n))=\delta(\hat f(m)(n))=\hat
f(m)^{}_0(n_0)\otimes \hat f(m)^{}_1n_1
=\hat f(m_0)(n_0)\otimes
m_1n_1=f(m_0\otimes n_0)\otimes m_1n_1\,,$$
where in the second equality we used that $\hat
f(m)\in\mathrm{HOM}^\mathrm{Ad}(N,O)$ and in the  third equality
$H$-colinearity of $\hat f$. Naturality of the isomorphism in  \eqref{hatiso} under $H$-colinear maps
$M\to M'$, $N\to N'$ and $O\to O'$ follows from naturality
of the isomorphism of vector spaces
$\,\hat{}: \mathrm{Hom}(M\otimes
N,O)\to\mathrm{Hom}(M,\mathrm{Hom}(N,O))$ since the functor
$\mathrm{HOM}^\mathrm{Ad}$ on morphisms is the restriction to $H$-colinear maps of
the vector space functor $\mathrm{Hom}$ on linear maps. 
This shows that $(\mathcal{M}^H,\otimes,\mathrm{HOM}^\mathrm{Ad})$ is
closed monoidal. A similar proof holds for $({}^H\mathcal{M},\otimes,{}^\mathrm{Ad}\mathrm{HOM})$
and $({}^H\mathcal{M}^H,\otimes,\mathrm{HOM})$.

\end{proof}
There are two other right and left adjoint $H$-coactions on
$\mathrm{Hom}(M,N)$, that in turn define other
rational morphisms ``acting from the right''. Namely
\begin{equation*}
    \overleftarrow{\delta}^\mathrm{Ad}\colon\mathrm{Hom}(M,N)\rightarrow
    \mathrm{Hom}(M,N\otimes H),~~~~~
    \overleftarrow{\delta}^\mathrm{Ad}(\phi)(m)=\phi(m_0)_0\otimes\overline{S}(m_1)\phi(m_0)_1
\end{equation*}
for right $H$-comodules $M,N$ and
\begin{equation*}
    \overleftarrow{\lambda}^\mathrm{Ad}\colon\mathrm{Hom}(M,N)\rightarrow
    \mathrm{Hom}(M,H\otimes N),~~~~~
    \overleftarrow{\lambda}^\mathrm{Ad}(\phi)(m))=S(m_{-1})\phi(m_0)_{-1}\otimes\phi(m_0)_0
\end{equation*}
for left $H$-comodules $M$ and $N$.
Setting $\overleftarrow{\mathrm{HOM}}^\mathrm{Ad}(M,N)
=(\overleftarrow{\delta}^\mathrm{Ad})^{-1}(\mathrm{Hom}(M,N)\otimes H)$,
${}^\mathrm{Ad}\overleftarrow{\mathrm{HOM}}(M,N)
=(\overleftarrow{\lambda}^\mathrm{Ad})^{-1}(H\otimes\mathrm{Hom}(M,N)$ and
$\overleftarrow{\mathrm{HOM}}(M,N)
=\overleftarrow{\mathrm{HOM}}^\mathrm{Ad}(M,N)
\cap{}^\mathrm{Ad}\overleftarrow{\mathrm{HOM}}(M,N)$ for $M,N$ in 
$\mathcal{M}^H$,
${}^H\mathcal{M}$
and ${}^H\mathcal{M}^H$, respectively,
we obtain $H$-coactions via the restrictions
\begin{equation*}
    \overleftarrow\delta^{{\mathrm{Ad}}}\colon{\overleftarrow{\mathrm{HOM}}}^{{\mathrm{Ad}}}(M,N)\to
    \overleftarrow{\mathrm{HOM}}^{{\mathrm{Ad}}}(M,N)\otimes H~,~~\phi\mapsto
    \phi_0\otimes\phi_1
\end{equation*}
\begin{equation*}
    {\overleftarrow\lambda}^{{\mathrm{Ad}}}\colon {}^{{\mathrm{Ad}}}{\overleftarrow{\mathrm{HOM}}}(M,N)\to
    H\otimes {}^{{\mathrm{Ad}}}{\overleftarrow{\mathrm{HOM}}}(M,N)~,~~\phi\mapsto
    \phi_{-1}\otimes\phi_0~.
\end{equation*}
These coactions further restrict to
coactions on $\overleftarrow{\mathrm{HOM}}(M,N):=\overleftarrow{\mathrm{HOM}}^{{\mathrm{Ad}}}(M,N)\cap
\overleftarrow{\mathrm{HOM}}^{{\mathrm{Ad}}}(M,N)$ and commute. The corresponding $H$-colinear evaluation and composition are the restrictions to
$\overleftarrow{\mathrm{HOM}}^{{\mathrm{Ad}}}(M,N)$,
${}^{{\mathrm{Ad}}}{\overleftarrow{\mathrm{HOM}}}(M,N)$ and $\overleftarrow{\mathrm{HOM}}(M,N)$ of the morphisms
$\overleftarrow{\mathrm{ev}}_{M,N}\colon M\otimes{}\mathrm{Hom}(M,N)\to N$,
$(m\otimes f)\mapsto f(m)$ and
\begin{equation*}
    \overleftarrow{\circ}_{M,N,O}:=\circ^\mathrm{op}_{M,N,O}\colon{}\mathrm{Hom}(M,N)
    \otimes{}\mathrm{Hom}(N,O)\to
    {}\mathrm{Hom}(M,O)~,~~f\otimes g\mapsto f\circ^\mathrm{op}g~,
\end{equation*}
where $(f\circ^\mathrm{op}g)(m):=g(f(m))$ for all $m\in M$.  $H$-colinearity of
$\overleftarrow{\mathrm{ev}}_{M,N}$ determines the module of internal morphisms
$\overleftarrow{\mathrm{HOM}}^{{\mathrm{Ad}}}(M,N)$
to be on the right in the evaluation of morphisms
$\overleftarrow{\mathrm{ev}}_{M,N}:M\otimes
\overleftarrow{\mathrm{HOM}}^{{\mathrm{Ad}}}(M,N)\to N$. We thus see that these morphisms act
from the right; consistently, also the $H$-colinear composition is the
usual composition for morphisms acting from the right. 
In the paper we mainly focus on $\mathrm{HOM}$ (morphisms acting from
the left) and develop the theory in
this setting. However, in Section~\ref{Sec5} we have to
make this crucial distinction.
The convention $\overleftarrow{\mathrm{HOM}}$ can be studied in complete
analogy by mirroring all expressions (cf. \cite{AschieriCartanStructure} for its study in
the context of $H$-modules rather than $H$-comodules).

\subsection{Covariant Modules}\label{Sec2.2}

Considering vector spaces which are both,
$H$-modules and $H$-comodules, there is a natural compatibility
condition of those structures: we can ask the coaction to be a morphism
in the monoidal category of $H$-modules or equivalently the action to be
a morphism in the monoidal category of $H$-comodules. This leads us to the
notion of covariant modules, also known as Hopf modules.

\begin{definition}[Covariant Modules or Hopf Modules]
The category $\mathcal{M}_H^H$ of right covariant right
modules consists of
all right $H$-modules $M$ which are also right $H$-comodules
such that the right coaction
$\delta_M\colon M\rightarrow M\otimes H$ is right $H$-linear,
i.e. $\delta_M(ma)=\delta_M(m)\Delta(a)$ for all $m\in M$ and $a\in H$.
Morphisms are right $H$-linear and right $H$-colinear.
The categories ${}_H\mathcal{M}^H$,
${}^H\mathcal{M}_H$, ${}_H^H\mathcal{M}$, ${}_H^H\mathcal{M}_H$,
${}_H\mathcal{M}^H_H$, ${}_H^H\mathcal{M}^H$, ${}^H\mathcal{M}^H_H$
and ${}_H^H\mathcal{M}_H^H$ are similarly defined.
In particular ${}_H^H\mathcal{M}_H^H$ is the category of bicovariant bimodules. A vector space $M$ is in  ${}_H^H\mathcal{M}_H^H$ if it
is an $H$-bimodule and an $H$-bicomodule such that the right and left coactions $\delta_M$ and
$\lambda_M$ are $H$-bilinear, i.e.,
\begin{equation*}
    \delta_M(amb)
    =\Delta(a)\delta_M(m)\Delta(b)
    \text{ and }
    \lambda_M(amb)
    =\Delta(a)\lambda_M(m)\Delta(b)
\end{equation*}
for all $a,b\in H$ and $m\in M$.
The corresponding morphisms are $H$-bilinear and $H$-colinear maps.
The vector space of morphisms between two bicovariant 
bimodules $M,N$ is denoted by ${}^H_H\mathrm{Hom}^H_H(M,N)$
and similarly for the other types of covariant modules.
\end{definition}
Canonical examples of bicovariant bimodules of $H$ are the
ground field $\Bbbk$ (with right and left $H$-action given by 
$\epsilon\colon H\cong\Bbbk\otimes H\cong H\otimes\Bbbk
\rightarrow\Bbbk$ and right and left $H$-coaction
given by $\eta\colon\Bbbk\rightarrow H\cong\Bbbk\otimes H
\cong H\otimes\Bbbk$) and $H$ itself (with actions given by 
right and left multiplication and coaction $\Delta\colon
H\rightarrow H\otimes H$). Covariant modules arise in the study of
the differential calculus on quantum groups \cite{Woronowicz1989}.

Any covariant module is $H$-generated by its coinvariant
elements. The construction is as follows:
for a Hopf algebra $H$ and right covariant right module $M$
we can structure the $\Bbbk$-module $M^{\mathrm{co}H}\otimes H$ as a right
covariant right module such that it is isomorphic to $M$.
If $M$ is further a bicovariant bimodule this isomorphism extends
to an isomorphism of bicovariant bimodules,  see e.g. \cite{Mo93}.

\begin{theorem}[Fundamental Theorem of Hopf Modules]\label{FundThm}
For any module $M$ of $\mathcal{M}^H_H$ there is an isomorphism
\begin{equation*}
    M\rightarrow M^{\mathrm{co}H}\otimes H~,~~
    m\mapsto m_0S(m_1)\otimes m_2~
\end{equation*}
of right covariant right modules with inverse
$M^{\mathrm{co}H}\otimes H\rightarrow M$, $m\otimes h\mapsto m\cdot h$.
If $M$ is a bicovariant bimodule this is an isomorphism in
${}_H^H\mathcal{M}_H^H$, where $M^{\mathrm{co}H}\otimes H$ is structured as
a bicovariant bimodule via  the $H$-actions
$a\cdot(m\otimes h)\cdot b:=a_1mS(a_2)\otimes a_3hb$,
for all $a,b,h\in H$ and $m\in M^{\mathrm{co}H}$, and the diagonal $H$-coactions.

Analogous results can be formulated for modules of ${}_H^H\mathcal{M}$, ${}_H\mathcal{M}^H$
and ${}^H\mathcal{M}_H$, with isomorphisms:
\begin{equation*}
  \begin{split}
&   M\rightarrow H\otimes{}^{\,\mathrm{co}H\!}M~,~~
m\mapsto m_{-2}\otimes S(m_{-1})m_0~,\\
& M\rightarrow H\otimes M^{\mathrm{co}H}~,~~
m\mapsto m_2\otimes\overline{S}(m_1)m_0~,\\
& M\rightarrow{}^{\,\mathrm{co}H\!}M\otimes H~,~~
   m\mapsto m_0\overline{S}(m_{-1})\otimes m_{-2}~.
\end{split}
\end{equation*}
In particular, all covariant modules are free $H$-modules.
\end{theorem}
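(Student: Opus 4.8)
\emph{Proof strategy.} The plan is to prove the statement in full for $M\in\mathcal{M}^H_H$, then upgrade to the bicovariant case, and finally deduce the three remaining presentations. Write $\Phi\colon M\to M^{\mathrm{co}H}\otimes H$, $m\mapsto m_0S(m_1)\otimes m_2$ and $\Psi\colon M^{\mathrm{co}H}\otimes H\to M$, $n\otimes h\mapsto n\cdot h$. First I would check that $\Phi$ is well defined, i.e.\ that $m_0S(m_1)\in M^{\mathrm{co}H}$: using right $H$-linearity of $\delta_M$ and that $S$ is an anti-coalgebra map, so $\Delta(S(h))=S(h_2)\otimes S(h_1)$, one computes
\[
\delta_M(m_0S(m_1))=\delta_M(m_0)\,\Delta(S(m_1))=m_0S(m_3)\otimes m_1S(m_2)=m_0S(m_1)\otimes 1_H,
\]
the last step by the antipode axiom $h_1S(h_2)=\epsilon(h)1$ together with counitality of the coaction. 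That $\Phi$ and $\Psi$ are mutually inverse is then two short computations: $\Psi(\Phi(m))=m_0\bigl(S(m_1)m_2\bigr)=m_0\epsilon(m_1)=m$ by associativity of the action and the antipode/counit axioms, and, since $\delta_M(n)=n\otimes1$ for $n\in M^{\mathrm{co}H}$, right $H$-linearity of $\delta_M$ gives $\delta_M(n\cdot h)=nh_1\otimes h_2$, whence $\Phi(\Psi(n\otimes h))=nh_1S(h_2)\otimes h_3=n\otimes h$. Thus $\Phi$ is a $\Bbbk$-linear isomorphism.

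Next I would verify that $\Phi$ is a morphism in $\mathcal{M}^H_H$. For right $H$-linearity, right $H$-linearity of $\delta_M$ yields $(\mathrm{id}_M\otimes\Delta)\delta_M(ma)=m_0a_1\otimes m_1a_2\otimes m_2a_3$, hence
\[
\Phi(ma)=m_0a_1S(m_1a_2)\otimes m_2a_3=m_0a_1S(a_2)S(m_1)\otimes m_2a_3=m_0S(m_1)\otimes m_2a=\Phi(m)\cdot a,
\]
using $a_1S(a_2)=\epsilon(a)1$; this matches the module structure $(n\otimes h)\cdot a=n\otimes ha$ on $M^{\mathrm{co}H}\otimes H$. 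For right $H$-colinearity one applies $\Delta$ to the last tensor leg of $\Phi(m)$ and recognises the result, by coassociativity of the coaction, as $(\Phi\otimes\mathrm{id}_H)\delta_M(m)$. Since $M^{\mathrm{co}H}\otimes H$ is manifestly a free right $H$-module, so is $M$.

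For $M\in{}^H_H\mathcal{M}^H_H$ I would first note that $M^{\mathrm{co}H}$ carries a left $H$-coaction: commutativity of $\delta_M$ and $\lambda_M$ applied to $n\in M^{\mathrm{co}H}$ forces $n_0\in M^{\mathrm{co}H}$, so $\lambda_M$ restricts. It is \emph{not} stable under the original left action of $H$, which is why the adjoint-type action $n\mapsto a_1nS(a_2)$ enters: $H$-bilinearity of $\delta_M$ and $\delta_M(n)=n\otimes 1$ give $\delta_M(a_1nS(a_2))=a_1nS(a_2)\otimes 1$, so this action indeed maps into $M^{\mathrm{co}H}$. A direct check (using only $S,\Delta,\epsilon$) then shows that the formulas $a\cdot(n\otimes h)\cdot b:=a_1nS(a_2)\otimes a_3hb$ together with the diagonal coactions make $M^{\mathrm{co}H}\otimes H$ a bicovariant bimodule. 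It remains to see that $\Phi$ is also left $H$-linear and left $H$-colinear: left linearity is the computation $\Phi(am)=a_1m_0S(m_1)S(a_2)\otimes a_3m_2=a\cdot\Phi(m)$, using $(\mathrm{id}_M\otimes\Delta)\delta_M(am)=a_1m_0\otimes a_2m_1\otimes a_3m_2$; left colinearity follows, exactly paralleling the right-handed case, from $H$-bilinearity of $\lambda_M$, the identity $\Delta(S(h))=S(h_2)\otimes S(h_1)$, the antipode/counit axioms and coassociativity, simplifying to $\lambda_{M^{\mathrm{co}H}\otimes H}(\Phi(m))=m_{-1}\otimes\Phi(m_0)=(\mathrm{id}_H\otimes\Phi)\lambda_M(m)$. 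This gives the isomorphism in ${}^H_H\mathcal{M}^H_H$.

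The remaining three presentations are proven by the same scheme: one checks that the stated maps $S(m_{-1})m_0$, $\overline S(m_1)m_0$ and $m_0\overline S(m_{-1})$ are projections onto the relevant coinvariants, that they invert the multiplication maps via the antipode identities for $S$ and $\overline S$, and that they are (co)linear; equivalently, each follows from the case already established by transporting it along the Hopf algebra isomorphisms $H\cong H^{\mathrm{op}},H^{\mathrm{cop}},H^{\mathrm{op,cop}}$, which interchange left and right (co)modules and replace $S$ by $\overline S$ in the op/cop directions. Freeness of all covariant modules is then immediate from their tensor-product-with-$H$ presentations. The one genuinely delicate point is the bicovariant step: because $M^{\mathrm{co}H}$ is not an original sub-bimodule of $M$, one must pass to the adjoint left action, verify it is well defined on $M^{\mathrm{co}H}$, and check that the resulting left action, right action and two diagonal coactions on $M^{\mathrm{co}H}\otimes H$ satisfy all bimodule/bicomodule compatibilities while $\Phi$ intertwines them — this is where the Sweedler bookkeeping is heaviest.
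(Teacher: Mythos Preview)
Your proof is correct and follows the standard route: verify that $m_0S(m_1)$ lands in $M^{\mathrm{co}H}$, check that $\Phi$ and $\Psi$ are mutual inverses via the antipode/counit axioms, and then confirm the various (co)linearity properties, with the bicovariant upgrade handled through the adjoint left action on $M^{\mathrm{co}H}$. The Sweedler computations are all accurate, including the left-colinearity check where the $S(m_2)m_3$ cancellation in $\lambda_{M^{\mathrm{co}H}\otimes H}(\Phi(m))$ yields $m_{-1}\otimes\Phi(m_0)$.

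There is nothing to compare against, however: the paper does not give its own proof of this theorem. It is stated as a known result with a reference to Montgomery's book (``see e.g.\ \cite{Mo93}'') and is used freely thereafter. Your argument is essentially the textbook proof one would find there, so it serves perfectly well as a self-contained justification.
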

Given a bicovariant bimodule its right (or left) coinvariants form a so-called Yetter-Drinfel'd
module, i.e. a module and comodule with a given compatibility condition
(different from the one of covariant modules).
On the other hand, the tensor product of a Yetter-Drinfel'd module
with $H$ gives a bicovariant bimodule.
The fundamental theorem of Hopf modules shows that these processes are inverse
to each other, providing a (monoidal) equivalence of categories. We will use the
notion of bicovariant bimodule and omit discussing Yetter-Drinfel'd
modules, we refer the interested reader to \cite{Schauenburg94} for the categorical
equivalence.

We notice that on the tensor product $M\otimes N$ of modules $M,N$
of ${}_H^H\mathcal{M}_H^H$ we have the $H$-bimodule structure
\begin{equation*}
    a\cdot(m\otimes n)\cdot b
    =am\otimes nb,
    \text{ for }a,b\in H,~m\in M,~n\in N~,
\end{equation*}
as well as the $H$-bicomodule
structure given by the diagonal coactions. The covariance conditions
ensure that these structures are compatible.
From
Lemma~\ref{lemma01} i''.) we know that the adjoint coactions structure
the vector space of rational morphisms
$\mathrm{HOM}(M,N)=\mathrm{HOM}^\mathrm{Ad}(M,N)\cap{}^\mathrm{Ad}\mathrm{HOM}(M,N)$
as an $H$-bicomodule if $M,N$ are $H$-bicomodules. For bicovariant left modules
$M,N$ we furthermore have the following $H$-bimodule structure
\begin{equation}\label{eq27}
    (a\cdot\phi\cdot b)(m)=a\phi(bm)
    \text{ for }a,b\in H,~\phi\in\mathrm{HOM}(M,N),~m\in M
\end{equation}
on $\mathrm{HOM}(M,N)$. Covariance ensures that this
bimodule structure is compatible with $\delta^\mathrm{Ad}$ and $\lambda^\mathrm{Ad}$.
\begin{lemma}\label{lemma04}
Given modules $M,N$ of ${}_H^H\mathcal{M}^H$ the vector space of
rational morphisms $\mathrm{HOM}(M,N)$ is a bicovariant bimodule with
respect to the $H$-actions in (\ref{eq27}) and the adjoint coactions
$\delta^\mathrm{Ad}$ and $\lambda^\mathrm{Ad}$. The assignment
$ \mathrm{HOM}
    \colon({}_H^H\mathcal{M}^H)^\mathrm{op}\times{}_H^H\mathcal{M}^H
    \rightarrow{}_H^H\mathcal{M}_H^H$ is functorial.
\end{lemma}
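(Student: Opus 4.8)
The plan is to verify the three separate structural claims in turn: first that \eqref{eq27} defines an $H$-bimodule structure on $\mathrm{HOM}(M,N)$; second that this bimodule structure is compatible with the two adjoint coactions $\delta^\mathrm{Ad}$ and $\lambda^\mathrm{Ad}$ already shown to be well-defined by Lemma~\ref{lemma01}~i''.), so that $\mathrm{HOM}(M,N)$ becomes an object of ${}_H^H\mathcal{M}_H^H$; and third that the morphism part $\mathrm{HOM}(g^\mathrm{op},h)\colon\phi\mapsto h\circ\phi\circ g$ is $H$-bilinear whenever $g,h$ are, so that $\mathrm{HOM}$ is a functor to ${}_H^H\mathcal{M}_H^H$. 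The functoriality on composition of morphisms is inherited from the already-established functoriality of $\mathrm{HOM}$ as a functor valued in ${}^H\mathcal{M}^H$ (Proposition~\ref{prop05} and the discussion after Lemma~\ref{lemma01}), so only the bilinearity of $\mathrm{HOM}(g^\mathrm{op},h)$ needs checking there.

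First I would check that \eqref{eq27} is well-defined, i.e.\ that $a\cdot\phi\cdot b$ is again \emph{rational} when $\phi$ is. The cleanest way is to observe that $a\cdot\phi\cdot b = L_a\circ\phi\circ L_b$, where $L_a\colon N\to N$ and $L_b\colon M\to M$ are left multiplication by $a$ and $b$. Since $M,N$ lie in ${}_H^H\mathcal{M}^H$, left multiplication by a fixed element is a right $H$-colinear map (it need not be left colinear, but the adjoint left coaction $\lambda^\mathrm{Ad}$ is only defined using $\lambda$ on a left comodule; here the ambient category is ${}_H^H\mathcal{M}^H$, so only the right coaction and both module structures are present, and $\mathrm{HOM}$ uses $\delta^\mathrm{Ad}$ together with the left adjoint coaction coming from the left $H$-comodule structure — I would be careful to use exactly the data available). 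Using Lemma~\ref{lemma01}~iv.) and iv'.) on composition of rational morphisms, together with the fact that $L_a,L_b$ are themselves rational (indeed $H$-colinear maps are rational by iii.), iii'.)), one gets $L_a\circ\phi\circ L_b\in\mathrm{HOM}(M,N)$. Associativity and unitality of the bimodule axioms are immediate from those of composition. The left and right actions commute because $L_a$ acts on the target and $L_b$ on the source: $(a\cdot\phi)\cdot b = L_a\circ\phi\circ L_b = a\cdot(\phi\cdot b)$.

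The main work — and the main obstacle — is the covariance compatibility: showing that $\delta^\mathrm{Ad}$ and $\lambda^\mathrm{Ad}$ are $H$-bilinear with respect to \eqref{eq27}, i.e.\ that for $a,b\in H$ and $\phi\in\mathrm{HOM}(M,N)$ one has $\delta^\mathrm{Ad}(a\cdot\phi\cdot b) = \Delta(a)\,\delta^\mathrm{Ad}(\phi)\,\Delta(b)$ and similarly for $\lambda^\mathrm{Ad}$, where the bimodule structure on $\mathrm{HOM}(M,N)\otimes H$ (resp.\ $H\otimes\mathrm{HOM}(M,N)$) is the tensor-product one. I would prove this by a direct Sweedler computation: evaluate both sides on $m\in M$, expand $\delta^\mathrm{Ad}$ via \eqref{coadr}, and push the coactions $\delta_M$, $\delta_N$ through the module actions using the covariance identities $\delta_M(amb)=\Delta(a)\delta_M(m)\Delta(b)$, $\delta_N(amb)=\Delta(a)\delta_N(m)\Delta(b)$, and then collapse the resulting antipode terms using $S(a_2 m_1 \cdots)$-type factorizations together with the Hopf axioms $h_1 S(h_2)=\epsilon(h)1$. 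Concretely, $\delta^\mathrm{Ad}(a\cdot\phi\cdot b)(m) = (a\phi(bm_0))_0 \otimes (a\phi(bm_0))_1 S(m_1)$; writing $\delta_N(a\phi(bm)) = a_1\phi(b_1 m_0)_0 \otimes a_2 \phi(b_1 m_0)_1 b_2 m_1$ — wait, one must be careful that $\delta_M(bm)=\Delta(b)\delta_M(m)$ only contributes $b_1 m_0 \otimes b_2 m_1$ \emph{before} $\phi$ is applied, so the bookkeeping is: $\delta_N(a\,\phi(b m_0)) = a_1\,\phi(b m_0)_0 \otimes a_2\,\phi(b m_0)_1$, and then one rewrites $\phi(b m_0)_0 \otimes \phi(b m_0)_1 S(m_1)$ using rationality of $\phi$ plus the covariance of $M$ to split off the $b$-dependence and the $S(m_1)$ factor. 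The delicate point is handling the $b$-contribution: since $b$ sits inside $\phi$, one needs $\phi(bm_0)_0\otimes\phi(bm_0)_1$ to factor correctly, which follows from writing $\delta^\mathrm{Ad}(\phi)(bm_0)$ and using that $\delta^\mathrm{Ad}(\phi) = \phi_0\otimes\phi_1$ with $\phi_0$ again in $\mathrm{Hom}(M,N)$ together with Lemma~\ref{lemma01}~ii.). I expect the computation to terminate with $\Delta(a)(\phi_0\otimes\phi_1)\Delta(b)$ evaluated on $m$ after the $S(m_1)$, $S(b_2)$ telescoping, but tracking the Sweedler indices carefully — especially the interplay of $S$, $\overline S$ in the left case $\lambda^\mathrm{Ad}$ — is where errors creep in; this is the step I would write out in full. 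Finally, $H$-bilinearity of $\mathrm{HOM}(g^\mathrm{op},h)$ is a one-line check: $\mathrm{HOM}(g^\mathrm{op},h)(a\cdot\phi\cdot b) = h\circ(L_a\circ\phi\circ L_b)\circ g = L_a\circ(h\circ\phi\circ g)\circ L_b = a\cdot\mathrm{HOM}(g^\mathrm{op},h)(\phi)\cdot b$, using that $h,g$ are $H$-linear so they commute with $L_a$ on the target and $L_b$ on the source respectively. Together with the already-known $H$-colinearity of $\mathrm{HOM}(g^\mathrm{op},h)$ this gives that $\mathrm{HOM}$ is a functor ${}_H^H\mathcal{M}^H)^\mathrm{op}\times{}_H^H\mathcal{M}^H\to{}_H^H\mathcal{M}_H^H$.
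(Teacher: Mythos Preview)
Your overall strategy coincides with the paper's: establish the bimodule structure, verify compatibility of the adjoint coactions with the actions \eqref{eq27} by a direct Sweedler computation, and check $H$-bilinearity of $\mathrm{HOM}(g^{\mathrm{op}},h)$ by commuting $g,h$ past left multiplications using their left $H$-linearity. Your outlines of the compatibility computation and of the functoriality check are correct and match the paper.

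There is one concrete error in your rationality argument: left multiplication $L_a\colon N\to N$ is \emph{not} right $H$-colinear. Covariance gives $\delta_N(an)=a_1 n_0\otimes a_2 n_1$, not $an_0\otimes n_1$, so your appeal to Lemma~\ref{lemma01}~iii.) fails. What \emph{is} true is that $L_a$ is rational: the same computation yields $\delta^{\mathrm{Ad}}(L_a)(n)=a_1 n\otimes a_2$, i.e.\ $\delta^{\mathrm{Ad}}(L_a)=L_{a_1}\otimes a_2$, and analogously $\lambda^{\mathrm{Ad}}(L_a)=a_1\otimes L_{a_2}$. With this correction your composition argument via Lemma~\ref{lemma01}~iv.), iv'.) goes through. (Incidentally, your parenthetical misidentifies the structures in ${}_H^H\mathcal{M}^H$: objects there carry a left module structure and \emph{both} comodule structures, but no right module structure.) The paper sidesteps this preliminary entirely: its single computation $\delta^{\mathrm{Ad}}(a\cdot\phi\cdot b)=a_1\cdot\phi_0\cdot b_1\otimes a_2\phi_1 b_2$ simultaneously establishes that $a\cdot\phi\cdot b$ is right rational (the right-hand side visibly lies in $\mathrm{Hom}(M,N)\otimes H$) and that the coaction is $H$-bilinear, so no separate rationality check is needed.
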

\begin{proof}
In Proposition~\ref{prop05} we already noted that $\delta^\mathrm{Ad}$ and 
$\lambda^\mathrm{Ad}$ are commuting coactions on bicomodules.
The $H$-actions in (\ref{eq27}) are compatible with $\delta^\mathrm{Ad}$ since
\begin{align*}
    (a_1\cdot\phi_0\cdot b_1)(m)\otimes a_2\phi_1b_2
    &=a_1\phi_0(b_1m)\otimes a_2\phi_1b_2\\
    &=a_1\phi(b_1m_0)_0\otimes a_2\phi(b_1m_0)_1S(b_2m_1)b_3\\
    &=a_1\phi(bm_0)_0\otimes a_2\phi(bm_0)_1S(m_1)\\
    &=(a\phi(bm_0))_0\otimes(a\phi(bm_0))_1S(m_1)\\
    &=(a\cdot\phi\cdot b)(m_0)_0\otimes(a\cdot\phi\cdot b)(m_0)_1S(m_1)\\
    &=(a\cdot\phi\cdot b)_0(m)\otimes(a\cdot\phi\cdot b)_1
\end{align*}
for all $a,b\in H$, $\phi\in\mathrm{HOM}(M,N)$ and $m\in M$.
Similarly, the $H$-actions are compatible with $\lambda^\mathrm{Ad}$.
On morphisms 
$f\colon M'\rightarrow M$, $g\colon N\rightarrow N'$ in
${}_H^H\mathcal{M}^H$ we define
\begin{equation}\label{eq76}
    \mathrm{HOM}(f^\mathrm{op},g)\colon\mathrm{HOM}(M,N)\rightarrow\mathrm{HOM}(M',N')~,~~~~~
    \phi\mapsto g\circ\phi\circ f
\end{equation}
and show that it is a morphism in ${}_H^H\mathcal{M}_H^H$. It is well-defined and
$H$-bicolinear because of Proposition~\ref{prop05}. Moreover
(\ref{eq76}) is $H$-bilinear: for $a,b\in H$ and
$\phi\in\mathrm{HOM}(M,N)$ from left $H$-linearity of $f$ and $g$ it
follows that
\begin{align*}
    [\mathrm{HOM}(f^\mathrm{op},g)(a\cdot\phi\cdot b)](m)
    &=(g\circ(a\cdot\phi\cdot b)\circ f)(m)\\
    &=g(a\phi(bf(m)))\\
    &=ag(\phi(f(bm)))\\
    &=[a\cdot[\mathrm{HOM}(f^\mathrm{op},g)(\phi)]\cdot b](m)
\end{align*}
for all $m\in M$.
\end{proof}
Analogous statements hold for the functor $\overleftarrow{\mathrm{HOM}}$
of rational morphisms acting from the right: given modules $M,N$ of ${}^H\mathcal{M}^H_H$
we structure $\overleftarrow{\mathrm{HOM}}(M,N)$ as a bicovariant bimodule via the $H$-actions
\begin{equation}\label{eq02}
    (a\cdot\phi\cdot b)(m)=\phi(ma)b
    \text{ for all }a,b\in H,~\phi\in\overleftarrow{\mathrm{HOM}}(M,N),~m\in M
\end{equation}
and the adjoint coactions $\overleftarrow{\delta}^\mathrm{Ad}$, 
$\overleftarrow{\lambda}^\mathrm{Ad}$. Recall that the evaluation of
$\overleftarrow{\mathrm{HOM}}(M,N)$ is defined by $\overleftarrow{\mathrm{ev}}\colon
M\otimes\overleftarrow{\mathrm{HOM}}(M,N)\rightarrow N$, $m\otimes\phi\mapsto\phi(m)$,
thus explaining the counterintuitive definition (\ref{eq02}).

Utilizing the $H$-module structures on rational morphisms we 
refine Proposition~\ref{prop05}.
\begin{proposition}
$({}_H^H\mathcal{M}^H,\otimes,\mathrm{HOM})$ and 
$({}^H\mathcal{M}^H_H,\otimes,\overleftarrow{\mathrm{HOM}})$
are closed monoidal categories.
\end{proposition}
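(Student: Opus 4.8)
The plan is to upgrade Proposition~\ref{prop05} by checking that the adjunction isomorphism already established there is compatible with the $H$-bimodule structures on rational morphisms given in \eqref{eq27} (and \eqref{eq02} for the $\overleftarrow{\mathrm{HOM}}$ case), so that it restricts to an adjunction in the category of bicovariant bimodules. First I would fix modules $M,N,O$ of ${}_H^H\mathcal{M}^H$ and recall that in Proposition~\ref{prop05} we have the natural isomorphism of vector spaces $\hat{}\,\colon \mathrm{Hom}^H(M\otimes N,O)\to\mathrm{Hom}^H(M,\mathrm{HOM}^\mathrm{Ad}(N,O))$, $\hat f(m)(n)=f(m\otimes n)$, restricting an analogous isomorphism of vector spaces, and that by Lemma~\ref{lemma04} the target $\mathrm{HOM}(N,O)$ is a bicovariant bimodule with the $H$-actions of \eqref{eq27}. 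The key point is that $M\otimes N$ in ${}_H^H\mathcal{M}_H^H$ carries the outer bimodule structure $a(m\otimes n)b = am\otimes nb$, while the internal-hom adjunction must be stated with respect to left modules only on the relevant slots, exactly as in the vector-space case the tensor--hom adjunction $\mathrm{Hom}_R({}_RM\otimes_\Bbbk {}_RN, {}_RO)\cong \mathrm{Hom}_R({}_RM,\mathrm{Hom}(N,O))$ holds with the conjugation action on $\mathrm{Hom}(N,O)$. Concretely I would show that $\hat{}$ restricts to a bijection
\begin{equation*}
  \hat{}\,\colon {}_H\mathrm{Hom}^H(M\otimes N,O)\xrightarrow{\ \sim\ }{}_H\mathrm{Hom}^H(M,\mathrm{HOM}(N,O))~,
\end{equation*}
where on the left $M\otimes N$ is a left $H$-module via $a\cdot(m\otimes n)=am\otimes n$ and on the right $\mathrm{HOM}(N,O)$ carries the left action of \eqref{eq27}, $(a\cdot\psi)(n)=a\psi(n)$; then invoke Proposition~\ref{prop05} for the colinearity half and conclude that $\mathrm{HOM}$ is right adjoint to $\otimes$ in ${}_H^H\mathcal{M}^H$.

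The main verification is the following: given $f\in {}_H\mathrm{Hom}^H(M\otimes N,O)$, one has $\widehat{f}(am)(n)=f(am\otimes n)=af(m\otimes n)=a(\widehat f(m)(n))=(a\cdot\widehat f(m))(n)$ for all $a\in H$, $m\in M$, $n\in N$, using left $H$-linearity of $f$ in its first slot and the definition of the left action \eqref{eq27} on $\mathrm{HOM}(N,O)$; hence $\widehat f(am)=a\cdot\widehat f(m)$, i.e.\ $\widehat f$ is left $H$-linear. Conversely, if $\widehat f\in {}_H\mathrm{Hom}^H(M,\mathrm{HOM}(N,O))$ then running the same computation backwards shows $f$ is left $H$-linear in the first slot; combined with the right $H$-colinearity already proven in Proposition~\ref{prop05} and the standard fact that $f$ is automatically right $H$-linear on the $N$-slot of $M\otimes N$ when $\widehat f(m)\in\mathrm{HOM}(N,O)$ is $H$-linear from the right (this is exactly $f(m\otimes nb)=\widehat f(m)(nb)=(\widehat f(m)(n))b=f(m\otimes n)b$, where I use the right action $(\psi\cdot b)(n)=\psi(bn)$ from \eqref{eq27} applied to check consistency — here one must be slightly careful, since \eqref{eq27} gives $(\psi\cdot b)(n)=\psi(bn)$, so right $H$-linearity of $f$ on the second slot corresponds to $\widehat f$ landing in $\mathrm{HOM}$ and the full adjunction is really between ${}_H^H\mathcal{M}^H$ and itself, not an $H$-bilinear statement on $M\otimes N$). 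I would therefore phrase the right slot carefully: the relevant adjunction is $({}_H^H\mathcal{M}^H,\otimes,\mathrm{HOM})$, so $M,N,O$ are all only left-$H$-modules-with-bicomodule-structure, and $M\otimes N$ gets the left action $a\cdot(m\otimes n)=am\otimes n$; there is no right $H$-module condition to impose, and the argument is then just the left-linear refinement of Proposition~\ref{prop05}.

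Once the left-$H$-linear refinement of the adjunction bijection is in place, naturality in $M,N,O$ under morphisms of ${}_H^H\mathcal{M}^H$ follows as in Proposition~\ref{prop05} together with Lemma~\ref{lemma04}, which shows $\mathrm{HOM}(f^\mathrm{op},g)$ is a morphism in ${}_H^H\mathcal{M}_H^H$ (in particular in ${}_H^H\mathcal{M}^H$); naturality of $\hat{}$ is inherited from naturality of the underlying vector-space isomorphism since all the functors in sight are restrictions of the vector-space ones. This establishes that $({}_H^H\mathcal{M}^H,\otimes,\mathrm{HOM})$ is closed monoidal. For $({}^H\mathcal{M}^H_H,\otimes,\overleftarrow{\mathrm{HOM}})$ I would simply mirror every step: $M\otimes N$ now carries the right action $(m\otimes n)\cdot b=m\otimes nb$, $\overleftarrow{\mathrm{HOM}}(N,O)$ carries the $H$-actions of \eqref{eq02}, the evaluation is $\overleftarrow{\mathrm{ev}}(m\otimes\phi)=\phi(m)$, and the ``hat'' isomorphism $f\mapsto \widehat f$ with $\widehat f(n)(m)=f(m\otimes n)$ restricts to a natural bijection ${}^H\mathrm{Hom}_H(M\otimes N,O)\cong {}^H\mathrm{Hom}_H(N,\overleftarrow{\mathrm{HOM}}(M,O))$; the compatibility with the right actions is the computation $\widehat f(n)(m)\cdot b = f(m\otimes n)b = f((m\otimes n)\cdot b') \ldots$ — precisely dual to the left case, using \eqref{eq02} in place of \eqref{eq27}. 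The main obstacle, and the only point requiring genuine care rather than bookkeeping, is getting the variance right: making sure the bimodule action on $\mathrm{HOM}(N,O)$ from \eqref{eq27} is the conjugation-type action for which the tensor--hom adjunction is an isomorphism of $H$-modules (not merely of vector spaces), and correspondingly that one does not overclaim an $H$-bilinear adjunction on $M\otimes N$ — the refinement that actually holds is over ${}_H^H\mathcal{M}^H$ and ${}^H\mathcal{M}^H_H$, as stated.
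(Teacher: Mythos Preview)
Your proposal is correct and follows essentially the same route as the paper: start from the adjunction of Proposition~\ref{prop05} for bicomodules and refine it by the single extra check that left $H$-linearity of $f$ in the first slot of $M\otimes N$ is equivalent to left $H$-linearity of $\hat f$ with respect to the action \eqref{eq27}, then mirror for $\overleftarrow{\mathrm{HOM}}$. One cosmetic point: the external hom for ${}_H^H\mathcal{M}^H$ is ${}_H^H\mathrm{Hom}^H$ (left linear and \emph{bi}colinear), not ${}_H\mathrm{Hom}^H$ as you write; your text makes clear you understand this (you invoke Proposition~\ref{prop05} for the full colinearity), but the displayed isomorphism should carry both superscripts, and the detour about right actions in the second paragraph can simply be deleted since, as you yourself conclude, there is no right module structure in ${}_H^H\mathcal{M}^H$ to worry about.
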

\begin{proof}
Let $M,N$ be modules of ${}_H^H\mathcal{M}^H$.
In Proposition~\ref{prop05} it is shown that
$({}^H\mathcal{M}^H,\otimes,\mathrm{HOM})$ is a closed monoidal category
and according to Lemma~\ref{lemma04} the vector space $\mathrm{HOM}(M,N)$ is a module of 
${}_H^H\mathcal{M}^H_H$, so in particular of ${}_H^H\mathcal{M}^H$.
Thus  it is left
to prove that $\otimes$ and $\mathrm{HOM}$ are adjoint via the external $\mathrm{Hom}$-functor
${}_H^H\mathrm{Hom}^H$. Indeed the isomorphism of vector spaces in  (\ref{hatiso})
restricts to a well-defined injective linear map 
\begin{equation*}
    {}_H^H\mathrm{Hom}^H(M\otimes N,O)
    \longrightarrow{}_H^H\mathrm{Hom}^H(M,\mathrm{HOM}(N,O))~,
\end{equation*}
where $O$ is another bicovariant left module. Its surjectivity is proven by showing that the preimage of
${}_H^H\mathrm{Hom}^H(M,\mathrm{HOM}(N,O))\subseteq\mathrm{Hom}^H(M,\mathrm{HOM}(N,O))$ under (\ref{hatiso}) is in  $ {}_H^H\mathrm{Hom}^H(M\otimes N,O)$.
The statement about $({}^H\mathcal{M}^H_H,\otimes,\overleftarrow{\mathrm{HOM}})$
is similarly proven.\end{proof}

Note that, even though $\otimes$ and $\mathrm{HOM}$ are functors
$({}_H^H\mathcal{M}_H^H)^\mathrm{op}\times{}_H^H\mathcal{M}_H^H
\rightarrow{}_H^H\mathcal{M}_H^H$, according to Lemma~\ref{lemma04}
they are not adjoint to each other in the category of bicovariant bimodules, since in general
\begin{equation*}
    {}_H^H\mathrm{Hom}_H^H(M\otimes N,O)
    \ncong{}_H^H\mathrm{Hom}_H^H(M,\mathrm{HOM}(N,O))
\end{equation*}
fails to be an isomorphism for $M,N,O$ bicovariant bimodules.
Thus, we are left with the following result.
\begin{corollary}\label{cor:closed-mon}
The category of bicovariant bimodules ${}_H^H\mathcal{M}_H^H$
\begin{enumerate}
\item[i.)] is a monoidal category $({}_H^H\mathcal{M}_H^H,\otimes)$
with respect to the tensor product of vector spaces, the monoidal unit is
$\Bbbk$;
\item[ii.)] is a biclosed category
  $({}_H^H\mathcal{M}_H^H,\mathrm{HOM}, \overleftarrow{\mathrm{HOM}})$
with the two internal $\mathrm{Hom}$-functors
\begin{equation*}
    \mathrm{HOM} \colon({}_H^H\mathcal{M}_H^H)^\mathrm{op}\times{}_H^H\mathcal{M}_H^H
    \rightarrow{}_H^H\mathcal{M}_H^H~,~~~\overleftarrow{\mathrm{HOM}}
    \colon({}_H^H\mathcal{M}_H^H)^\mathrm{op}\times{}_H^H\mathcal{M}_H^H
    \rightarrow{}_H^H\mathcal{M}_H^H~.
\end{equation*}
The first functor is defined in Lemma \ref{lemma04}. The second one
assigns to any pair $(M,N)$ of bicovariant bimodules the bicovariant
bimodule $\overleftarrow{\mathrm{HOM}}(M,N)$ with the adjoint coaction
$\overleftarrow{\delta}^\mathrm{Ad}$, $\overleftarrow{\lambda}^\mathrm{Ad}$ and the
$H$-actions in (\ref{eq02}).
\end{enumerate}
\end{corollary}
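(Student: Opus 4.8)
The plan is to assemble the corollary from the structural facts already established, so that essentially no new computation is needed beyond checking that the relevant constructions restrict to bicovariant bimodules. For part i.), I would first verify that $\otimes$ preserves the class of bicovariant bimodules. Given $M,N\in{}_H^H\mathcal{M}_H^H$, equip $M\otimes N$ with the outer $H$-bimodule structure $a\cdot(m\otimes n)\cdot b=am\otimes nb$ and the diagonal $H$-bicomodule structure. The diagonal coactions are $H$-bilinear because $\Delta$ is an algebra morphism and $M,N$ are themselves covariant: $\delta_{M\otimes N}(a(m\otimes n)b)=a_1m_0\otimes n_0b_1\otimes a_2m_1n_1b_2=\Delta(a)\delta_{M\otimes N}(m\otimes n)\Delta(b)$, and likewise for $\lambda_{M\otimes N}$; the two coactions commute since they do on $M$ and on $N$ separately and coassociativity lets one reshuffle the Sweedler legs. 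Hence $M\otimes N\in{}_H^H\mathcal{M}_H^H$. The associativity and unit isomorphisms of the monoidal category of $\Bbbk$-vector spaces manifestly intertwine the outer $H$-actions and the diagonal coactions, so they are isomorphisms in ${}_H^H\mathcal{M}_H^H$; the pentagon and triangle identities then hold because the forgetful functor ${}_H^H\mathcal{M}_H^H\to\mathrm{Vec}_\Bbbk$ is faithful and they hold in $\mathrm{Vec}_\Bbbk$. Finally $\Bbbk$, with the bicovariant bimodule structure recorded just after the definition of covariant modules, is the monoidal unit.

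For part ii.), I would invoke Lemma~\ref{lemma04}: for $M,N\in{}_H^H\mathcal{M}^H$ the space $\mathrm{HOM}(M,N)$ is a bicovariant bimodule under the $H$-actions \eqref{eq27} together with the adjoint coactions $\delta^\mathrm{Ad},\lambda^\mathrm{Ad}$, and the assignment $\mathrm{HOM}$ is functorial into ${}_H^H\mathcal{M}_H^H$. Precomposing with the forgetful functor that sends a bicovariant bimodule to its underlying object of ${}_H^H\mathcal{M}^H$ (on the side where the left action is retained) yields the functor $\mathrm{HOM}\colon({}_H^H\mathcal{M}_H^H)^\mathrm{op}\times{}_H^H\mathcal{M}_H^H\to{}_H^H\mathcal{M}_H^H$ asserted in the statement. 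The second internal $\mathrm{Hom}$-functor $\overleftarrow{\mathrm{HOM}}$ is obtained in the same way from the mirror analogue of Lemma~\ref{lemma04}: for $M,N\in{}^H\mathcal{M}^H_H$ the space $\overleftarrow{\mathrm{HOM}}(M,N)$ is a bicovariant bimodule under the $H$-actions \eqref{eq02} and the adjoint coactions $\overleftarrow\delta^\mathrm{Ad},\overleftarrow\lambda^\mathrm{Ad}$, and one precomposes with the corresponding forgetful functor.

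I do not expect a genuine technical obstacle; the only point that deserves care is conceptual. The word ``biclosed'' here must be read as ``carrying two internal $\mathrm{Hom}$-functors'', not as asserting two tensor--hom adjunctions internal to ${}_H^H\mathcal{M}_H^H$. Indeed, as already observed before the statement, ${}_H^H\mathrm{Hom}_H^H(M\otimes N,O)\ncong{}_H^H\mathrm{Hom}_H^H(M,\mathrm{HOM}(N,O))$ in general: the tensor--hom adjunction of Proposition~\ref{prop05} and of the closed-monoidal statement for $({}_H^H\mathcal{M}^H,\otimes,\mathrm{HOM})$ and $({}^H\mathcal{M}^H_H,\otimes,\overleftarrow{\mathrm{HOM}})$ only survives when one of the two module structures is forgotten. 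Thus the corollary is precisely a repackaging of those earlier results together with Lemma~\ref{lemma04} and its mirror, and the proof amounts to stating this.
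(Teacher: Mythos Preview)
Your proposal is correct and matches the paper's approach: the corollary is stated without a separate proof in the paper, being presented as an immediate repackaging of Lemma~\ref{lemma04}, its mirror for $\overleftarrow{\mathrm{HOM}}$, and the monoidal structure discussed just before, exactly as you outline. Your explicit caution that ``biclosed'' here records only the existence of the two internal $\mathrm{Hom}$-functors and not a tensor--hom adjunction in ${}_H^H\mathcal{M}_H^H$ is precisely the point the paper makes in the paragraph preceding the corollary.
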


There is another monoidal structure on bicovariant bimodules:
Given two bicovariant bimodules $M,N$, their balanced tensor product over $H$,
$M\otimes_HN$,  
is a bicovariant bimodule  
via $a\cdot(m\otimes_Hn)\cdot b:=am\otimes_Hnb$
and
\begin{equation*}
    \delta_{M\otimes_HN}(m\otimes_Hn)
    :=(m_0\otimes_Hn_0)\otimes m_1n_1~,~~
    \lambda_{M\otimes_HN}(m\otimes_Hn)
    :=m_{-1}n_{-1}\otimes(m_0\otimes_Hn_0)
\end{equation*}
for all $a,b\in H$ and $m\otimes_Hn\in M\otimes_HN$. 
We briefly recall the construction of a braiding for the monoidal category $({}_H^H\mathcal{M}_H^H,\otimes_H)$.
Following Woronowicz \cite{Woronowicz1989} there is a unique $H$-bilinear map
$\sigma^\mathcal{W}_{M,N}\colon M\otimes_HN\rightarrow N\otimes_HM$ that 
on coinvariant (coaction invariant) elements
$\overline{m}\in{}^{\mathrm{co}H}M$, $\overline{n}\in
N^{\mathrm{co}H}$ acts as the flip operator 
$\sigma^\mathcal{W}_{M,N}(\overline{m}\otimes_H\overline{n})
=\overline{n}\otimes_H\overline{m}$.
From the identity 
$m\otimes n=m_{-2}S(m_{-1})m_0\otimes n_0S(n_1)n_2$ with 
$S(m_{-1})m_0\otimes n_0S(n_1)\in {}^{\mathrm{co}H}M\otimes
N^{\mathrm{co}H}$ and $H$-bilinearity we have that
$\sigma^\mathcal{W}_{M,N}\colon M\otimes_HN\rightarrow N\otimes_HM$ is
given by
\begin{equation}\label{WorBraiding}
    \sigma^\mathcal{W}_{M,N}(m\otimes_Hn)
    :=m_{-2}n_0S(n_1)\otimes_HS(m_{-1})m_0n_2~.
\end{equation}
In the following we shall frequently use the short notation ${}_\alpha n\otimes_H{}^\alpha m
:=\sigma^\mathcal{W}_{M,N}(m\otimes_Hn)$. 
This map is an isomorphism of bicovariant bimodules, the inverse
$\overline{\sigma}^\mathcal{W}_{N,M}\colon N\otimes_HM\rightarrow
M\otimes_HN$ is given by
\begin{equation*}
    \overline{\sigma}^\mathcal{W}_{N,M}(n\otimes_Hm)
    :=n_2m_0\overline{S}(m_{-1})\otimes_H\overline{S}(n_1)n_0m_{-2}~,
\end{equation*}
or equivalently on coinvariant elements $\overline{m}\in{}^{\mathrm{co}H}M$, 
$\overline{n}\in N^{\mathrm{co}H}$ by
$\overline{\sigma}^\mathcal{W}_{N,M}(\overline{n}\otimes_H\overline{m})
=\overline{m}\otimes_H\overline{n}$.
It turns out that for all bicovariant bimodules $M,N,O$ the \textit{hexagon equations}
\begin{equation*}
    \sigma^\mathcal{W}_{M\otimes_HN,O}
    =(\sigma^\mathcal{W}_{M,O}\otimes_H\mathrm{id}_N)
    \circ(\mathrm{id}_M\otimes_H\sigma^\mathcal{W}_{N,O})
    \quad\text{ ~and~ }\quad
    \sigma^\mathcal{W}_{M,N\otimes_HO}
    =(\mathrm{id}_N\otimes_H\sigma^\mathcal{W}_{M,O})
    \circ(\sigma^\mathcal{W}_{M,N}\otimes_H\mathrm{id}_O)~~
\end{equation*}
hold. In leg notation the above equalities read
\begin{equation*}
    \sigma^\mathcal{W}_{12,3}
    =\sigma^\mathcal{W}_{12}\circ\sigma^\mathcal{W}_{23}
    \quad\text{ ~and~ }\quad
    \sigma^\mathcal{W}_{1,23}
    =\sigma^\mathcal{W}_{23}\circ\sigma^\mathcal{W}_{12}~,
\end{equation*}
where $\sigma^\mathcal{W}_{12,3}=\sigma^\mathcal{W}_{M\otimes_HN,O}$,
$\sigma^\mathcal{W}_{12}=\sigma^\mathcal{W}\otimes_H\mathrm{id}$, et cetera.
As a result of the hexagon equations the \textit{quantum Yang-Baxter equation}
\begin{equation}\label{QYBE}
    \sigma^\mathcal{W}_{12}\circ\sigma^\mathcal{W}_{23}\circ\sigma^\mathcal{W}_{12}
    =\sigma^\mathcal{W}_{23}\circ\sigma^\mathcal{W}_{12}\circ\sigma^\mathcal{W}_{23}
\end{equation}
follows. In summary, $({}_H^H\mathcal{M}_H^H,\otimes_H,\sigma^\mathcal{W})$ is a braided
monoidal category, cf. for example \cite{Schauenburg94}~Theorem~6.3.

This category admits an internal $\mathrm{Hom}$-functor.
For modules $M,N$ of  ${}^H\mathcal{M}^H_H$   we define the vector space $$\mathrm{HOM}_H(M,N)
:=\mathrm{HOM}(M,N)\cap{}\mathrm{Hom}_H(M,N)
$$ 
of  right $H$-linear rational morphisms. It is easily seen to be a  sub bicovariant
bimodule of $\mathrm{HOM}(M,N)$.
\begin{proposition}\label{prop07}
$({}_H^H\mathcal{M}_H^H,\otimes_H,\mathrm{HOM}_H,\sigma^\mathcal{W})$ is a 
closed braided monoidal category. 
\end{proposition}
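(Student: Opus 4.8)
The plan is to establish that $({}_H^H\mathcal{M}_H^H,\otimes_H,\mathrm{HOM}_H,\sigma^\mathcal{W})$ is a closed braided monoidal category by combining three ingredients already available: the braided monoidal structure $({}_H^H\mathcal{M}_H^H,\otimes_H,\sigma^\mathcal{W})$ established following Woronowicz and Schauenburg; the fact (just noted) that $\mathrm{HOM}_H(M,N)$ is a sub-bicovariant-bimodule of $\mathrm{HOM}(M,N)$; and the closedness machinery of Proposition~\ref{prop05}. So the only genuinely new thing to check is that $\otimes_H$ and $\mathrm{HOM}_H$ are adjoint via the external Hom-functor ${}_H^H\mathrm{Hom}_H^H$, i.e. that there is a natural isomorphism
\begin{equation*}
    {}_H^H\mathrm{Hom}_H^H(M\otimes_H N,O)\cong{}_H^H\mathrm{Hom}_H^H(M,\mathrm{HOM}_H(N,O))
\end{equation*}
for bicovariant bimodules $M,N,O$.

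First I would recall the classical tensor–hom adjunction over the ring $H$: for right $H$-modules, the vector space isomorphism $\widehat{\phantom{f}}\colon\mathrm{Hom}(M\otimes_HN,O)\to\mathrm{Hom}_H(M,\mathrm{Hom}_H(N,O))$, $\widehat f(m)(n)=f(m\otimes_Hn)$, where the left $H$-module structure on $M$ is used on the source side and on $\mathrm{Hom}_H(N,O)$ via $(a\cdot\phi)(n)=a\phi(n)$ on the target side (this is exactly the structure (\ref{eq27}) restricted to $H$-linear maps). The task is then to show this restricts to an isomorphism between the stated subspaces. For the forward direction: take $f\in{}_H^H\mathrm{Hom}_H^H(M\otimes_HN,O)$; right $H$-linearity of $f$ gives that $\widehat f(m)\in\mathrm{Hom}_H(N,O)$, and the colinearity computation is literally the one in Proposition~\ref{prop05} (the argument there only uses the diagonal coactions, which agree on $M\otimes N$ and $M\otimes_HN$), yielding $\delta^\mathrm{Ad}(\widehat f(m))=\widehat f(m_0)\otimes m_1$ and analogously for $\lambda^\mathrm{Ad}$, so $\widehat f(m)\in\mathrm{HOM}_H(N,O)$ and $\widehat f$ is $H$-bicolinear; left $H$-bilinearity of $\widehat f$ (for the $H$-bimodule structures on $M$ and on $\mathrm{HOM}_H(N,O)$ from (\ref{eq27})) follows from left $H$-linearity of $f$ together with $f(am\otimes_Hn)=af(m\otimes_Hn)=f(m\otimes_Hnb)$-type identities. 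For surjectivity: given $\widehat f\in{}_H^H\mathrm{Hom}_H^H(M,\mathrm{HOM}_H(N,O))$, define $f(m\otimes_Hn):=\widehat f(m)(n)$; this is well defined on $M\otimes_HN$ precisely because $\widehat f$ is left $H$-linear into $\mathrm{HOM}_H(N,O)$ with the (\ref{eq27}) structure — $\widehat f(ma)(n)=(\widehat f(m)\cdot a)(n)=\widehat f(m)(an)$ — and the colinearity and bilinearity of $f$ follow as in Proposition~\ref{prop05} and Lemma~\ref{lemma04}. Naturality is inherited from naturality of the underlying vector-space (and $H$-module) adjunction, since all functors in sight are restrictions of the corresponding $\Bbbk$-linear or $H$-linear ones.

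The main obstacle — the point deserving care rather than cleverness — is bookkeeping the interplay of the two $H$-module structures and the two $H$-coaction structures simultaneously: on the source, $M$ carries its intrinsic left action and $N$ its intrinsic right action, and these must match the action (\ref{eq27}) on $\mathrm{HOM}_H(N,O)$ and the adjoint coactions, with the balancing over $H$ in $M\otimes_HN$ exactly absorbing the potential ambiguity. Once the adjunction isomorphism is in place, the remaining assertions are immediate: $\mathrm{HOM}_H(M,N)$ is an object of ${}_H^H\mathcal{M}_H^H$ by the remark preceding the proposition, the braiding $\sigma^\mathcal{W}$ satisfies the hexagon and Yang–Baxter equations by Schauenburg's result already quoted, and compatibility of $\mathrm{HOM}_H$ with the braiding is the standard statement that in a braided closed monoidal category the internal-hom functor is automatically part of the structure. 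Hence $({}_H^H\mathcal{M}_H^H,\otimes_H,\mathrm{HOM}_H,\sigma^\mathcal{W})$ is a closed braided monoidal category.
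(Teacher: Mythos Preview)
Your proposal is correct and follows essentially the same route as the paper: both cite Schauenburg for the braided monoidal structure and then verify the adjunction ${}_H^H\mathrm{Hom}_H^H(M\otimes_HN,O)\cong{}_H^H\mathrm{Hom}_H^H(M,\mathrm{HOM}_H(N,O))$ by restricting the usual currying map $f\mapsto\widehat f$, reusing the colinearity computation of Proposition~\ref{prop05} and checking the remaining $H$-linearities directly. Two small slips to clean up: in the surjectivity step well-definedness on $M\otimes_HN$ comes from \emph{right} $H$-linearity of $\widehat f$ (your displayed equation is the right one, only the word ``left'' is wrong), and the string ``$f(am\otimes_Hn)=af(m\otimes_Hn)=f(m\otimes_Hnb)$'' should be split into the two separate identities for left and right linearity.
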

\begin{proof}
We prove that the internal $\mathrm{Hom}$-functor 
\begin{equation*}
    \mathrm{HOM}_H\colon({}_H^H\mathcal{M}_H^H)^\mathrm{op}\times{}_H^H\mathcal{M}_H^H
    \rightarrow{}_H^H\mathcal{M}_H^H~,~~~~~
    (M,N)\mapsto\mathrm{HOM}_H(M,N)
  \end{equation*}
  is adjoint to the tensor product functor $\otimes_H$, that is, for
  all bicovariant bimodules $M,N,O$ we have the bijection
\begin{equation*}
    {}_H^H\mathrm{Hom}_H^H(M\otimes_HN,O)\,\cong\,
    {}_H^H\mathrm{Hom}_H^H(M,\mathrm{HOM}_H(N,O))~,
  \end{equation*}
which is natural in all its arguments.
For any
$f\in{}_H^H\mathrm{Hom}_H^H(M\otimes_HN,O)$, define
$\hat{f}: M\to \mathrm{HOM}(N,O)$, $m\mapsto  \hat{f}(m)$
by $\hat{f}(m)(n)=f(m\otimes_Hn)$ for all $n\in N$.
The map $\hat{f}$ is
well defined because the inclusion 
${}_H^H\mathrm{Hom}_H^H(M\otimes_HN,O)\subseteq
{}^H\mathrm{Hom}^H(M\otimes N,O)$ (given by composing with the
projection $M\otimes N\to M\otimes_H N$) allows to apply
Proposition~\ref{prop05}. Moreover, $\hat{f}: M\to
\mathrm{HOM}_H(N,O)$ since
$\hat{\phi}(m)$ is right $H$-linear for all $m\in M$:  
$
\hat{\phi}(m)(na)
=\phi(m\otimes_Hna)
=\phi(m\otimes_Hn)a
=\hat{\phi}(m)(n)a
$
for all $a\in H$ and $n\in N$ by the right $H$-linearity of $\phi$.
The map $\,\hat{}\,: {}_H^H\mathrm{Hom}_H^H(M\otimes_HN,O)\,\to\,
    {}_H^H\mathrm{Hom}_H^H(M,\mathrm{HOM}_H(N,O))$, $f\mapsto \hat{f}$, is well defined
    since $\hat{f}$ is a
    morphism of bicovariant bimodules: left $H$-linearity and $H$-bicolinearity are
obvious, while right $H$-linearity holds since
$$
\hat{\phi}(m a)(n)
=\phi(ma\otimes_Hn)
=\phi(m\otimes_Han)
=(\hat{\phi}(m)\cdot a)(n)
$$
for all $m\in M$, $n\in N$ and $a\in H$. One proves that the map $\,\hat{}\,$ is invertible
and that it is a natural transformation in complete analogy with
Proposition~\ref{prop05}.
\end{proof}

In the closed braided monoidal category $({}_H^H\mathcal{M}_H^H,\otimes_H,
\mathrm{HOM}_H,\sigma^\mathcal{W})$
we define the evaluation $\mathrm{ev}_{}(\phi\otimes_Hm):=\phi(m)$ and
composition $\psi\circ\phi$ 
of rational morphisms
$\phi\in\mathrm{HOM}_H(M,N)$, $\psi\in\mathrm{HOM}_H(N,O)$, where
$(\psi\circ\phi)(m)=\psi(\phi(m))$ for all $m\in M$.
We further consider the \textit{braided tensor product} of rational morphisms
\begin{equation}\label{eq29}
    (\phi\otimes_{\sigma^\mathcal{W}}\psi)(m\otimes_Hn)
    :=\phi({}_\alpha m)\otimes_H({}^\alpha\psi)(n)
    =\phi(\psi_{-2}m_0S(m_1))\otimes_HS(\psi_{-1})\psi_0(m_2n)
\end{equation}
for $\phi\in\mathrm{HOM}_H(M,M')$, $\psi\in\mathrm{HOM}_H(N,N')$ and $m\otimes_Hn
\in M\otimes_HN$, where $M,M',N,N',O$ are bicovariant bimodules.
From \cite{Ma95}~Proposition~9.3.13 it follows:
\begin{corollary}\label{cor01}
Let $M,M',N,N',O$ be modules of ${}_H^H\mathcal{M}_H^H$. Then
\begin{equation*}
    \mathrm{ev}_{M,N}\colon\mathrm{HOM}_H(M,N)\otimes_H M\rightarrow N,
\end{equation*}
\begin{equation*}
    \circ_{M,N,O}\colon\mathrm{HOM}_H(N,O)\otimes_H\mathrm{HOM}_H(M,N)
    \rightarrow\mathrm{HOM}_H(M,O)
\end{equation*}
and
\begin{equation}\label{eq04}
    \otimes_{\sigma^\mathcal{W}}\colon
    \mathrm{HOM}_H(M,M')\otimes_H\mathrm{HOM}_H(N,N')
    \rightarrow\mathrm{HOM}_H(M\otimes_HN,M'\otimes_H,N')
\end{equation}
are morphisms of bicovariant bimodules. Furthermore, $\circ$ and
$\otimes_{\sigma^\mathcal{W}}$ are associative.
\end{corollary}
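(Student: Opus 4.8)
The plan is to derive everything as an instance of the general theory of closed braided monoidal categories, now that Proposition~\ref{prop07} has established that $({}_H^H\mathcal{M}_H^H,\otimes_H,\mathrm{HOM}_H,\sigma^\mathcal{W})$ is such a category; this is exactly the point of view of \cite{Ma95}~Proposition~9.3.13. Concretely, by Proposition~\ref{prop07} the functor $\mathrm{HOM}_H(N,-)$ is right adjoint to $-\otimes_HN$ with adjunction bijection $f\mapsto\hat f$, $\hat f(m)(n)=f(m\otimes_Hn)$, natural in all arguments. I would first observe that $\mathrm{ev}_{M,N}$ is by definition the counit of this adjunction, i.e.\ the unique map with $\widehat{\mathrm{ev}_{M,N}}=\mathrm{id}_{\mathrm{HOM}_H(M,N)}$; unwinding $\hat{\ }$ this reads $\mathrm{ev}_{M,N}(\phi\otimes_Hm)=\phi(m)$, matching the definition used in the text. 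As the transpose of an identity morphism it is automatically a morphism of bicovariant bimodules; the only thing to verify by hand is that $\phi\otimes_Hm\mapsto\phi(m)$ is well defined on the balanced tensor product $\mathrm{HOM}_H(M,N)\otimes_HM$, which uses right $H$-linearity of $\phi$.

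Next I would treat composition. In any closed monoidal category one \emph{defines} $\circ_{M,N,O}$ as the adjunction transpose of the composite $\mathrm{ev}_{N,O}\circ(\mathrm{id}\otimes_H\mathrm{ev}_{M,N})\colon\mathrm{HOM}_H(N,O)\otimes_H\mathrm{HOM}_H(M,N)\otimes_HM\to O$. Being built from the evaluations (already known to be morphisms) and the transpose operation, it lands in ${}_H^H\mathrm{Hom}_H^H$; evaluating the transpose one recovers $(\psi\circ\phi)(m)=\psi(\phi(m))$, and right $H$-linearity of $\psi\circ\phi$ is clear, so the map indeed takes values in $\mathrm{HOM}_H(M,O)$. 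Associativity of $\circ$ is then the standard coherence statement: both $(\chi\circ\psi)\circ\phi$ and $\chi\circ(\psi\circ\phi)$ are the transpose of the evident triple composite of evaluations, hence agree; alternatively it is immediate from the pointwise formula.

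For the braided tensor product I would invoke \cite{Ma95}~Proposition~9.3.13 directly: in a closed \emph{braided} monoidal category $\otimes_{\sigma^\mathcal{W}}$ is the transpose of the composite
\begin{align*}
\mathrm{HOM}_H(M,M')\otimes_H\mathrm{HOM}_H(N,N')\otimes_HM\otimes_HN
&\xrightarrow{\ \mathrm{id}\otimes_H\sigma^\mathcal{W}_{\mathrm{HOM}_H(N,N'),M}\otimes_H\mathrm{id}\ }\\
\mathrm{HOM}_H(M,M')\otimes_HM\otimes_H\mathrm{HOM}_H(N,N')\otimes_HN
&\xrightarrow{\ \mathrm{ev}_{M,M'}\otimes_H\mathrm{ev}_{N,N'}\ }M'\otimes_HN'.
\end{align*}
Since $\sigma^\mathcal{W}$ is a morphism of bicovariant bimodules (recalled above from \cite{Schauenburg94}), the evaluations are morphisms, and transposing preserves morphisms, the resulting map $\otimes_{\sigma^\mathcal{W}}$ is a morphism into $\mathrm{HOM}_H(M\otimes_HN,M'\otimes_HN')$. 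I would then check that this abstract definition reproduces the explicit formula \eqref{eq29}: transposing gives $(\phi\otimes_{\sigma^\mathcal{W}}\psi)(m\otimes_Hn)=\phi({}_\alpha m)\otimes_H({}^\alpha\psi)(n)$ with ${}_\alpha m\otimes_H{}^\alpha\psi:=\sigma^\mathcal{W}_{\mathrm{HOM}_H(N,N'),M}(\psi\otimes_Hm)$, and substituting the Woronowicz braiding \eqref{WorBraiding} together with the $H$-action \eqref{eq27} on $\mathrm{HOM}_H$ yields precisely $\phi(\psi_{-2}m_0S(m_1))\otimes_HS(\psi_{-1})\psi_0(m_2n)$. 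Finally, associativity of $\otimes_{\sigma^\mathcal{W}}$ (the associators in ${}_H^H\mathcal{M}_H^H$ being identities) follows from the hexagon equations for $\sigma^\mathcal{W}$ recorded in the excerpt: the two bracketings differ only in the order in which a single $\mathrm{HOM}_H$-factor is braided past a tensor product of two others, and $\sigma^\mathcal{W}_{1,23}=\sigma^\mathcal{W}_{23}\circ\sigma^\mathcal{W}_{12}$ identifies them; equivalently this is the quantum Yang--Baxter equation \eqref{QYBE}.

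The genuinely formal part is the identification of the abstract evaluation, composition and tensor-product morphisms as morphisms in the category — that is immediate from Proposition~\ref{prop07}. The real work, and where I expect the main obstacle to lie, is twofold: (i) matching the conventions, namely checking that the left-internal-$\mathrm{HOM}$ transpose and the chosen direction of $\sigma^\mathcal{W}$ reproduce the precise formulas \eqref{eq29}, which is a somewhat delicate but routine unwinding of $\hat{\ }$ and \eqref{WorBraiding}; and (ii) the associativity of $\otimes_{\sigma^\mathcal{W}}$, which is the only place the hexagon/Yang--Baxter identity is actually needed, and which one can present either abstractly via the braided coherence theorem or, if a self-contained argument is preferred, by a direct computation from \eqref{eq29} using the explicit braiding.
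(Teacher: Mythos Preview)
Your proposal is correct and follows exactly the approach the paper indicates: the paper simply states that the corollary follows from \cite{Ma95}~Proposition~9.3.13 (given the closed braided monoidal structure of Proposition~\ref{prop07}), and your write-up is a careful unpacking of precisely that citation. Your detailed verification that the abstract categorical definitions reproduce the explicit formulas \eqref{eq29} and that associativity of $\otimes_{\sigma^\mathcal{W}}$ rests on the hexagon/Yang--Baxter identities is a faithful elaboration of what the paper leaves implicit.
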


In the next section we shall consider evaluation, composition and
tensor product of rational morphisms that are not necessarily
$H$-linear, like for example connections.
\\

We conclude this section observing  that every right $H$-linear
map $\phi\in\mathrm{Hom}_H(M,N)$ between covariant modules is a 
rational morphism if $M$ is finitely generated  as an $H$-module. 
\begin{proposition}\label{prop100}
i.) Let $M,N$ be modules of $\mathcal{M}_H^H$, with $M$ finitely generated 
 as a right $H$-module, then $\mathrm{Hom}_H(M,N)\subseteq\mathrm{HOM}^\mathrm{Ad}(M,N)$.
ii.) Let $M,N$ be modules of ${}^H\mathcal{M}_H$ with $M$  finitely generated 
 as a left $H$-module, then
$\mathrm{Hom}_H(M,N)\subseteq {}^\mathrm{Ad}\mathrm{HOM}(M,N)$.
iii.)  Let $M,N$ be bicovariant bimodules, with $M$ finitely
generated, then  $\mathrm{Hom}_H(M,N)=\mathrm{HOM}_H(M,N)$.
\end{proposition}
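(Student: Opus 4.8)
The plan is to establish each statement by exploiting the Fundamental Theorem of Hopf Modules (Theorem~\ref{FundThm}), which tells us that a finitely generated covariant module $M$ (say in $\mathcal{M}_H^H$) admits a finite basis of coinvariant elements. Concretely, since $M\cong M^{\mathrm{co}H}\otimes H$ as right covariant right modules and $M$ is finitely generated as an $H$-module, the vector space $M^{\mathrm{co}H}$ is finite dimensional; pick a basis $\{e_i\}_{i=1}^k$ of $M^{\mathrm{co}H}$. Then every $m\in M$ is uniquely $m=\sum_i e_i h^i$ with $h^i\in H$, and $\delta_M(m)=\sum_i e_i h^i_1\otimes h^i_2$.

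For part i.), I would take $\phi\in\mathrm{Hom}_H(M,N)$ and compute $\delta^\mathrm{Ad}(\phi)$ explicitly on the basis. Using right $H$-linearity of $\phi$, for $m=\sum_i e_i h^i$ we get $\phi(m_0)_0\otimes\phi(m_0)_1 S(m_1)=\sum_i \phi(e_i)_0 (h^i_1)_1\otimes \phi(e_i)_1 (h^i_1)_2 S(h^i_2)=\sum_i \phi(e_i)_0\otimes\phi(e_i)_1 h^i_1 S(h^i_2)$, which collapses via the antipode axiom to $\sum_i \phi(e_i)_0\epsilon(\cdots)\otimes\cdots$; carrying this through carefully one finds $\delta^\mathrm{Ad}(\phi)$ is a finite sum $\sum_i \psi_i\otimes a_i$ where $\psi_i\in\mathrm{Hom}(M,N)$ is determined by $\psi_i(e_j h)=\delta_{ij}\phi(e_i)_0 h$ (or a similar explicit finite expression) and $a_i\in H$ comes from $\phi(e_i)_1$. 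The key point is merely that since there are finitely many basis elements $e_i$, the would-be infinite tensor $\delta^\mathrm{Ad}(\phi)\in\mathrm{Hom}(M,N\otimes H)$ actually lands in the finite tensor $\mathrm{Hom}(M,N)\otimes H$, i.e. $\phi$ is rational. Part ii.) is the mirror-image argument using the left-comodule version of the Fundamental Theorem ($M\cong H\otimes M^{\mathrm{co}H}$ appropriately) and $\overline{S}$ in place of $S$. Part iii.) then follows by combining i.) and ii.): a finitely generated bicovariant bimodule is finitely generated both as a left and as a right $H$-module (being free of finite rank over $H$ by Theorem~\ref{FundThm}), so $\mathrm{Hom}_H(M,N)\subseteq\mathrm{HOM}^\mathrm{Ad}(M,N)\cap{}^\mathrm{Ad}\mathrm{HOM}(M,N)=\mathrm{HOM}(M,N)$; intersecting with $\mathrm{Hom}_H(M,N)$ gives $\mathrm{Hom}_H(M,N)\subseteq\mathrm{HOM}_H(M,N)$, and the reverse inclusion is immediate from the definition $\mathrm{HOM}_H(M,N)=\mathrm{HOM}(M,N)\cap\mathrm{Hom}_H(M,N)$.

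The only mild subtlety — and the step I would be most careful about — is the bookkeeping in i.): one must check that the coefficients $\psi_i$ of the decomposition are genuinely well-defined linear maps $M\to N$ and that the decomposition is finite, which hinges on the uniqueness of the expansion $m=\sum_i e_i h^i$ (a consequence of freeness in Theorem~\ref{FundThm}) and on $\phi(e_i)\in N$ having a fixed, finite expression for its coaction $\delta_N(\phi(e_i))=\phi(e_i)_0\otimes\phi(e_i)_1$ — note $\phi(e_i)$ need not itself be coinvariant, but its coaction is still an honest element of $N\otimes H$, hence a finite sum. Spelling the indices out, write $\delta_N(\phi(e_i))=\sum_{r}n_{ir}\otimes c_{ir}$ with finite range in $r$; then one reads off that $\delta^\mathrm{Ad}(\phi)=\sum_{i,r}\phi^{ir}\otimes c_{ir}\in\mathrm{Hom}(M,N)\otimes H$ where $\phi^{ir}(e_j h)=\delta_{ij}\,n_{ir}\,h$. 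This is a completely routine verification once the basis is fixed; no genuine obstacle arises because finite generation is exactly the hypothesis that makes the relevant sums finite. I would remark at the end that part iii.) recovers the familiar fact that for finite-dimensional $H$ (where \emph{every} covariant module over a finitely generated ground algebra situation degenerates) all right $H$-linear maps between finitely generated bicovariant bimodules are rational, consistent with the earlier observation that all morphisms are rational when $H$ is finite dimensional.
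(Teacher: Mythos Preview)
Your argument is correct. The explicit computation with the coinvariant basis works exactly as you describe: once one fixes a finite basis $\{e_i\}$ of $M^{\mathrm{co}H}$, the antipode cancellation yields $\delta^\mathrm{Ad}(\phi)(m)=\sum_i\phi(e_i)_0\,h^i\otimes\phi(e_i)_1$ for $m=\sum_i e_ih^i$, and your decomposition $\delta^\mathrm{Ad}(\phi)=\sum_{i,r}\phi^{ir}\otimes c_{ir}$ with $\phi^{ir}(e_jh)=\delta_{ij}n_{ir}h$ is the right one. (Your intermediate line $\sum_i\phi(e_i)_0\otimes\phi(e_i)_1h^i_1S(h^i_2)$ has the $h^i$ on the wrong tensor factor, but the final expression you land on is correct.) Parts ii.) and iii.) follow as you indicate.

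The paper, however, argues differently and with weaker structural input. Rather than invoking the Fundamental Theorem to obtain a \emph{coinvariant free basis}, it uses only an arbitrary finite \emph{generating set} $\{e_i\}$ of $M$ as a right $H$-module. The key step is the purely module-theoretic identity $\mathrm{Hom}_H(M,N\otimes H)=\mathrm{Hom}_H(M,N)\otimes H$, where $N\otimes H$ carries the right $H$-action $(n\otimes k)\cdot h=nh\otimes k$: one checks that $\delta^\mathrm{Ad}$ sends right $H$-linear maps to right $H$-linear maps for this structure, and then proves the identity by observing that any $\psi\in\mathrm{Hom}_H(M,N\otimes H)$ has image in $N\otimes S_\psi$ with $S_\psi\subseteq H$ the finite-dimensional span of the $H$-components of the $\psi(e_i)$. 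Your approach is more direct and makes the rationality witness completely explicit, at the price of using the stronger freeness/coinvariance structure; the paper's approach separates the covariance input (that $\delta^\mathrm{Ad}$ preserves $H$-linearity) from a clean module-theoretic lemma that holds for any finitely generated $M$, and so would transport more readily to settings where a coinvariant basis is not available.
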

\begin{proof}
\begin{enumerate}
\item[i.)] For $M,N$ in $\mathcal{M}_H^H$ 
define $N\otimes H$ as a right $H$-module via the right $H$-action
\begin{equation}\label{eq10'}
    (n\otimes k)\cdot h:=nh\otimes k
\end{equation}
for all $n\in N$, $k,h\in H$. The thesis follows from $\mathrm{Hom}_H(M,N\otimes H)=
\mathrm{Hom}_H(M,N)\otimes H$, which we now prove. We have already seen in \eqref{incinNH} that any $\phi\otimes
k\in\mathrm{Hom}_H(M,N)\otimes H$ is in $\mathrm{Hom}(M,N\otimes H)$;
we show that it is in $\mathrm{Hom}_H(M,N\otimes H)$. Indeed
$$
(\phi\otimes k)(mh)
=\phi(mh)\otimes k
=\phi(m)h\otimes k
=(\phi(m)\otimes k)\cdot h
=((\phi\otimes k)(m))\cdot h
$$
for all $m\in M$ and $h\in H$.
We are left to prove the other inclusion  $\mathrm{Hom}_H(M,N\otimes H)\subseteq
\mathrm{Hom}_H(M,N)\otimes H$.  We show  $\mathrm{Hom}_H(M,N\otimes H)\subseteq
\mathrm{Hom}(M,N)\otimes H$, then $\mathrm{Hom}_H(M,N\otimes H)\subseteq
\mathrm{Hom}_H(M,N)\otimes H$ easily follows.  Let $\{e_i\}$, $i=1,\ldots n$, be a generating set of the finitely generated right
$H$-module $M$. For any $m\in M$ we have $m=e^ih_i$ with $h_i\in H$ (sum
over $i=1,\ldots n$ understood). For any $\psi\in \mathrm{Hom}_H(M,N\otimes H)$ we have
$\psi(e^i)=n^{i \alpha}\otimes k^i_\alpha\in N\otimes H$
(finite sum over the index $\alpha$ understood) and $\psi(m)=\psi(e^ih_i)=
\psi(e^i)\cdot h_i=n^{i\, \alpha} h_i\otimes k^i_\alpha$. Thus the
$H$-linear map $\psi$ has image in
$N\otimes S_\psi$, where $$S_\psi:=\mathrm{span}\{k^i_\alpha\}\subseteq
H$$
is the finite
dimensional vector space given by the linear span of the elements
$k^i_\alpha\in H$, ($\{{}^i_\alpha\}$ is a finite  index
set). Hence $\psi\in\mathrm{Hom}_H(M,N\otimes
S_\psi)\subseteq\mathrm{Hom}(M,N)\otimes S_\psi\subseteq \mathrm{Hom}(M,N)\otimes H$,
where in the first inclusion we used $\mathrm{Hom}(M,N\otimes
S_\psi)=\mathrm{Hom}(M,N)\otimes S_\psi$ that  holds trivially since $S_\psi$ is
finite-dimensional.

\item[ii.)] Following a similar argumentation one proves that 
$\mathrm{Hom}_H(M,H\otimes N)= H\otimes\mathrm{Hom}_H(M,N)$ where 
 $H\otimes N$ is a right $H$-module via $(k\otimes n)\cdot h =k\otimes nh$ 
 for all $k,h\in H, n\in N$.
 
 \item[iii.)] This is straightforward from i.) and ii.).
\end{enumerate}
\end{proof}

\subsection{Tensor Product of Rational Morphisms}\label{Sec2.3}

Let us consider the monoidal category $({}_H^H\mathcal{M}_H^H,\otimes)$ with
the tensor product $\otimes$ of vector spaces. From Corollary~\ref{cor:closed-mon} this
category is also closed with internal Hom-functor  $\mathrm{HOM}$, but it is not closed monoidal for a generic Hopf algebra
$H$, nor is it braided. We shall introduce a linear map
$\sigma_{M,N}:M\otimes N\to
N\otimes M$ that is a lift of the
braiding $\sigma^\mathcal{W}_{M,N}: M\otimes_H N\to
N\otimes_H M$ of bicovariant bimodules and 
study  analogues of Corollary~\ref{cor01}, first by considering 
the tensor product $\otimes$ and the rational morphisms
$\mathrm{HOM}(M,N)$ and then by considering the balanced tensor
product $\otimes_H$ and the rational morphisms $\mathrm{HOM}(M,N)$.
Let $M, N, O$ be bicovariant bimodules. We begin by defining the evaluation \begin{equation}\label{eq05}
    \mathrm{ev}_{M,N}\colon\mathrm{HOM}(M,N)\otimes M\rightarrow N~,~~~~~
    \phi\otimes m\mapsto\phi(m)
\end{equation}
and the composition
\begin{equation}\label{eq06}
    \circ_{M,N,O}\colon\mathrm{HOM}(N,O)\otimes\mathrm{HOM}(M,N)
    \rightarrow\mathrm{HOM}(M,O)~,~~~~~
    \psi\otimes\phi\mapsto\psi\circ\phi
\end{equation}
of rational morphisms. Lemma~\ref{lemma01} implies that 
the evaluation in  (\ref{eq05}) is a morphism in
${}_H^H\mathcal{M}^H$, while the composition in (\ref{eq06}) is a
morphism in ${}_H^H\mathcal{M}^H_H$.
If and only if we consider $\mathrm{HOM}_H(M,N)$ the evaluation is right
$H$-linear and hence a morphism in ${}_H^H\mathcal{M}^H_H$.

In order to define a tensor product of rational morphisms we consider
the linear map:
\begin{equation}\label{sigma}
    \sigma_{M,N}\colon M\otimes N\rightarrow N\otimes M~,~~~~~
    m\otimes n\mapsto m_{-2}n_0S(m_{-1}n_1)\otimes m_0n_2~.
  \end{equation}
  This is a lift of the braiding $\sigma^\mathcal{W}_{M,N}\colon M\otimes_HN\rightarrow N\otimes_HM$
  since the diagram
\begin{equation*}
\begin{tikzcd}
M\otimes N
\arrow{rr}{\sigma_{M,N}}
\arrow{d}[swap]{\pi_H^{M,N}}
& & N\otimes M
\arrow{d}{\pi_H^{N,M}} \\
M\otimes_HN
\arrow{rr}{\sigma^\mathcal{W}_{M,N}}
& & N\otimes_HM
\end{tikzcd}
\end{equation*}
commutes. 
Accordingly, the short notation ${}_\alpha n\otimes_H{}^\alpha
m=\sigma^\mathcal{W}_{M,N}(m\otimes_H n)$  lifts to 
${}_\alpha n\otimes{}^\alpha m=\sigma_{M,N}(m\otimes n)$. 
A refinement of the above commutative diagram is given by
\begin{equation*}
\begin{tikzcd}
M\otimes N
\arrow{rr}{\sigma_{M,N}}
\arrow{d}[swap]{~~~~~~~~~~~\pi_H^{M,N}}
& & N^{\mathrm{co}H}\otimes M
\arrow{d}{\pi_H^{N,M}|^{}_{N^{\mathrm{co}H}\otimes M} } \\
M\otimes_HN
\arrow{rr}{\sigma^\mathcal{W}_{M,N}}
& & N\otimes_HM
\end{tikzcd}
\end{equation*}
where the restricted projection $\pi^{N,M}_H|^{}_{N^{\mathrm{co}H}\otimes M} : N^{\mathrm{co}H}\otimes M \to N\otimes_H
M$ is an isomorphism in ${}^H\mathcal{M}^H_H$ since,  according to the
Fundamental Theorem of Hopf modules, $N\otimes_H M\cong (N^{\mathrm{co}H}\otimes
H)\otimes_HM\cong N^{\mathrm{co}H}\otimes M$.
The inclusion $\sigma_{M, N}(M\otimes N)\subseteq N^{\mathrm{co}H}\otimes M$
is immediate from the definition of $\sigma_{M, N}$. We further have
$$ \sigma_{M, N}(M\otimes N)=N^{\mathrm{co}H}\otimes M~.$$ This
equality follows form commutativity of the diagram, i.e., $\sigma_{M, N}=
(\pi_H^{N,M}|^{}_{N^{\mathrm{co}H}\otimes M})^{-1}\circ
\sigma^\mathcal{W}_{M,N}\circ\pi_H^{N,M}$, observing that all the maps
on the right hand side are surjective.
\begin{lemma}\label{lemma02}
For any pair $(M,N)$ of bicovariant bimodules 
$\sigma_{M,N}\colon M\otimes N\rightarrow N\otimes M$ is a morphism in
${}^H\mathcal{M}^H_H$ and satisfies $\sigma_{M,N}(ma\otimes n)=\sigma_{M,N}(m\otimes an)$
for all $a\in H$, $m\in M$, $n\in N$. Furthermore, the hexagon
equation
\begin{equation}\label{eq19}
    \sigma_{M\otimes N,O}
    =(\sigma_{M,O}\otimes\mathrm{id}_N)\circ(\mathrm{id}_M\otimes\sigma_{N,O})
\end{equation}
holds for all modules $M,N,O$ of ${}_H^H\mathcal{M}_H^H$.
\end{lemma}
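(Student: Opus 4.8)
The plan is to verify each of the three assertions directly from the explicit formula \eqref{sigma} for $\sigma_{M,N}$, using the Hopf algebra axioms and the bicovariance of $M,N,O$. I would organize the argument exactly in the order the claims are stated: first the $H$-bimodule and $H$-bicomodule covariance of $\sigma_{M,N}$ as a map in ${}^H\mathcal{M}^H_H$, then the middle-linearity identity $\sigma_{M,N}(ma\otimes n)=\sigma_{M,N}(m\otimes an)$, and finally the hexagon equation \eqref{eq19}.

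\emph{Covariance as a morphism in ${}^H\mathcal{M}^H_H$.} Here I can avoid a brute-force computation by invoking the refined commutative diagram established just above the statement, namely $\sigma_{M,N}=(\pi_H^{N,M}|_{N^{\mathrm{co}H}\otimes M})^{-1}\circ\sigma^\mathcal{W}_{M,N}\circ\pi_H^{M,N}$. The projection $\pi_H^{M,N}\colon M\otimes N\to M\otimes_H N$ is a morphism of bicovariant bimodules (it is $H$-bilinear and colinear for the diagonal structures), $\sigma^\mathcal{W}_{M,N}$ is a morphism in ${}_H^H\mathcal{M}_H^H$ by Schauenburg's result recalled in the excerpt, and the restricted projection $\pi_H^{N,M}|_{N^{\mathrm{co}H}\otimes M}$ is an isomorphism in ${}^H\mathcal{M}^H_H$ by the Fundamental Theorem of Hopf Modules (Theorem~\ref{FundThm}). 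Composing, $\sigma_{M,N}$ is right $H$-linear and $H$-bicolinear; it is \emph{not} claimed to be left $H$-linear on the nose — indeed left $H$-linearity is replaced by the middle-linearity identity, consistent with the codomain $N^{\mathrm{co}H}\otimes M$ carrying the left $H$-action only through the second tensor factor $M$. Alternatively, and this is the fallback if one wants to be self-contained, one checks $\delta_{N\otimes M}\circ\sigma_{M,N}=(\sigma_{M,N}\otimes\mathrm{id}_H)\circ\delta_{M\otimes N}$, and the analogous identity for $\lambda$, and right $H$-linearity, by a direct Sweedler computation: e.g. for right colinearity, $\delta$ applied to $m_{-2}n_0 S(m_{-1}n_1)\otimes m_0 n_2$ produces $m_{-3}n_0 S(m_{-2}n_1)\otimes m_0 n_3\otimes m_{-1}n_2$ after using that $\delta$ is $H$-bilinear and $S$ is an anti-coalgebra map, which matches $(\sigma_{M,N}\otimes\mathrm{id})$ applied to $m_0\otimes n_0\otimes m_1 n_1$.

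\emph{Middle linearity.} For $\sigma_{M,N}(ma\otimes n)=\sigma_{M,N}(m\otimes an)$ I would substitute $\delta_M(ma)=\Delta(a_{(?)})\cdots$ — more precisely use $\lambda_M(ma)=m_{-1}a_1\otimes m_0a_2$ and $\delta_M(ma)=m_0a_1\otimes m_1a_2$ into the formula, getting the $M$-side bicomodule legs of $ma$ to be $(m_{-2}a_1, m_0 a_3, m_1 a_4)$-type expressions (being careful with three independent coproduct legs of $a$), and symmetrically expand $\delta_N(an)$, $\lambda_N(an)$ on the $N$-side. The two resulting expressions in $M\otimes_H$-free-tensor form should collapse to the same thing after using $S(a_1)a_2=\varepsilon(a_1)$ — the $a$-legs telescope through the antipode exactly as they do in the verification that $\sigma^\mathcal{W}$ is well-defined on $\otimes_H$. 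This is really the ``lift'' incarnation of the fact that $\sigma^\mathcal{W}_{M,N}$ descends to $M\otimes_H N$.

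\emph{Hexagon equation.} This is the step I expect to be the most laborious, though not conceptually hard. One route: both sides are linear maps $M\otimes N\otimes O\to O^{\mathrm{co}H}\otimes M\otimes N$ (tracking coinvariance of the first leg), and by the fundamental theorem it suffices to check equality on elements $\overline m\otimes\overline n\otimes\overline o$ with $\overline m\in{}^{\mathrm{co}H}M$, $\overline n\in{}^{\mathrm{co}H}N$, $\overline o\in O^{\mathrm{co}H}$ — but one must be slightly careful that the left-hand $\sigma_{M\otimes N,O}$ sees $M\otimes N$ with its diagonal left coaction, so ``$M\otimes N$-coinvariant'' is not simply $\overline m\otimes\overline n$; instead I would reduce to a spanning set and push everything through the explicit formula. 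The cleaner route is purely formal: \eqref{eq19} is the image under the lifting construction of the hexagon $\sigma^\mathcal{W}_{M\otimes_H N,O}=(\sigma^\mathcal{W}_{M,O}\otimes_H\mathrm{id})\circ(\mathrm{id}\otimes_H\sigma^\mathcal{W}_{N,O})$ recalled in the excerpt, so I would prove a small general lemma: if $f\colon A\otimes B\to C$ and $g$ lift $\bar f,\bar g$ along the canonical projections $\pi_H$ in the sense of the commuting squares above, then their composites/tensor products lift the corresponding composites of $\bar f,\bar g$ — using that $\pi_H$ is surjective and natural, and that the image of $\sigma_{\bullet,O}$ already lands in $O^{\mathrm{co}H}\otimes(-)$, on which $\pi_H$ restricts to an isomorphism. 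Chasing the resulting diagram gives \eqref{eq19} from the $\sigma^\mathcal{W}$ hexagon plus uniqueness of the lift. The main obstacle is simply bookkeeping: making sure the map $\mathrm{id}_M\otimes\sigma_{N,O}$ on the right-hand side is well-defined as stated (it is, since $\sigma_{N,O}$ is a genuine linear map $N\otimes O\to O\otimes N$) and that the intermediate objects carry the tensor-product ($\otimes$, not $\otimes_H$) bicomodule structures throughout, so that the projections $\pi_H$ used on $A=M\otimes N$ and on $A=M$, $B=N$ separately are compatible — which follows from coassociativity of the coactions.
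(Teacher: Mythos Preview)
Your proposal is correct, and for the covariance and middle-linearity parts your approach (factorizing $\sigma_{M,N}$ through $\sigma^\mathcal{W}_{M,N}$ via the refined diagram, and then a direct Sweedler check) matches in spirit what the paper does --- the paper simply declares the (co)linearity properties ``obvious'' and moves on.

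For the hexagon equation, however, you take a genuinely different route from the paper. You propose either a reduction to coinvariant generators or a diagram-chase deducing \eqref{eq19} from the $\sigma^\mathcal{W}$-hexagon via uniqueness of the lift. The paper instead does a three-line direct computation: it writes out $\sigma_{M\otimes N,O}(m\otimes n\otimes o)=m_{-2}n_{-2}o_0S(m_{-1}n_{-1}o_1)\otimes m_0\otimes n_0 o_2$ from the formula (using the diagonal left coaction on $M\otimes N$), then computes $(\sigma_{M,O}\otimes\mathrm{id}_N)(m\otimes\sigma_{N,O}(n\otimes o))$ in two steps, observing that the first leg $n_{-2}o_0S(n_{-1}o_1)$ produced by $\sigma_{N,O}$ is right coinvariant, so that applying $\sigma_{M,O}$ to it only picks up the $m$-coaction. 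The two expressions then match by the antipode identity $S(m_{-1}n_{-1}o_1)=S(n_{-1}o_1)S(m_{-1})$. Your lifting argument is more conceptual and would generalize better (e.g.\ to the other hexagon, or to iterated tensor products), but the paper's brute-force check is considerably shorter here and avoids the bookkeeping you flag about compatibility of the various projections $\pi_H$.
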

\begin{proof}
The (co-)linearity properties of $\sigma_{M,N}$ are obvious. We verify the
hexagon equation (\ref{eq19}). For $m\in M$, $n\in N$ and $o\in O$ the
expressions $\sigma_{M\otimes N,O}(m\otimes n\otimes o)
=m_{-2}n_{-2}o_0S(m_{-1}n_{-1}o_1)\otimes(m_0\otimes n_0)o_2$
and
\begin{align*}
    (\sigma_{M,O}\otimes\mathrm{id}_N)(m\otimes\sigma_{N,O}(n\otimes o))
    &=\sigma_{M,O}(m\otimes n_{-2}o_0S(n_{-1}o_1))\otimes n_0o_2\\
    &=m_{-2}n_{-2}o_0S(n_{-1}o_1)S(m_{-1})\otimes m_0\otimes n_0o_2
\end{align*}
coincide, where we used that $n_{-2}o_0S(n_{-1}o_1)\otimes(m_0\otimes n_0)o_2
\in O^{\mathrm{co}H}\otimes N$.
\end{proof}
Using the lifting $\sigma$,
for bicovariant bimodules $M,M',N,N'$ we define the {\it{tensor
    product of rational morphisms}} 
\begin{equation}\label{eq07}
    \otimes_\sigma\colon\mathrm{HOM}(M,M')\otimes\mathrm{HOM}(N,N')
    \rightarrow\mathrm{HOM}(M\otimes N,M'\otimes N')
\end{equation}
(for short we wrote $\otimes_\sigma$ rather than ${\otimes_\sigma}^{}_{M,M',N,N'}$) to be the linear map given by
\begin{equation}\label{eq36}
    (\phi\otimes_\sigma\psi)(m\otimes n)
    :=\phi({}_\alpha m)\otimes({}^\alpha\psi)(n)
    =\phi(\psi_{-2}m_0S(\psi_{-1}m_1))\otimes\psi_0(m_2n),
\end{equation}
for all $\phi\in\mathrm{HOM}(M,M')$, $\psi\in\mathrm{HOM}(N,N')$ and
$M\in M$, $n\in N$.  
\begin{theorem}\label{thm01}
For any bicovariant bimodule $M,M',N,N'$ the tensor product of
rational morphisms $\otimes_\sigma$ in \eqref{eq07}
is a morphism between the bicovariant bimodules
$\mathrm{HOM}(M,M')\otimes\mathrm{HOM}(N,N')$ and 
 $\mathrm{HOM}(M\otimes N,M'\otimes N')$.
Furthermore, it decomposes as
\begin{equation}\label{eq20}
    \phi\otimes_\sigma\psi
    =(\phi\otimes_\sigma\mathrm{id}_{N'})
    \circ(\mathrm{id}_M\otimes_\sigma\psi)~,
\end{equation}
for all $\phi\in\mathrm{HOM}(M,M')$, $\psi\in\mathrm{HOM}(N,N')$, and it is associative, i.e.,
\begin{equation*}
    (\phi\otimes_\sigma\psi)\otimes_\sigma\chi
    =\phi\otimes_\sigma(\psi\otimes_\sigma\chi)~,
\end{equation*}
where $\chi\in\mathrm{HOM}(M,M')$ is another rational morphism of bicovariant
bimodules.
\end{theorem}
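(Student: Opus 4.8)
\emph{Strategy.} The plan is to prove the decomposition \eqref{eq20} first and to derive well-definedness, $H$-bilinearity, $H$-bicolinearity and associativity of $\otimes_\sigma$ from it, together with the two elementary cases $\mathrm{id}_M\otimes_\sigma\psi$ and $\phi\otimes_\sigma\mathrm{id}_{N'}$, which are far more transparent than a general $\phi\otimes_\sigma\psi$. The tools are: the $\mathrm{HOM}(-,-)$ spaces are themselves bicovariant bimodules (Lemma~\ref{lemma04}); composition of rational morphisms is rational and intertwines the adjoint coactions (Lemma~\ref{lemma01}~iv.), iv'.)); and the lift $\sigma$ is $H$-balanced, satisfies the hexagon \eqref{eq19}, and has image $\sigma_{X,Y}(X\otimes Y)=Y^{\mathrm{co}H}\otimes X$ for all bicovariant bimodules $X,Y$ (Lemma~\ref{lemma02} and the discussion preceding it).

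\emph{The decomposition \eqref{eq20}.} Since $\mathrm{id}_{N'}$ is a bicoinvariant rational morphism, \eqref{eq36} specialises to $(\phi\otimes_\sigma\mathrm{id}_{N'})(x\otimes y)=\phi(x_0S(x_1))\otimes x_2y$ and to $(\mathrm{id}_M\otimes_\sigma\psi)(m\otimes n)=\psi_{-2}m_0S(\psi_{-1}m_1)\otimes\psi_0(m_2n)$. The element $x:=\psi_{-2}m_0S(\psi_{-1}m_1)$ is the $M$-component of $\sigma_{\mathrm{HOM}(N,N'),M}(\psi\otimes m)$, hence lies in $M^{\mathrm{co}H}$; therefore $x_0S(x_1)\otimes x_2=x\otimes 1$, and substituting into $(\phi\otimes_\sigma\mathrm{id}_{N'})$ collapses the composite to $\phi(\psi_{-2}m_0S(\psi_{-1}m_1))\otimes\psi_0(m_2n)=(\phi\otimes_\sigma\psi)(m\otimes n)$, i.e.\ \eqref{eq20}. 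This coinvariance of the output of $\sigma$ is the recurring mechanism; conceptually \eqref{eq20} is an instance of the hexagon \eqref{eq19}, applying $\sigma$ once to move $\psi$ past $m$ and once for the trivial twist by $\mathrm{id}_{N'}$.

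\emph{Well-definedness and the coactions.} By \eqref{eq20} and the fact that composition of rational morphisms is rational and $H$-colinear, it suffices to treat the two elementary factors. A direct computation from the definitions \eqref{coadr}, \eqref{coadl}, using the $H$-bicomodule axioms of $M,N$ and the antipode identities, gives $\delta^\mathrm{Ad}(\phi\otimes_\sigma\mathrm{id}_{N'})=(\phi_0\otimes_\sigma\mathrm{id}_{N'})\otimes\phi_1$ and $\lambda^\mathrm{Ad}(\phi\otimes_\sigma\mathrm{id}_{N'})=\phi_{-1}\otimes(\phi_0\otimes_\sigma\mathrm{id}_{N'})$; since $\phi$ is rational the right-hand sides lie in $\mathrm{HOM}(M\otimes N,M'\otimes N')\otimes H$ and $H\otimes\mathrm{HOM}(M\otimes N,M'\otimes N')$, so $\phi\otimes_\sigma\mathrm{id}_{N'}$ is rational. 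Analogously $\delta^\mathrm{Ad}(\mathrm{id}_M\otimes_\sigma\psi)=(\mathrm{id}_M\otimes_\sigma\psi_0)\otimes\psi_1$ and $\lambda^\mathrm{Ad}(\mathrm{id}_M\otimes_\sigma\psi)=\psi_{-1}\otimes(\mathrm{id}_M\otimes_\sigma\psi_0)$, using rationality of $\psi$ and the compatibility of $\delta^\mathrm{Ad},\lambda^\mathrm{Ad}$ on $\mathrm{HOM}(N,N')$ (Lemma~\ref{lemma01}~i''.)). Composing via \eqref{eq20} and Lemma~\ref{lemma01}~iv.), iv'.) then shows $\phi\otimes_\sigma\psi$ is rational with $\delta^\mathrm{Ad}(\phi\otimes_\sigma\psi)=(\phi_0\otimes_\sigma\psi_0)\otimes\phi_1\psi_1$ and $\lambda^\mathrm{Ad}(\phi\otimes_\sigma\psi)=\phi_{-1}\psi_{-1}\otimes(\phi_0\otimes_\sigma\psi_0)$; these are exactly the statements that $\otimes_\sigma$ is right and left $H$-colinear for the diagonal coactions on $\mathrm{HOM}(M,M')\otimes\mathrm{HOM}(N,N')$.

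\emph{$H$-bilinearity, associativity, and the main obstacle.} $H$-linearity amounts to $\otimes_\sigma\big((a\cdot\phi)\otimes(\psi\cdot b)\big)=a\cdot(\phi\otimes_\sigma\psi)\cdot b$ for $a,b\in H$, where on the domain the left $H$-action is on the first tensor factor and the right on the second, and on the target the action is \eqref{eq27}. Expanding both sides with \eqref{eq36}, using $(a\cdot\phi)(x)=a\phi(x)$, $(\psi\cdot b)(y)=\psi(by)$ and the compatibility of the $H$-bimodule structure of $\mathrm{HOM}(N,N')$ with $\lambda^\mathrm{Ad},\delta^\mathrm{Ad}$ (established exactly as in the proof of Lemma~\ref{lemma04}), a short reindexing with the antipode axioms gives the claim. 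For associativity, with $\chi$ a third rational morphism of bicovariant bimodules, say $\chi\colon O\to O'$ (and using that $\otimes$ over $\Bbbk$ is strictly associative, so both sides are maps $M\otimes N\otimes O\to M'\otimes N'\otimes O'$), one evaluates $(\phi\otimes_\sigma\psi)\otimes_\sigma\chi$ and $\phi\otimes_\sigma(\psi\otimes_\sigma\chi)$ on $m\otimes n\otimes o$ via \eqref{eq36}, inserts the adjoint coactions of $\phi\otimes_\sigma\psi$ and of $\psi\otimes_\sigma\chi$ computed above, and verifies that the two resulting expressions coincide by coassociativity, the antipode axioms, the $H$-balancedness of $\sigma$ and the hexagon \eqref{eq19}; this is the lengthiest of the verifications. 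I expect the Sweedler bookkeeping — in the decomposition, in the coaction computations of the elementary factors, and in associativity — to be the main obstacle: it is precisely where the \emph{lifted}, one-sided braiding $\sigma$ (which factors through $N^{\mathrm{co}H}\otimes M$ and, unlike $\sigma^\mathcal{W}$, need not be invertible) has to be handled carefully against the adjoint coactions on rational morphisms. Once \eqref{eq20} and the two elementary cases are in hand, the well-definedness and $H$-(co)linearity assertions are formal; only associativity requires the additional computation just sketched.
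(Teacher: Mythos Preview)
Your proposal is correct, and the overall logical content matches the paper's, but the \emph{order} and \emph{mechanism} for the bicolinearity/rationality step are genuinely different. The paper verifies $H$-bilinearity and right/left $H$-colinearity of $\otimes_\sigma$ by direct Sweedler computation in the \emph{general} case (one computation for each of $(a\cdot\phi)\otimes_\sigma(\psi\cdot b)$, $\delta^\mathrm{Ad}(\phi\otimes_\sigma\psi)$ and $\lambda^\mathrm{Ad}(\phi\otimes_\sigma\psi)$), and only afterwards proves the decomposition \eqref{eq20}. You instead prove \eqref{eq20} first (same argument as the paper, via $\sigma(\mathrm{HOM}\otimes M)\subseteq M^{\mathrm{co}H}\otimes\mathrm{HOM}$), compute the adjoint coactions on the two elementary factors $\phi\otimes_\sigma\mathrm{id}_{N'}$ and $\mathrm{id}_M\otimes_\sigma\psi$, and then invoke Lemma~\ref{lemma01}~iv.),~iv'.)\ on compositions together with \eqref{eq20} to obtain the general formulas $\delta^\mathrm{Ad}(\phi\otimes_\sigma\psi)=(\phi_0\otimes_\sigma\psi_0)\otimes\phi_1\psi_1$ and the left analogue. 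This is a clean reduction and avoids the longest of the paper's computations (the general left $H$-colinearity check, which in the paper occupies about ten lines of Sweedler manipulation); the price is that the coaction computation for $\mathrm{id}_M\otimes_\sigma\psi$ is only marginally simpler than the general one, so the saving is real but modest. For $H$-bilinearity and for associativity your argument and the paper's coincide: both expand via \eqref{eq36}, and both use the already-established left colinearity $(\psi\otimes_\sigma\chi)_{-1}\otimes(\psi\otimes_\sigma\chi)_0=\psi_{-1}\chi_{-1}\otimes(\psi_0\otimes_\sigma\chi_0)$ as the key identity in the associativity check.
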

\begin{proof}
For all
$\phi\in\mathrm{HOM}(M,M')$, $\psi\in\mathrm{HOM}(N,N')$
and $a,b\in H$, we show
\begin{equation*}
\begin{split}
    (\phi\otimes_\sigma\psi)_0\otimes(\phi\otimes_\sigma\psi)_1
    &=(\phi_0\otimes_\sigma\psi_0)\otimes\phi_1\psi_1~,\\
    (\phi\otimes_\sigma\psi)_{-1}\otimes(\phi\otimes_\sigma\psi)_0
    &=\phi_{-1}\psi_{-1}\otimes(\phi_0\otimes_\sigma\psi_0)~,\\
    a\cdot(\phi\otimes_\sigma\psi)\cdot b
    &=(a\cdot\phi)\otimes_\sigma(\psi\cdot b)~.
\end{split}
\end{equation*}

  Let $m\in M$, $n\in N$ and $o\in O$, then
\begin{align*}
    ((a\cdot\phi)\otimes_\sigma(\psi\cdot b))(m\otimes n)
    &=(a\cdot\phi)(\psi_{-2}b_1m_0S(\psi_{-1}b_2m_1))\otimes(\psi_0\cdot b_3)(m_2n)\\
    &=a\phi(\psi_{-2}(bm)_0S(\psi_{-1}(bm)_1))\otimes\psi_0((bm)_2n)\\
    &=a(\phi\otimes_\sigma\psi)(bm\otimes n)\\
    &=(a\cdot(\phi\otimes_\sigma\psi)\cdot b)(m\otimes n)
\end{align*}
proves that (\ref{eq07}) is $H$-bilinear. For right $H$-colinearity we verify that
\begin{align*}
    (\phi_0\otimes_\sigma\psi_0)(m\otimes n)\otimes\phi_1\psi_1
    &=\phi_0(\psi_{-2}m_0S(\psi_{-1}m_1))\otimes\psi_0(m_2n)\otimes\phi_1\psi_1\\
    &=\phi(\psi_{-2}m_0S(\psi_{-1}m_1))_0\otimes\psi_0(m_2n)
    \otimes\phi(\psi_{-2}m_0S(\psi_{-1}m_1))_1S(1)\psi_1\\
    &=\phi(\psi_{-2}m_0S(\psi_{-1}m_1))_0\otimes\psi_0(m_2n_0)_0
    \otimes\phi(\psi_{-2}m_0S(\psi_{-1}m_1))_1\psi_0(m_2n_0)_1S(m_3n_1)
\end{align*}
coincides with
\begin{align*}
    (\phi\otimes_\sigma\psi)_0(m\otimes n)\otimes(\phi\otimes_\sigma\psi)_1
    &=(\phi\otimes_\sigma\psi)(m_0\otimes n_0)_0
    \otimes(\phi\otimes_\sigma\psi)(m_0\otimes n_0)_1S(m_1n_1)\\
    &=\phi(\psi_{-2}m_0S(\psi_{-1}m_1))_0\otimes\psi(m_2n_0)_0
    \otimes\phi(\psi_{-2}m_0S(\psi_{-1}m_1))_1\psi(m_2n_0)_1S(m_3n_1)~,
\end{align*}
where we used that $\psi_{-2}m_0S(\psi_{-1}m_1)\otimes\psi_0
\in M^{\mathrm{co}H}\otimes\mathrm{HOM}(N,N')$. To check left
$H$-colinearity we first note that
$$
\lambda_M(\psi_{-2}m_0S(\psi_{-1}m_1))\otimes\psi_0
=\psi_{-4}m_{-1}S(\psi_{-1}m_2)
\otimes\psi_{-3}m_0S(\psi_{-2}m_1)\otimes\psi_0~.
$$
Using this equality we calculate
\begin{align*}
    \phi_{-1}\psi_{-1}&\otimes(\phi_0\otimes_\sigma\psi_0)(m\otimes n)
    =\phi_{-1}\psi_{-3}\otimes\phi_0(\psi_{-2}m_0S(\psi_{-1}m_1))\otimes\psi_0(m_2n)\\
    &=\phi(\psi_{-3}m_0S(\psi_{-2}m_1))_{-1}
    \overline{S}(\psi_{-4}m_{-1}S(\psi_{-1}m_2))\psi_{-5}
    \otimes\phi(\psi_{-3}m_0S(\psi_{-2}m_1))_0\otimes\psi_0(m_3n)\\
    &=\phi(\psi_{-3}m_0S(\psi_{-2}m_1))_{-1}
    \psi_{-1}m_2\overline{S}(m_{-1})
    \otimes\phi(\psi_{-3}m_0S(\psi_{-2}m_1))_0\otimes\psi_0(m_3n)\\
    &=\phi(\psi_{-2}m_0S(\psi_{-1}m_1))_{-1}
    \psi_0(m_4n_0)_{-1}\overline{S}(m_3n_{-1})m_2\overline{S}(m_{-1})
    \otimes\phi(\psi_{-2}m_0S(\psi_{-1}m_1))_0\otimes\psi_0(m_4n_0)_0\\
    &=\phi(\psi_{-2}m_0S(\psi_{-1}m_1))_{-1}
    \psi_0(m_2n_0)_{-1}\overline{S}(m_{-1}n_{-1})
    \otimes\phi(\psi_{-2}m_0S(\psi_{-1}m_1))_0\otimes\psi_0(m_2n_0)_0
\end{align*}
which turns out to coincide with
\begin{align*}
    (\phi\otimes_\sigma\psi)_{-1}&\otimes(\phi\otimes_\sigma\psi)_0(m\otimes n)
    =(\phi\otimes_\sigma\psi)(m_0\otimes n_0)_{-1}\overline{S}(m_{-1}n_{-1})
    \otimes(\phi\otimes_\sigma\psi)(m_0\otimes n_0)_0\\
    &=\phi(\psi_{-2}m_0S(\psi_{-1}m_1))_{-1}\psi_0(m_2n_0)_{-1}\overline{S}(m_{-1}n_{-1})
    \otimes\phi(\psi_{-2}m_0S(\psi_{-1}m_1))_0\otimes\psi_0(m_2n_0)_0~.
\end{align*}
Summarizing, $\otimes_\sigma$ is a morphism of bicovariant bimodules.

Equation (\ref{eq20}) is verified by the computation
\begin{align*}
    ((\phi\otimes_\sigma\mathrm{id}_{N'})
    \circ(\mathrm{id}_M\otimes_\sigma\psi))(m\otimes n)
    &=(\phi\otimes_\sigma\mathrm{id}_{N'})
    (\psi_{-2}m_0S(\psi_{-1}m_1)\otimes\psi_0(m_2n))\\
    &=\phi(\psi_{-2}m_0S(\psi_{-1}m_1))\otimes\psi_0(m_2n)\\
    &=(\phi\otimes_\sigma\psi)(m\otimes n)~,
\end{align*}
where we used again that $\psi_{-2}m_0S(\psi_{-1}m_1)\otimes\psi_0
\in M^{\mathrm{co}H}\otimes\mathrm{HOM}(N,N')$
and furthermore that $\lambda^\mathrm{Ad}(\mathrm{id}_{N'})=1_H\otimes\mathrm{id}_{N'}$.

Finally, associativity follows from the left $H$-colinearity of $\otimes_\sigma$. In fact
\begin{align*}
    ((\phi\otimes_\sigma\psi)\otimes_\sigma\chi)(m\otimes n\otimes o)
    &=(\phi\otimes_\sigma\psi)(\chi_{-2}(m_0\otimes n_0)S(\chi_{-1}m_1n_1))
    \otimes\chi_0(m_2n_2o)\\
    &=\phi(\psi_{-2}\chi_{-4}m_0S(\psi_{-1}\chi_{-3}m_1))
    \otimes\psi_0(\chi_{-2}m_2n_0S(\chi_{-1}m_3n_1))
    \otimes\chi_0(m_4n_2o)
\end{align*}
equals
\begin{align*}
    (\phi\otimes_\sigma(\psi\otimes_\sigma\chi))(m\otimes n\otimes o)
    &=\phi((\psi\otimes_\sigma\chi)_{-2}m_0S((\psi\otimes_\sigma\chi)_{-1}m_1))
    \otimes(\psi\otimes_\sigma\chi)_0(m_2(n\otimes o))\\
    &=\phi(\psi_{-2}\chi_{-2}m_0S(\psi_{-1}\chi_{-1}m_1))
    \otimes(\psi_0\otimes_\sigma\chi_0)(m_2n\otimes o)\\
    &=\phi(\psi_{-2}\chi_{-4}m_0S(\psi_{-1}\chi_{-3}m_1))
    \otimes\psi_0(\chi_{-2}m_2n_0S(\chi_{-1}m_3n_1))
    \otimes\chi_0(m_4n_2o)~.
\end{align*}
This concludes the proof of the theorem.
\end{proof}

The evaluation, composition and tensor product of rational morphisms
defined in \eqref{eq05}, \eqref{eq06} and \eqref{eq07}
descend to well defined maps when projecting the tensor product $\otimes$
to the balanced tensor product $\otimes_H$.
With slight abuse of notation we still denote by $\pi_H^{M',N'}\colon\mathrm{HOM}(M\otimes_HN,M'\otimes N')\to\mathrm{HOM}(M\otimes_HN,M'\otimes_HN')$ the projection on morphisms induced by the projection $\pi_H^{M',N'}\colon M'\otimes N'\to M'\otimes_HN'$.

\begin{theorem}\label{prop06}
 Let $M,M',N,N',O$ be modules of ${}_H^H\mathcal{M}_H^H$. Then
\begin{equation}
    \mathrm{ev}_{M,N}\colon\mathrm{HOM}(M,N)\otimes_H M\rightarrow
    N~,~~\phi\otimes_H m\mapsto \phi(m)~, 
\end{equation}
\begin{equation}\label{eq03hat}
    \circ_{M,N,O}\colon\mathrm{HOM}(N,O)\otimes_H\mathrm{HOM}(M,N)
    \rightarrow\mathrm{HOM}(M,O)~,~~(\phi\circ\psi)(m)=\phi(\psi(m))~,
\end{equation}
and
\begin{equation}\label{eq04hat}
    \,\hat{\otimes}\,:=\pi_H^{M',N'}\circ\otimes_\sigma \colon
    \mathrm{HOM}(M,M')\otimes\mathrm{HOM}(N,N')
    \rightarrow\mathrm{HOM}(M\otimes_HN,M'\otimes_HN')~,~~  \phi\otimes \psi\mapsto \phi\,\hat{\otimes}\,\psi 
\end{equation}
with
$$
~~~(\phi\,\hat{\otimes}\,\psi )(m\otimes_H n)
    =\phi({}_\alpha m)\otimes_H({}^\alpha\psi)(n)
    =\phi(\psi_{-2}m_0S(\psi_{-1}m_1))\otimes_H\psi_0(m_2n)~,
$$
are well defined morphisms in ${}_H^H\mathcal{M}^H$,
${}_H^H\mathcal{M}_H^H$ and ${}_H^H\mathcal{M}_H^H$
respectively. Furthermore, the restriction of $\,\hat{\otimes}\,$ to right
$H$-linear rational morphisms equals $\otimes_{\sigma^\mathcal{W}}$ as
defined in \eqref{eq29}:
 $\,\hat{\otimes}\,|^{}_{\mathrm{HOM}_H(M,M')\otimes\mathrm{HOM}_H(N,N')}=\otimes_{\sigma^\mathcal{W}}$.
 \end{theorem}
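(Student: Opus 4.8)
The plan is to verify, for each of the three maps, that it is well defined on the balanced tensor product $\otimes_H$ and that it lands in the asserted category. The general strategy: everything descends from the already-established structures on $\otimes$ (Lemma~\ref{lemma01}, Theorem~\ref{thm01}) once we check compatibility with the relations $\phi a\otimes m - \phi\otimes am$ (for $\mathrm{ev}$), $\psi\circ(a\cdot\phi)-(\psi\cdot a)\circ\phi$ (for $\circ$), and $\phi\cdot a\otimes\psi-\phi\otimes a\cdot\psi$ (for $\hat\otimes$). First I would handle $\mathrm{ev}$: well-definedness on $\otimes_H$ is immediate since $(\phi\cdot a)(m)=\phi(am)$ by \eqref{eq27}, so $(\phi\cdot a)\otimes_H m$ and $\phi\otimes_H am$ have the same image $\phi(am)$; the $H$-bicolinearity and left $H$-linearity then follow directly from Lemma~\ref{lemma01}~ii''.), and right $H$-linearity fails in general (as for $\mathrm{HOM}$, not $\mathrm{HOM}_H$), so the target is correctly ${}_H^H\mathcal{M}^H$, matching the bimodule-less fact that $\mathrm{ev}$ need not respect the right $H$-module structure unless one restricts to $\mathrm{HOM}_H$. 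For $\circ$, well-definedness over $\otimes_H$ uses associativity of composition together with \eqref{eq27}: $(\phi\circ(a\cdot\psi))(m)=\phi(a\psi(m))$ while $((\phi\cdot a)\circ\psi)(m)=(\phi\cdot a)(\psi(m))=\phi(a\psi(m))$, so the two agree; the resulting map is in ${}_H^H\mathcal{M}_H^H$ by Lemma~\ref{lemma01}~iv''.) for the coactions and by a one-line check using \eqref{eq27} for the $H$-bilinearity, exactly as in Corollary~\ref{cor01}.

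The substantive case is $\hat\otimes=\pi_H^{M',N'}\circ\otimes_\sigma$. The key point is that $\otimes_\sigma$ itself is compatible with the $H$-balancing on the source: by Lemma~\ref{lemma02} the lift $\sigma_{M,N}$ satisfies $\sigma_{M,N}(ma\otimes n)=\sigma_{M,N}(m\otimes an)$, and combining this with the bimodule structure \eqref{eq27} on $\mathrm{HOM}$ and the identity $\phi\otimes_\sigma\psi=(\phi\cdot a)\otimes_\sigma(\psi)$ vs. $\phi\otimes_\sigma(a\cdot\psi)$ — which one reads off directly from formula \eqref{eq36} since in $(\phi\otimes_\sigma\psi)(m\otimes n)=\phi({}_\alpha m)\otimes({}^\alpha\psi)(n)$ the coinvariant-valued first leg ${}_\alpha m$ and the adjoint-coaction leg ${}^\alpha\psi$ transmute an $a$ acting on the right of $\phi$ into one acting on the left of $\psi$ — shows $(\phi\cdot a)\otimes_\sigma\psi=\phi\otimes_\sigma(a\cdot\psi)$ as elements of $\mathrm{HOM}(M\otimes N,M'\otimes N')$. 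Hence after postcomposing with $\pi_H^{M',N'}$ (which kills the balancing in the target) and noting that $\otimes_\sigma$ already vanishes on the relation $ma\otimes n - m\otimes an$ in the source of the \emph{evaluated} map by Lemma~\ref{lemma02}, $\hat\otimes$ descends to $\mathrm{HOM}(M,M')\otimes\mathrm{HOM}(N,N')\to\mathrm{HOM}(M\otimes_HN,M'\otimes_HN')$. That it is a morphism in ${}_H^H\mathcal{M}_H^H$ is then inherited from Theorem~\ref{thm01}: the adjoint coactions and the $H$-bimodule structure on $\hat\otimes(\phi\otimes\psi)$ are computed by composing the corresponding statements for $\otimes_\sigma$ with the colinear, bilinear projection $\pi_H$.

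Finally, for the last claim, restrict to $\phi\in\mathrm{HOM}_H(M,M')$, $\psi\in\mathrm{HOM}_H(N,N')$. Comparing \eqref{eq36} with \eqref{eq29}: on right $H$-linear morphisms the extra factors involving $m_1,m_2$ in \eqref{eq36} can be pulled through $\psi$ and recombined, so $(\phi\otimes_\sigma\psi)(m\otimes n)=\phi(\psi_{-2}m_0S(\psi_{-1}m_1))\otimes\psi_0(m_2n)$ projects under $\pi_H$ to $\phi(\psi_{-2}m_0S(m_1\cdot 1))\otimes_H\ldots$, i.e. precisely to the formula \eqref{eq29} for $\otimes_{\sigma^\mathcal{W}}$; more conceptually, both $\hat\otimes$ and $\otimes_{\sigma^\mathcal{W}}$ are the unique morphism induced by the braided monoidal structure, and on the subcategory of $H$-linear maps the lift $\sigma$ reduces to $\sigma^\mathcal{W}$ under $\pi_H$ by the refined commutative diagram preceding Lemma~\ref{lemma02}. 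The main obstacle I anticipate is the bookkeeping in showing $(\phi\cdot a)\otimes_\sigma\psi=\phi\otimes_\sigma(a\cdot\psi)$ cleanly from \eqref{eq36} — one must track how $a$ propagates through the Sweedler legs $\psi_{-2}m_0S(\psi_{-1}m_1)$ and $\psi_0(m_2n)$ using only the defining relations of $\delta^\mathrm{Ad},\lambda^\mathrm{Ad}$ and the bimodule structure \eqref{eq27}, and it is here that antipode identities and coassociativity have to be applied with care; everything else is formal category theory plus the already-proven Theorem~\ref{thm01}.
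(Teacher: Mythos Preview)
Your treatment of $\mathrm{ev}$ and $\circ$ is fine and matches the paper. The issue is with $\hat\otimes$.

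You have misread the domain: in the statement, $\hat\otimes$ is a map out of $\mathrm{HOM}(M,M')\otimes\mathrm{HOM}(N,N')$ with tensor over $\Bbbk$, \emph{not} over $H$. So there is nothing to check about ``$H$-balancing on the source'' of $\hat\otimes$, and the identity you single out as ``the main obstacle'', namely $(\phi\cdot a)\otimes_\sigma\psi=\phi\otimes_\sigma(a\cdot\psi)$, is not needed. Worse, it appears to be false for general (non right $H$-linear) $\phi$: writing $x=\psi_{-2}m_0S(\psi_{-1}m_1)\in M^{\mathrm{co}H}$, the left side evaluates to $\phi(ax)\otimes\psi_0(m_2n)$ while the right side gives $\phi(a_1xS(a_2))\otimes a_3\psi_0(m_2n)$, and even after projecting to $M'\otimes_H N'$ you cannot move $S(a_2)$ across $\phi$. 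So the effort you anticipate spending there would be on a red herring.

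What actually must be checked---and what the paper does---is that for each fixed $\phi,\psi$ the map $\phi\otimes_\sigma\psi$ factors through $M\otimes_H N$, i.e.\ $(\phi\otimes_\sigma\psi)(ma\otimes n)=(\phi\otimes_\sigma\psi)(m\otimes an)$. You do mention this (``in the source of the evaluated map''), and your attribution to Lemma~\ref{lemma02} is correct: right $H$-linearity of $\sigma$ gives $\sigma(\psi\otimes ma)=\sigma(\psi\otimes m)\cdot a={}_\alpha m\otimes({}^\alpha\psi\cdot a)$, and then $({}^\alpha\psi\cdot a)(n)={}^\alpha\psi(an)$ by \eqref{eq27}. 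The paper instead verifies this by a direct two-line Sweedler computation from \eqref{eq36}. Once this is done, that $\hat\otimes$ is a morphism in ${}_H^H\mathcal{M}_H^H$ follows immediately from Theorem~\ref{thm01} and the fact that $\pi_H^{M',N'}$ is a morphism of bicovariant bimodules, exactly as you say.

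For the final restriction claim your description is slightly off: the point is that right $H$-linearity of $\phi$ lets one pull $S(\psi_{-1})$ out of $\phi(\psi_{-2}m_0S(m_1)S(\psi_{-1}))$ and across $\otimes_H$, yielding $\phi(\psi_{-2}m_0S(m_1))\otimes_H S(\psi_{-1})\psi_0(m_2n)$, which is \eqref{eq29}. It is $\phi$'s right $H$-linearity that is used here, not $\psi$'s.
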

\begin{proof}
The linear maps $\mathrm{ev}_{M,N}$ and $\circ_{M,N,O}$ are easily
checked to be well defined. Their properties follow from those of the
evaluation and composition in  \eqref{eq06} and \eqref{eq07}.
For the tensor product $\,\hat{\otimes}\,$ we first notice that
$$\otimes_\sigma:
 \mathrm{HOM}(M,M')\otimes\mathrm{HOM}(N,N')
    \rightarrow\mathrm{HOM}(M\otimes_H N,M'\otimes N')
    \subseteq \mathrm{HOM}(M\otimes N,M'\otimes N')$$
    where
the inclusion is via composition with the projection
$\pi_H^{M,N}\colon M\otimes N\to M\otimes_H N$.
Indeed, for all $\phi\in\mathrm{HOM}(M,M')$, $\psi\in\mathrm{HOM}(N,N')$, 
$\phi\otimes_\sigma\psi$ is well defined on the quotient $M\otimes_HN$,
since
\begin{align*}
    (\phi\otimes_\sigma\psi)(ma\otimes n)
    &=\phi(\psi_{-2}m_0a_1S(\psi_{-1}m_1a_2))\otimes\psi_0(m_2a_3n)\\
    &=\phi(\psi_{-2}m_0S(\psi_{-1}m_1))\otimes\psi_0(m_2an)\\
    &=(\phi\otimes_\sigma\psi)(m\otimes an)
\end{align*}
for all $m\in M$, $n\in N$ and $a\in H$.

Since the
projection $\pi^{M',N'}_H\colon M\otimes N\rightarrow M\otimes_HN$
is a morphism in
${}_H^H\mathcal{M}_H^H$ and $\otimes_\sigma$ is a morphism in
${}_H^H\mathcal{M}_H^H$ by Theorem~\ref{thm01}, it follows that
$\,\hat{\otimes}\,=\pi^{M',N'}_H\circ\otimes_\sigma$ is a bicovariant bimodule
morphism as well.

Let $\phi\in\mathrm{HOM}_H(M,M')$, $\psi\in\mathrm{HOM}_H(N,N')$ be
right $H$-linear rational morphisms.
The equalities
\begin{align*}
    (\phi\,\hat{\otimes}\,\psi)(m\otimes_Hn)
    &=\phi(\psi_{-2}m_0S(\psi_{-1}m_1))\otimes_H\psi_0(m_2n)\\
    &=\phi(\psi_{-2}m_0S(m_1))\otimes_HS(\psi_{-1})\psi_0(m_2n)\\
    &=(\phi\otimes_{\sigma^\mathcal{W}}\psi)(m\otimes_Hn)
\end{align*}
prove that
$\,\hat{\otimes}\,|^{}_{\mathrm{HOM}_H(M,M')\otimes\mathrm{HOM}_H(N,N')}=\otimes_{\sigma^\mathcal{W}}$.
\end{proof}
The equality $\,\hat{\otimes}\,|^{}_{\mathrm{HOM}_H(M,M')\otimes\mathrm{HOM}_H(N,N')}=\otimes_{\sigma^\mathcal{W}}$ shows that the tensor product
$\,\hat{\otimes}\,$ extends to rational morphisms the tensor product
$\otimes_{\sigma^\mathcal{W}}$ of right $H$-linear rational morphisms. Associativity
of the composition in \eqref{eq06} and of the tensor product
$\otimes_\sigma$ in \eqref{eq07} implies associativity of the induced  composition \eqref{eq03hat} and of the
tensor product $\,\hat{\otimes}\,$.

\begin{remark} We list the different tensor products
  encountered, the first two into modules, the last three into modules
  of rational morphisms: i) $\otimes$ over the base field $\Bbbk$, ii)
  $\otimes_H$ over the Hopf algebra $H$, iii)
  $\otimes_{\sigma^\mathcal{W}}$ of right $H$-linear rational
  morphisms, iv) $\otimes_{\sigma}$ of rational morphisms
 and v) its projection $\,\hat{\otimes}\,$. In particular, $\phi \,\hat{\otimes}\,
 \mathrm{id}_N=\phi\otimes_{\sigma^\mathcal{W}}\mathrm{id}_N$ for
 $\phi\in \mathrm{HOM}_H(M,M')$ a right $H$-linear map,  explicitly, on elements $m\in M, n\in N$,
$(\phi \,\hat{\otimes}\,
 \mathrm{id}_N)(m\otimes_H n)=(\phi\otimes_{\sigma^\mathcal{W}}\mathrm{id}_N)(m\otimes_H
 n)=\phi(m) \otimes_H n$.
  \end{remark}

  \section{Bicovariant Bimodules and Differential Calculi}\label{Sec3}
  
We review the construction of bicovariant differential calculi on
quantum groups \cite{Woronowicz1989} and study in detail braided
symmetric tensors and their duals, since these are relevant for pseudo-Riemannian
structures.
We follow the convention of \cite{AsSchupp96}, where vector fields are paired with
differential forms from the left, rather than from the right as in \cite{Woronowicz1989} (this explains the inverse braiding defining the braided exterior algebra). 
See \cite{AsSchupp96} for the relation between these two conventions.
A comprehensive study of bicovariant differential calculi, together with numerous
examples and applications, can be found in the textbooks
\cite{KlimykSchmudgen, MajidBeggsBook}.

\subsection{Braided Exterior Algebra and Duality}\label{Sec3.1}

Fix a bicovariant bimodule $M$. For any positive integer $k>0$ we
define
\begin{equation*}
    M^k:=\underbrace{M\otimes^{}_H\ldots\otimes^{}_HM}_{k-\text{times}}.
\end{equation*}
This tensor product of bicovariant bimodules  is again a bicovariant
bimodule with respect to the $H$-(co)actions
\begin{equation*}
\begin{split}
    a\cdot m\cdot b
    &=(am^1)\otimes_Hm^2\ldots\otimes_Hm^{k-1}\otimes_H(m^kb)~,\\
    \delta_{M^k}(m)
    &=(m^1_0\otimes_H\ldots\otimes_Hm^k_0)\otimes m^1_1\ldots m^k_1~,\\
    \lambda_{M^k}(m)
    &=m^1_{-1}\ldots m^k_{-1}\otimes(m^1_0\otimes_H\ldots\otimes_Hm^k_0)
\end{split}
\end{equation*}
where $m=m^1\otimes_H\ldots\otimes_Hm^k\in M^k$,
 $a,b\in H$, cf. Proposition~\ref{prop07}.
We further set $M^0:=H$ which is a bicovariant
bimodule with actions and coactions given by the multiplication and the coproduct.
Thus, the \textit{tensor algebra} $(M^\bullet,\otimes_H,1_H)$, where
\begin{equation*}
    M^\bullet
    :=\bigoplus\nolimits_{k\geq 0}M^k
    =H\oplus M\oplus(M\otimes_HM)\oplus\cdots~,
\end{equation*}
is a graded $H$-bicomodule algebra, 
i.e., the $H$-actions and coaction commute, are compatible with the tensor product $\otimes_H$ and respect the natural grading of $M^\bullet$. In particular, $M^\bullet$ is a graded bicovariant bimodule.

Following  \cite{Woronowicz1989}, the braiding $\sigma^\mathcal{W}$ of the braided monoidal category of
bicovariant bimodules (cf. Proposition~\ref{prop07}) leads to a canonical quotient $\Lambda^\bullet M$
of $M^\bullet$, the \textit{braided exterior algebra}.

Let $n\in\mathbb{N}$ be a natural number $n>0$. We denote the nearest neighbour
transpositions of the set $\{1,\ldots,n\}$ by $\{t_1,\ldots,t_{n-1}\}$, meaning
that $t_i$, $1\leq i<n$, fixes the elements of $\{1,\ldots,n\}$ but flips $i$ and
$i+1$. Every permutation $p\in P(n)$ of $\{1,\ldots,n\}$ can be written as
a composition 
of nearest neighbour transpositions: $p=t_{k_1}\cdots t_{k_N}$, with
$k_1,\ldots,k_N\in\{1,\ldots,n-1\}$ for some $N\in\mathbb{N}$ (if
$N=0$, $p$ is the identity). For a fixed
$p\in P(n)$ its length $I(p)\in\mathbb{N}$ is the (well defined)
minimal number of nearest neighbour transpositions such that
\begin{equation}\label{permutation}
    p=t_{k_1}\cdots t_{k_{I(p)}}~.
\end{equation}
For $k\in\{1,\ldots,n-1\}$ we
define 
$$
\overline{\sigma}_k:=\mathrm{id}_{M^{k-1}}\otimes_H\overline{\sigma}^\mathcal{W}_{M,M}\otimes_H
\mathrm{id}_{M^{n-k-1}}\colon M^n
\rightarrow M^n
$$
and for every permutation $p\in P(n)$ with decomposition (\ref{permutation}) we set
$\Pi_p:=\overline{\sigma}_{k_1}\cdots\overline{\sigma}_{k_{I(p)}}\colon M^n
\rightarrow M^n$.
Because of the quantum Yang-Baxter equation (\ref{QYBE}),
$\Pi_p$ is independent of the choice of the decomposition (\ref{permutation}).
Define now the \textit{braided anti-symmetrization} maps
\begin{equation*}
    A_n:=\sum_{p\in P(n)}\mathrm{sign}(p)\Pi_p\colon M^n
    \rightarrow M^n~,~~~
    \text{ for }n\geq 1~,
\end{equation*}
$A_0={\rm id}_{M^0}={\rm id}_H$,  where $\mathrm{sign}(p)=(-1)^{I(p)}$ denotes the sign
of the permutation $p\in P(n)$. It immediately follows that $A_n$ is a
bicovariant bimodule morphism, since so are
${\rm{id}}_{M^n}$ and $\overline{\sigma}_k$ for any $k$.
We further define $S^n:=\ker A_n\subseteq M^n$, and
$S^\bullet:=\bigoplus_{n\geq 0}S^n\subseteq M^\bullet$.
The vector space $S^\bullet$ is a graded $H$-bicomodule subalgebra of
$M^\bullet$
and can be shown to be a graded algebra ideal of the tensor algebra 
$(M^\bullet,\otimes_H,1_H)$.
Then, the quotient 
\begin{equation}\label{QA}
  M^\bullet/S^\bullet=  \bigoplus_{n\geq 0}M^n/S^n~.
\end{equation}
is a graded $H$-bicomodule algebra.
Using the bicovariant bimodule  isomorphisms
$M^n/S^n\cong{\rm{Im}}A_n$ we define the \textit{braided
exterior algebra} as the graded bicovariant bimodule 
\begin{equation}\label{BEA1}
    \Lambda^\bullet M:=
   \bigoplus_{n\geq 0}{\rm{Im}}A_n~,
   \end{equation}
with multiplication the \textit{wedge product} $\wedge$ which is
induced on $\Lambda^\bullet M$ from the quotient multiplication on
$M^\bullet/S^\bullet$. The linear span of all wedge
products with $n$ elements in $M$ is denoted 
$M^{\wedge n}:=\underbrace{M\wedge\ldots\wedge
  M}_{n-\text{times}}$ and we have
\begin{equation}
M^{\wedge 0}=H~~,~~~M^{\wedge 1}=M~~,~~~
M^{\wedge n}={\rm{Im}}A_n~~,~~~\Lambda^\bullet M:=
  \bigoplus_{n\geq 0}M^{\wedge n}~.\label{BEA}
  \end{equation}
We have given this definition of braided exterior algebra, rather than the
equivalent one as the quotient $
M^\bullet/S^\bullet$, so that $M^{\wedge n}$ is a
bicovariant subbimodule of $M^n$. We shall later study
$M^2$ as a direct sum of braided symmetric tensors and
the braided antisymmetric tensors $M^{\wedge 2}$. In summary, we have
\\
\begin{proposition}\label{prop111}
The braided exterior algebra $(\Lambda^\bullet M,\wedge,1_H)$ is an associative graded $H$-bicomodule algebra, and, in particular, a graded bicovariant bimodule.
\end{proposition}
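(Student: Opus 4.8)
The plan is to verify the three defining properties of a graded algebra that is simultaneously a graded bicovariant bimodule: (1) $\Lambda^\bullet M$ is a well-defined associative unital algebra; (2) the grading is respected by the wedge product; (3) the $H$-(co)actions respect the grading and are compatible with the algebra structure. Most of the work has in fact already been set up: we know from the discussion preceding the statement that $S^\bullet$ is a graded algebra ideal of the tensor algebra $(M^\bullet,\otimes_H,1_H)$, that each $A_n$ is a bicovariant bimodule morphism, and that $M^n/S^n\cong\mathrm{Im}\,A_n$ as bicovariant bimodules. So the strategy is to transport everything along this isomorphism.

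First I would record that the quotient $M^\bullet/S^\bullet=\bigoplus_{n\geq 0}M^n/S^n$ is an associative unital graded algebra: this is the standard fact that the quotient of a graded algebra by a graded (two-sided) ideal inherits a graded algebra structure, with unit the class of $1_H$, using that $S^0=\ker A_0=0$ so the degree-zero part is still $H$. Associativity and unitality descend from $(M^\bullet,\otimes_H,1_H)$. Next, I would use the graded bicovariant bimodule isomorphism $\bigoplus_n(M^n/S^n)\cong\bigoplus_n\mathrm{Im}\,A_n=\Lambda^\bullet M$ (degreewise $M^n/S^n\cong\mathrm{Im}\,A_n$, which assembles to a graded iso since each piece is homogeneous) to \emph{define} $\wedge$ on $\Lambda^\bullet M$ as the transport of the quotient multiplication. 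By construction $\wedge$ is then associative, unital with unit $1_H\in H=\mathrm{Im}\,A_0$, and $M^{\wedge m}\wedge M^{\wedge n}\subseteq M^{\wedge(m+n)}$, i.e. the grading is respected.

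It then remains to check that $\Lambda^\bullet M$ is a graded bicovariant bimodule and that the $H$-actions interact correctly with $\wedge$. That $\Lambda^\bullet M$ is a bicovariant bimodule is immediate because each $\mathrm{Im}\,A_n$ is a bicovariant subbimodule of $M^n$ (as the image of the bicovariant bimodule morphism $A_n$), and the direct sum of bicovariant bimodules is one; the grading is respected because each summand is homogeneous. Compatibility of $\wedge$ with the $H$-bimodule structure — i.e. $a\cdot(\omega\wedge\eta)\cdot b=(a\cdot\omega)\wedge(\eta\cdot b)$ and $H$-colinearity of $\wedge$ — follows from the same properties of $\otimes_H$ on $M^\bullet$ together with the fact that $\pi:M^\bullet\to M^\bullet/S^\bullet$ and the isomorphism $M^\bullet/S^\bullet\cong\Lambda^\bullet M$ are morphisms of bicovariant bimodules, so all structure maps commute with the quotient and the transport.

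The only genuinely non-routine point, which the excerpt states but does not prove in detail, is that $S^\bullet$ really is a two-sided graded ideal of $(M^\bullet,\otimes_H,1_H)$ — equivalently that $S^n\otimes_H M^m\subseteq S^{n+m}$ and $M^m\otimes_H S^n\subseteq S^{n+m}$, i.e. $A_{n+m}$ kills $\ker A_n\otimes_H M^m$ and $M^m\otimes_H\ker A_n$. This is the classical Woronowicz computation: it uses the hexagon/braid relations to factor $A_{n+m}$ through $(A_n\otimes_H\mathrm{id})$ (up to a sum over coset representatives of braided shuffles), so that vanishing of $A_n$ on the first $n$ factors forces vanishing of $A_{n+m}$; symmetrically on the right. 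I expect this ideal property to be the main obstacle, but since the excerpt explicitly asserts it (``can be shown to be a graded algebra ideal''), I would cite it and proceed, so that the proof of the proposition itself reduces to the bookkeeping described above.
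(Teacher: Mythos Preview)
Your proposal is correct and follows the standard route. Note, however, that the paper does not actually give its own proof of this proposition: it is stated with a citation to \cite{Woronowicz1989} and the paper merely records the induced $H$-(co)actions on $\Lambda^\bullet M$ immediately afterward, relying on Woronowicz's original argument for the proof itself. Your write-up is essentially a reconstruction of that classical argument---quotienting by the graded ideal $S^\bullet$, transporting along $M^n/S^n\cong\mathrm{Im}\,A_n$, and inheriting the bicovariant bimodule structure from the fact that each $A_n$ is a morphism in ${}_H^H\mathcal{M}_H^H$---and is the expected approach; the paper simply takes all of this as known.
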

The $H$-actions and coactions on $\Lambda^\bullet M$ are induced from the ones on $M^\bullet$, explicitly
\begin{equation*}
\begin{split}
    a\cdot(m^1\wedge\ldots\wedge m^k)\cdot b
    &=(a\cdot m^1)\wedge\ldots\wedge(m^k\cdot b)~,\\
    (m^1\wedge\ldots\wedge m^k)_0\otimes
    (m^1\wedge\ldots\wedge m^k)_1
    &=(m^1_0\wedge\ldots\wedge m^k_0)\otimes
    (m^1_1\cdots m^k_1)~,\\
    (m^1\wedge\ldots\wedge m^k)_{-1}\otimes
    (m^1\wedge\ldots\wedge m^k)_0
    &=(m^1_{-1}\cdots m^k_{-1})
    \otimes(m^1_0\wedge\ldots\wedge m^k_0)~,
\end{split}
\end{equation*}
for all $a,b\in H$ and $m^1,\ldots,m^k\in M$.
\\\medskip

For any bicovariant bimodule $M$ which is finitely generated as a
right  $H$-module,
its dual $M^*:=\mathrm{Hom}_H(M,H)$
is a bicovariant bimodule according to Propositions~\ref{prop07} and \ref{prop100}.
Explicitly, actions and coactions read $(a\cdot X\cdot b)(m)=aX(bm)$ and
\begin{equation*}
    X_0(m)\otimes X_1=X(m_0)_0\otimes X(m_0)_1S(m_1)
    ~~~\text{ ~,~~ }~~~
    X_{-1}\otimes X_0(m)=X(m_0)_{-1}\overline{S}(m_{-1})\otimes X(m_0)_0
\end{equation*}
for all $a,b\in H$, $X\in M^*$ and $m\in M$. Furthermore, according to
Corollary~\ref{cor01}, the evaluation is a morphism of bicovariant bimodules.
In the context of dual modules we denote the evaluation by
\begin{equation}\label{eq54}
    \langle\cdot,\cdot\rangle\colon
    M^*\otimes_HM\rightarrow H
\end{equation}
and call it pairing of $M$ and $M^*$. Since
$M$ is the free and  finitely generated  right $H$-module  $M\cong M^{\mathrm{co}H}\otimes
H$, $M^*$ is the free and finitely generated left $H$-module $M^*\cong
H\otimes (M^{\mathrm{co}H})^*$. Nondegeneracy of the vector space pairing  $
\langle\cdot,\cdot\rangle\colon    (M^{\mathrm{co}H})^*\otimes M^{\mathrm{co}H}\rightarrow \Bbbk$
implies nondegeneracy of the pairing in \eqref{eq54}, i.e.  $\langle X,m\rangle=0$ for all $m\in M$ implies
$X=0$ and $\langle X,m\rangle=0$ for all $X\in M^*$ implies
$m=0$.   From the fundamental theorem for the bicovariant bimodule $M^*$
we also have $M^*\cong H\otimes  (M^*)^{\mathrm{co}H}$, comparing with  $M^*\cong
H\otimes (M^{\mathrm{co}H})^*$ we obtain  $(M^*)^{\mathrm{co}H}\cong  (M^{\mathrm{co}H})^*$.
For any $k>1$ we extend the pairing in \eqref{eq54} to  the non-degenerate pairing of bicovariant bimodules 
\begin{equation}\label{onion}
    \langle\cdot,\cdot\rangle
    \colon (M^*)^k\otimes_HM^k\rightarrow H~,
\end{equation}
via the \textit{onion like structure} 
\begin{equation*}
    \langle X_1\otimes_H\ldots\otimes_HX_k,
    \omega_1\otimes_H\ldots\otimes_H\omega_k\rangle
    :=\langle X_1,\ldots\langle X_{k-1},
    \langle X_k,\omega_1\rangle\omega_2\rangle\ldots\omega_k\rangle~,
  \end{equation*}
for any 
$X_1\otimes_H\ldots\otimes_HX_k\in(M^*)^k$ and
$\omega_1\otimes_H\ldots\otimes_H\omega_k\in M^k$.
Nondegeneracy can be easily proven using a basis of $M^{\mathrm{co}H}$ and of
$(M^*)^{\mathrm{co}H}\cong  (M^{\mathrm{co}H})^*$. Uniqueness up to isomorphism of the
dual implies the bicovariant bimodule isomorphism $M^*\otimes_H\ldots \otimes_H
M^*\cong(M\otimes_H \ldots \otimes_H M)^*$.
(For a more general discussion see for example
\cite{AschieriCartanStructure}~Theorem~2.5.3).
For every morphism $\Phi\colon M^k\rightarrow M^k$
of bicovariant bimodules the  pairing induces a 
\textit{transpose morphism} $\Phi^*\colon(M^*)^{k}\rightarrow(M^*)^{k}$
via
\begin{equation*}
    \langle\Phi^*(X),m\rangle
    =\langle X,\Phi(m)\rangle
\end{equation*}
for all $X\in(M^*)^k$ and $m\in M^k$. It follows
that $\Phi^*$ is a morphism of bicovariant bimodules, as well.
\begin{lemma}\label{lemma03}
The transpose of $\overline{\sigma}^\mathcal{W}_{M,M}\colon M^2\rightarrow
M^2$
is $(\overline{\sigma}_{M,M}^\mathcal{W})^*=\overline{\sigma}^\mathcal{W}_{M^*,M^*}\colon
(M^*)^2\rightarrow(M^*)^2$.
\end{lemma}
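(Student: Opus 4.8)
The plan is to unwind the definitions of the transpose and of the Woronowicz braiding, and check the defining equation $\langle (\overline\sigma^\mathcal{W}_{M,M})^*(X), m\rangle = \langle X, \overline\sigma^\mathcal{W}_{M,M}(m)\rangle$ with $(\overline\sigma^\mathcal{W}_{M,M})^*$ replaced by $\overline\sigma^\mathcal{W}_{M^*,M^*}$. Since the pairing $\langle\cdot,\cdot\rangle\colon (M^*)^2\otimes_H M^2\to H$ is nondegenerate and $\overline\sigma^\mathcal{W}$ is already known to be a bicovariant bimodule morphism, it suffices to verify this identity on coinvariant elements. Concretely, I would pick $\overline X,\overline Y\in (M^*)^{\mathrm{co}H}\cong (M^{\mathrm{co}H})^*$ and $\overline m,\overline n\in M^{\mathrm{co}H}$, using the fact (recorded just before the lemma) that $M^2$ and $(M^*)^2$ are generated over $H$ by tensor products of coinvariants, and that on such elements $\overline\sigma^\mathcal{W}_{M,M}(\overline m\otimes_H\overline n)=\overline n\otimes_H\overline m$ acts as the bare flip, and likewise $\overline\sigma^\mathcal{W}_{M^*,M^*}(\overline X\otimes_H\overline Y)=\overline Y\otimes_H\overline X$.

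The key step is then a short computation with the onion-like pairing on coinvariants. On one side,
\begin{equation*}
\langle \overline X\otimes_H\overline Y,\ \overline\sigma^\mathcal{W}_{M,M}(\overline m\otimes_H\overline n)\rangle
=\langle \overline X\otimes_H\overline Y,\ \overline n\otimes_H\overline m\rangle
=\langle \overline X, \langle \overline Y,\overline n\rangle\,\overline m\rangle~,
\end{equation*}
and on the other side
\begin{equation*}
\langle \overline\sigma^\mathcal{W}_{M^*,M^*}(\overline X\otimes_H\overline Y),\ \overline m\otimes_H\overline n\rangle
=\langle \overline Y\otimes_H\overline X,\ \overline m\otimes_H\overline n\rangle
=\langle \overline Y, \langle \overline X,\overline m\rangle\,\overline n\rangle~.
\end{equation*}
Here one must be slightly careful: the scalars $\langle \overline Y,\overline n\rangle$ and $\langle \overline X,\overline m\rangle$ lie in $H$, not necessarily in $\Bbbk$, so the two expressions are not literally obviously equal. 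However, for coinvariant $\overline X\in (M^*)^{\mathrm{co}H}$ and coinvariant $\overline m\in M^{\mathrm{co}H}$ the pairing $\langle \overline X,\overline m\rangle$ is itself coinvariant in $H$, hence a scalar multiple of $1_H$ (since $H^{\mathrm{co}H}=\Bbbk\,1_H$); this identifies $\langle \overline X,\overline m\rangle$ with the $\Bbbk$-valued pairing $\langle\cdot,\cdot\rangle\colon (M^{\mathrm{co}H})^*\otimes M^{\mathrm{co}H}\to\Bbbk$. With this identification both sides reduce to $\langle \overline X,\overline m\rangle_\Bbbk\,\langle \overline Y,\overline n\rangle_\Bbbk\in\Bbbk\,1_H$, so they agree.

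Finally, I would note that both $(\overline\sigma^\mathcal{W}_{M,M})^*$ and $\overline\sigma^\mathcal{W}_{M^*,M^*}$ are $H$-bilinear (the former because the transpose of a bimodule morphism is one, by the discussion preceding the lemma; the latter by Woronowicz's construction), and that the full pairing is determined by its restriction to coinvariants via $H$-bilinearity. Hence agreement on coinvariant arguments, together with nondegeneracy of the pairing $(M^*)^2\otimes_H M^2\to H$, forces $(\overline\sigma^\mathcal{W}_{M,M})^*=\overline\sigma^\mathcal{W}_{M^*,M^*}$. The main obstacle, as indicated, is the bookkeeping in passing from the $H$-valued pairing to the $\Bbbk$-valued one on coinvariants, i.e. justifying that $\langle (M^*)^{\mathrm{co}H},M^{\mathrm{co}H}\rangle\subseteq\Bbbk\,1_H$ and that this is compatible with the onion-like iterated pairing; once that is in place the rest is the flip computation above.
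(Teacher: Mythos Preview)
Your proposal contains a genuine error in the description of how $\overline{\sigma}^\mathcal{W}$ acts on coinvariants. You take $\overline m,\overline n\in M^{\mathrm{co}H}$ (both \emph{right} coinvariant) and assert that $\overline{\sigma}^\mathcal{W}_{M,M}(\overline m\otimes_H\overline n)=\overline n\otimes_H\overline m$. This is not what the paper records: the Woronowicz braiding $\sigma^\mathcal{W}_{M,N}$ acts as the bare flip on ${}^{\mathrm{co}H}M\otimes_H N^{\mathrm{co}H}$, and correspondingly its inverse $\overline{\sigma}^\mathcal{W}_{N,M}$ acts as the flip on $N^{\mathrm{co}H}\otimes_H{}^{\mathrm{co}H}M$. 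In other words, the flip description requires one factor to be \emph{left} coinvariant and the other \emph{right} coinvariant. On $M^{\mathrm{co}H}\otimes_H M^{\mathrm{co}H}$ the braiding is in general \emph{not} the flip (if it were, $\sigma^\mathcal{W}$ would square to the identity, which is false for generic $H$). The same remark applies to your claim $\overline{\sigma}^\mathcal{W}_{M^*,M^*}(\overline X\otimes_H\overline Y)=\overline Y\otimes_H\overline X$ for $\overline X,\overline Y\in(M^*)^{\mathrm{co}H}$.

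Once you correct this by taking mixed coinvariants --- say $\overline m\in M^{\mathrm{co}H}$, $\overline n\in{}^{\mathrm{co}H}M$ and likewise $\overline X\in(M^*)^{\mathrm{co}H}$, $\overline Y\in{}^{\mathrm{co}H}(M^*)$ --- your strategy goes through and becomes essentially the paper's proof. The paper simply implements this with explicit dual bases $\{m_R^i\}\subset M^{\mathrm{co}H}$, $\{m_L^i\}\subset{}^{\mathrm{co}H}M$ and $\{\alpha^R_i\}$, $\{\alpha^L_i\}$, computing both pairings to a common Kronecker $\delta_i^p\delta_j^q$; your observation that $\langle\overline X,\overline m\rangle\in\Bbbk\,1_H$ for matching coinvariance types is exactly what makes those Kronecker deltas scalars.
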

\begin{proof}
By Theorem~\ref{FundThm} and the bilinearity of $\overline{\sigma}^\mathcal{W}$ it is
sufficient to verify the claim on coinvariant elements.
Let $\{m_R^i\}_{i\in I}$ and $\{m_L^i\}_{i\in I}$ be a basis of right and left
coinvariant elements of $M$, respectively and consider the dual bases $\{\alpha^R_i\}_{i\in I}$
and $\{\alpha^L_i\}_{i\in I}$. Then
\begin{align*}
    \langle(\overline{\sigma}^\mathcal{W}_{M,M})^*(\alpha^R_i\otimes_H\alpha^L_j),
    m_R^p\otimes_Hm_L^q\rangle
    &=\langle\alpha^R_i\otimes_H\alpha^L_j,
    \overline{\sigma}^\mathcal{W}_{M,M}(m_R^p\otimes_Hm_L^q)\rangle\\
    &=\langle\alpha^R_i\otimes_H\alpha^L_j,
    m_L^q\otimes_Hm_R^p\rangle\\
    &=\delta_i^p\delta_j^q\\
    &=\langle\alpha^L_j\otimes_H\alpha^R_i,
    m_R^p\otimes_Hm_L^q\rangle\\
    &=\langle\overline{\sigma}^\mathcal{W}_{M^*,M^*}(\alpha^R_i\otimes_H\alpha^L_j),
    m_R^p\otimes_Hm_L^q\rangle
\end{align*}
implies
$(\overline{\sigma}_{M,M}^\mathcal{W})^*=\overline{\sigma}^\mathcal{W}_{M^*,M^*}$
by $H$-bilinearity and 
non-degeneracy of the pairing.
\end{proof}

We now assume that $\overline{\sigma}^\mathcal{W}_{M,M}\colon
M\otimes_HM\rightarrow M\otimes_HM$ is diagonalisable and that has
$1$ among its eigenvalues.
This is for example the case for all
quantum groups of the $A$, $B$, $C$, $D$ series, with $M$ the
bicovariant bimodule of $1$-forms, cf. \cite[Sec. 3]{Jurco},
\cite[Prop. 3.1]{BhowmickCov}.
 We denote by $V_\lambda\subseteq M$  the eigenspace of
$\overline{\sigma}^\mathcal{W}_{M,M}$ with eigenvalue $\lambda\in
\Lambda$.  The
set $\Lambda$ of eigenvalues is finite since $M^2$ is a free and finitely
generated right $H$-module and $\overline{\sigma}^\mathcal{W}_{M,M}$
is in particular right $H$-linear. We have
\begin{equation}\label{eq81}
    M^2=\bigoplus\nolimits_{\lambda\in\Lambda}V_\lambda
    ~~~\text{ and }~~~    \overline{\sigma}^\mathcal{W}_{M,M}=\sum\nolimits_{\lambda\in\Lambda}\lambda P_\lambda~,
\end{equation}
where $P_\lambda\colon M^2\rightarrow M^2$, $P_\lambda^2=P_\lambda$
is the projector to the eigenspace $V_\lambda$ corresponding to
the eigenvalue $\lambda$.
The explicit expression of the projectors is 
\begin{equation}\label{Pnu}
  P_\nu
    =\prod_{\lambda\in\Lambda\setminus\{\nu\}}\frac{\overline{\sigma}^\mathcal{W}_{M,M}
    -\lambda\mathbb{1}}{\nu-\lambda}~,\qquad\nu\in\Lambda~,
\end{equation}
as is easily seen evaluating them on vectors $\omega_\rho\in V_\rho$
for the two cases $\rho=\nu$ and $\rho\not=\nu$. They are bicovariant
bimodules morphisms because compositions of bicovariant bimodule
morphisms.  
The eigenspaces $V_\lambda=P_\lambda(M^2)$ are bicovariant
bimodules since images of bicovariant bimodule morphisms, therefore $
M^2=\bigoplus\nolimits_{\lambda\in\Lambda}V_\lambda$ is a direct
sum of bicovariant bimodules.

We define the vector subspaces
$$
V^*_\lambda
:=\{X\in(M^*)^2~|~\langle X,V_\rho\rangle=0\text{ for all }\rho\neq\lambda\}
$$
and
$$
V_\lambda^\perp
:=\{X\in (M^*)^2~|~ \langle X, V_\lambda\rangle=0\}
$$
for all eigenvalues $\lambda\in\Lambda$. The vector subspace
$V^*_\lambda\subseteq(M^*)^2$ is a subbicovariant bimodule, indeed,
since
 $V_\lambda$ is a  bicovariant bimodule
and the pairing is a morphism of bicovariant bimodules,
$V^*_\lambda$  is closed  under the $H$-actions and the $H$-coactions.
Similarly $V_\lambda^\perp\subseteq(M^*)^2$ is a subbicovariant bimodule.
\begin{lemma}\label{lemma15}
Let $\overline{\sigma}^\mathcal{W}_{M,M}$ be diagonalisable with eigenvalues
$\Lambda$, eigenspaces $V_\lambda$ and projectors $P_\lambda$
as in (\ref{eq81}).
Then the following statements hold:
\begin{enumerate}
\item[i.)]
$(M^*)^2=\bigoplus\nolimits_{\lambda\in\Lambda}V^*_\lambda$
as bicovariant bimodules.

\item[ii.)]  The transpose of the braiding,
$\overline{\sigma}^\mathcal{W}_{M^*,M^*}\colon (M^*)^2\to (M^*)^2$,
has the projector decomposition
$$\overline{\sigma}^\mathcal{W}_{M^*,M^*}=\sum\nolimits_{\lambda\in\Lambda}\lambda
P^*_\lambda$$
where the projectors $P^*_\lambda:  (M^*)^{2} \to
(M^*)^{2} $ to the
eigenspaces  $V^*_\lambda$ are the transpose of the projectors
$P_\lambda$, i.e., $P^*_\lambda=(P_\lambda)^*$.
\item[iii.)] For any $\lambda\in\Lambda$ there is an isomorphism
$V^*_\lambda\cong(V_\lambda)^*:=\mathrm{Hom}_H(V_\lambda,H)$
of bicovariant bimodules.

\item[iv.)] For any $\lambda\in\Lambda$ there is an isomorphism
$V^*_\lambda\cong(M^*)^2/V_\lambda^\perp$ of bicovariant bimodules.

\end{enumerate}
\end{lemma}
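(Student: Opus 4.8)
The plan is to deduce all four statements from the projector decomposition $\overline{\sigma}^\mathcal{W}_{M,M}=\sum_{\lambda}\lambda P_\lambda$ in \eqref{eq81} by applying the transpose operation $\Phi\mapsto\Phi^*$, which by Lemma~\ref{lemma03} sends $\overline{\sigma}^\mathcal{W}_{M,M}$ to $\overline{\sigma}^\mathcal{W}_{M^*,M^*}$, together with the nondegeneracy of the pairing. For \textbf{ii.)}, I would set $P^*_\lambda:=(P_\lambda)^*$ and observe that transposition is linear and reverses composition but in fact preserves it on idempotents commuting suitably: from $P_\lambda^2=P_\lambda$ one gets $(P^*_\lambda)^2=(P_\lambda^2)^*=P^*_\lambda$ (note the order in $(\Psi\circ\Phi)^*=\Phi^*\circ\Psi^*$ is harmless here since both factors are $P_\lambda$), and from $\sum_\lambda P_\lambda=\mathrm{id}$, $P_\lambda P_\mu=\delta_{\lambda\mu}P_\lambda$ one obtains $\sum_\lambda P^*_\lambda=\mathrm{id}$ and $P^*_\lambda P^*_\mu=\delta_{\lambda\mu}P^*_\lambda$ by transposing. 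Applying transposition to $\overline{\sigma}^\mathcal{W}_{M,M}=\sum_\lambda\lambda P_\lambda$ and using Lemma~\ref{lemma03} then yields $\overline{\sigma}^\mathcal{W}_{M^*,M^*}=\sum_\lambda\lambda P^*_\lambda$. Each $P^*_\lambda$ is a bicovariant bimodule morphism since transposes of such are (as recalled in the text before Lemma~\ref{lemma03}), so \textbf{i.)} follows: $(M^*)^2=\bigoplus_\lambda \mathrm{Im}(P^*_\lambda)$ is a direct sum of bicovariant bimodules, and it remains only to identify $\mathrm{Im}(P^*_\lambda)$ with $V^*_\lambda$. For this I would argue that $X=P^*_\lambda(X)$ iff $\langle X,P_\mu(m)\rangle=0$ for all $\mu\neq\lambda$ and all $m$ (using $\langle P^*_\mu X,m\rangle=\langle X,P_\mu m\rangle$ and the fact that $X=\sum_\mu P^*_\mu X$ with the summands lying in complementary spaces), which by $V_\mu=\mathrm{Im}P_\mu$ is exactly the condition $\langle X,V_\mu\rangle=0$ for $\mu\neq\lambda$, i.e. $X\in V^*_\lambda$; nondegeneracy of the pairing is what makes this equivalence tight.

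For \textbf{iii.)}, I would produce the isomorphism $V^*_\lambda\to(V_\lambda)^*=\mathrm{Hom}_H(V_\lambda,H)$ by restriction: send $X\in V^*_\lambda\subseteq(M^*)^2$ to the map $v\mapsto\langle X,v\rangle$ on $V_\lambda$. This is well defined and $H$-bilinear/$H$-bicolinear because the pairing is a bicovariant bimodule morphism. Injectivity: if $\langle X,v\rangle=0$ for all $v\in V_\lambda$ then, combined with $\langle X,V_\mu\rangle=0$ for $\mu\neq\lambda$ (definition of $V^*_\lambda$) and the decomposition $M^2=\bigoplus_\mu V_\mu$, we get $\langle X,M^2\rangle=0$, hence $X=0$ by nondegeneracy. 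Surjectivity: given $\xi\in\mathrm{Hom}_H(V_\lambda,H)$, extend it by zero on the complement $\bigoplus_{\mu\neq\lambda}V_\mu$ to get an $H$-linear map $\tilde\xi:M^2\to H$; since $M^2$ is finitely generated (being built from the finitely generated $M$), Proposition~\ref{prop100}~iii.) identifies $\mathrm{Hom}_H$ with $\mathrm{HOM}_H$, so $\tilde\xi$ defines an element of $(M^2)^*\cong(M^*)^2$ which manifestly lies in $V^*_\lambda$ and restricts to $\xi$. For \textbf{iv.)}, the orthogonal complement $V_\lambda^\perp=\{X:\langle X,V_\lambda\rangle=0\}$ equals $\bigoplus_{\mu\neq\lambda}V^*_\mu$ by the same bookkeeping as in i.), so the projection $(M^*)^2\to(M^*)^2/V_\lambda^\perp$ restricted to the complementary summand $V^*_\lambda$ is a bicovariant bimodule isomorphism onto the quotient.

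The only genuinely delicate point is the order-reversal in transposition, $(\Psi\circ\Phi)^*=\Phi^*\circ\Psi^*$: one must check it does not spoil idempotency or the orthogonality relations among the $P^*_\lambda$. It does not, precisely because these relations only involve a single $P_\lambda$ or a product $P_\lambda P_\mu$ that transposes to $P^*_\mu P^*_\lambda$, and $P^*_\mu P^*_\lambda=\delta_{\lambda\mu}P^*_\lambda$ is symmetric in $\lambda,\mu$; likewise $\sum_\lambda P_\lambda=\mathrm{id}$ is self-transpose. Everything else is a routine transcription of finite-dimensional linear algebra for eigenspace decompositions, transported to the bicovariant bimodule setting via the facts, already established in the excerpt, that the pairing is nondegenerate and a morphism, that transposes of morphisms are morphisms, and that $M^2$ being finitely generated makes $\mathrm{Hom}_H$ and $\mathrm{HOM}_H$ agree (Proposition~\ref{prop100}).
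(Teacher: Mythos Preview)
Your proof is correct and follows essentially the same approach as the paper: both hinge on Lemma~\ref{lemma03}, nondegeneracy of the pairing, and restriction/extension-by-zero for part iii.). The minor differences are organizational---you establish ii.) first and derive i.) from the projector decomposition (which is arguably cleaner), and for iv.) you use the direct-sum identification $V_\lambda^\perp=\bigoplus_{\mu\neq\lambda}V^*_\mu$ directly, whereas the paper instead constructs a nondegenerate pairing $(M^*)^2/V_\lambda^\perp\otimes_H V_\lambda\to H$ and invokes uniqueness of duals; both routes are equally valid.
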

\begin{proof}
\begin{enumerate}
\item[i.)]  $V^*_\lambda\cap V^*_\rho=\{0\}$ if $\lambda\neq\rho$,
by non-degeneracy of the evaluation. Therefore
$\bigoplus\nolimits_{\lambda\in\Lambda}V^*_\lambda\subseteq(M^*)^2$. To
show the other inclusion let $X\in(M^*)^2$ and
$\omega=\sum\nolimits_{\lambda\in\Lambda}\omega_\lambda$ be arbitrary,
where $\omega_\lambda\in V_\lambda$. Define $X_\lambda\in V^*_\lambda$ by
$X_\lambda(\omega_\lambda)=X(\omega_\lambda)$ and $X_\lambda(\omega_\rho)=0$
for $\rho\neq\lambda$. This shows that $X$ decomposes into a direct sum
$X=\sum\nolimits_{\lambda\in\Lambda}X_\lambda$.

\item[ii.)]  The transpose $(P_\nu)^*:(M^*)^2\to
  (M^*)^2$ of the projector $P_\nu$ satisfies, for any
  $X_\nu\in V^*_\nu$ and $\omega_\rho\in V_\rho$ with $\rho\not=\nu$, 
  $\langle (P_\nu)^* X_\nu,\omega_\rho\rangle=\langle X_\nu,P_\nu \omega_\rho\rangle=0$. Moreover,
  for any
   $X_\nu\in V^*_\nu$ and   $\omega_\nu\in V_\nu$,
  $\langle (P_\nu)^* X_\nu,v_\nu\rangle=\langle X_\nu,P_\nu
  \omega_\nu\rangle=\langle X_\nu, \omega_\nu\rangle$ shows that $(P_\nu)^*$
  is the identity on $V_\nu^*$. Hence  $(P_\nu)^*$ equals the
  projector $P_\nu^*\colon (M^*)^2\to
  (M^*)^2$ to the subspace  $V_\nu^*$.
  From Lemma~\ref{lemma03} we have 
$$
\overline{\sigma}^\mathcal{W}_{M^*,M^*}
=(\overline{\sigma}^\mathcal{W}_{M,M})^*
=\big(\sum\nolimits_{\lambda\in\Lambda}\lambda P_\lambda\big)^*
=\sum\nolimits_{\lambda\in\Lambda}\lambda P^*_\lambda
$$
as claimed.
\item[iii.)] Every element of $(M^*)^2\cong (M^2)^*$ 
can be restricted to an $H$-linear map $V_\lambda\rightarrow H$, so in particular $V^*_\lambda
\hookrightarrow(V_\lambda)^*$, $\omega_\lambda\mapsto
\langle\omega_\lambda , \cdot\rangle$ is an injection of  bicovariant
bimodules. On the other hand, every
$X\in(V_\lambda)^*$ can be extended to $(M^2)^*\cong (M^*)^2$ by setting it
equal zero on all $V_\rho$ for $\rho\neq\lambda$. By definition this extension is
in $V^*_\lambda\subseteq  (M^*)^2$ and inverse to the
previous restriction, proving that the bicovariant bimodule injection
$V^*_\lambda
\hookrightarrow(V_\lambda)^*$, $\omega_\lambda\mapsto
\langle\omega_\lambda , \cdot\rangle$ is an isomorphism.

\item[iv.)] The quotient $(M^2)^*/
V_\lambda^\perp$ is a
bicovariant bimodule, since it is a quotient of bicovariant bimodules. We prove that there is a non-degenerate pairing
\begin{equation}\label{pairinglambda}
    \langle\cdot,\cdot\rangle_\lambda\colon(M^*)^2/V_\lambda^\perp
    \otimes_HV_\lambda\rightarrow H~,~~~~~
    [X]\otimes_H\omega\mapsto\langle[X],\omega\rangle_\lambda
    :=\langle X , \omega\rangle
\end{equation}
of bicovariant bimodules.
First of all, the $H$-bimodule map in (\ref{pairinglambda}) is well-defined since, if
$[X]=0$ any representative $X$ is in $V_\lambda^\perp$ and so
$\langle X,V_\lambda\rangle=0$. The pairing is a morphism of bicovariant
bimodules since it is induced from the bicovariant bimodules pairing
$\langle\cdot,\cdot\rangle$ via a quotient of bicovariant
bimodules. Nondegeneracy follows from that of
$\langle\cdot,\cdot\rangle$.
This nondegenerate pairing implies that 
$(M^2)^*/V_\lambda^\perp\cong(V_\lambda)^*$ and thus the quotient
is isomorphic to $V_\lambda^*$ by $iii.)$.
\end{enumerate}
\end{proof}

Recalling that the braided anti-symmetrization on $M^2$ is
$A_2={\rm id}_{M^2}-\overline{\sigma}^\mathcal{W}_{M,M}$ we
see that the eigenspace with eigenvalue $\lambda=1$ is $V_1=S^2={\rm ker}A_2$, then  ${\rm Im}A_2
 =\bigoplus\nolimits_{\lambda\in\Lambda\setminus\{1\}}V_\lambda$. It
 is convenient to introduce the  notation
\begin{equation*}
  M^{\vee2}:=M\vee M:={\rm ker}A_2=S^2=V_1~.
\end{equation*}
We call $M^{\vee2}$ the \textit{braided symmetric tensors} of
$M^2$, and  $M^{\wedge 2}=M\wedge M={\rm
  Im}A_2$ the \textit{braided antisymmetric tensors}, cf. \eqref{BEA}.
From \eqref{eq81} and \eqref{Pnu} we have in particular the direct sum of bicovariant bimodules
\begin{equation}
    M^2=M^{\vee2}\oplus M^{\wedge2}
\end{equation}
 with the bicovariant bimodule morphisms
$$
P_\vee:=P_1=\prod_{\lambda\in\Lambda\setminus\{1\}}\frac{\overline{\sigma}^\mathcal{W}_{M,M}
    -\lambda\mathbb{1}}{1-\lambda}\colon M^2\rightarrow M^2
\quad\text{ and }\quad
P_\wedge=\mathrm{id}_{M^2}-P_\vee\colon M^2\rightarrow M^2~,
$$ 
that are the
projectors to $M^{\vee 2}$ and $M^{\wedge 2}$, respectively.

Dually, recalling Lemma \eqref{lemma15} ii.) we set
\begin{equation*}
  (M^*)^{\vee2}:=  M^*\vee M^*:={\rm ker} A_2^*=V_1^*~,~~
    (M^*)^{\wedge 2}:=M^*\wedge M^*:={\rm
    Im}A^*_2=\bigoplus\nolimits_{\lambda\in\Lambda\setminus\{1\}}V^*_\lambda
\end{equation*}
and consider the direct sum decomposition
$(M^*)^2=(M^*)^{\vee2}\oplus (M^*)^{\wedge 2}$.
Choosing $\lambda=1$ in \eqref{pairinglambda} we obtain the
nondegenerate pairing
$
    \langle\cdot,\cdot\rangle_1
    \colon(M^*)^2/V_1^\perp\otimes_HM^{\vee2}
    \rightarrow H
$ that composed with the isomorphism $V_1^*\cong
(M^*)^2/V_1^\perp$, cf. Lemma \eqref{lemma15} iv.), gives
the nondegenerate pairing of braided symmetric tensors
\begin{equation}\label{eq80}
    \langle\cdot,\cdot\rangle_\vee
    \colon(M^*)^{\vee2}\otimes_HM^{\vee2}
    \rightarrow H~.
\end{equation}
\subsection{Bicovariant Differential Calculi and Structure Constants}\label{Sec3.2}

Recall that a differential graded algebra $(\Omega^\bullet,\wedge,\mathrm{d})$
is a ($\mathbb{N}_0$-)graded algebra
$\Omega^\bullet=\bigoplus\nolimits_{k\in\mathbb{N}_0}\Omega^k$
with graded product 
$\wedge\colon\Omega^k\otimes\Omega^\ell\rightarrow\Omega^{k+\ell}$,
$k,\ell\in\mathbb{N}_0$ and a graded map
$\mathrm{d}\colon\Omega^\bullet\rightarrow\Omega^{\bullet+1}$ of degree $+1$
(the \textit{differential}), which
squares to zero ($\mathrm{d}^2=0$) and satisfies the
\textit{graded Leibniz rule}
$$
\mathrm{d}(\omega\wedge\eta)=\mathrm{d}\omega\wedge\eta
+(-1)^{|\omega|}\omega\wedge\mathrm{d}\eta
$$
for all homogeneous elements $\omega,\eta\in\Omega^\bullet$, 
where $|\omega|\in\mathbb{N}_0$ denotes the degree of $\omega$ \footnote{
The graded algebra $\Omega^\bullet$ is not necessarily 
the braided exterior algebra
$\Lambda^\bullet M$ of a bicovariant bimodule $M$ defined in
\eqref{BEA}. Furthermore, the notation $\Omega^\bullet$ and $\Omega^k$ should not
be confused with the tensor product algebra $M^\bullet$, $M^k$.}.
A \textit{differential calculus} on an algebra $A$ is a differential graded
algebra $(\Omega^\bullet,\wedge,\mathrm{d})$ such that $\Omega^0=A$ and 
$\Omega^k=\mathrm{span}_\Bbbk\{a^0\mathrm{d}a^1\wedge\ldots\wedge\mathrm{d}a^k~|~
a^0,\ldots,a^k\in A\}$ for all $k>0$.
Note that the graded product is by assumption associative
but not graded commutative in general.
The data $(\Gamma:=\Omega^1,\mathrm{d}\colon A\rightarrow\Gamma)$ constitutes
a so-called \textit{first order differential calculus} (for short FODC) on $A$, i.e.
an $A$-bimodule $\Gamma$ together with a linear map
$\mathrm{d}\colon A\rightarrow\Gamma$ such that the \textit{Leibniz rule}
$\mathrm{d}(ab)=(\mathrm{d}a)b+a\mathrm{d}b$ holds for all $a,b\in A$ and
the \textit{surjectivity condition} 
$A=\mathrm{span}_\Bbbk\{a\mathrm{d}b~|~a,b\in A\}$ is satisfied.
The prototypical example of a differential calculus is the differential
graded algebra of differential forms $(\Omega^\bullet(M),\wedge,\mathrm{d})$
on a smooth manifold $M$, with $\wedge$ the wedge product and $\mathrm{d}$ the
de Rham differential. The corresponding FODC is given by the smooth sections
$\Gamma=\Gamma^\infty(T^*M)$ of the cotangent bundle and the differential in
zero degree. 
For a Lie group $G$ we can use the pullback of the left and right multiplication on $G$
to induce additional $\mathscr{C}^\infty(G)$-coactions on $\Omega^1(G)$.
This example has the following noncommutative generalization. As before, we denote
by $H$ a Hopf algebra (with invertible antipode).
\begin{definition}\label{FODC}
A differential calculus $(\Omega^\bullet,\wedge,\mathrm{d})$ on $H$
is said to be bicovariant if for all $k>0$
and $a^{i,0},\ldots,a^{i,k}\in H$ with
$\sum_ia^{i,0}\mathrm{d}a^{i,1}\wedge\ldots\wedge\mathrm{d}a^{i,k}=0$
(finite sum over $i$) we have
\begin{equation*}
    \sum\nolimits_i\Delta(a^{i,0})(\mathrm{d}\otimes\mathrm{id})(\Delta(a^{i,1}))\ldots(\mathrm{d}\otimes\mathrm{id})(\Delta(a^{i,k}))
    =0
    =\sum\nolimits_i\Delta(a^{i,0})(\mathrm{id}\otimes\mathrm{d})(\Delta(a^{i,1}))\ldots(\mathrm{id}\otimes\mathrm{d})(\Delta(a^{i,k}))~.
\end{equation*}
It is said to be right covariant if it satisfies the first equation and
left covariant if it satisfies the second equation.
A FODC is said to be bicovariant (right, left covariant) if the above conditions are satisfied in first degree.
\end{definition}
The rationale of Definition~\ref{FODC} is the following.
To any bicovariant differential calculus $(\Omega^\bullet,\wedge,\mathrm{d})$ on $H$
we associate two linear maps
$\delta_\Omega\colon\Omega^\bullet\rightarrow\Omega^\bullet\otimes H$ and
$\lambda_\Omega\colon\Omega^\bullet\rightarrow H\otimes\Omega^\bullet$
by
\begin{equation}\label{eq08}
\begin{split}
    \delta_\Omega(\omega)
    &=\sum\nolimits_i\Delta(a^{i,0})(\mathrm{d}\otimes\mathrm{id})(\Delta(a^{i,1}))\ldots(\mathrm{d}\otimes\mathrm{id})(\Delta(a^{i,k}))~,\\
    \lambda_\Omega(\omega)
    &=\sum\nolimits_i\Delta(a^{i,0})(\mathrm{id}\otimes\mathrm{d})(\Delta(a^{i,1}))\ldots(\mathrm{id}\otimes\mathrm{d})(\Delta(a^{i,k}))
\end{split}
\end{equation}
for all $k>0$ and
$\omega=\sum_ia^{i,0}\mathrm{d}a^{i,1}\wedge\ldots\wedge\mathrm{d}a^{i,k}
\in\Omega^k$, where $a^{i,0},\ldots,a^{i,k}\in H$.
Note that $\delta_\Omega$ and $\lambda_\Omega$ are well-defined through the
right and left covariance property of Definition~\ref{FODC}, respectively.
\begin{proposition}\label{prop02}
Let $(\Omega^\bullet,\wedge,\mathrm{d})$ be a differential calculus on $H$.
It is bicovariant if and only if $\Omega^\bullet$ is a graded $H$-bicomodule algebra
and $\mathrm{d}$ is $H$-bicolinear. In this case
the graded $H$-coactions are determined by (\ref{eq08}).
\end{proposition}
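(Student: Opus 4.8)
The plan is to observe that the right hand sides of \eqref{eq08} are the \emph{only} candidates for the $H$-coactions on $\Omega^\bullet$, so that the whole statement reduces to: (i) those formulas are well defined if and only if the calculus is bicovariant, and (ii) when well defined they automatically obey the graded bicomodule algebra axioms with $\mathrm{d}$ bicolinear. The one structural ingredient I would set up first is that $\Omega^\bullet\otimes H$ is itself a differential graded algebra, with product $(\omega\otimes h)(\omega'\otimes h'):=(\omega\wedge\omega')\otimes hh'$ and differential $\mathrm{d}\otimes\mathrm{id}_H$ — the graded Leibniz rule and $(\mathrm{d}\otimes\mathrm{id}_H)^2=0$ being inherited from $\Omega^\bullet$ — and symmetrically that $H\otimes\Omega^\bullet$ is a DGA with differential $\mathrm{id}_H\otimes\mathrm{d}$. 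I would also record that $\mathrm{d}(a^0\mathrm{d}a^1\wedge\cdots\wedge\mathrm{d}a^k)=\mathrm{d}a^0\wedge\mathrm{d}a^1\wedge\cdots\wedge\mathrm{d}a^k$, since $\mathrm{d}^2=0$ and the Leibniz rule annihilate all other terms, so that the differential of a standard generator is again a standard generator.

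For the direction ``bicovariant $\Rightarrow$ graded bicomodule algebra with bicolinear $\mathrm{d}$'', bicovariance is by definition exactly the assertion that $\delta_\Omega$ and $\lambda_\Omega$ of \eqref{eq08} are well defined linear maps, so only the axioms remain. Respect for the grading is immediate from the formula. Counitality and coassociativity of $\delta_\Omega,\lambda_\Omega$, and the commutation $(\mathrm{id}_H\otimes\delta_\Omega)\circ\lambda_\Omega=(\lambda_\Omega\otimes\mathrm{id}_H)\circ\delta_\Omega$, I would check by evaluating on a generator $\omega=a^0\mathrm{d}a^1\wedge\cdots\wedge\mathrm{d}a^k$ and reducing to $(\mathrm{id}\otimes\epsilon)\Delta=\mathrm{id}$ and $(\Delta\otimes\mathrm{id})\Delta=(\mathrm{id}\otimes\Delta)\Delta$ applied in each tensor slot. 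Bicolinearity of $\mathrm{d}$ follows from the remark above: $\delta_\Omega(\mathrm{d}\omega)=(\mathrm{d}\otimes\mathrm{id}_H)(\Delta(a^0))\,(\mathrm{d}\otimes\mathrm{id}_H)(\Delta(a^1))\cdots$, and since $\mathrm{d}\otimes\mathrm{id}_H$ annihilates each factor $(\mathrm{d}\otimes\mathrm{id}_H)(\Delta(a^i))$, the graded Leibniz rule in $\Omega^\bullet\otimes H$ rewrites this as $(\mathrm{d}\otimes\mathrm{id}_H)\big(\delta_\Omega(\omega)\big)$; symmetrically for $\lambda_\Omega$.

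The step I expect to be the main obstacle is multiplicativity of $\delta_\Omega$ (and of $\lambda_\Omega$), because $\Omega^\bullet$ is not free on $H$ and $\mathrm{d}H$, so one cannot ``define on generators and extend'' but must verify compatibility with the defining relation $\mathrm{d}a\cdot b=\mathrm{d}(ab)-a\,\mathrm{d}b$. My plan is to reduce $\delta_\Omega(\omega\wedge\eta)=\delta_\Omega(\omega)\,\delta_\Omega(\eta)$ to the two statements $\delta_\Omega(\omega\wedge\mathrm{d}b)=\delta_\Omega(\omega)\,(\mathrm{d}\otimes\mathrm{id}_H)(\Delta(b))$ and $\delta_\Omega(\omega\cdot b)=\delta_\Omega(\omega)\,\Delta(b)$ for $\omega\in\Omega^k$ and $b\in H$, because a general standard $\eta=b^0\mathrm{d}b^1\wedge\cdots\wedge\mathrm{d}b^\ell$ is reached from $\omega$ by right multiplication with $b^0$ followed by successive wedges with $\mathrm{d}b^1,\ldots,\mathrm{d}b^\ell$. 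The first statement is immediate since $\omega\wedge\mathrm{d}b$ is already a standard generator. For the second I would induct on $k$: writing $\omega=\tilde\omega\wedge\mathrm{d}a^k$ with $\tilde\omega$ of degree $k-1$, one has $\omega\cdot b=\tilde\omega\wedge\mathrm{d}(a^kb)-(\tilde\omega\cdot a^k)\wedge\mathrm{d}b$, and applying $\delta_\Omega$ together with the first statement, the inductive hypothesis $\delta_\Omega(\tilde\omega\cdot a^k)=\delta_\Omega(\tilde\omega)\Delta(a^k)$, and the identity $(\mathrm{d}\otimes\mathrm{id}_H)(\Delta(a^k))\cdot\Delta(b)=(\mathrm{d}\otimes\mathrm{id}_H)(\Delta(a^kb))-\Delta(a^k)(\mathrm{d}\otimes\mathrm{id}_H)(\Delta(b))$ — which holds in $\Omega^\bullet\otimes H$ because $\Delta$ is an algebra map and $\mathrm{d}\otimes\mathrm{id}_H$ a graded derivation — collapses the right hand side to $\delta_\Omega(\tilde\omega)(\mathrm{d}\otimes\mathrm{id}_H)(\Delta(a^k))\cdot\Delta(b)=\delta_\Omega(\omega)\Delta(b)$; the base case $k=0$ is $\Delta(a^0b)=\Delta(a^0)\Delta(b)$. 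The argument for $\lambda_\Omega$ in $H\otimes\Omega^\bullet$ is the mirror image.

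For the converse, suppose $\Omega^\bullet$ carries graded $H$-coactions making it an $H$-bicomodule algebra whose degree-zero part is $(H,\Delta,\Delta)$, with $\mathrm{d}$ $H$-bicolinear. Then $\delta_\Omega$ is an algebra map and satisfies $\delta_\Omega\circ\mathrm{d}=(\mathrm{d}\otimes\mathrm{id}_H)\circ\delta_\Omega$, i.e.\ it is a morphism of differential graded algebras $\Omega^\bullet\to\Omega^\bullet\otimes H$; evaluating on a standard generator and using multiplicativity together with $\delta_\Omega(\mathrm{d}a)=(\mathrm{d}\otimes\mathrm{id}_H)(\delta_\Omega(a))=(\mathrm{d}\otimes\mathrm{id}_H)(\Delta(a))$ produces exactly the right hand side of \eqref{eq08}. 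In particular that right hand side is well defined, which is the first covariance condition of Definition~\ref{FODC}; the analogous argument with $\lambda_\Omega$ gives the second, so the calculus is bicovariant and, as a byproduct, its $H$-coactions are necessarily those of \eqref{eq08}. (If one only wishes to assume $\mathrm{d}$ to be colinear on $\Omega^0$, the identity $\delta_\Omega\circ\mathrm{d}=(\mathrm{d}\otimes\mathrm{id}_H)\circ\delta_\Omega$ still propagates to all of $\Omega^\bullet$, since the set where it holds is a subalgebra stable under $\mathrm{d}$ that contains $\Omega^0$, and $\Omega^\bullet$ is generated as a DGA by $\Omega^0$.)
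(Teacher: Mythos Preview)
Your proposal is correct. The paper does not give its own proof of this proposition but refers to \cite[Sect.~12.2.3]{KlimykSchmudgen}, and what you have written is essentially the standard argument found there: identify $\Omega^\bullet\otimes H$ and $H\otimes\Omega^\bullet$ as differential graded algebras, observe that any algebra coaction with $\mathrm{d}$ bicolinear must be a DGA morphism and is therefore forced to satisfy \eqref{eq08} on standard generators, and conversely verify the bicomodule-algebra axioms directly once \eqref{eq08} is well defined. Your inductive handling of $\delta_\Omega(\omega\cdot b)=\delta_\Omega(\omega)\Delta(b)$ via the Leibniz identity $(\mathrm{d}a^k)b=\mathrm{d}(a^kb)-a^k\mathrm{d}b$ is exactly the point where the non-freeness of $\Omega^\bullet$ has to be dealt with, and the argument is sound.

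One small remark: the proposition as stated is slightly underspecified, since an abstract $H$-bicomodule algebra structure on $\Omega^\bullet$ need not restrict to $\Delta$ on $\Omega^0=H$. Your explicit hypothesis ``whose degree-zero part is $(H,\Delta,\Delta)$'' is the intended reading --- it is what makes the subsequent corollary on FODCs (that $\Gamma$ is a bicovariant bimodule with $\mathrm{d}$ bicolinear) come out correctly --- and is implicit in the textbook treatment as well.
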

See e.g. \cite[Sect. 12.2.3]{KlimykSchmudgen} for a proof of 
Proposition~\ref{prop02}.
As a corollary it follows that a FODC 
$(\Gamma,\mathrm{d})$ on $H$ is bicovariant if and only if $\Gamma$ is a bicovariant
bimodule and $\mathrm{d}\colon H\rightarrow\Gamma$ is an $H$-bicolinear map.
Given a bicovariant FODC $(\Gamma,\mathrm{d})$ on $H$ we can consider
the braided exterior algebra $(\Lambda^\bullet\Gamma,\wedge,1_H)$ of 
Proposition~\ref{prop111}.

\begin{proposition}[\cite{Woronowicz1989}~Thm.~4.1]\label{propBEA}
Every bicovariant FODC $(\Gamma,\mathrm{d})$ on $H$ has a unique extension to a
bicovariant differential calculus $(\Lambda^\bullet\Gamma,\wedge,\mathrm{d})$.
\end{proposition}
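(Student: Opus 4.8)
The plan is to show first that any extension is forced, hence unique; then to take the only possible formula as a \emph{definition} and reduce the entire existence statement to a single well-definedness claim, which is where the content lies.

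\medskip

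\emph{Uniqueness.} If $(\Lambda^\bullet\Gamma,\wedge,\mathrm{d})$ is a differential calculus extending $(\Gamma,\mathrm{d})$, then $\Lambda^k\Gamma=\Gamma^{\wedge k}$ is spanned by the elements $a^0\,\mathrm{d}a^1\wedge\cdots\wedge\mathrm{d}a^k$, and the graded Leibniz rule together with $\mathrm{d}^2=0$ forces
\[
\mathrm{d}\bigl(a^0\,\mathrm{d}a^1\wedge\cdots\wedge\mathrm{d}a^k\bigr)=\mathrm{d}a^0\wedge\mathrm{d}a^1\wedge\cdots\wedge\mathrm{d}a^k ,
\]
so $\mathrm{d}$ is determined on a spanning set and is therefore unique.

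\medskip

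\emph{Reduction to a well-definedness statement.} Take the displayed formula as the definition of the candidate map $\mathrm{d}\colon\Lambda^k\Gamma\to\Lambda^{k+1}\Gamma$. Using the realization $\Lambda^n\Gamma=\mathrm{Im}\,A_n\subseteq\Gamma^{\otimes_H n}$ with $m^1\wedge\cdots\wedge m^n=A_n(m^1\otimes_H\cdots\otimes_H m^n)$, the candidate is well defined precisely when, for every $k$,
\[
\sum\nolimits_i a^{i,0}\,\mathrm{d}a^{i,1}\otimes_H\cdots\otimes_H\mathrm{d}a^{i,k}\in S^k \ \Longrightarrow\ \sum\nolimits_i \mathrm{d}a^{i,0}\otimes_H\mathrm{d}a^{i,1}\otimes_H\cdots\otimes_H\mathrm{d}a^{i,k}\in S^{k+1} .
\]
Granting this implication, everything else is formal. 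The graded Leibniz rule follows from the Leibniz rule on $\Gamma$ by a short induction that moves coefficients out of the wedge one at a time; $\mathrm{d}^2=0$ holds because $\mathrm{d}$ applied to $\mathrm{d}a^0\wedge\cdots\wedge\mathrm{d}a^k$, written as $1\cdot\mathrm{d}a^0\wedge\cdots\wedge\mathrm{d}a^k$, equals $\mathrm{d}(1)\wedge\mathrm{d}a^0\wedge\cdots\wedge\mathrm{d}a^k=0$; the spanning property $\Lambda^k\Gamma=\mathrm{span}\{a^0\,\mathrm{d}a^1\wedge\cdots\wedge\mathrm{d}a^k\}$ follows from $\Gamma=\mathrm{span}\{a\,\mathrm{d}b\}$ and the Leibniz rule; and $\mathrm{d}$ on $\Lambda^\bullet\Gamma$ is $H$-bicolinear, being assembled from the bicolinear map $\mathrm{d}\colon H\to\Gamma$ and the bicomodule morphism $\wedge$, so that by Proposition~\ref{prop02} $(\Lambda^\bullet\Gamma,\wedge,\mathrm{d})$ is a bicovariant differential calculus.

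\medskip

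\emph{Proving well-definedness; the main obstacle.} I would establish the implication using the bicovariant structure of $\Gamma$. By Theorem~\ref{FundThm} choose a basis $\{\omega^a\}$ of left-coinvariant forms, so that $\Gamma=\bigoplus_a\omega^a H$ as a free right $H$-module; record the structure data of the FODC — functionals $f^b{}_a\in H^*$ governing the rewriting of left- in terms of right-multiplication, quantum tangent vectors $\chi^a\in H^*$ through which $\mathrm{d}$ is expressed, and the numerical braiding $R^{ab}{}_{cd}\in\Bbbk$ defined by $\overline{\sigma}^\mathcal{W}_{\Gamma,\Gamma}(\omega^a\otimes_H\omega^b)=\sum_{c,d}R^{ab}{}_{cd}\,\omega^c\otimes_H\omega^d$. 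Bicovariance of the FODC is exactly the statement that these data satisfy a system of compatibility identities (the ``quantum Lie algebra'' relations and the quantum Yang--Baxter equation \eqref{QYBE} for $R$), obtained by feeding coassociativity of $\Delta$ into $\mathrm{d}(hk)=(\mathrm{d}h)k+h\,\mathrm{d}k$ and into the $H$-bicolinearity of $\mathrm{d}$; equivalently, the kernel $\mathcal{N}$ of the surjection from the universal FODC onto $\Gamma$ is a bicovariant sub-bimodule, corresponding to an $\mathrm{Ad}$-invariant right ideal $\mathcal{R}\subseteq\ker\epsilon$. Using these identities one checks the implication first in degree $k=1$ — equivalently, that the candidate value $\mathrm{d}\omega^a$, the quantum Maurer--Cartan $2$-form whose structure constants are read off from the coproducts of the $\chi^a$, does not depend on the chosen presentation $\omega^a=S(b_1)\,\mathrm{d}b_2$, i.e. that the image in $\Gamma\otimes_H\Gamma$ of $\mathrm{d}\mathcal{N}$ lies in $S^2=\Gamma^{\vee2}$ — and the general $k$ then follows because $S^\bullet$ is the ideal built from $\overline{\sigma}^\mathcal{W}$ and the candidate $\mathrm{d}$ is compatible with $\overline{\sigma}^\mathcal{W}$, hence carries $S^k$-relations into $S^{k+1}$-relations. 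The hard part — and the only place where a bimodule-only argument does not suffice — is precisely this: the degree-$2$ inclusion $\mathrm{d}\mathcal{N}\subseteq\Gamma^{\vee2}$ together with its propagation through the braiding to all $S^n$. Here one genuinely needs both the left and the right covariance (not merely right $H$-linearity of $\overline{\sigma}^\mathcal{W}$) and the interaction of $\overline{\sigma}^\mathcal{W}$ with the coproduct; concretely it is a finite, if intricate, computation with $f$, $\chi$ and $R$ forced by coassociativity. Everything else in the statement is formal.
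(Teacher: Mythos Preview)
The paper does not give its own proof of this proposition; it is stated with a reference to \cite{Woronowicz1989}~Thm.~4.1 and immediately used. So there is no in-paper argument to compare against.

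Your outline is the right shape and matches the strategy of Woronowicz's original proof: uniqueness is forced by the graded Leibniz rule and $\mathrm{d}^2=0$; existence reduces to well-definedness of the unique candidate; and the crux is that the first-order relations (the kernel $\mathcal{N}$ of the universal FODC surjection) are sent by $\mathrm{d}$ into $S^2=\ker A_2$. You are also right that this degree-two step is where bicovariance is genuinely used and that the remaining items (Leibniz, $\mathrm{d}^2=0$, surjectivity, bicolinearity) are routine once well-definedness is in hand.

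Two places where your sketch is thinner than a proof. First, the degree-two inclusion $\mathrm{d}\mathcal{N}\subseteq S^2$ is asserted as ``a finite, if intricate, computation'' but not carried out; in Woronowicz's argument this is exactly the content, and it uses the $\mathrm{Ad}$-invariance of the right ideal $\mathcal{R}\subseteq\ker\epsilon$ corresponding to $\mathcal{N}$ in an essential way, not merely the existence of the data $f,\chi,R$. Second, the propagation ``$S^k\Rightarrow S^{k+1}$'' is not as automatic as you suggest. One needs the recursive factorizations of the antisymmetrizers, of the type $A_{n+1}=\bigl(\sum_p(-1)^{I(p)}\Pi_p\bigr)\circ(A_n\otimes_H\mathrm{id})$ over suitable coset representatives, together with the compatibility of the candidate $\mathrm{d}$ with $\overline{\sigma}^\mathcal{W}$ (which in turn rests on the Yang--Baxter equation \eqref{QYBE} and the bicolinearity of $\mathrm{d}$). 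Saying ``$\mathrm{d}$ is compatible with $\overline{\sigma}^\mathcal{W}$'' hides a commutation identity between $\mathrm{d}$ (extended to $\Gamma^{\otimes_H n}$ by the Leibniz pattern) and each $\sigma_k$ that must be checked. None of this is wrong in your proposal, but as written it is a plan rather than a proof; the two steps just named are where the actual work lies.
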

A bicovariant differential calculus $(\Omega^\bullet,\wedge,\mathrm{d})$ will be called \textit{Woronowicz differential calculus} if it equals the extension of of the bicovariant FODC $(\Gamma,\mathrm{d})$ as in the above proposition. This will be the main examples of differential calculi in this paper.

We next recall the Cartan-Maurer formula associated with $(\Lambda^\bullet\Gamma,\wedge,\mathrm{d})$ \cite[Sect. 4]{PaoloLeonardo} (we use the conventions of vector fields acting from the left, cf. \cite{AsSchupp96}). It will be used to define torsion-free connections in the next section.
By the Fundamental Theorem of Hopf Modules 
(see Theorem~\ref{FundThm}) it follows that the $H$-bimodule $\Gamma$ of a bicovariant
FODC $(\Gamma,\mathrm{d})$ on $H$ is $H$-generated by
its coinvariant elements. Assume that ${}^{\mathrm{co}H}\Gamma$ is  finite-dimensional and choose a basis $\{\omega^i\}_{i\in I}$ of ${}^{\mathrm{co}H}\Gamma$.
There are linear maps $f_j{}^i\colon H\rightarrow\Bbbk$
and elements $M_j{}^i\in H$ such that
\begin{equation}\label{eq55}
    a\omega^i
    =\omega^j(f_j{}^i*a)~~~
    \text{ for all }a\in H~~~
    \text{ and }~~~
    \delta_\Gamma(\omega^i)
    =\omega^j\otimes M_j{}^i~,
\end{equation}
where we used Einstein sum convention and the convolution action
$f_j{}^i*a:=a_1f_j{}^i(a_2)$. It is easy to verify that
$\omega^ia=(S(f_j{}^i)*a)\omega^j$, where
$S(f_j{}^i)*a=a_1f_j{}^i(S(a_2))$,
and
\begin{equation*}
\begin{split}
    \Delta(f_j{}^i)
    &=f_j{}^k\otimes f_k{}^i~,~~~~
    \epsilon(f_j{}^i)
    =\delta_j{}^i~,\\
    \Delta(M_j{}^i)
    &=M_j{}^k\otimes M_k{}^i~,~
    \epsilon(M_j{}^i)
    =\delta_j{}^i
\end{split}
\end{equation*}
for all $i,j\in I$. Moreover, we have the compatibility condition
$(a*f_i{}^j)M_j{}^k=M_i{}^j(f_j{}^k*a)$
for all $a\in H$. We obtain a basis $\{\eta^i\}_{i\in I}$ of $\Gamma^{\mathrm{co}H}$
by defining $\eta^i=\omega^jS(M_j{}^i)$. The braiding isomorphism
$\sigma^\mathcal{W}_{\Gamma,\Gamma}\colon\Gamma\otimes_H\Gamma
\rightarrow\Gamma\otimes_H\Gamma$ transforms basis elements as follows
$\sigma^\mathcal{W}_{\Gamma,\Gamma}(\omega^i\otimes_H\eta^j)
=\eta^j\otimes_H\omega^i$, since it is $H$-colinear it closes in
${}^{\mathrm{co}H}\Gamma\otimes{}^{\mathrm{co}H}\Gamma$,
$\sigma^\mathcal{W}_{\Gamma,\Gamma}(\omega^i\otimes_H\omega^j)
=\omega^k\otimes_H\omega^\ell\Sigma_{k\ell}{}^{ij}$.
From
$$
\sigma^\mathcal{W}_{\Gamma,\Gamma}(\omega^i\otimes_H\omega^j)
=\sigma^\mathcal{W}_{\Gamma,\Gamma}(\omega^i\otimes_H\eta^rM_r{}^j)
=\eta^r\otimes_H(S(f_k{}^i)*M_r{}^j)\omega^k
=\omega^\ell\otimes_H\omega^kf_k{}^iS(M_\ell{}^j)
$$
we have $\Sigma_{k\ell}{}^{ij}=f_\ell{}^i(S(M_k{}^j))\in\Bbbk$.
Moreover,
\begin{equation}\label{eq34}
    \omega^i\wedge\omega^j
    =\omega^i\otimes_H\omega^j
    -\omega^k\otimes_H\omega^\ell\Lambda_{k\ell}{}^{ij}~,
\end{equation}
where $\Lambda=\Sigma^{-1}$,
hence $\Lambda_{k\ell}{}^{ij}:=f_k{}^j(M_\ell{}^i)\in\Bbbk$.
Recall from Proposition~\ref{prop100} that $\mathrm{Hom}_H(\Gamma,H)$ is
a bicovariant bimodule since we assume $\Gamma$ to be finitely generated
as a right $H$-module. We call it the module of 
vector fields
$$
\mathfrak{X}:=\mathrm{Hom}_H(\Gamma,H)
$$
and consider the
basis $\{\chi_i\}_{i\in I}\subseteq{}^{\mathrm{co}H}\mathfrak{X}$
of left invariant elements dual to $\{\omega^i\}_{i\in I}$, i.e. such that
$\langle\chi_i,\omega^j\rangle=\delta_i^j$. We obtain
\begin{equation*}
    \chi_ia
    =(f_i{}^j*a)\chi_j,~~~~~
    a\chi_i
    =\chi_j(\overline{S}(f_i{}^j)*a),~~~~~
    \delta_{\mathfrak{X}}(\chi_i)
    =\chi_j\otimes S(M_i{}^j),~~~~~
    \lambda_{\mathfrak{X}}(\chi_i)=1\otimes\chi_i
\end{equation*}
for all $a\in H$ and $i\in I$. In terms of the left invariant vector fields
and one forms we have
$\mathrm{d}a=\omega^i(\chi_i*a)$.
We define the \textit{structure constants}
$$
C^i_{kj}:=(\chi_k*\chi_j)(x^i)\in\Bbbk~,
$$ where
$x^k\in\ker(\epsilon)$ such that $\chi_i(x^k)=\delta_i^k$. From
\begin{equation*}
    0=\mathrm{d}^2a
    =\mathrm{d}(\omega^i(\chi_i*a))
    =\mathrm{d}(\omega^i)(\chi*a)
    -\omega^i\wedge\omega^j(\chi_j*(\chi_i*a))~,
\end{equation*}
setting $a=x^i$ in $\mathrm{d}^2(a_2)\overline{S}(a_1)$
we obtain the \textit{Cartan-Maurer formula}
\begin{equation}\label{CartanMaurer}
    \mathrm{d}\omega^i
    =C^i_{kj}\omega^j\wedge\omega^k~.
\end{equation}
Applying (\ref{eq34}) we further deduce
\begin{equation}\label{CartanMaurerII}
    \mathrm{d}\omega^i
    =C^i_{kj}(\omega^j\otimes_H\omega^k
    -\Lambda_{mn}{}^{jk}\omega^m\otimes_H\omega^n)
    =\mathbf{C}^i_{kj}\omega^j\otimes_H\omega^k~,
\end{equation}
where we defined the braided antisymmetric structure constants $\mathbf{C}^i_{kj}:=C^i_{kj}-\Lambda_{jk}{}^{mn}C^i_{nm}\in\Bbbk$.

\section{Connections on Bicovariant Bimodules}\label{Sec4}

We give a general introduction to connections on Hopf algebra modules
for covariant first order differential calculi (Section~\ref{Sec4.1}),
their curvature and torsion (Section~\ref{Sec4.2}) and duality of connections
(Section~\ref{Sec4.3}). We illustrate all notions via concrete canonical examples
on bicovariant bimodules. In particular, we prove that there are
flat connections on any bicovariant bimodule and torsion-free connections on those of one forms and vector fields. Utilizing the tensor product $\,\hat{\otimes}\,$ of rational morphisms,
we define the sum of connections in Section~\ref{Sec4.4}, their curvatures are studied in Section~\ref{Sec4.5}. 

\subsection{Definitions and Canonical Examples}\label{Sec4.1}

Fix
a bicovariant differential calculus $(\Omega^\bullet,\wedge,\mathrm{d})$
on $H$.

\begin{definition}
A \textit{right connection} on a right $H$-module $M$ is a linear map
$\dst\colon M\rightarrow M\otimes_H\Omega^1$ satisfying
\begin{equation*}
    \dst(ma)=\dst(m)a+m\otimes_H\mathrm{d}a
\end{equation*}
for all $m\in M$ and $a\in H$. The set of all right connections on $M$ is denoted by
$\mathrm{Con}_H(M)$.

A \textit{left connection} on a left $H$-module $M$ is a linear map
$\std\colon M\rightarrow\Omega^1\otimes_HM$ satisfying
\begin{equation*}
    \std(am)=a\std(m)+\mathrm{d}a\otimes_Hm
\end{equation*}
for all $m\in M$ and $a\in H$. The set of all left connections on $M$ is denoted by
${}_H\mathrm{Con}(M)$.
\end{definition}
Given a right $H$-module $M$ there is a free and transitive right action
\begin{equation*}
    \mathrm{Con}_H(M)\times\mathrm{Hom}_H(M,M\otimes_H\Omega^1)
    \rightarrow\mathrm{Con}_H(M)~,~~~~~(\dst,\phi)\mapsto\dst+\phi~.
\end{equation*}
of $\mathrm{Hom}_H(M,M\otimes_H\Omega^1)$ on $\mathrm{Con}_H(M)$
turning $\mathrm{Con}_H(M)$ into
an affine space over $\mathrm{Hom}_H(M,M\otimes_H\Omega^1)$.
Similarly, for $M$ a left $H$-module, ${}_H\mathrm{Con}(M)$ becomes an affine space over ${}_H\mathrm{Hom}(M,
\Omega^1\otimes_HM)$.
On covariant modules there always exist connections. We give their explicit form in the
following lemma.
\begin{lemma}\label{prop03}\phantom{cc}
\begin{enumerate}
\item[i.)] 
For any module $M$ of $\mathcal{M}_H^H$ there is a
right connection
$\mathrm{d}^M
\in\mathrm{Con}_H(M)
$
defined by
\begin{equation}\label{eq33}
    \mathrm{d}^M(m)=m_0S(m_1)\otimes_H\mathrm{d}m_2
\end{equation}
for all $m\in M$.

\item[ii.)] For any module $M$ of ${}_H\mathcal{M}^H$ there is a
left connection
${}^{\overline{M}}\mathrm{d}
\in{}_H\mathrm{Con}(M)$
defined by ${}^{\overline{M}}\mathrm{d}(m)=\mathrm{d}m_2\otimes_H\overline{S}(m_1)m_0$
for all $m\in M$.
\end{enumerate}
\begin{enumerate}
\item[iii.)] 
For any module $M$ of ${}^H\mathcal{M}_H$ there is a
right connection
$\mathrm{d}^{\overline{M}}
\in\mathrm{Con}_H(M)$
defined by $\mathrm{d}^{\overline{M}}(m)=m_0\overline{S}(m_{-1})\otimes_H\mathrm{d}m_{-2}$
for all $m\in M$. 

\item[iv.)] For any module $M$ of ${}_H^H\mathcal{M}$ there is a
left connection
${}^M\mathrm{d}
\in{}_H\mathrm{Con}(M)$
defined by ${}^M\mathrm{d}(m)=\mathrm{d}m_{-2}\otimes_HS(m_{-1})m_0$
for all $m\in M$.
\end{enumerate}
\end{lemma}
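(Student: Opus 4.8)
The plan is as follows. For each of the four maps there are two things to verify: that it is a well-defined $\Bbbk$-linear map into the indicated balanced tensor product, and that it satisfies the corresponding Leibniz rule (the right one for i.), iii.) and the left one for ii.), iv.)). Well-definedness is immediate, since each formula is obtained by composing the iterated coaction, the antipode, the differential $\mathrm{d}\colon H\to\Omega^1$, the module action and the canonical projection onto $\otimes_H$, all of which are linear and everywhere defined; there is no quotient on which the formula would have to be checked to be consistent. So the entire content is the Leibniz rule. Moreover i.)--iv.) are related by the symmetries ``left module $\leftrightarrow$ right module'', ``left comodule $\leftrightarrow$ right comodule'' and ``$S\leftrightarrow\overline{S}$'', so I would prove i.) carefully and then record the (purely notational) changes needed for the remaining three.

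For i.), take $M\in\mathcal{M}_H^H$, $m\in M$, $a\in H$. Right covariance $\delta_M(ma)=\delta_M(m)\Delta(a)$ gives $(ma)_0\otimes(ma)_1\otimes(ma)_2=m_0a_1\otimes m_1a_2\otimes m_2a_3$, so that, using anti-multiplicativity of $S$,
\[
\mathrm{d}^M(ma)=m_0a_1S(a_2)S(m_1)\otimes_H\mathrm{d}(m_2a_3).
\]
Expanding $\mathrm{d}(m_2a_3)=(\mathrm{d}m_2)a_3+m_2\,\mathrm{d}a_3$ by the Leibniz rule for $\mathrm{d}$ splits this into two terms; in both, the $a$-legs appear as $a_1S(a_2)\otimes a_3$, which collapses to $1\otimes a$ by the antipode axiom. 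The first term then equals $m_0S(m_1)\otimes_H(\mathrm{d}m_2)a=\mathrm{d}^M(m)\,a$. The second equals $m_0S(m_1)\otimes_H m_2\,\mathrm{d}a$; here I would use $\otimes_H$-balancing to move $m_2\in H$ across the tensor product and then $m_0S(m_1)m_2=m_0\epsilon(m_1)=m$ (antipode axiom together with the comodule counit), obtaining $m\otimes_H\mathrm{d}a$. Adding the two terms gives $\mathrm{d}^M(ma)=\mathrm{d}^M(m)\,a+m\otimes_H\mathrm{d}a$, i.e.\ $\mathrm{d}^M\in\mathrm{Con}_H(M)$.

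For ii.)--iv.) the argument is the mirror image and uses exactly the same ingredients, with the expected replacements: the relevant covariance axiom; anti-multiplicativity of $S$ or $\overline{S}$; one antipode axiom to collapse the $a$-legs and one more (together with the comodule counit) to collapse the surviving coaction leg after an $\otimes_H$-balancing step; and the Leibniz rule for $\mathrm{d}$. Concretely, in ii.) ($M\in{}_H\mathcal{M}^H$) the outer antipode is $\overline{S}$ because $\overline{S}(m_1)m_0$, not $m_0S(m_1)$, is the right-coinvariant combination, and one invokes the identities $\overline{S}(a_2)a_1=\epsilon(a)1$ and $m_2\overline{S}(m_1)m_0=m$; in iii.) ($M\in{}^H\mathcal{M}_H$) one works with the left-coaction legs $m_{-2}\otimes m_{-1}\otimes m_0$, keeps $\overline{S}$, and uses $m_0\overline{S}(m_{-1})m_{-2}=m$; in iv.) ($M\in{}_H^H\mathcal{M}$) one uses the left coaction, returns to $S$, and uses $m_{-2}S(m_{-1})m_0=m$. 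I expect no conceptual obstacle here; the only thing requiring care is the Sweedler-leg bookkeeping --- applying the antipode axiom to the correct consecutive pair of legs and performing the $\otimes_H$-balancing at exactly the right step --- together with consistently matching the choice of $S$ versus $\overline{S}$ to the coinvariant projection ($m_0S(m_1)$, $\overline{S}(m_1)m_0$, $m_0\overline{S}(m_{-1})$, or $S(m_{-1})m_0$) that occurs in each formula.
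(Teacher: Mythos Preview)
Your proof is correct and follows exactly the same approach as the paper's: verify the Leibniz rule for $\mathrm{d}^M$ by expanding $\mathrm{d}^M(ma)$ via right covariance, the antipode identity $a_1S(a_2)\otimes a_3=1\otimes a$, and the Leibniz rule for $\mathrm{d}$, then note that the remaining three cases follow by the obvious symmetries. The paper is simply more terse (a one-line computation for i.) and ``in complete analogy'' for ii.)--iv.)), while you have spelled out the intermediate steps and the precise antipode/counit identities needed in each of the mirror cases.
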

\begin{proof}
We prove that (\ref{eq33}) defines a right connection on $M$. In fact
$$
\mathrm{d}^M(ma)
=m_0a_1S(m_1a_2)\otimes_H\mathrm{d}(m_2a_3)
=m_0S(m_1)\otimes\mathrm{d}(m_2)a
+m_0S(m_1)\otimes_Hm_2\mathrm{d}a
=\mathrm{d}^M(m)a
+m\otimes_H\mathrm{d}a
$$
for all $m\in M$ and $a\in H$
by the Leibniz rule of $\mathrm{d}$ and the right covariance of $M$.
The statements about
${}^M\mathrm{d}$, $\mathrm{d}^{\overline{M}}$ and
${}^{\overline{M}}\mathrm{d}$ follow in complete analogy.
\end{proof}
The connection $\mathrm{d}^M$ is the unique right connection that vanishes
on $M^{\mathrm{co}H}$. Similarly, ${}^{\overline{M}}\mathrm{d}$ is the unique
left connection that vanishes on $M^{\mathrm{co}H}$. Using ${}^{\mathrm{co}H}M$
instead of $M^{\mathrm{co}H}$ leads to the connections $\mathrm{d}^{\overline{M}}$
and ${}^M\mathrm{d}$.
We sometimes refer to the connections of Lemma~\ref{prop03} as the
\textit{canonical connections}. These are easily proven to be $H$-bicolinear maps
and so in particular rational morphisms according to Lemma~\ref{lemma01}.
Together with the fact that connections form an affine space over right
$H$-linear maps (which we proved in Proposition~\ref{prop100} to be rational)
this implies that any connection on a finitely generated
covariant module is rational. 
\begin{proposition}\label{PropConRat}
Let $M$ be a finitely generated right $H$-module.
\begin{enumerate}
\item[i.)] If $M$ is a module of $\mathcal{M}_H^H$ then there is the set inclusion
$\mathrm{Con}_H(M)\subset\mathrm{HOM}^\mathrm{Ad}(M,M\otimes_H\Omega^1)$,
i.e. every right connection on $M$ is a right rational morphism.

\item[ii.)] If $M$ is a module of ${}^H\mathcal{M}_H$ then there is the set inclusion
$\mathrm{Con}_H(M)\subset{}^\mathrm{Ad}\mathrm{HOM}(M,M\otimes_H\Omega^1)$,
i.e. every right connection on $M$ is a left rational morphism.

\item[iii.)] If $M$ is a module of ${}^H\mathcal{M}_H^H$ then there is the set inclusion
$\mathrm{Con}_H(M)\subset\mathrm{HOM}(M,M\otimes_H\Omega^1)$,
i.e. every right connection on $M$ is a rational morphism.
\end{enumerate}
\end{proposition}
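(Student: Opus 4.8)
The plan is to write an arbitrary connection as the sum of a canonical connection (which is rational because it is $H$-colinear) and a right $H$-linear map (which is rational by Proposition~\ref{prop100}), and then to observe that rational morphisms form a $\Bbbk$-linear subspace of all linear maps, so the sum is again rational.

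In detail, for i.) fix $\dst\in\mathrm{Con}_H(M)$ with $M$ a module of $\mathcal{M}_H^H$ that is finitely generated as a right $H$-module. Since $\Omega^1$ is a bicovariant bimodule, $M\otimes_H\Omega^1$ is again a module of $\mathcal{M}_H^H$, so by Lemma~\ref{prop03} i.) the canonical connection $\mathrm{d}^M\in\mathrm{Con}_H(M)$ is defined; it is right $H$-colinear (immediate from its formula together with $H$-bicolinearity of $\mathrm{d}$ and right covariance of $M$), hence by Lemma~\ref{lemma01} iii.) it is a right rational morphism, $\mathrm{d}^M\in\mathrm{HOM}^\mathrm{Ad}(M,M\otimes_H\Omega^1)$. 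The difference $\phi:=\dst-\mathrm{d}^M$ is right $H$-linear, because the difference of two connections is right $H$-linear, so $\phi\in\mathrm{Hom}_H(M,M\otimes_H\Omega^1)$, and Proposition~\ref{prop100} i.) (using that $M$ is finitely generated) yields $\phi\in\mathrm{HOM}^\mathrm{Ad}(M,M\otimes_H\Omega^1)$. Finally $\mathrm{HOM}^\mathrm{Ad}(M,M\otimes_H\Omega^1)=(\delta^\mathrm{Ad})^{-1}(\mathrm{Hom}(M,M\otimes_H\Omega^1)\otimes H)$ is a $\Bbbk$-linear subspace of $\mathrm{Hom}(M,M\otimes_H\Omega^1)$, being the preimage of a subspace under the linear map $\delta^\mathrm{Ad}$; therefore $\dst=\mathrm{d}^M+\phi$ is right rational, which is the claim.

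Parts ii.) and iii.) are proved by exactly the same argument, replacing the tools by their left, respectively two-sided, counterparts. For ii.), $M$ a module of ${}^H\mathcal{M}_H$, one uses the canonical connection $\mathrm{d}^{\overline{M}}$ of Lemma~\ref{prop03} iii.), which is left $H$-colinear and hence a left rational morphism by Lemma~\ref{lemma01} iii'.), together with Proposition~\ref{prop100} ii.) for $\phi=\dst-\mathrm{d}^{\overline{M}}$ and the fact that ${}^\mathrm{Ad}\mathrm{HOM}(M,M\otimes_H\Omega^1)$ is a linear subspace. For iii.), $M$ a module of ${}^H\mathcal{M}_H^H$, the canonical connection $\mathrm{d}^M$ is $H$-bicolinear, hence lies in $\mathrm{HOM}(M,M\otimes_H\Omega^1)$ by Lemma~\ref{lemma01} iii''.); combining with Proposition~\ref{prop100} iii.) for $\phi$ and using that $\mathrm{HOM}(M,M\otimes_H\Omega^1)=\mathrm{HOM}^\mathrm{Ad}(M,M\otimes_H\Omega^1)\cap{}^\mathrm{Ad}\mathrm{HOM}(M,M\otimes_H\Omega^1)$ is again a linear subspace, one concludes $\dst\in\mathrm{HOM}(M,M\otimes_H\Omega^1)$.

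There is essentially no obstacle here beyond bookkeeping: the one point to get right is that in each case $M\otimes_H\Omega^1$ is a covariant module of the type required to invoke Lemma~\ref{prop03}, Lemma~\ref{lemma01} and Proposition~\ref{prop100}, and that the relevant canonical connection is $H$-colinear in the appropriate (right, left or bi) sense --- both being routine consequences of the bicovariance of the calculus (Proposition~\ref{prop02}) and the compatibility of the bimodule and bicomodule structures, since $\Omega^1$ is a bicovariant bimodule. No new computation is needed beyond what is already recorded in the cited results.
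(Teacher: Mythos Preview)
Your proof is correct and follows the same strategy as the paper's: write $\dst$ as a canonical connection (rational because $H$-colinear, which the paper verifies by the explicit computation $\delta^\mathrm{Ad}(\mathrm{d}^M)=\mathrm{d}^M\otimes 1_H$) plus a right $H$-linear correction (rational by Proposition~\ref{prop100}). One small bookkeeping point in iii.): Proposition~\ref{prop100} iii.) is stated for bicovariant bimodules, while here $M$ is only in ${}^H\mathcal{M}_H^H$; the paper sidesteps this by deducing iii.) directly from i.) and ii.), which amounts to invoking Proposition~\ref{prop100} i.) and ii.) separately for $\phi$ rather than iii.).
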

\begin{proof}
Consider a right covariant right module $M$ and the right connection
$\mathrm{d}^M$ on $M$ presented in Lemma~\ref{prop03}~i.). Recalling the
definition of $\delta^\mathrm{Ad}$ in (\ref{coadr}) we observe that
\begin{align*}
    \delta^\mathrm{Ad}(\mathrm{d}^M)(m)
    &=\mathrm{d}^M(m_0)_0\otimes\mathrm{d}^M(m_0)_1S(m_1)\\
    &=(m_0S(m_1)\otimes_H\mathrm{d}m_2)_0
    \otimes(m_0S(m_1)\otimes_H\mathrm{d}m_2)_1S(m_3)\\
    &=m_0S(m_2)_1\otimes_H\mathrm{d}m_3\otimes m_1S(m_2)_2m_4S(m_5)\\
    &=m_0S(m_1)\otimes_H\mathrm{d}m_2\otimes 1_H\\
    &=\mathrm{d}^M(m)\otimes 1_H\\
    &=(\mathrm{d}^M\otimes 1_H)(m)
\end{align*}
for all $m\in M$, that is, 
$\delta^\mathrm{Ad}(\mathrm{d}^M)=\mathrm{d}^M\otimes 1_H$. Hence
$\mathrm{d}^M\in\mathrm{HOM}^H(M,M\otimes_H\Omega^1)$, in particular it
is right rational.
Since any right connection $\dst$ on $M$ equals $\mathrm{d}^M+\phi$ for
a unique right $H$-linear map $\phi\in\mathrm{Hom}_H(M,M\otimes_H\Omega^1)$
the result $i.)$ follows from Proposition~\ref{prop100}~$i.)$, which states
that right rationality of $\phi$ holds if
$M$ is a finitely generated right $H$-module.
Similarly, the right connection $\mathrm{d}^{\overline{M}}$ of
Lemma~\ref{prop03}~iii.) satisfies $\lambda^\mathrm{Ad}(\mathrm{d}^{\overline{M}})
=1_H\otimes\mathrm{d}^{\overline{M}}$ and from Proposition~\ref{prop100}~$ii.)$
we conclude that any right connection is left rational, i.e. $ii.)$.
The last statement of the proposition follows from $i.)$ and $ii.)$.
\end{proof}
There is an analogue version of Proposition~\ref{PropConRat} for 
the left connections of Lemma~\ref{prop03}~ii.) and iv.).
In this case rationality is with respect to the internal $\mathrm{Hom}$-functors
$\overleftarrow{\mathrm{HOM}}^\mathrm{Ad}$, 
${}^\mathrm{Ad}\overleftarrow{\mathrm{HOM}}$ and
$\overleftarrow{\mathrm{HOM}}$ of morphisms 'acting from the right' defined after
Proposition~\ref{prop05}.
\begin{remark}
The coactions $\delta^\mathrm{Ad}$ and $\lambda^\mathrm{Ad}$ do
\textit{not} restrict to coactions on $\mathrm{Con}_H(M)$,
which is an affine space, not a vector space.
Let $M$ be a finitely generated right $H$-module.
If $M$ is a module of $\mathcal{M}_H^H$ and $\dst\in\mathrm{Con}_H(M)$ then
$\delta^\mathrm{Ad}(\dst)=\dst{}_0\otimes\dst{}_1
\in\mathrm{HOM}^\mathrm{Ad}(M,M\otimes_H\Omega^1)\otimes H$ satisfies the
\textit{pseudo Leibniz rule}
\begin{equation*}
    \dst{}_0(ma)\otimes\dst{}_1
    =\dst{}_0(m)a\otimes\dst{}_1
    +m\otimes_H\mathrm{d}a\otimes 1_H
\end{equation*}
for all $m\in M$ and $a\in H$. This is the case since
\begin{align*}
    \dst{}_0(ma)\otimes\dst{}_1
    &=\dst(m_0a_1)_0\otimes\dst(m_0a_1)_1S(m_1a_2)\\
    &=(\dst(m_0)a_1)_0\otimes(\dst(m_0)a_1)_1S(m_1a_2)
    +(m_0\otimes_H\mathrm{d}a_1)_0\otimes(m_0\otimes_H\mathrm{d}a_1)_1S(m_1a_2)\\
    &=\dst(m_0)_0a_1\otimes\dst(m_0)_1a_2S(m_1a_3)
    +m_0\otimes_H\mathrm{d}a_1\otimes m_1a_2S(m_1a_3)\\
    &=\dst{}_0(m)a\otimes\dst{}_1
    +m_0\otimes_H\mathrm{d}a\otimes 1_H
\end{align*}
by the Leibniz rule of $\dst$. Similarly one proves that if
$M$ is a module of ${}^H\mathcal{M}_H$ finitely generated as a right $H$-module
and $\dst\in\mathrm{Con}_H(M)$ then
$\lambda^\mathrm{Ad}(\dst)=\dst{}_{-1}\otimes\dst{}_0$
satisfies the pseudo Leibniz rule
\begin{equation}\label{eq38}
    \dst{}_{-1}\otimes\dst{}_0(ma)
    =\dst{}_{-1}\otimes\dst{}_0(m)a
    +1_H\otimes m\otimes_H\mathrm{d}a
\end{equation}
for all $m\in M$ and $a\in H$.
\end{remark}
\medskip

We fix a bicovariant bimodule $M$ for the rest of this section.
We observe that the right connection
$\mathrm{d}^M
\in\mathrm{Con}_H(M)$ of Lemma~\ref{prop03}~$i.)$ satisfies
the additional Leibniz rule
\begin{equation*}
    \mathrm{d}^M(am)
    =a_1m_0S(a_2m_1)\otimes_H\mathrm{d}(a_3m_2)
    =a\cdot\mathrm{d}^M(m)+\sigma^\mathcal{W}_{\Omega^1,M}(\mathrm{d}a\otimes_Hm)
\end{equation*}
for all $a\in H$ and $m\in M$, where $\sigma^\mathcal{W}$ denotes the braiding of
bicovariant bimodules defined in (\ref{WorBraiding}). It is therefore a 
\textit{bimodule connection} (cf. \cite{D-VMichor,MajidBeggs2011}).
\begin{definition}
A right connection 
$\dst
\in\mathrm{Con}_H(M)$ is said to be a bimodule right connection
if
\begin{equation}\label{eq64}
    \dst(am)=a\dst(m)+\sigma^\mathcal{W}_{\Omega^1,M}(\mathrm{d}a\otimes_Hm)
\end{equation}
for all $a\in H$ and $m\in M$, while a left connection 
$\std
\in{}_H\mathrm{Con}(M)$ is said to be a bimodule left connection
if
\begin{equation}\label{eq65}
    \std(ma)=\dst(m)a+\sigma^\mathcal{W}_{M,\Omega^1}(m\otimes_H\mathrm{d}a)
\end{equation}
for all $a\in H$ and $m\in M$. There are obvious versions of 
$\overline{\sigma}^\mathcal{W}$-bimodule connections if we replace 
$\sigma^\mathcal{W}$
by $\overline{\sigma}^\mathcal{W}$ in (\ref{eq64}) and (\ref{eq65}).
\end{definition}
For bimodule connections on bicovariant bimodules there is an immediate
correspondence between right and left connections.
\begin{remark}
If $\dst\colon M\rightarrow M\otimes_H\Omega^1$ is a bimodule right connection, then
\begin{equation*}
    \dst{}^{\overline{\sigma}}:=\overline{\sigma}^\mathcal{W}_{M,\Omega^1}\circ\dst
    \colon M\rightarrow\Omega^1\otimes_HM
\end{equation*}
is a $\overline{\sigma}^\mathcal{W}$-bimodule left connection. Namely,
\begin{equation*}
    \dst{}^{\overline{\sigma}}(am)=a\dst{}^{\overline{\sigma}}(m)
    +\mathrm{d}a\otimes_Hm
    \quad\text{ and }\quad
    \dst{}^{\overline{\sigma}}(ma)=\dst{}^{\overline{\sigma}}(m)a
    +\overline{\sigma}_{M,\Omega^1}^\mathcal{W}(m\otimes_H\mathrm{d}a)
\end{equation*}
for all $a\in H$ and $m\in M$. Conversely, a 
$\overline{\sigma}^\mathcal{W}$-bimodule left connection
$\std\colon M\rightarrow\Omega^1\otimes_HM$ gives rise to a bimodule right connection
$\std{}^\sigma:=\sigma^\mathcal{W}_{\Omega^1,M}\circ\std
\colon M\rightarrow M\otimes_H\Omega^1$. Those constructions are inverse to each other,
i.e. $(\dst{}^{\overline{\sigma}})^\sigma=\dst$ and
$(\std{}^\sigma)^{\overline{\sigma}}=\std$, as one easily verifies.
There is an analogous correspondence between $\overline{\sigma}^\mathcal{W}$-bimodule
right connections and bimodule left connections.
\end{remark}
\begin{example}\label{example02}
The canonical connections $\mathrm{d}^M
\in\mathrm{Con}_H(M)$ and
${}^M\mathrm{d}
\in{}_H\mathrm{Con}(M)$ are bimodule
right and left connections, respectively. The corresponding
$\overline{\sigma}^\mathcal{W}$-bimodule left and right connections are
\begin{equation*}
    {}^{\overline{M}}\mathrm{d}=(\mathrm{d}^M)^{\overline{\sigma}}
    \colon M\rightarrow\Omega^1\otimes_HM
    \quad\text{ and }\quad
    \mathrm{d}^{\overline{M}}=({}^M\mathrm{d})^\sigma
    \colon M\rightarrow M\otimes_H\Omega^1.
\end{equation*}
\end{example}

If $(\Omega^\bullet,\wedge,\mathrm{d})$ is a Woronowicz differential
calculus on a braided exterior algebra of Proposition~\ref{propBEA}
we can use the structure constants to construct a right connection
$\Omega^1\rightarrow\Omega^1\otimes_H\Omega^1$.
Recall that, given a finite-dimensional basis $\{\omega^i\}_{i\in I}$ of 
${}^{\mathrm{co}H}\Omega^1$, the structure constants $C^i_{kj}$, 
$\mathbf{C}^i_{kj}\in\Bbbk$ are determined via the Cartan-Maurer formulas
(\ref{CartanMaurer}) and (\ref{CartanMaurerII}).
We define two linear maps 
$\dst{}^c,\widetilde{\dst{}^c}\colon\Omega^1\rightarrow\Omega^1\otimes_H\Omega^1$
by
\begin{equation}\label{eq67}
    \dst{}^c(\omega^i)
    =-\omega^j\otimes_H\omega^kC^i_{kj}
    \quad\text{ and }\quad
    \widetilde{\dst{}^c}(\omega^i)
    =-\omega^j\otimes_H\omega^k\mathbf{C}^i_{kj}
    =-\mathrm{d}\omega^i
\end{equation}
for all $i\in I$ and extend those maps as right connections
on the free right $H$-module $\Omega^1={}^{\mathrm{co}H}\Omega^1\otimes H$.
Namely, $\dst{}^c(\omega^ia_i)
=-\omega^j\otimes_H\omega^kC^i_{kj}a_i
+\omega^i\otimes_H\mathrm{d}a_i$
for all $a_i\in H$ and similarly for $\widetilde{\dst{}^c}$.
In the following we call $\dst{}^c$ and $\widetilde{\dst{}^c}$ the
\textit{structure constants connections}.
Whenever discussing them we implicitly assume a Woronowicz differential calculus.
\begin{lemma}\label{prop04}
The structure constants connections are left $H$-coinvariant elements, i.e.,
$\dst{}^c,\widetilde{\dst{}^c}
\in{}^H\mathrm{Hom}(\Omega^1,\Omega^1\otimes_H\Omega^1)$. 
\end{lemma}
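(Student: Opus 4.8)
The plan is to check directly that $\dst{}^c$ and $\widetilde{\dst{}^c}$ intertwine the left $H$-coactions, since by definition ${}^H\mathrm{Hom}(\Omega^1,\Omega^1\otimes_H\Omega^1)$ is exactly the space of left $H$-colinear maps; equivalently, using that by Proposition~\ref{PropConRat} a connection on the finitely generated bicovariant bimodule $\Omega^1$ is a rational morphism together with Lemma~\ref{lemma01}~iii'.), the claim is $\lambda^\mathrm{Ad}(\dst{}^c)=1_H\otimes\dst{}^c$ and $\lambda^\mathrm{Ad}(\widetilde{\dst{}^c})=1_H\otimes\widetilde{\dst{}^c}$. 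So concretely I would verify
\[
\lambda_{\Omega^1\otimes_H\Omega^1}\circ\dst{}^c=(\mathrm{id}_H\otimes\dst{}^c)\circ\lambda_{\Omega^1}
\]
and the analogous identity for $\widetilde{\dst{}^c}$.

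First I would record two ingredients. (i) On the coinvariant basis, $\dst{}^c(\omega^i)=-\omega^j\otimes_H\omega^kC^i_{kj}$ and $\widetilde{\dst{}^c}(\omega^i)=-\omega^j\otimes_H\omega^k\mathbf{C}^i_{kj}=-\mathrm{d}\omega^i$ are \emph{left coinvariant} elements of $\Omega^1\otimes_H\Omega^1$: each of $\omega^j,\omega^k\in{}^{\mathrm{co}H}\Omega^1$ is left coinvariant, the structure constants lie in $\Bbbk$, and the diagonal left coaction of $\Omega^1\otimes_H\Omega^1$ sends a tensor of left coinvariants to a left coinvariant, $\lambda_{\Omega^1\otimes_H\Omega^1}(\omega^j\otimes_H\omega^k)=1_H\otimes(\omega^j\otimes_H\omega^k)$. (ii) Since $\Omega^1$ is finitely generated, $\Omega^1={}^{\mathrm{co}H}\Omega^1\otimes H$ as a right $H$-module, so $\Omega^1$ is spanned by the elements $\omega^ia$, $i\in I$, $a\in H$, and it suffices to verify the identity on these (both sides are $\Bbbk$-linear in $\omega$); moreover $\lambda_{\Omega^1}$ and $\lambda_{\Omega^1\otimes_H\Omega^1}$ are left $H$-linear (being coactions on bicovariant bimodules), and $\mathrm{d}$ is left $H$-colinear, $\lambda_{\Omega^1}(\mathrm{d}a)=a_1\otimes\mathrm{d}a_2$, so that $\lambda_{\Omega^1}(\omega^ia)=a_1\otimes\omega^ia_2$.

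The verification is then a short Sweedler computation using only the Leibniz rule for $\dst{}^c$. On $\omega=\omega^ia$ the left-hand side of the colinearity identity is
\begin{align*}
\lambda_{\Omega^1\otimes_H\Omega^1}\big(\dst{}^c(\omega^i)a+\omega^i\otimes_H\mathrm{d}a\big)
&=a_1\otimes\big(\dst{}^c(\omega^i)a_2\big)+a_1\otimes\big(\omega^i\otimes_H\mathrm{d}a_2\big)\\
&=a_1\otimes\dst{}^c\big(\omega^ia_2\big),
\end{align*}
where in the first equality I used (i) and left $H$-linearity of $\lambda_{\Omega^1\otimes_H\Omega^1}$ together with $\lambda_{\Omega^1}(\mathrm{d}a)=a_1\otimes\mathrm{d}a_2$, and in the second the Leibniz rule read backwards; this matches the right-hand side $(\mathrm{id}_H\otimes\dst{}^c)(\lambda_{\Omega^1}(\omega^ia))=a_1\otimes\dst{}^c\big(\omega^ia_2\big)$. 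The same computation proves the statement for $\widetilde{\dst{}^c}$ with $\mathbf{C}$ in place of $C$ (alternatively, $\widetilde{\dst{}^c}(\omega^i)=-\mathrm{d}\omega^i$ is left coinvariant because $\mathrm{d}$ is $H$-bicolinear on $\Lambda^\bullet\Omega^1$).

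The only delicate point — the modest main obstacle — is that $\dst{}^c$ is \emph{not} right $H$-linear, so left $H$-colinearity cannot be checked on the generators $\{\omega^i\}$ alone: the inhomogeneous term $\omega^i\otimes_H\mathrm{d}a$ of the Leibniz rule has to be transported through the left coaction, and it is precisely the left $H$-colinearity of $\mathrm{d}$, together with the left coinvariance of the $\omega^i$ and of their images $\dst{}^c(\omega^i)$, that makes the two extra terms agree. For the same reason one cannot simplify by peeling off a canonical connection $\mathrm{d}^{\Omega^1}$: the latter vanishes on \emph{right} coinvariants and is not manifestly left $H$-colinear, so the direct computation above is the cleaner route.
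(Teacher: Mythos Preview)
Your proof is correct and follows essentially the same route as the paper: both arguments write a general element as $\omega^i a$, use that $\dst{}^c(\omega^i)\in{}^{\mathrm{co}H}(\Omega^1\otimes_H\Omega^1)$ together with the Leibniz rule and the left $H$-colinearity of $\mathrm{d}$, and conclude left coinvariance of the connection; the paper phrases this via the adjoint coaction $\lambda^{\mathrm{Ad}}$ while you check the equivalent colinearity condition directly (as you note, these are equivalent by Lemma~\ref{lemma01}~iii'.). One terminological slip: what you invoke as ``left $H$-linearity'' of $\lambda_{\Omega^1\otimes_H\Omega^1}$ is actually the compatibility of the left coaction with the \emph{right} $H$-action, $\lambda(m\cdot a)=\lambda(m)\Delta(a)$, which is the bicovariant-bimodule condition --- the computation itself is unaffected.
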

\begin{proof}
First note that the structure constant connections are left rational by
Proposition~\ref{PropConRat}.
We prove that $\dst{}^c$ is left $H$-coinvariant. Then it is clear that
$\widetilde{\dst{}^c}$ is left $H$-coinvariant, as well.
For any $\omega\in\Omega^1$ and a basis $\{\omega^i\}_{i\in I}$ of 
${}^{\mathrm{co}H}\Omega^1$ there are unique $a_i\in H$ such that
$\omega=\omega^ia_i$ by the Fundamental Theorem of Hopf Modules. Then
\begin{align*}
    \dst{}^c_{-1}\otimes\dst{}^c_0(\omega)
    &=\dst{}^c(\omega_0)_{-1}\overline{S}(\omega_{-1})
    \otimes\dst{}^c(\omega_0)_0\\
    &=\dst{}^c(\omega^i(a_i)_2)_{-1}\overline{S}((a_i)_1)
    \otimes\dst{}^c(\omega^i(a_i)_2)_0\\
    &=-(\omega^j\otimes_H\omega^kC^i_{kj}(a_i)_2)_{-1}\overline{S}((a_i)_1)
    \otimes(\omega^j\otimes_H\omega^kC^i_{kj}(a_i)_2)_0\\
    &\quad+(\omega^i\otimes_H\mathrm{d}(a_i)_2)_{-1}\overline{S}((a_i)_1)
    \otimes(\omega^i\otimes_H\mathrm{d}(a_i)_2)_0\\
    &=-(a_i)_2\overline{S}((a_i)_1)
    \otimes\omega^j\otimes_H\omega^kC^i_{kj}(a_i)_3
    +(a_i)_2\overline{S}((a_i)_1)
    \otimes\omega^i\otimes_H\mathrm{d}(a_i)_3\\
    &=1_H\otimes(-\omega^j\otimes_H\omega^kC^i_{kj}a_i
    +\omega^i\otimes_H\mathrm{d}a_i)\\
    &=1_H\otimes\dst{}^c(\omega)
\end{align*}
follows.
\end{proof}
If we extend the linear maps in (\ref{eq67}) as left connections we similarly obtain two
left $H$-coinvariant left connections.

Considering structure constants with respect to a
basis $\{\eta^i\}_{i\in I}$ of right $H$-coinvariant elements of $\Omega^1$
leads to yet another pair of right/left connections, this time right $H$-coinvariant.
In the following we mainly focus on $\dst{}^c$ and $\widetilde{\dst{}^c}$.

\subsection{Curvature and Torsion}\label{Sec4.2}

Let $(\Omega^\bullet,\wedge,\mathrm{d})$ be a bicovariant differential calculus
over $H$. Using the differential $\mathrm{d}$ we extend any right
connection $\dst
\in\mathrm{Con}_H(M)$ on a right $H$-module
$M$ to a linear map $\dst{}^\bullet\colon M\otimes_H\Omega^\bullet
\rightarrow M\otimes_H\Omega^{\bullet+1}$ by
\begin{equation}\label{eq68}
  \dst{}^\bullet(m\otimes_H\omega)=\dst(m)\wedge\omega+m\otimes_H\mathrm{d}\omega
  \end{equation}
for $m\in M$ and $\omega\in\Omega^\bullet$.
Note that $\dst{}^\bullet$
is well-defined on the balanced tensor product $M\otimes_H\Omega^\bullet$ since
\begin{align*}
    \dst(ma)\wedge\omega+ma\otimes_H\mathrm{d}\omega
    =(\dst(m)a+m\otimes_H\mathrm{d}a)\wedge\omega
    +m\otimes_Ha\mathrm{d}\omega
    =\dst(m)\wedge a\omega+m\otimes_H\mathrm{d}(a\omega)
\end{align*}
for $a\in H$ by the Leibniz rules of $\dst$ and $\mathrm{d}$.
For a left connection $\std
\in{}_H\mathrm{Con}(M)$
on a left $H$-module $M$ we define a linear map
$\std{}^\bullet\colon\Omega^\bullet\otimes_HM\rightarrow\Omega^{\bullet+1}\otimes_HM$
by
\begin{equation}\label{eq70}
\std{}^\bullet(\omega\otimes_Hm)
=\omega\wedge\std(m)+(-1)^{|\omega|}\mathrm{d}\omega\otimes_Hm
\end{equation}
where
$\omega\in\Omega^k$, $|\omega|=k$ and
$m\in M$. From
\begin{align*}
    \omega a\wedge\std(m)+(-1)^{|\omega a|}\mathrm{d}(\omega a)\otimes_Hm
    &=\omega\wedge a\std(m)+(-1)^{|\omega|}\mathrm{d}(\omega)a\otimes_Hm
    +(-1)^{2|\omega|}\omega\wedge\mathrm{d}a\otimes_Hm\\
    &=\omega\wedge\std(am)+(-1)^{|\omega|}\mathrm{d}\omega\otimes_Ham~,
\end{align*}
where $a\in H$,
we see that $\std{}^\bullet$ is well-defined on the balanced tensor product
$\Omega^\bullet\otimes_HM$.
\begin{lemma}\label{lemma10}
The extension $\dst{}^\bullet\colon M\otimes_H\Omega^\bullet
\rightarrow M\otimes_H\Omega^{\bullet+1}$ of a right connection
$\dst\in\mathrm{Con}_H(M)$ on a right $H$-module $M$ satisfies the
graded Leibniz rule
\begin{equation*}
    \dst^\bullet((m\otimes_H\omega)\cdot a)
    =(\dst^\bullet(m\otimes_H\omega))\cdot a
    +(-1)^{|\omega|}m\otimes_H\omega\wedge\mathrm{d}a~.
\end{equation*}
The extension $\std{}^\bullet\colon\Omega^\bullet\otimes_H M
\rightarrow\Omega^{\bullet+1}\otimes_HM$ of a left connection
$\std\in{}_H\mathrm{Con}(M)$ on a left $H$-module $M$ satisfies the
graded Leibniz rule
\begin{equation*}
    \std^\bullet(a\cdot(\omega\otimes_Hm))
    =a\cdot\std^\bullet(\omega\otimes_Hm)
    +(-1)^{|\omega|}\mathrm{d}a\wedge\omega\otimes_Hm~,
\end{equation*}
for all $m\in M$, $\omega\in\Omega^\bullet$ and $a\in H$.
\end{lemma}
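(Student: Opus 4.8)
The plan is to verify both identities by a short direct computation from the definitions \eqref{eq68} and \eqref{eq70}, using only the Leibniz rule of $\dst$ (resp.\ $\std$), the graded Leibniz rule of $\mathrm{d}$, and the fact that multiplication by an element $a\in H=\Omega^0$ leaves the degree of a form unchanged. Since $\dst^\bullet$, $\std^\bullet$ and all the expressions appearing are linear, and $M\otimes_H\Omega^\bullet$ (resp.\ $\Omega^\bullet\otimes_HM$) is spanned by simple tensors $m\otimes_H\omega$ (resp.\ $\omega\otimes_Hm$) with $\omega$ homogeneous, it suffices to check the claims on such elements; the well-definedness of $\dst^\bullet$ and $\std^\bullet$ on the balanced tensor products has already been established immediately before the statement.

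For the right-connection identity I would start from $(m\otimes_H\omega)\cdot a=m\otimes_H(\omega a)$ and apply $\dst^\bullet$, obtaining $\dst(m)\wedge(\omega a)+m\otimes_H\mathrm{d}(\omega a)$. In the first summand the right $H$-action on $M\otimes_H\Omega^{\bullet+1}$ sits on the last leg, so $\dst(m)\wedge(\omega a)=\big(\dst(m)\wedge\omega\big)\cdot a$. In the second summand I expand $\mathrm{d}(\omega a)=(\mathrm{d}\omega)\,a+(-1)^{|\omega|}\omega\wedge\mathrm{d}a$ by the graded Leibniz rule (here $a$ has degree $0$, so $|\omega a|=|\omega|$). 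Collecting terms yields $\big(\dst(m)\wedge\omega+m\otimes_H\mathrm{d}\omega\big)\cdot a+(-1)^{|\omega|}m\otimes_H\omega\wedge\mathrm{d}a$, which is exactly $\big(\dst^\bullet(m\otimes_H\omega)\big)\cdot a+(-1)^{|\omega|}m\otimes_H\omega\wedge\mathrm{d}a$.

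The left-connection identity is entirely analogous, the only extra point being to track the sign coming from the degree map $\mathrm{deg}$ in \eqref{eq70}. Applying $\std^\bullet$ to $a\cdot(\omega\otimes_Hm)=(a\omega)\otimes_Hm$ gives $(a\omega)\wedge\std(m)+(-1)^{|a\omega|}\mathrm{d}(a\omega)\otimes_Hm$. Using $|a\omega|=|\omega|$, left $H$-linearity of $\wedge$ in the first leg, and $\mathrm{d}(a\omega)=(\mathrm{d}a)\wedge\omega+a\,\mathrm{d}\omega$, this rearranges to $a\cdot\big(\omega\wedge\std(m)+(-1)^{|\omega|}\mathrm{d}\omega\otimes_Hm\big)+(-1)^{|\omega|}\mathrm{d}a\wedge\omega\otimes_Hm$, i.e.\ $a\cdot\std^\bullet(\omega\otimes_Hm)+(-1)^{|\omega|}\mathrm{d}a\wedge\omega\otimes_Hm$. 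There is essentially no obstacle: both computations are two or three lines, and the only places requiring minor care are keeping the $H$-action on the correct leg of the balanced tensor product and, in the left case, the signs introduced by $\mathrm{deg}$.
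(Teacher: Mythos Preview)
Your proposal is correct and follows essentially the same approach as the paper: both arguments apply the definitions \eqref{eq68} and \eqref{eq70} to a simple tensor, expand $\mathrm{d}(\omega a)$ (resp.\ $\mathrm{d}(a\omega)$) via the graded Leibniz rule of $\mathrm{d}$, and regroup. Your version is in fact slightly more explicit about why it suffices to check on simple tensors with homogeneous $\omega$ and about the role of $|a\omega|=|\omega|$, but the substance is identical.
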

\begin{remark}
Let $M$ be a module of ${}_H^H\mathcal{M}_H^H$, finitely generated as a 
right $H$-module and consider a  connection $\dst\in\mathrm{Con}_H(M)$.
Using the tensor product $\,\hat{\otimes}\,$ introduced in Theorem~\ref{prop06} we have two well-defined linear maps
$\wedge_{23}\circ(\dst\,\hat{\otimes}\,\mathrm{id}_{\Omega^\bullet}),
\mathrm{id}_M\,\hat{\otimes}\,\mathrm{d}\colon M\otimes_H\Omega^\bullet\rightarrow
M\otimes_H\Omega^{\bullet+1}$, and their sum equals $\dst{}^\bullet\colon M\otimes_H\Omega^\bullet\rightarrow M\otimes_H\Omega^{\bullet+1}$.
Indeed 
\begin{equation}\label{eq69}
    \dst{}^\bullet=\wedge_{23}\circ(\dst\,\hat{\otimes}\,\mathrm{id}_{\Omega^\bullet})
    +\mathrm{id}_M\,\hat{\otimes}\,\mathrm{d}
\end{equation}
is an equation of linear maps
$M\otimes_H\Omega^\bullet\rightarrow M\otimes_H\Omega^{\bullet+1}$,
where all individual terms are well-defined.

Now consider a left connection $\std\in{}_H\mathrm{Con}(M)$ 
on $M$.
Since left connection are understood as maps ``acting from the right",
by employing the analogue of $\,\hat{\otimes}\,$ in $\overleftarrow{\mathrm{HOM}}$ we can define $\std{}^\bullet$ as a sum of well-defined linear maps $\Omega^\bullet\otimes_HM\to\Omega^{\bullet+1}\otimes_HM$.
Since we are mainly interested in the theory of right connections we are not going
to spell this out in detail.
\end{remark}

The extensions of a connection are used to define the curvature
as the square of the connection.
\begin{definition}[Curvature]
The curvature of a
right connection $\dst
\in\mathrm{Con}_H(M)$ on a right $H$-module $M$ is
by definition the linear map
\begin{equation}\label{defcurvatureR}
    \mathrm{R}^{\dst}=\dst^\bullet\circ\dst\colon M
    \rightarrow M\otimes_H\Omega^2~,
\end{equation}
while the curvature of a
left connection $\std
\in{}_H\mathrm{Con}(M)$ on a left $H$-module $M$ is
by definition the linear map
\begin{equation*}
    \mathrm{R}^\std=\std^\bullet\circ\std\colon M
    \rightarrow\Omega^2\otimes_HM~.
\end{equation*}
A connection with zero curvature is said to be flat.
\end{definition}
The following lemma shows that these curvatures are right and left $H$-linear maps, respectively. Hence, they determine assignments
$\mathrm{R}\colon\mathrm{Con}_H(M)\rightarrow\mathrm{Hom}_H(M,M\otimes_H\Omega^2)$
between the affine spaces of connections and the
vector spaces $\mathrm{Hom}_H(M,M\otimes_H\Omega^2)$ and ${}_H\mathrm{Hom}(M,\Omega^2\otimes_HM)$.
\begin{lemma}\label{lemma12}
The curvature 
$\mathrm{R}^{\dst}\in\mathrm{Hom}_H(M,M\otimes_H\Omega^2)$ of a right connection
is right $H$-linear and the
curvature $\mathrm{R}^\std\in{}_H\mathrm{Hom}(M,\Omega^2\otimes_HM)$ 
of a left connection is left $H$-linear.
\end{lemma}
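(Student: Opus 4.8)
The plan is to derive both halves of the statement directly from the graded Leibniz rules of Lemma~\ref{lemma10} and the identity $\mathrm{d}^2=0$; beyond this no genuinely new computation is required, only careful bookkeeping of the degree signs.

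For the right-connection case, fix $m\in M$ and $a\in H$ and expand $\mathrm{R}^{\dst}(ma)=\dst^\bullet(\dst(ma))$. The Leibniz rule for $\dst$ splits this as $\dst^\bullet(\dst(m)\cdot a)+\dst^\bullet(m\otimes_H\mathrm{d}a)$. For the first summand I would write $\dst(m)=\sum_i m_i\otimes_H\omega_i$ with each $\omega_i\in\Omega^1$ of degree one and apply Lemma~\ref{lemma10} termwise; since $(-1)^{|\omega_i|}=-1$ this gives $\dst^\bullet(\dst(m)\cdot a)=\dst^\bullet(\dst(m))\cdot a-\dst(m)\wedge\mathrm{d}a$. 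For the second summand, the very definition of $\dst^\bullet$ yields $\dst^\bullet(m\otimes_H\mathrm{d}a)=\dst(m)\wedge\mathrm{d}a+m\otimes_H\mathrm{d}(\mathrm{d}a)=\dst(m)\wedge\mathrm{d}a$, using $\mathrm{d}^2=0$. Adding the two pieces, the correction terms $-\dst(m)\wedge\mathrm{d}a$ and $+\dst(m)\wedge\mathrm{d}a$ cancel, leaving $\mathrm{R}^{\dst}(ma)=\dst^\bullet(\dst(m))\cdot a=\mathrm{R}^{\dst}(m)\cdot a$, which is exactly right $H$-linearity.

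For a left connection $\std\in{}_H\mathrm{Con}(M)$ the argument is the mirror image: starting from $\mathrm{R}^\std(am)=\std^\bullet(\std(am))$, the Leibniz rule of $\std$ gives $\std^\bullet(a\cdot\std(m))+\std^\bullet(\mathrm{d}a\otimes_Hm)$; writing $\std(m)=\sum_i\omega_i\otimes_H m_i$ with $\omega_i$ of degree one and invoking the second graded Leibniz rule of Lemma~\ref{lemma10} yields $\std^\bullet(a\cdot\std(m))=a\cdot\std^\bullet(\std(m))-\mathrm{d}a\wedge\std(m)$, while the definition of $\std^\bullet$ together with $\mathrm{d}^2=0$ gives $\std^\bullet(\mathrm{d}a\otimes_Hm)=\mathrm{d}a\wedge\std(m)$. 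The correction terms again cancel, so $\mathrm{R}^\std(am)=a\cdot\mathrm{R}^\std(m)$.

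The only delicate points are getting the degree signs right (a one-form always contributes a $-1$) and making sure each intermediate expression is well-defined on the balanced tensor product over $H$; both are already handled by the fact that Lemma~\ref{lemma10} is stated for $\dst^\bullet$ and $\std^\bullet$ as maps on $M\otimes_H\Omega^\bullet$ and $\Omega^\bullet\otimes_HM$. I therefore do not expect any real obstacle: the proof is a two-line cancellation in each case once Lemma~\ref{lemma10} is available. One could alternatively phrase it conceptually by noting that $\dst^\bullet$ is a degree-one right $\Omega^\bullet$-derivation (which follows straight from the definition), hence $\dst^\bullet\circ\dst=(\dst^\bullet)^2$ is right $\Omega^\bullet$-linear, and the claim is its restriction to the degree-zero component $\Omega^0=H$; likewise on the left.
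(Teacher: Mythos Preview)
Your proof is correct and is essentially identical to the paper's own argument: both expand $\dst^\bullet(\dst(ma))$ (respectively $\std^\bullet(\std(am))$) using the Leibniz rule, apply the graded Leibniz rule of Lemma~\ref{lemma10} to the degree-one piece, and observe that the $\pm\dst(m)\wedge\mathrm{d}a$ (respectively $\pm\mathrm{d}a\wedge\std(m)$) terms cancel while $\mathrm{d}^2a=0$ kills the remaining obstruction. The paper simply writes this as a single displayed computation rather than separating the two summands, but the content is the same.
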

For connections on $\Omega^1$ or the dual bicovariant bimodule $\mathfrak{X}$
there is a natural definition of torsion. In the following
$\{\omega^i\}_{i\in I}$ denotes a basis of the vector space
${}^{\mathrm{co}H}\Omega^1$ of left coinvariant elements, that we call
left-invariant following \cite{Woronowicz1989}. Let
$\{\chi_i\}_{i\in I}$ be the corresponding dual basis of ${}^{\mathrm{co}H}\mathfrak{X}$.
We further abbreviate $\mathrm{I}:=\omega^i\otimes_H\chi_i
\in\Omega^1\otimes_H\mathfrak{X}$, where the sum over repeated indices is understood.
Recall that $\wedge\colon\Omega^1\otimes_H\Omega^1\rightarrow\Omega^2$, 
$\omega\otimes_H\eta\mapsto\omega\wedge\eta$ is the product of the bicovariant differential calculus
$(\Omega^\bullet,\wedge,\mathrm{d})$.
\begin{definition}[Torsion]
The torsion of a right connection $\dst
\in\mathrm{Con}_H(\Omega^1)$
on $\Omega^1$ is the right $H$-linear map
\begin{equation*}
    \mathrm{Tor}^{\dst}
    :=\wedge\circ\dst+\mathrm{d}
    \colon\Omega^1\rightarrow\Omega^2~.
\end{equation*}
Using the dual module $\mathfrak{X}$, $\mathrm{Tor}^{\dst}\in\Omega^2\otimes_H\mathfrak{X}$.
The torsion of a left connection
$\std
\in{}_H\mathrm{Con}(\mathfrak{X})$ on
$\mathfrak{X}$ is the $\mathfrak{X}$-valued $2$-form
\begin{equation*}
    \mathrm{Tor}^{\std}
    :=\std{}^\bullet(\mathrm{I})
    \in\Omega^2\otimes_H\mathfrak{X}~.
\end{equation*}
A connection with zero torsion is said to be torsion-free.
\end{definition}
In analogy to Lemma~\ref{lemma12} we have that the torsion of a right connection
corresponds to an assignment $\mathrm{Tor}\colon\mathrm{Con}_H(\Omega^1)
\rightarrow\mathrm{Hom}_H(\Omega^1,\Omega^2)$.

As an example we calculate curvature and torsion of the connections
introduced in the previous section. It turns out that the canonical connections
are all flat with non-trivial torsion that is easily computable, while
the structure
constants connection $\dst{}^c$ is torsion-free and non-flat.
\begin{proposition}
\begin{enumerate}
\item[i.)]
The canonical connections defined in Proposition~\ref{prop03} are flat.

\item[ii.)]
The structure constants connection $\dst{}^c$ of
Proposition~\ref{prop04} is torsion-free with curvature
$$
\mathrm{R}^{\dst{}^c}(\omega^i)
=(C^i_{kj}C^j_{nm}-C^i_{\ell m}C^\ell_{kn})
\omega^m\otimes_H\omega^n\wedge\omega^k~.
$$
\end{enumerate}
\end{proposition}
\begin{proof}
Since $M\cong M^{\mathrm{co}H}\otimes H$, $\mathrm{d}^M(M^{\mathrm{co}H})=0$
and $\mathrm{R}^{\mathrm{d}^M}$ is right $H$-linear we have
$\mathrm{R}^{\mathrm{d}^M}=0$.
Similarly for the other canonical connections.

Next we consider the right connection $\dst{}^c$ 
on $\Omega^1$. By the Cartan-Maurer formula (\ref{CartanMaurer})
$
\mathrm{Tor}^{\dst{}^c}(\omega^i)
=-\omega^j\wedge\omega^kC^i_{kj}+\mathrm{d}\omega^i
=0,
$
which implies $\mathrm{Tor}^{\dst{}^c}=0$ since $\mathrm{Tor}^{\dst{}^c}$ is
right $H$-linear.
The curvature of $\dst{}^c$ is given by
\begin{align*}
    \mathrm{R}^{\dst{}^c}(\omega^i)
    &=(\dst{}^c)^\bullet(\dst{}^c(\omega^i))\\
    &=-C^i_{kj}(\dst{}^c)^\bullet(\omega^j\otimes_H\omega^k)\\
    &=-C^i_{kj}\dst{}^c(\omega^j)\wedge\omega^k
    -C^i_{kj}\omega^j\otimes_H\mathrm{d}\omega^k\\
    &=C^i_{kj}C^j_{nm}\omega^m\otimes_H\omega^n\wedge\omega^k
    -C^i_{kj}C^k_{sr}\omega^j\otimes_H\omega^r\wedge\omega^s~.
\end{align*}
This concludes the proof of the proposition.
\end{proof}

\subsection{Dual Connection}\label{Sec4.3}

We recall the duality of left and right modules in a more general setting than that in
Section~\ref{Sec3.1} on bicovariant bimodules. We then study dual connections.
\begin{definition}\label{def02}
A right $H$-module $M$ and a left $H$-module $N$
are said to be dual if there is an $H$-bilinear map
$$
\langle\cdot,\cdot\rangle\colon N\otimes M\rightarrow H
$$
which is non-degenerate, i.e. $\langle n,m\rangle=0$
for all $n\in N$ implies $m=0$ and
$\langle n,m\rangle=0$ for all $m\in M$ implies $n=0$.
\end{definition}
If $M$ is a finitely generated projective right $H$-module,
the vector space $M^*=\mathrm{Hom}_H(M,H)$ of right
$H$-linear maps $M\rightarrow H$ becomes a left $H$-module.
The left $H$-action on
$M^*$ is defined by $(a\cdot\alpha)(m)=a\cdot\alpha(m)$
for all $a\in H$, $m\in M$ and $\alpha\in M^*$.
The pairing $\langle\cdot,\cdot\rangle\colon M^*\otimes M\rightarrow H$ is the
evaluation.
\begin{proposition}\label{prop06'}
Consider a dual pair $(M,N,\langle\cdot,\cdot\rangle)$,
as given in Definition~\ref{def02} and a differential calculus 
$(\Omega^\bullet,\wedge,\mathrm{d})$ on $H$.
\begin{enumerate}
\item[i.)] Any right connection $\dst
\in\mathrm{Con}_H(M)$ induces a left connection $\std
\in{}_H\mathrm{Con}(N)$, called the dual connection, via the formula
\begin{equation}\label{eq47}
    \mathrm{d}\langle n,m\rangle
    =\langle\std n,m\rangle
    +\langle n,\dst m\rangle
\end{equation}
for all $m\in M$ and $n\in N$.

\item[ii.)] Conversely, any left connection 
$\std
\in{}_H\mathrm{Con}(N)$
induces a dual right connection $\dst
\in\mathrm{Con}_H(M)$ via eq.(\ref{eq47}).

\item[iii.)] If $\dst
\in\mathrm{Con}_H(M)$ and
$\std
\in{}_H\mathrm{Con}(N)$ are connections dual to each other, i.e.,
such that eq.(\ref{eq47}) holds, the curvatures
$\mathrm{R}^{\dst}\colon M\rightarrow M\otimes_H\Omega^2$ and
$\mathrm{R}^\std\colon N\rightarrow\Omega^2\otimes_HN$
are dual to each other in the sense that
\begin{equation*}
    \langle n,\mathrm{R}^{\dst}m\rangle
    =\langle\mathrm{R}^\std n,m\rangle
\end{equation*}
holds for all $m\in M$ and $n\in N$.

\item[iv.)] If $\dst
\in\mathrm{Con}_H(\Omega^1)$
is a right connection and $\std
\in{}_H\mathrm{Con}(\mathfrak{X})$ is the dual left connection,
the torsions
$\mathrm{Tor}^{\dst}\colon\Omega^1\rightarrow\Omega^2$ and
$\mathrm{Tor}^\std\in\Omega^2\otimes_H\mathfrak{X}$
are dual to each other in the sense that
\begin{equation}\label{eq71}
    -\mathrm{Tor}^{\dst}(\omega)
    =\langle\mathrm{Tor}^{\std},\omega\rangle
\end{equation}
holds for all $\omega\in\Omega^1$.
\end{enumerate}
\end{proposition}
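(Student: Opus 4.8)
The plan is to reduce all four parts to the defining relation \eqref{eq47} and to the graded Leibniz rules of Lemma~\ref{lemma10} for the extended connections $\dst^\bullet$ and $\std^\bullet$. For i.)--ii.), fix $\dst\in\mathrm{Con}_H(M)$ and, for $n\in N$, consider the linear map $\tau_n\colon M\to\Omega^1$, $\tau_n(m)=\mathrm{d}\langle n,m\rangle-\langle n,\dst m\rangle$. The Leibniz rules of $\mathrm{d}$ and $\dst$ together with $H$-bilinearity of the pairing give $\tau_n(mb)=\tau_n(m)b$, so $\tau_n\in\mathrm{Hom}_H(M,\Omega^1)$. Identifying $N$ with $M^*=\mathrm{Hom}_H(M,H)$ via the non-degenerate pairing, the canonical map $\Omega^1\otimes_HN\to\mathrm{Hom}_H(M,\Omega^1)$, $\eta\otimes_Hn'\mapsto\bigl(m\mapsto\eta\langle n',m\rangle\bigr)$, is an isomorphism because $M$ is finitely generated projective; I define $\std n\in\Omega^1\otimes_HN$ to be the preimage of $\tau_n$. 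By construction $\langle\std n,m\rangle=\tau_n(m)$, which is precisely \eqref{eq47}, and non-degeneracy shows $\std$ is the unique such map. That $\std$ is a left connection, $\std(an)=\mathrm{d}a\otimes_Hn+a\std n$, is then checked by pairing both sides against an arbitrary $m\in M$: using $\langle an,m\rangle=a\langle n,m\rangle$, $\langle an,\dst m\rangle=a\langle n,\dst m\rangle$ and the Leibniz rule of $\mathrm{d}$ one recovers $\langle\std(an),m\rangle$, whence the claim by non-degeneracy. Part ii.) is the mirror statement with the roles of $M$ and $N$ exchanged (here one uses that $N$, being dual to the finitely generated projective $M$, is itself finitely generated projective).

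For iii.)--iv.) I first record two graded Leibniz rules for the pairing. Extend $\langle\cdot,\cdot\rangle$ to a $\Bbbk$-bilinear map $(\Omega^p\otimes_HN)\otimes(M\otimes_H\Omega^q)\to\Omega^{p+q}$ by $\langle\eta\otimes_Hn,m\otimes_H\omega\rangle:=\eta\langle n,m\rangle\omega$; the $H$-bilinearity of the original pairing makes this well defined on the balanced tensor products. Using \eqref{eq47}, the definitions \eqref{eq68}, \eqref{eq70} of $\dst^\bullet,\std^\bullet$ (in particular the degree map $\mathrm{deg}$ occurring in \eqref{eq70}) and the graded Leibniz rule of $\mathrm{d}$, a short computation on elementary tensors yields, for $n\in N$, $m\in M$, $\Psi\in M\otimes_H\Omega^\bullet$ and $\Xi\in\Omega^\bullet\otimes_HN$,
\begin{equation*}
    \mathrm{d}\langle n,\Psi\rangle=\langle\std n,\Psi\rangle+\langle n,\dst^\bullet\Psi\rangle
    \qquad\text{and}\qquad
    \mathrm{d}\langle\Xi,m\rangle=(-1)^{|\Xi|}\bigl(\langle\std^\bullet\Xi,m\rangle+\langle\Xi,\dst m\rangle\bigr).
\end{equation*}

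Given these, iii.) is immediate: taking $\Psi=\dst m$ in the first identity gives $\langle n,\mathrm{R}^{\dst}m\rangle=\langle n,\dst^\bullet(\dst m)\rangle=\mathrm{d}\langle n,\dst m\rangle-\langle\std n,\dst m\rangle$; rewriting $\langle n,\dst m\rangle=\mathrm{d}\langle n,m\rangle-\langle\std n,m\rangle$ by \eqref{eq47}, applying $\mathrm{d}$ (so $\mathrm{d}^2=0$ kills the first term), and using the second identity with $\Xi=\std n$ (so $|\Xi|=1$), one rewrites $\mathrm{d}\langle n,\dst m\rangle=\langle\std^\bullet(\std n),m\rangle+\langle\std n,\dst m\rangle$; the two copies of $\langle\std n,\dst m\rangle$ cancel, leaving $\langle n,\mathrm{R}^{\dst}m\rangle=\langle\std^\bullet(\std n),m\rangle=\langle\mathrm{R}^\std n,m\rangle$. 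For iv.), the dual-basis property of $\{\omega^i\}_{i\in I}$, $\{\chi_i\}_{i\in I}$ gives $\langle\mathrm{I},\omega\rangle=\sum_i\omega^i\langle\chi_i,\omega\rangle=\omega$ for every $\omega\in\Omega^1$, hence also $\langle\mathrm{I},\dst\omega\rangle=\wedge(\dst\omega)$; applying the second identity with $\Xi=\mathrm{I}$ (again $|\Xi|=1$) and $m=\omega$ yields $\mathrm{d}\omega=\mathrm{d}\langle\mathrm{I},\omega\rangle=-\langle\std^\bullet\mathrm{I},\omega\rangle-\langle\mathrm{I},\dst\omega\rangle=-\langle\mathrm{Tor}^\std,\omega\rangle-\wedge(\dst\omega)$, i.e. $\langle\mathrm{Tor}^\std,\omega\rangle=-\bigl(\wedge(\dst\omega)+\mathrm{d}\omega\bigr)=-\mathrm{Tor}^{\dst}(\omega)$, which is \eqref{eq71}.

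The main obstacle is the sign bookkeeping in the two graded Leibniz identities---in particular the factor $(-1)^{|\Xi|}$ in the second one, which arises precisely because the degree map $\mathrm{deg}$ enters the definition \eqref{eq70} of $\std^\bullet$ but not the definition \eqref{eq68} of $\dst^\bullet$; getting this right is exactly what makes the cancellations in iii.) and the minus sign in iv.) work out. A secondary point of care, for i.)--ii.), is verifying that $\std n$ is genuinely well defined on the balanced tensor product $\Omega^1\otimes_HN$, which is where finite generation and projectivity of $M$ (equivalently the identification $N\cong M^*$) are used.
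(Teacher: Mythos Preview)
Your proof is correct and in fact tidier than the paper's in places. The overall strategies differ mainly for parts iii.) and iv.).

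For i.)--ii.) the paper proceeds exactly as you do: it checks that $m\mapsto\mathrm{d}\langle n,m\rangle-\langle n,\dst m\rangle$ is right $H$-linear and then verifies the left Leibniz rule for $\std$ by pairing against an arbitrary $m$. The paper, however, simply asserts that $\std n$ is ``well-defined via the pairing'' without saying why this right $H$-linear map $M\to\Omega^1$ comes from an element of $\Omega^1\otimes_HN$; you make this explicit by invoking finite generation and projectivity of $M$ and the identification $\Omega^1\otimes_HN\cong\mathrm{Hom}_H(M,\Omega^1)$. That is the honest way to close the argument (and is consistent with the remark following Definition~\ref{def02}).

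For iii.)--iv.) the paper carries out two long direct computations: for iii.) it expands $\langle n,\mathrm{R}^{\dst}m\rangle$ step by step using the short notation $\dst m=m_M\otimes_Hm_\Omega$, $\std n=n_\Omega\otimes_Hn_N$, repeatedly applying \eqref{eq47} and the graded Leibniz rule of $\mathrm{d}$ until $\langle\mathrm{R}^\std n,m\rangle$ appears; for iv.) it writes $\omega=\omega^ia_i$ and $\dst\omega=\omega^i\otimes_H\omega^j\dst{}_{ij}$ in the left-invariant basis and computes $\langle\mathrm{Tor}^{\std},\omega\rangle$ directly. Your route---first isolating the two graded Leibniz identities for the extended pairing and then reading off iii.) and iv.) as two-line consequences---is genuinely different and more structural: it makes transparent that the sign $(-1)^{|\Xi|}$ originates from the $\mathrm{deg}$ in \eqref{eq70}, and it treats curvature and torsion uniformly. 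The paper's approach has the minor advantage of not needing to set up the extended pairing $(\Omega^p\otimes_HN)\otimes(M\otimes_H\Omega^q)\to\Omega^{p+q}$, but your packaging is cleaner and the sign bookkeeping you flag is handled correctly.
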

\begin{proof}
Let $m\in M$, $n\in N$ and $a\in H$.
\begin{enumerate}
\item[i.)] Assume that $\dst$ is a right connection on $M$. Then
$\std\colon N\rightarrow\Omega\otimes_HN$ is well-defined
via the pairing, since
\begin{align*}
    \langle\std n,m\cdot a\rangle
    &=\mathrm{d}\langle n,m\cdot a\rangle
    -\langle n,\dst(m\cdot a)\rangle\\
    &=\mathrm{d}(\langle n,m\rangle a)
    -\langle n,(\dst m)\cdot a\rangle
    -\langle n,m \otimes_H\mathrm{d}a\rangle\\
    &=(\mathrm{d}\langle n,m\rangle)\cdot a
    +\langle n,m\rangle\otimes_H\mathrm{d}a
    -\langle n,\dst m\rangle\cdot a
    -\langle n,m\rangle\otimes_H\mathrm{d}a\\
    &=\langle\std n,m\rangle\cdot a~.
\end{align*}
It is a left connection on $N$, since
\begin{align*}
    \langle\std(a\cdot n),m\rangle
    &=\mathrm{d}\langle a\cdot n,m\rangle
    -\langle a\cdot n,\dst m\rangle\\
    &=\mathrm{d}(a\langle n,m\rangle)
    -a\langle n,\dst m\rangle\\
    &=\mathrm{d}a\otimes_H\langle n,m\rangle
    +a\cdot\mathrm{d}\langle n,m\rangle
    -a\langle n,\dst m\rangle\\
    &=\langle\mathrm{d}a\otimes_Hn,m\rangle
    +a\langle\std n,m\rangle\\
    &=\langle\mathrm{d}a\otimes_Hn,m\rangle
    +\langle a\cdot\std n,m\rangle~.
\end{align*}

\item[ii.)] This follows in complete analogy to $i.)$.

\item[iii.)] If $\dst$ and $\std$ are right and left connections, respectively,
such that eq.(\ref{eq47}) is satisfied, then
\begin{align*}
    \langle n,\mathrm{R}^{\dst}m\rangle
    &=\langle n,\dst^\bullet(\dst(m))\rangle\\
    &=\langle n,\dst^\bullet(m_M\otimes_Hm_\Omega)\\
    &=\langle n,\dst(m_M)\wedge m_\Omega\rangle
    +\langle n,m_M\otimes_H\mathrm{d}m_\Omega\rangle\\
    &=\langle n,\dst m_M\rangle\wedge m_\Omega
    +\langle n,m_M\rangle\cdot\mathrm{d}m_\Omega\\
    &=\mathrm{d}\langle n,m_M\rangle\wedge m_\Omega
    -\langle\std n,m_M\rangle\wedge m_\Omega
    +\mathrm{d}(\langle n,m_M\rangle\cdot m_\Omega)
    -\mathrm{d}(\langle n,m_M\rangle)\wedge m_\Omega\\
    &=-\langle\std n,m_M\rangle\wedge m_\Omega
    +\mathrm{d}(\langle n,m_M\rangle\cdot m_\Omega)\\
    &=-n_\Omega\wedge\langle n_N,\dst m\rangle
    +\mathrm{d}\langle n,\dst m\rangle\\
    &=-n_\Omega\wedge\mathrm{d}\langle n_N,m\rangle
    +n_\Omega\wedge\langle\std n_N,m\rangle
    +\mathrm{d}^2\langle n,m\rangle
    -\mathrm{d}\langle\std n,m\rangle\\
    &=-n_\Omega\wedge\mathrm{d}\langle n_N,m\rangle
    +n_\Omega\wedge\langle\std n_N,m\rangle
    -\mathrm{d}(n_\Omega\cdot\langle n_N,m\rangle)\\
    &=n_\Omega\wedge\langle\std n_\mathcal{N},m\rangle
    -\mathrm{d}(n_\Omega)\cdot\langle n_N,m\rangle\\
    &=\langle n_\Omega\wedge\std n_N
    +(-1)^1\mathrm{d}n_\Omega\otimes_Hn_N,m\rangle\\
    &=\langle\mathrm{R}^\std(n),m\rangle~,
\end{align*}
where we used the short notation $\dst m=m_M\otimes_Hm_\Omega$ and
$\std n=n_\Omega\otimes_Hn_N$.

\item[iv.)] Let $\omega=\omega^ia_i\in\Omega^1$ and
$\dst\omega=\omega^i\otimes_H\omega^j\dst{}_{ij}\in\Omega^1\otimes_H\Omega^1$,
where $a_i,\dst{}_{ij}\in H$ are suitable coefficients with respect to the bases
$\omega^i$ and $\omega^i\otimes\omega^j$ of left-invariant one forms. Then
\begin{align*}
    \langle\mathrm{Tor}^{\std},\omega\rangle
    &=\langle\std{}^\bullet(\omega^i\otimes\chi_i),\omega\rangle\\
    &=\langle\omega^i\wedge\std(\chi_i)
    +(-1)^{|\omega^i|}\mathrm{d}\omega^i\otimes_H\chi_i,\omega\rangle\\
    &=\omega^i\wedge\langle\std(\chi_i),\omega\rangle
    -\mathrm{d}\omega^i\langle\chi_i,\omega^ja_j\rangle\\
    &=\omega^i\wedge(\mathrm{d}\langle\chi_i,\omega^ja_j\rangle
    -\langle\chi_i,\dst\omega\rangle)
    -\mathrm{d}(\omega^i)a_i\\
    &=\omega^i\wedge\mathrm{d}a_i
    -\omega^i\wedge\langle\chi_i,\omega^j\otimes_H\omega^k\dst{}_{jk}\rangle
    -\mathrm{d}(\omega^i)a_i\\
    &=-\omega^i\wedge\omega^k\dst{}_{ik}-\mathrm{d}(\omega^ia_i)\\
    &=-\mathrm{Tor}^{\dst}(\omega)~.
\end{align*}
\end{enumerate}
This concludes the proof of the proposition.
\end{proof}
Note that the minus sign in (\ref{eq71}) appears because torsion is of
odd degree, while curvature is of even degree.

We already noticed in Example~\ref{example02} that ${}^{\overline{M}}\mathrm{d}$
is the left connection corresponding to the right connection $\mathrm{d}^M$
and ${}^M\mathrm{d}$ is the left connection corresponding to the right
connection $\mathrm{d}^{\overline{M}}$ via composition with the braiding
$\overline{\sigma}^\mathcal{W}$. In the following proposition we prove that
these connections are furthermore dual to each other.
We also construct the connections dual to the structure constants connections of
Proposition~\ref{prop04}.
\begin{proposition}
Let $M$ be a bicovariant bimodule and
$(\Omega^\bullet,\wedge,\mathrm{d})$ a bicovariant
differential calculus on $H$.
Then
\begin{equation*}
    \mathrm{d}\langle\alpha,m\rangle
    =\langle{}^{\overline{M}}\mathrm{d}(\alpha),m\rangle
    +\langle\alpha,\mathrm{d}^M(m)\rangle~~,~~~
    \mathrm{d}\langle\alpha,m\rangle
    =\langle{}^M\mathrm{d}(\alpha),m\rangle
    +\langle\alpha,\mathrm{d}^{\overline{M}}(m)\rangle
\end{equation*}
holds for all $m\in M$ and $\alpha\in M^*$, i.e.
$\mathrm{d}^M$ and ${}^{\overline{M}}\mathrm{d}$ are dual connections,
and $\mathrm{d}^{\overline{M}}$ and ${}^M\mathrm{d}$ are dual connections.

Furthermore, the left connection
$\std{}^c\in
{}_H\mathrm{Con}(\mathfrak{X})$ dual to
$\dst{}^c$ is given by
\begin{equation}\label{eq58}
    \std{}^c(a^i\chi_i)
    =a^iC^k_{ni}\omega^n\otimes_H\chi_k
    +\mathrm{d}a^i\otimes_H\chi_i
\end{equation}
and the left connection
$\widetilde{\std}{}^c
\in{}_H\mathrm{Con}(\mathfrak{X})$ dual to
$\widetilde{\dst}{}^c$ is given by
\begin{equation*}
    \widetilde{\std}{}^c(a^i\chi_i)
    =a^i\mathbf{C}^k_{ni}\omega^n\otimes_H\chi_k
    +\mathrm{d}a^i\otimes_H\chi_i
\end{equation*}
for all $a^i\in H$, with $\{\chi_i\}$ the basis dual to $\{\omega^i\}$.
\end{proposition}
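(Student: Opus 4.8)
\emph{Overall plan.} For each claim I would start from the defining relation \eqref{eq47} of a dual connection and use that, on a fixed module, two connections differ by a module morphism, so they coincide as soon as they agree on a generating set of (co)invariants. This turns all three assertions into short Sweedler computations. For the first part I identify the dual of a canonical connection with another canonical connection by matching them on coinvariants; for the structure constants connections I match the dual of $\dst{}^c$ (resp.\ $\widetilde{\dst}{}^c$) with the stated formula on the basis $\{\chi_i\}$.

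\emph{The canonical connections.} Here ${}^{\overline{M}}\mathrm{d}$ is to be read as the canonical left connection of Lemma~\ref{prop03}~ii.) for the bicovariant bimodule $M^*$, which by the remark after that lemma is the unique left connection on $M^*$ vanishing on $(M^*)^{\mathrm{co}H}$; recall also that $M^*$ is generated as a left $H$-module by $(M^*)^{\mathrm{co}H}$. By Proposition~\ref{prop06'}~i.) the right connection $\mathrm{d}^M$ has a dual left connection $\std$ on $M^*$, determined by $\langle\std\alpha,m\rangle=\mathrm{d}\langle\alpha,m\rangle-\langle\alpha,\mathrm{d}^M m\rangle$. Since $\std-{}^{\overline{M}}\mathrm{d}$ is left $H$-linear, it is enough to check $\std\alpha=0$ for $\alpha\in(M^*)^{\mathrm{co}H}$, i.e.\ (by nondegeneracy of the pairing) that $\langle\alpha,\mathrm{d}^M m\rangle=\mathrm{d}\langle\alpha,m\rangle$ for all $m\in M$. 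The key point is that by Lemma~\ref{lemma01}~iii.) such an $\alpha$ is precisely a right $H$-colinear map $\alpha\colon M\to H$, so with $h:=\langle\alpha,m\rangle$ one has $h_1\otimes h_2\otimes h_3=\langle\alpha,m_0\rangle\otimes m_1\otimes m_2$, and then, using right $H$-linearity of the pairing,
\[
\langle\alpha,\mathrm{d}^M m\rangle
=\langle\alpha,m_0\rangle S(m_1)\,\mathrm{d}m_2
=h_1S(h_2)\,\mathrm{d}h_3
=\mathrm{d}h~.
\]
The duality of $\mathrm{d}^{\overline{M}}$ and ${}^{M}\mathrm{d}$ follows verbatim after exchanging right and left coinvariants, $\delta\leftrightarrow\lambda$ and $S\leftrightarrow\overline{S}$, using Lemma~\ref{lemma01}~iii'.) and the identity $g_3\overline{S}(g_2)\otimes g_1=1\otimes g$.

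\emph{The structure constants connections.} First I would check that the right hand side of \eqref{eq58} really defines a left connection on the free left $H$-module $\mathfrak{X}={}^{\mathrm{co}H}\mathfrak{X}\otimes H$ --- the summand $\mathrm{d}a^i\otimes_H\chi_i$ supplies the Leibniz rule exactly as for $\dst{}^c$, and likewise for $\widetilde{\std}{}^c$. Letting $\std$ be the connection on $\mathfrak{X}$ dual to $\dst{}^c$ (Proposition~\ref{prop06'}~iv.)), the difference $\std-\std{}^c$ is again left $H$-linear, so it suffices to compare values on the basis $\{\chi_i\}_{i\in I}$ of ${}^{\mathrm{co}H}\mathfrak{X}$, i.e.\ to verify $\langle\std{}^c\chi_i,\omega\rangle=\mathrm{d}\langle\chi_i,\omega\rangle-\langle\chi_i,\dst{}^c\omega\rangle$ for all $\omega\in\Omega^1$. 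Inserting $\omega=\omega^jb_j$ and using the Leibniz rules together with $\langle\chi_i,\omega^j\rangle=\delta_i^j$, the inhomogeneous terms cancel and the identity collapses to the purely combinatorial
\[
\langle\std{}^c\chi_i,\omega^j\rangle+\langle\chi_i,\dst{}^c\omega^j\rangle
=C^j_{ni}\omega^n-C^j_{ki}\omega^k=0~,
\]
where I used that $C^i_{kj}\in\Bbbk$ so that the structure constants pull out of the pairing (which acts on the first tensor leg of $\dst{}^c\omega^j$ and on the $\mathfrak{X}$-leg of $\std{}^c\chi_i$). The case of $\widetilde{\std}{}^c$, $\widetilde{\dst}{}^c$ is the identical computation with $C^i_{kj}$ replaced throughout by $\mathbf{C}^i_{kj}=C^i_{kj}-\Lambda_{jk}{}^{mn}C^i_{nm}\in\Bbbk$.

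\emph{Main obstacle.} There is no conceptual difficulty here; the only thing requiring care is the bookkeeping --- keeping straight which of the four canonical connections vanishes on left versus right coinvariants and carries $S$ versus $\overline{S}$, and fixing once and for all the conventions for how the pairing extends to $M\otimes_H\Omega^1$ and $\Omega^1\otimes_H\mathfrak{X}$ and on which leg $H$ acts --- after which each displayed identity above is a one-line Sweedler manipulation.
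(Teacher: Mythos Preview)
Your proposal is correct and follows essentially the same strategy as the paper: reduce to (co)invariant generators where the canonical connections vanish, and verify the duality relation there by a short Sweedler computation (for the first part) or by matching coefficients in the left-invariant basis (for the structure constants part). The only stylistic difference is that the paper carries out the first computation by simultaneously expanding both $m$ and $\alpha$ in dual bases of right-coinvariant elements, whereas you keep $m$ generic and use Lemma~\ref{lemma01}~iii.) to translate $\alpha\in(M^*)^{\mathrm{co}H}$ into right $H$-colinearity of $\alpha$; this is the same content. One trivial remark: your reference to Proposition~\ref{prop06'}~iv.) for the existence of the dual of $\dst{}^c$ should be to part i.).
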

\begin{proof}
Recall that $\mathrm{d}^M$ and ${}^{\overline{M}}\mathrm{d}$ have the defining property of vanishing on right invariant elements. Then $\mathrm{d}\langle\alpha,m\rangle=\langle{}^{\overline{M}}\mathrm{d}(\alpha),m\rangle+\langle\alpha,\mathrm{d}^M(m)\rangle$ trivially holds if $\alpha$ and $m$ are right invariant, since all terms vanish. The duality then follows recalling that $M\cong M^{\mathrm{co}H}\otimes H$ and $M^*\cong H\otimes(M^*)^{\mathrm{co}H}$.
Similarly, using $M\cong H\otimes{}^{\mathrm{co}H}M$ and $M^*\cong{}^{\mathrm{co}H}(M^*)\otimes H$,
one proves that $\mathrm{d}^{\overline{M}}$ and ${}^M\mathrm{d}$ are dual.
Moreover,
$\mathrm{d}\langle\chi_i,\omega^j\rangle-\langle\chi_i,\dst{}^c(\omega^j)\rangle=\langle\std{}^c(\chi_i),\omega^j\rangle$
similarly implies that $\std{}^c$ defined in (\ref{eq58}) is the dual connection of $\dst{}^c$.
The dual connection of $\widetilde{\dst}^c$ is obtained by substituting
$C^i_{jk}$ with $\mathbf{C}^i_{jk}$.
\end{proof}

  \subsection{Sum of Connections}\label{Sec4.4}

Given two right connections on modules $M$, $N$ we construct their sum
which is a
a right connection on the tensor product module $M\otimes N$.
In the following $M$ and $N$ denote bicovariant bimodules.
We first study the relation between the canonical connections on $M$, $N$ and
$M\otimes_HN$.
\begin{lemma}
The canonical connections on $M$, $N$ and $M\otimes_HN$ are related via
\begin{equation}\label{eq12}
    \mathrm{d}^{M\otimes_HN}(m\otimes_Hn)
    =\sigma^\mathcal{W}_{23}(\mathrm{d}^M(m)\otimes_Hn)
    +m\otimes_H\mathrm{d}^N(n),
\end{equation}
for all $m\in M$ and $n\in N$.
\end{lemma}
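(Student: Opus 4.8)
The plan is to establish \eqref{eq12} by a direct computation in Sweedler notation, evaluating both sides on an arbitrary generator $m\otimes_Hn$ of $M\otimes_HN$. For the left hand side I would unfold the definition $\mathrm{d}^{M\otimes_HN}(m\otimes_Hn)=(m\otimes_Hn)_0\,S\bigl((m\otimes_Hn)_1\bigr)\otimes_H\mathrm{d}\bigl((m\otimes_Hn)_2\bigr)$ using the diagonal right coaction $(m\otimes_Hn)_0\otimes(m\otimes_Hn)_1\otimes(m\otimes_Hn)_2=(m_0\otimes_Hn_0)\otimes m_1n_1\otimes m_2n_2$, the anti-multiplicativity of $S$, the bimodule structure $a\cdot(m\otimes_Hn)\cdot b=am\otimes_Hnb$, and the Leibniz rule $\mathrm{d}(m_2n_2)=\mathrm{d}(m_2)n_2+m_2\mathrm{d}(n_2)$. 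This produces
\begin{equation*}
    \mathrm{d}^{M\otimes_HN}(m\otimes_Hn)
    =m_0\otimes_Hn_0S(n_1)S(m_1)\otimes_H\mathrm{d}(m_2)\,n_2
    +m_0\otimes_Hn_0S(n_1)S(m_1)\otimes_Hm_2\,\mathrm{d}(n_2)~,
\end{equation*}
which I split as a term $\mathrm{A}$ (the first summand) plus a term $\mathrm{B}$ (the second).

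For $\mathrm{B}$ I would move $S(m_1)$ across $\otimes_H$ from the middle into the last tensor factor, creating $S(m_1)m_2$ there; the coaction axioms (antipode, then counit) give $m_0\otimes S(m_1)m_2=m\otimes 1_H$, so $\mathrm{B}=m\otimes_Hn_0S(n_1)\otimes_H\mathrm{d}(n_2)=m\otimes_H\mathrm{d}^N(n)$, which is exactly the second summand of \eqref{eq12}. For $\mathrm{A}$ the same move yields $\mathrm{A}=m_0\otimes_Hn_0S(n_1)\otimes_HS(m_1)\mathrm{d}(m_2)\,n_2$, and it remains to recognise this as $\sigma^\mathcal{W}_{23}(\mathrm{d}^M(m)\otimes_Hn)=(\mathrm{id}_M\otimes_H\sigma^\mathcal{W}_{\Omega^1,N})\bigl(m_0S(m_1)\otimes_H\mathrm{d}(m_2)\otimes_Hn\bigr)$. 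Since the differential calculus is bicovariant, $\mathrm{d}$ is left $H$-colinear, so the iterated left coaction of $\mathrm{d}(m_2)$ has Sweedler legs $m_2\otimes m_3\otimes\mathrm{d}(m_4)$; plugging this into formula \eqref{WorBraiding} for $\sigma^\mathcal{W}_{\Omega^1,N}$, then moving the appropriate $H$-components across $\otimes_H$ and collapsing the $m$-legs via the standard identity $S(h_1)h_2\otimes h_3\otimes h_4=1_H\otimes h_1\otimes h_2$ (antipode plus coassociativity), gives precisely $m_0\otimes_Hn_0S(n_1)\otimes_HS(m_1)\mathrm{d}(m_2)\,n_2$. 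Thus $\mathrm{A}=\sigma^\mathcal{W}_{23}(\mathrm{d}^M(m)\otimes_Hn)$, and summing $\mathrm{A}$ and $\mathrm{B}$ yields \eqref{eq12}.

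The only delicate point is the last computation in $\mathrm{A}$: keeping the Sweedler components of the iterated coaction of $m$ aligned as they pass through $\sigma^\mathcal{W}_{\Omega^1,N}$ and then through the projection onto $\otimes_H$; everything else is routine bookkeeping. As a cleaner but less self-contained alternative one may argue by uniqueness: $\mathrm{d}^{M\otimes_HN}$ is the unique right connection on $M\otimes_HN$ vanishing on $(M\otimes_HN)^{\mathrm{co}H}$, and the right hand side of \eqref{eq12} is a well-defined right connection (by the $H$-bilinearity of $\sigma^\mathcal{W}$ and the Leibniz rule of $\mathrm{d}^N$) which annihilates every $\overline m\otimes_H\overline n$ with $\overline m\in M^{\mathrm{co}H}$ and $\overline n\in N^{\mathrm{co}H}$, since $\mathrm{d}^M$ and $\mathrm{d}^N$ do; as such elements span $(M\otimes_HN)^{\mathrm{co}H}$, the two connections coincide.
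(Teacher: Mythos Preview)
Your main argument is correct and is essentially the same direct Sweedler computation as the paper's proof, only organized in the reverse order: the paper first computes $\sigma^\mathcal{W}_{23}(\mathrm{d}^M(m)\otimes_Hn)$ explicitly and then expands $\mathrm{d}^{M\otimes_HN}(m\otimes_Hn)$ to match, whereas you expand the left hand side first and then identify the two summands. Your closing uniqueness argument (both sides are right connections vanishing on $(M\otimes_HN)^{\mathrm{co}H}$) is a clean alternative that the paper does not give.
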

\begin{proof}
Let $m\in M$ and $n\in N$. We have
\begin{align*}
    \sigma^\mathcal{W}_{23}(\mathrm{d}^M(m)\otimes_Hn)
    &=m_0S(m_1)\otimes_H\sigma^\mathcal{W}(\mathrm{d}m_2\otimes_Hn)\\
    &=m_0S(m_1)\otimes_Hm_2n_0S(n_1)\otimes_HS(m_3)\mathrm{d}m_4n_2\\
    &=m_0\otimes_Hn_0S(n_1)\otimes_HS(m_1)\mathrm{d}m_2n_2~,
\end{align*}
which implies
\begin{align*}
    \mathrm{d}^{M\otimes_HN}(m\otimes_Hn)
    &=(m_0\otimes_Hn_0)S(m_1n_1)\otimes_H\mathrm{d}(m_2n_2)\\
    &=m_0\otimes_Hn_0S(m_1n_1)\otimes_H\mathrm{d}(m_2)n_2
    +m_0\otimes_Hn_0S(m_1n_1)\otimes_Hm_2\mathrm{d}(n_2)\\
    &=\sigma^\mathcal{W}_{23}(\mathrm{d}^M(m)\otimes_Hn)
    +m\otimes_H\mathrm{d}^N(n)~.
\end{align*}
\end{proof}
The right hand side of equation (\ref{eq12}) can be understood as defining the
sum of the connections $\mathrm{d}^M$ and $\mathrm{d}^N$; this equals the canonical
connection on $M\otimes_HN$. Since any right connection
$\dst{}^M
\in\mathrm{Con}_H(M)$ on $M$ is equal to the sum of $\mathrm{d}^M$ and a
right $H$-linear map $\phi^M\in\mathrm{Hom}_H(M,M\otimes_H\Omega^1)$
the sum of two arbitrary right
connections $\dst{}^M
\in\mathrm{Con}_H(M)$ and
$\dst{}^N
\in\mathrm{Con}_H(N)$ 
on the bicovariant bimodules $M,N$ that we rewrite as
$\dst{}^M=\mathrm{d}^M+(\dst{}^M-\mathrm{d}^M)=\mathrm{d}^M+\phi^M$,
$\dst{}^N=\mathrm{d}^N+(\dst{}^N-\mathrm{d}^N)=\mathrm{d}^N+\phi^N$,
is then given by
\begin{equation}\label{eq13}
    \dst{}^{M\otimes_HN}
    :=\mathrm{d}^{M\otimes_HN}
    +\sigma^\mathcal{W}_{23}
    \circ((\dst{}^M-\mathrm{d}^M)\otimes_{\sigma^\mathcal{W}}\mathrm{id}_N)
    +(\mathrm{id}_M\otimes_{\sigma^\mathcal{W}}(\dst{}^N-\mathrm{d}^N))~.
\end{equation}
Since $\phi^M=\dst{}^M-\mathrm{d}^M$ and $\phi^N=\dst{}^N-\mathrm{d}^N$
are right $H$-linear maps, the tensor products $\otimes_{\sigma^\mathcal{W}}$
are well-defined and give right $H$-linear maps. It is immediate that
(\ref{eq13}) defines a right $H$-connection $M\otimes_HN\rightarrow
M\otimes_HN\otimes_H\Omega^1$ and generalizes the sum (\ref{eq12}) 
of the canonical connection.
In order to formulate a sum of connections solely from the
initial data $(\dst{}^M,\dst{}^N)$ and
without reference to the canonical connections
we employ the tensor product $\,\hat{\otimes}\,$ of rational morphisms
defined in Proposition~\ref{prop06} via the lifting $\sigma$ of $\sigma^\mathcal{W}$.
\begin{theorem}\label{thm02}
Let $M,N$ be modules of ${}_H^H\mathcal{M}_H^H$, finitely generated as right 
$H$-modules and consider right connections 
$\dst{}^M
\in\mathrm{Con}_H(M)$ 
and
$\dst{}^N
\in\mathrm{Con}_H(N)$.
Then
\begin{equation}\label{eq15}
    \dst{}^M\oplus\dst{}^N
    :=\sigma^\mathcal{W}_{23}\circ(\dst{}^M\,\hat{\otimes}\,\mathrm{id}_N)
    +\mathrm{id}_M\,\hat{\otimes}\,\dst{}^N
\end{equation}
defines a right connection $\dst{}^M\oplus\dst{}^N
\in\mathrm{Con}_H(M\otimes_HN)$.
On elements $m\otimes_Hn\in M\otimes_HN$ this reads
\begin{equation}\label{eq37}
    (\dst{}^M\oplus\dst{}^N)(m\otimes_Hn)
    =\sigma^\mathcal{W}_{23}(\dst{}^M(m_0S(m_1))\otimes_Hm_2n)
    +\dst{}^N_{-2}m_0S(\dst{}^N_{-1}m_1)\otimes_H\dst{}^N_0(m_2n)~.
\end{equation}
It follows that
\begin{equation}\label{eq16}
    \dst{}^M\oplus\dst{}^N    =\dst^{M\otimes_HN}~,
\end{equation}
i.e. definition (\ref{eq15}) coincides with definition (\ref{eq13})
of the sum of connections.
\end{theorem}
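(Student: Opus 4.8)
The plan is threefold: first unwind \eqref{eq15} into the explicit formula \eqref{eq37}; then verify directly from \eqref{eq37} the right Leibniz rule, so that $\dst{}^M\oplus\dst{}^N$ is a right connection; and finally identify it with the connection $\dst{}^{M\otimes_HN}$ of \eqref{eq13}, which is already known to be a right connection.

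For the first step I substitute $\hat\otimes=\pi_H^{M',N'}\circ\otimes_\sigma$ from Theorem~\ref{prop06} and expand with \eqref{eq36}. In $\dst{}^M\hat\otimes\mathrm{id}_N$ the relation $\lambda^\mathrm{Ad}(\mathrm{id}_N)=1_H\otimes\mathrm{id}_N$ yields $(\dst{}^M\hat\otimes\mathrm{id}_N)(m\otimes_Hn)=\dst{}^M(m_0S(m_1))\otimes_Hm_2n$, while \eqref{eq36} applied to $\mathrm{id}_M\hat\otimes\dst{}^N$ gives $(\mathrm{id}_M\hat\otimes\dst{}^N)(m\otimes_Hn)=\dst{}^N_{-2}m_0S(\dst{}^N_{-1}m_1)\otimes_H\dst{}^N_0(m_2n)$; acting with $\sigma^\mathcal{W}_{23}$ on the first expression and adding produces \eqref{eq37}. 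All of the $\mathrm{HOM}$-valued expressions are legitimate because $\dst{}^M$ and $\dst{}^N$ are rational morphisms, by Proposition~\ref{PropConRat} and the assumption that $M,N$ are finitely generated.

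For the second step I evaluate the right-hand side of \eqref{eq37} on $(m\otimes_Hn)\cdot a=m\otimes_H(na)$. On the first summand I pull $a$ out to the right using that $\sigma^\mathcal{W}_{\Omega^1,N}$ is $H$-bilinear. On the second summand I apply the iterated pseudo Leibniz rule \eqref{eq38} for $\lambda^\mathrm{Ad}(\dst{}^N)$, which splits $\dst{}^N_{-2}\otimes\dst{}^N_{-1}\otimes\dst{}^N_0((m_2n)\cdot a)$ into $\dst{}^N_{-2}\otimes\dst{}^N_{-1}\otimes\dst{}^N_0(m_2n)\cdot a$ plus $1_H\otimes 1_H\otimes m_2n\otimes_H\mathrm{d}a$; the first piece reassembles $(\dst{}^M\oplus\dst{}^N)(m\otimes_Hn)\cdot a$ and the second, after moving the $H$-factor $m_2$ across $\otimes_H$ and using $m_0S(m_1)m_2=m$, collapses to $(m\otimes_Hn)\otimes_H\mathrm{d}a$. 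This is exactly the Leibniz identity, so $\dst{}^M\oplus\dst{}^N\in\mathrm{Con}_H(M\otimes_HN)$.

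For the last step, both $\dst{}^M\oplus\dst{}^N$ and $\dst{}^{M\otimes_HN}$ are right connections on $M\otimes_HN$, so their difference is right $H$-linear and it suffices to check agreement on the elements $\overline m\otimes_H\overline n$, $\overline m\in M^{\mathrm{co}H}$, $\overline n\in N^{\mathrm{co}H}$, which generate $M\otimes_HN$ as a right $H$-module (Fundamental Theorem of Hopf Modules). Writing $\dst{}^M=\mathrm{d}^M+\phi^M$ and $\dst{}^N=\mathrm{d}^N+\phi^N$ with $\phi^M,\phi^N$ right $H$-linear rational morphisms, evaluation of \eqref{eq37} on $\overline m\otimes_H\overline n$ kills the $\mathrm{d}^M$-part of the first summand (as $\mathrm{d}^M(\overline m)=0$) and the $\mathrm{d}^N$-part of the second summand: indeed $\mathrm{d}^N$ annihilates $N^{\mathrm{co}H}$, and the subspace of morphisms vanishing on $N^{\mathrm{co}H}$ is $\lambda^\mathrm{Ad}$-stable because $\lambda_N(N^{\mathrm{co}H})\subseteq H\otimes N^{\mathrm{co}H}$, so $\mathrm{d}^N_0$ annihilates $N^{\mathrm{co}H}$ too; after balancing, what remains is $\sigma^\mathcal{W}_{23}(\phi^M(\overline m)\otimes_H\overline n)+(\phi^N)_{-2}\overline m\otimes_HS((\phi^N)_{-1})(\phi^N)_0(\overline n)$. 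On the \eqref{eq13} side, $\mathrm{d}^{M\otimes_HN}(\overline m\otimes_H\overline n)$ vanishes since $\overline m\otimes_H\overline n$ is right coinvariant, and the two $\otimes_{\sigma^\mathcal{W}}$-terms, evaluated on $\overline m\otimes_H\overline n$ by \eqref{eq29} using $\overline m\in M^{\mathrm{co}H}$ and $\lambda^\mathrm{Ad}(\mathrm{id}_N)=1_H\otimes\mathrm{id}_N$, reduce to exactly $\sigma^\mathcal{W}_{23}(\phi^M(\overline m)\otimes_H\overline n)$ and $(\phi^N)_{-2}\overline m\otimes_HS((\phi^N)_{-1})(\phi^N)_0(\overline n)$. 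Hence the two connections agree on generators, which gives \eqref{eq16}. The step I expect to be the main obstacle is the vanishing of this $\mathrm{d}^N$-contribution on coinvariants, i.e. the $\lambda^\mathrm{Ad}$-bookkeeping, which is needed precisely because $\mathrm{d}^N$ fails to be left $H$-colinear; an alternative is to prove the case $\mathrm{d}^M\oplus\mathrm{d}^N=\mathrm{d}^{M\otimes_HN}$ by the direct Sweedler computation underlying \eqref{eq12} and then add the $\phi$-corrections, but that is longer.
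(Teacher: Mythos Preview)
Your argument is correct, and steps one and two are essentially identical to the paper's proof. For the third step the paper does precisely what you call the ``alternative'' at the end: it first shows $\mathrm{d}^M\oplus\mathrm{d}^N=\mathrm{d}^{M\otimes_HN}$ by a direct Sweedler computation (using $\mathrm{d}^M(M^{\mathrm{co}H})=0$ and left $H$-coinvariance of $\mathrm{d}^N$), and then obtains \eqref{eq16} for general $\dst{}^M,\dst{}^N$ by linearity of $\hat\otimes$ together with $\hat\otimes|_{\mathrm{HOM}_H}=\otimes_{\sigma^\mathcal{W}}$ from Theorem~\ref{prop06}. Your version---checking agreement on right-coinvariant generators---reaches the same conclusion with the same ingredients, just organized differently.

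One factual correction: you write that the $\lambda^\mathrm{Ad}$-bookkeeping is needed ``precisely because $\mathrm{d}^N$ fails to be left $H$-colinear''. In fact $\mathrm{d}^N$ \emph{is} left $H$-colinear (equivalently $\lambda^\mathrm{Ad}(\mathrm{d}^N)=1_H\otimes\mathrm{d}^N$); this follows from a short computation analogous to the one in the proof of Proposition~\ref{PropConRat} for $\delta^\mathrm{Ad}$, and the paper uses it explicitly. With this in hand your vanishing step is immediate: on $\overline m\otimes_H\overline n$ the second summand becomes $\overline m\otimes_H\mathrm{d}^N(\overline n)=0$. Your stability argument for the subspace of morphisms annihilating $N^{\mathrm{co}H}$ is valid but unnecessary.
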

\begin{proof}
According to Proposition~\ref{prop100} $\dst{}^M$ and $\dst{}^N$ are rational
morphisms, since $M,N$ are finitely generated $H$-modules.
Then Theorem~\ref{prop06} implies that
$\dst{}^M\oplus\dst{}^N
=\sigma^\mathcal{W}_{23}\circ(\dst{}^M\,\hat{\otimes}\,\mathrm{id}_N)
+\mathrm{id}_M\,\hat{\otimes}\,\dst{}^N$
is well-defined as a linear map
$M\otimes_HN\rightarrow M\otimes_HN\otimes_H\Omega^1$.
Employing (\ref{eq36})
the explicit expression (\ref{eq37}) follows. Then, for all $m\otimes_Hn\in M\otimes_HN$
and $a\in H$,
\begin{align*}
    (\dst{}^M\oplus\dst{}^N)((m\otimes_Hn)a)
    &=\sigma^\mathcal{W}_{23}(\dst{}^M(m_0S(m_1))\otimes_Hm_2na)
    +\dst{}^N_{-2}m_0S(\dst{}^N_{-1}m_1)\otimes_H\dst{}^N_0(m_2na)\\
    &=\sigma^\mathcal{W}_{23}(\dst{}^M(m_0S(m_1))\otimes_Hm_2n)a
    +\dst{}^N_{-2}m_0S(\dst{}^N_{-1}m_1)\otimes_H\dst{}^N_0(m_2n)a\\
    &\quad+m_0S(m_1)\otimes_Hm_2n\otimes_H\mathrm{d}a\\
    &=((\dst{}^M\oplus\dst{}^N)(m\otimes_Hn))a
    +m\otimes_Hn\otimes_H\mathrm{d}a
\end{align*}
by (\ref{eq38}) and the right $H$-linearity of $\sigma^\mathcal{W}$.
This proves that $\dst{}^M\oplus\dst{}^N$ is a right connection on $M\otimes_HN$.
For the last statement we first prove that
\begin{align*}
    (\mathrm{d}^M\oplus\mathrm{d}^N)(m\otimes_Hn)
    &=(\sigma^\mathcal{W}_{23}\circ(\mathrm{d}^M\,\hat{\otimes}\,\mathrm{id}_N)
    +(\mathrm{id}_M\,\hat{\otimes}\,\mathrm{d}^N))(m\otimes_Hn)\\
    &=\sigma^\mathcal{W}_{23}(\mathrm{d}^M(m_0S(m_1))\otimes_Hm_2n)
    +\mathrm{d}^N_{-2}m_0S(\mathrm{d}^N_{-1}m_1)\otimes_H\mathrm{d}^N_0(m_2n)\\
    &=0+m_0S(m_1)\otimes_H\mathrm{d}^N(m_2n)\\
    &=m_0S(m_1)\otimes_Hm_2n_0S(m_3n_1)\otimes\mathrm{d}(m_4n_2)\\
    &=(m_0\otimes_Hn_0)S(m_1n_1)\otimes\mathrm{d}(m_2n_2)\\
    &=\mathrm{d}^{M\otimes_HN}(m\otimes_Hn)
\end{align*}
for all $m\in M$ and $n\in N$, using $\mathrm{d}^M(M^{\mathrm{co}H})=0$
and that $\mathrm{d}^N$ is left coinvariant
(cf. Proposition~\ref{prop03}).
Then (\ref{eq16}) follows from the linearity of
$\,\hat{\otimes}\,$ and Theorem~\ref{prop06}.
This concludes the proof of the theorem.
\end{proof}
For two connections $\dst{}^M\in\mathrm{Con}_H(M)$ and $\dst{}^N\in\mathrm{Con}_H(N)$
we call (\ref{eq15}) or equivalently (\ref{eq13}) the \textit{sum of $\dst{}^M$
and $\dst{}^N$}, written $\dst{}^M\oplus\dst{}^N\in\mathrm{Con}_H(M\otimes_HN)$.
If $M=N$ and $\dst{}^M=\dst{}^N$ we abbreviate the sum of connections by
\begin{equation*}
    \dst{}^M(\cdot\otimes_H\cdot):=\dst{}^M\oplus\dst{}^M
    \in\mathrm{Con}_H(M^{\otimes_H2})
\end{equation*}
and similarly for higher tensor products.
In analogy to (\ref{eq15})  for
right $H$-linear maps $\phi\in\mathrm{Hom}_H(M,M\otimes_H\Omega^1)$
and $\psi\in\mathrm{Hom}_H(N,N\otimes_H\Omega^1)$
we define the \textit{sum of $H$-linear maps} $\phi\oplus\psi\in\mathrm{Hom}_H(M\otimes_HN,
M\otimes_HN\otimes_H\Omega^1)$
by
\begin{equation}\label{eq90}
    \phi\oplus\psi
    :=\sigma^\mathcal{W}_{23}\circ(\phi\otimes_{\sigma^\mathcal{W}}\mathrm{id}_N)
    +\mathrm{id}_M\otimes_{\sigma^\mathcal{W}}\psi~.
\end{equation}
If $M=N$ and $\phi=\psi$, we simply write $\phi(\cdot\otimes_H\cdot)
:=\phi\oplus\phi\colon M^{\otimes_H2}\rightarrow M^{\otimes_H2}\otimes\Omega^1$.

\begin{remark}[Bimodule Connections]\label{rem01}
  Usually for the sum of connections extra properties are required,
  typically bimodule connections are considered  \cite{D-VMichor}.
The sum of two 
bimodule right connections $\dst{}^M\in\mathrm{Con}_H(M)$ and
$\dst{}^N\in\mathrm{Con}_H(N)$ is the 
 bimodule right connection
$$
\dst{}^{M\otimes_HN}_{BM}(m\otimes_Hn)
:=\sigma^\mathcal{W}_{23}(\dst{}^M(m)\otimes_Hn)
+m\otimes_H\dst{}^N(n)~,
$$
where $m\in M$ and $n\in N$.
 We
  have seen that in the context of bicovariant bimodules arbitrary
  right connections can be summed. 
If $M, N$ are bicovariant bimodules and $\dst{}^N$ is furthermore left
$H$-colinear the connection $
\dst{}^{M\otimes_HN}_{BM}$
coincides with the sum of connections $\dst{}^{M\otimes_HN}$
introduced in (\ref{eq15}) since
  \begin{align}
    \dst{}^{M\otimes_HN}(m\otimes_Hn)
    &=(\sigma^\mathcal{W}_{23}\circ(\dst{}^M\,\hat{\otimes}\,\mathrm{id}_N)
    +\mathrm{id}_M\,\hat{\otimes}\,\dst{}^N)(m\otimes_Hn)\nonumber\\
    &=\sigma^\mathcal{W}_{23}(\dst{}^M(m_0S(m_1))\otimes_Hm_2n)
    +\dst{}^N_{-2}m_0S(\dst{}^N_{-1}m_1)\otimes_H\dst{}^N_0(m_2n)\nonumber\\
    &=\sigma^\mathcal{W}_{23}(\dst{}^M(m)\otimes_Hn)
    +\sigma^\mathcal{W}_{23}(m_0\otimes_H\mathrm{d}S(m_1)\otimes_Hm_2n)\nonumber\\
    &~~+m_0S(m_1)m_2\otimes_H\dst{}^N(n)
    +\sigma^\mathcal{W}_{23}(m_0S(m_1)\otimes_H
    \mathrm{d}m_2\otimes_Hn)\nonumber\\
    &=\sigma^\mathcal{W}_{23}(\dst{}^M(m)\otimes_Hn)
    +m\otimes_H\dst{}^N(n)\nonumber\\
    &= 
      \dst{}^{M\otimes_HN}_{BM}(m\otimes_H n)\label{sumBM}
 \end{align}
for all $m\in M$ and $n\in N$ by the right Leibniz rule of $\dst{}^M$, the
additional left Leibniz rule (\ref{eq64}) of the bimodule right connection
$\dst{}^N$ and $H$-colinearity: $\dst{}^N_{-1}\otimes\dst{}^N_0=1\otimes\dst{}^N$.

In particular, since the canonical connections are $H$-bicolinear, the sum of connections in (\ref{eq12}) equals the sum of bimodule connections.
\end{remark}

\begin{remark}[Classical sum of connections]
  Let $H$ be a commutative Hopf algebra (e.g.
   the coordinate algebra of a compact Lie group $G$) and $M,N$ be \textit{symmetric}
bicovariant bimodules, i.e. $am=ma$ and $an=na$ for all $a\in H$, $m\in M$ and
$n\in N$. In particular Woronowicz's braiding $\sigma^\mathcal{W}_{M,N}$ coincides with the usual flip operator
$\sigma^\mathrm{flip}\colon M\otimes_HN\rightarrow N\otimes_HM$. Any right connection $\dst{}^M\in\mathrm{Con}_H(M)$
is a bimodule connection in this case, since
$    \dst{}^M(am)
    =\dst{}^M(ma)
    =\dst{}^M(m)a+m\otimes_H\mathrm{d}a
    =a\dst{}^M(m)+\sigma^\mathcal{W}_{\Omega^1,M}(\mathrm{d}a\otimes_Hm)
    $
    for all $a\in H$ and $m\in M$.
    
Furthermore, when considering the sum
$\dst{}^{M\otimes_HN}(m\otimes_Hn)$ in equation \eqref{sumBM} we have
that the equality
$\dst{}^N_{-2}m_0S(\dst{}^N_{-1}m_1)\otimes_H\dst{}^N_0(m_2n)=
   m_0S(m_1)\otimes_H\dst{}^N(m_2n)
$ holds because of commutativity, so that  there is no need to require
$H$-colinearity of $ \dst{}^N$. Therefore the sum of connections $\dst{}^{M\otimes_HN}$
becomes the usual sum of connections of classical differential geometry:
$ (\dst{}^M\oplus\dst{}^N)(m\otimes_Hn)=
\sigma^\mathrm{flip}_{23}(\dst{}^M(m)\otimes_Hn)
    +m\otimes_H\dst{}^N(n)$.

  \end{remark}

\subsection{Sum of Curvatures}\label{Sec4.5}

In this section we give an explicit expression of the curvature of the sum of
connections in terms of the curvature of the initial connections.
We fix a bicovariant differential calculus $(\Omega^\bullet,\wedge,\mathrm{d})$ on
$H$ and consider bicovariant bimodules $M,M',\ldots,N,N',\ldots$, which are 
finitely generated as right $H$-modules.
Recalling (\ref{eq69}) and (\ref{defcurvatureR}) the curvature
of a right connection $\dst\in\mathrm{Con}_H(M)$ on $M$ reads
\begin{equation}\label{eq61}
    \mathrm{R}^{\dst}
    =\dst{}^\bullet\circ\dst
    =\wedge_{23}\circ(\dst\,\hat{\otimes}\,\mathrm{id}_{\Omega^1})
    \circ\dst
    +(\mathrm{id}_M\,\hat{\otimes}\,\mathrm{d})\circ\dst~.
\end{equation}

Before we attack the sum of curvatures we study the compatibility
properties of the tensor product $\,\hat{\otimes}\,$ with the composition
of rational morphisms.
Recall that for right $H$-linear maps $\phi\in\mathrm{Hom}_H(M',M'')$,
$\phi'\in\mathrm{Hom}_H(M,M')$, $\psi\in\mathrm{Hom}_H(N',N'')$ and
$\psi'\in\mathrm{Hom}_H(N,N')$ the equality 
\begin{equation*}
    (\phi\otimes_{\sigma^\mathcal{W}}\psi)\circ(\phi'\otimes_{\sigma^\mathcal{W}}\psi')
    =(\phi\circ{}_\alpha\phi')\otimes_{\sigma^\mathcal{W}}({}^\alpha\psi\circ\psi')
\end{equation*}
holds. More in general, recalling the lifting $\sigma$ defined in (\ref{sigma}) we have
\begin{lemma}\label{lemma05}
Let $\phi\in\mathrm{HOM}(M',M'')$, $\phi'\in\mathrm{HOM}(M,M')$,
$\psi\in\mathrm{HOM}(N',N'')$ and $\psi'\in\mathrm{HOM}(N,N')$.
Then
\begin{equation*}
    (\phi\,\hat{\otimes}\,\mathrm{id}_{N'})\circ (\mathrm{id}_{M'}\,\hat{\otimes}\,\psi')=(\phi\,\hat{\otimes}\,\psi')
     ~~\mbox{
      \it{and} }~~
     (\mathrm{id}_M\,\hat{\otimes}\,\psi)\circ(\mathrm{id}_M\,\hat{\otimes}\,\psi')
    = (\mathrm{id}_M\,\hat{\otimes}\,\psi\circ \psi')~.
\end{equation*}
If  $\phi'$ is right $H$-linear or right $H$-colinear we
further have
\begin{equation*}
    (\phi\,\hat{\otimes}\,\mathrm{id}_{N})\circ(\phi'\,\hat{\otimes}\,\mathrm{id}_{N})
    =(\phi\circ\phi')\,\hat{\otimes}\,\mathrm{id}_{N}~.
\end{equation*}
If  $\phi'$ is right $H$-linear or, if $\phi'$ is right $H$-colinear
and $\psi$ is left $H$-colinear, we have
\begin{equation*}
    (\mathrm{id}_{M'}\,\hat{\otimes}\,\psi)\circ(\phi'\,\hat{\otimes}\,\mathrm{id}_{N'})
    ={}_\alpha\phi'\,\hat{\otimes}\,{}^\alpha\psi~.
\end{equation*}
Finally, the last two equalities hold also if 
$M'=M\otimes_H\Omega^1$,
$\phi'=\dst\colon M\rightarrow M\otimes_H\Omega^1$ is a right connection
and, for the last one, if $\psi$ is left $H$-colinear.
\end{lemma}
\begin{proof}
The first equality easily follows from that for $\otimes_\sigma$ in equation
\eqref{eq20} and recalling that $\phi\otimes_\sigma
\mathrm{id}_{N'}=(\phi\otimes_\sigma \mathrm{id}_{N'})\circ
\pi_H^{M,N'}$. We prove at once this and all the other equalities for
$\phi,\phi',\psi,\psi'$ rational morphisms  by
computing
\begin{align*}
    ((\phi\,\hat{\otimes}\,\psi)\circ(\phi'\,\hat{\otimes}\,\psi'))(m\otimes_Hn)
    &=(\phi\,\hat{\otimes}\,\psi)
    (\phi'(\psi'_{-2}m_0S(\psi'_{-1}m_1))\otimes_H\psi'_0(m_2n))\\
    &=\phi(\psi_{-2}\phi'(\psi'_{-2}m_0S(\psi'_{-1}m_1))_0
    S(\psi_{-1}\phi'(\psi'_{-2}m_0S(\psi'_{-1}m_1))_1))\\
    &\quad\otimes_H\psi_0(\phi'(\psi'_{-2}m_0S(\psi'_{-1}m_1))_2\psi'_0(m_2n))\\
    &=\phi\bigg(\psi_{-2}\phi'_0\bigg(\psi'_{-2}m_0S(\psi'_{-1}m_1)\bigg)
    S(\psi_{-1}\phi'_1)\bigg)
    \otimes_H\psi_0(\phi'_2\psi'_0(m_2n))~,
\end{align*}
and
\begin{align*}
    ((\phi\circ{}_\alpha\phi')&\,\hat{\otimes}\,({}^\alpha\psi\circ\psi'))(m\otimes_Hn)\\
    &=\big((\phi\circ(\psi_{-2}\phi'_0S(\psi_{-1}\phi'_1)))
    \,\hat{\otimes}\,((\psi_0\phi'_2)\circ\psi')\big)(m\otimes_Hn)\\
    &=\phi((\psi_{-2}\phi'_0S(\psi_{-1}\phi'_1))(
    [(\psi_0\phi'_2)\circ\psi']_{-2}m_0S([(\psi_0\phi'_2)\circ\psi']_{-1}m_1)))
    \otimes_H[(\psi_0\phi'_2)\circ\psi']_0(m_2n)\\
    &=\phi((\psi_{-4}\phi'_0S(\psi_{-3}\phi'_1))(
    \psi_{-2}\phi'_2\psi'_{-2}m_0S(\psi_{-1}\phi'_3\psi'_{-1}m_1)))
    \otimes_H[(\psi_0\phi'_4)\circ\psi'_0](m_2n)\\
    &=\phi(\psi_{-4}\phi'_0(S(\psi_{-3}\phi'_1)
    \psi_{-2}\phi'_2\psi'_{-2}m_0S(\psi_{-1}\phi'_3\psi'_{-1}m_1)))
    \otimes_H[(\psi_0\phi'_4)\circ\psi'_0](m_2n)\\
    &=\phi\bigg(\psi_{-2}\phi'_0\bigg(
    \psi'_{-2}m_0S(\psi'_{-1}m_1)S(\psi_{-1}\phi'_1)\bigg)\bigg)
    \otimes_H\psi_0(\phi'_2\psi'_0(m_2n))
\end{align*}
for all $m\in M$ and $n\in N$. We see that these expressions coincide
if $\phi'$ is right $H$-linear or if $\phi'$ is right $H$-colinear and
$\psi$ is left $H$-colinear. As special cases we obtain the equalities
displayed in the lemma.

Assume that $M'=M\otimes_H\Omega^1$ and
$\phi'=\dst\colon M\rightarrow M\otimes_H\Omega^1$ is a right connection.
Note that $\mathrm{d}^M\,\hat{\otimes}\,\psi'=0$ for all linear maps
$\psi'$ because the canonical connection $\mathrm{d}^M$ vanishes on $M^{\mathrm{co}H}$.
Then
\begin{align*}
    (\phi\,\hat{\otimes}\,\psi)\circ(\dst\,\hat{\otimes}\,\mathrm{id}_{N'})
    &=(\phi\,\hat{\otimes}\,\psi)\circ((\dst-\mathrm{d}^M)\,\hat{\otimes}\,\mathrm{id}_{N'})\\
    &=(\phi\circ{}_\alpha(\dst-\mathrm{d}^M))\,\hat{\otimes}\,{}^\alpha\psi\\
    &=(\phi\circ{}_\alpha\dst)\,\hat{\otimes}\,{}^\alpha\psi
    -(\phi\circ{}_\alpha\mathrm{d}^M)\,\hat{\otimes}\,{}^\alpha\psi\\
    &=(\phi\circ{}_\alpha\dst)\,\hat{\otimes}\,{}^\alpha\psi
    -(\phi\circ(\psi_{-2}\cdot\mathrm{d}^M\cdot S(\psi_{-1})))
    \,\hat{\otimes}\,\psi_0\\
    &=(\phi\circ{}_\alpha\dst)\,\hat{\otimes}\,{}^\alpha\psi~,
\end{align*}
where in the last two equations we have used that $\mathrm{d}^M$ is right $H$-colinear and
assumed that $\psi$ is left $H$-colinear. The final part of the lemma follows again by
specializing this equality.
\end{proof}

\begin{theorem}\label{SOC}
Consider two right connections $\dst{}^M
\in\mathrm{Con}_H(M)$ and
$\dst{}^N
\in\mathrm{Con}_H(N)$.
The curvature of their sum $\dst{}^M\oplus\dst{}^N
\in\mathrm{Con}_H(M\otimes_HN)$ defined in Theorem~\ref{thm02}
is given by
\begin{equation}\label{eq63}
\begin{split}
    \mathrm{R}^{\dst{}^M\oplus\dst{}^N}
    &=\sigma^\mathcal{W}_{23}\circ(\mathrm{R}^{\dst{}^M}\otimes_{\sigma^\mathcal{W}}\mathrm{id}_M)
    +\mathrm{id}_M\otimes_{\sigma^\mathcal{W}}\mathrm{R}^{\dst{}^N}\\
    &\quad+\wedge_{34}\circ\sigma^\mathcal{W}_{23}
    \circ \big((\dst{}^M-\mathrm{d}^M)\otimes_{\sigma^\mathcal{W}}(\dst{}^N-\mathrm{d}^N)\big)\\
    &\quad+\wedge_{34}\circ\big(\mathrm{id}_M
    \otimes_{\sigma^\mathcal{W}}(\dst{}^N-\mathrm{d}^N)\otimes_{\sigma^\mathcal{W}}\mathrm{id}_{\Omega^1}\big)
    \circ\sigma^\mathcal{W}_{23}
    \circ((\dst{}^M-\mathrm{d}^M)\otimes_{\sigma^\mathcal{W}}\mathrm{id}_N)~,
\end{split}    
\end{equation}
where all addends are right $H$-linear maps 
$M\otimes_HN\rightarrow M\otimes_HN\otimes_H\Omega^2$.
\end{theorem}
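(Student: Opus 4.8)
The plan is to compute $\mathrm{R}^{\dst{}^M\oplus\dst{}^N} = (\dst{}^M\oplus\dst{}^N)^\bullet\circ(\dst{}^M\oplus\dst{}^N)$ by expanding both factors using the characterization \eqref{eq61} of curvature via $\hat\otimes$, together with the defining formula \eqref{eq15}, and then regrouping the resulting terms according to the four summands on the right hand side of \eqref{eq63}. Concretely, I would write $\dst{}^M = \mathrm{d}^M + \phi^M$ and $\dst{}^N = \mathrm{d}^N + \phi^N$ with $\phi^M := \dst{}^M - \mathrm{d}^M \in \mathrm{Hom}_H(M,M\otimes_H\Omega^1)$ and likewise $\phi^N$, so that by Theorem~\ref{thm02} (specifically \eqref{eq13}/\eqref{eq16})
\begin{equation*}
    \dst{}^M\oplus\dst{}^N
    = \mathrm{d}^{M\otimes_HN}
    + \sigma^\mathcal{W}_{23}\circ(\phi^M\otimes_{\sigma^\mathcal{W}}\mathrm{id}_N)
    + \mathrm{id}_M\otimes_{\sigma^\mathcal{W}}\phi^N~.
\end{equation*}
Since $\mathrm{d}^{M\otimes_HN}$ is flat (it is a canonical connection, hence $\mathrm{R}^{\mathrm{d}^{M\otimes_HN}}=0$ by the proposition computing curvatures of canonical connections), the curvature of the sum is bilinear-ish in the correction terms: expanding $(\dst{}^M\oplus\dst{}^N)^\bullet\circ(\dst{}^M\oplus\dst{}^N)$ produces a "pure $M$" block, a "pure $N$" block, and genuine cross terms.

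For the pure blocks I would use Lemma~\ref{lemma06}~iii.) and Lemma~\ref{lemma05} (cases i.) and iii.), which cover exactly the situation where one factor is a right $H$-linear map or a right connection and the companion map is left $H$-colinear — note $\mathrm{d}$ and $\mathrm{id}_{\Omega^1}$ are left $H$-colinear) to slide the wedge and the differential past the $\hat\otimes$ and $\sigma^\mathcal{W}_{23}$. The point is that $\wedge_{23}\circ(\dst{}^M\hat\otimes\mathrm{id})\circ\dst{}^M + (\mathrm{id}_M\hat\otimes\mathrm{d})\circ\dst{}^M$ reassembles, after applying $\sigma^\mathcal{W}_{23}$ on the outside, into $\sigma^\mathcal{W}_{23}\circ(\mathrm{R}^{\dst{}^M}\otimes_H\mathrm{id}_N)$ — here one uses that $\mathrm{R}^{\dst{}^M}$ is already right $H$-linear (Lemma~\ref{lemma12}) so $\hat\otimes\mathrm{id}_N$ collapses to $\otimes_H\mathrm{id}_N$ via Lemma~\ref{lemma06}~iii.) — and similarly the $N$-side reassembles into $\mathrm{id}_M\otimes_{\sigma^\mathcal{W}}\mathrm{R}^{\dst{}^N}$. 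Lemma~\ref{lemma06}~i.) ($\wedge_{23}\circ\sigma^\mathcal{W}_{12,3}=\sigma^\mathcal{W}_{12}\circ\wedge_{12}$) and ii.) (naturality of $\sigma^\mathcal{W}$ through $\phi\hat\otimes\mathrm{id}$) are exactly what is needed to commute the various $\sigma^\mathcal{W}_{ij}$ and $\wedge_{jk}$ past each other so that the legs land in the right slots for the $\otimes_H\Omega^2$ output.

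The cross terms split into two families: one where the $M$-correction $\phi^M$ and the $N$-correction $\phi^N$ both appear "horizontally" — this yields $\wedge_{34}\circ\sigma^\mathcal{W}_{23}\circ(\phi^M\otimes_{\sigma^\mathcal{W}}\phi^N)$, the third summand of \eqref{eq63} — and one where $\phi^M$ is fed into the connection-extension of the $N$-side, producing a nested term $\wedge_{34}\circ(\mathrm{id}_M\otimes_{\sigma^\mathcal{W}}(\phi^N\otimes_H\mathrm{id}_{\Omega^1}))\circ\sigma^\mathcal{W}_{23}\circ(\phi^M\otimes_H\mathrm{id}_N)$, the fourth summand. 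Here the key manipulation is the composition rule for $\hat\otimes$ from Lemma~\ref{lemma05}~i.) applied with $\phi^M$ right $H$-linear, which is what lets one actually evaluate the nested $\hat\otimes$-composition; the $\otimes_{\sigma^\mathcal{W}}$ composition identity recalled just before Lemma~\ref{lemma05} handles the terms where both factors are $H$-linear. I would also need to check that the "mixed" term in which $\phi^N$ sees the $\mathrm{d}^M$-part vanishes — this is the observation inside the proof of Lemma~\ref{lemma05} that $\mathrm{d}^M\hat\otimes(\text{anything})=0$ because $\mathrm{d}^M$ kills $M^{\mathrm{co}H}$ — and that the term feeding the $\mathrm{d}$-part of the $N$-extension with $\phi^M$ either cancels or is absorbed, using $\mathrm{d}^2=0$ in the same way as in Lemma~\ref{lemma12}. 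Throughout, right $H$-linearity of every addend is automatic once one checks it for the output, since $\mathrm{R}^{\dst{}^M\oplus\dst{}^N}$ is right $H$-linear by Lemma~\ref{lemma12} and the four claimed addends are visibly built from right $H$-linear pieces ($\mathrm{R}^{\dst{}^M}$, $\mathrm{R}^{\dst{}^N}$, $\sigma^\mathcal{W}$, $\wedge$, and the right $H$-linear maps $\phi^M,\phi^N$).

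The main obstacle, I expect, is purely organizational rather than conceptual: keeping track of which leg of a multi-factor balanced tensor product each map acts on while repeatedly commuting $\sigma^\mathcal{W}_{ij}$, $\wedge_{jk}$, and $\hat\otimes$-factors past one another, and making sure that at each step the hypotheses of the relevant case of Lemma~\ref{lemma05} (right $H$-linearity of the inner map, or left $H$-colinearity of the outer one) are actually met — in particular the places where one must first rewrite $\dst = \mathrm{d}^M + \phi^M$ to split a non-colinear connection into a colinear piece plus an $H$-linear piece before a lemma becomes applicable. A secondary bookkeeping point is the degree/sign convention: the $\mathrm{d}$ appearing in the extension $(\cdot)^\bullet$ carries the $(-1)^{|\cdot|}$ from the graded Leibniz rule, and one must confirm that these signs conspire so that no extra sign appears in \eqref{eq63} (they should, since curvature is even degree, exactly as in the proof of Lemma~\ref{lemma12}). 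Once the expansion is laid out carefully, each of the four target terms should appear verbatim and the remaining terms should cancel in pairs or vanish by $\mathrm{d}^2=0$ and by $\mathrm{d}^M|_{M^{\mathrm{co}H}}=0$.
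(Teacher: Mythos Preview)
Your proposal is essentially the paper's approach: expand $\mathrm{R}^{\dst{}^M\oplus\dst{}^N}$ via \eqref{eq61} and \eqref{eq15}, then regroup the resulting pieces using Lemmas~\ref{lemma05} and~\ref{lemma06} together with the observation $\mathrm{d}^M\hat\otimes(\cdot)=0$. One point where execution will differ from your expectation: the term $(\mathrm{id}_{M\otimes_HN}\hat\otimes\mathrm{d})\circ\sigma^\mathcal{W}_{23}\circ(\dst{}^M\hat\otimes\mathrm{id}_N)$ is not dispatched by $\mathrm{d}^2=0$ or by the lemmas alone --- the paper handles it by an explicit element-level computation which produces a leftover $-\wedge_{34}\circ\sigma^\mathcal{W}_{23}\circ(\dst{}^M\hat\otimes\mathrm{d}^N)$, and it is \emph{this} specific term (not a $\mathrm{d}^2$ contribution) that then cancels against the matching piece emerging from the cross terms.
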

\begin{proof}
We split the proof into seven parts. 
\begin{enumerate}
\item[i.)] Firstly, by (\ref{eq61}) and observing that
  $\dst{}^M\,\hat{\otimes}\,\mathrm{id}$ is a right $H$-linear map we obtain
\allowdisplaybreaks
\begin{align}
    \mathrm{R}^{\dst{}^M\oplus\dst{}^N}
    &=\wedge_{34}
    \circ[[\sigma^\mathcal{W}_{23}\circ(\dst{}^M\,\hat{\otimes}\,\mathrm{id}_N)]
    \otimes_{\sigma^\mathcal{W}}\mathrm{id}_{\Omega^1}]
    \circ\sigma^\mathcal{W}_{23}
    \circ(\dst{}^M\,\hat{\otimes}\,\mathrm{id}_N) \label{SOC1} \\
    &\quad+\wedge_{34}
    \circ[[\sigma^\mathcal{W}_{23}\circ(\dst{}^M\,\hat{\otimes}\,\mathrm{id}_N)]
    \otimes_{\sigma^\mathcal{W}}\mathrm{id}_{\Omega^1}]
    \circ(\mathrm{id}_M\,\hat{\otimes}\,\dst{}^N) \label{SOC2} \\ 
    &\quad+\wedge_{34}
    \circ[(\mathrm{id}_M\,\hat{\otimes}\,\dst{}^N)\,\hat{\otimes}\,\mathrm{id}_{\Omega^1}]
    \circ\sigma^\mathcal{W}_{23}
    \circ(\dst{}^M\,\hat{\otimes}\,\mathrm{id}_N) \label{SOC3} \\
    &\quad+\wedge_{34}
    \circ[(\mathrm{id}_M\,\hat{\otimes}\,\dst{}^N)\,\hat{\otimes}\,\mathrm{id}_{\Omega^1}]
    \circ(\mathrm{id}_M\,\hat{\otimes}\,\dst{}^N) \label{SOC4} \\
    &\quad+(\mathrm{id}_{M\otimes_HN}\,\hat{\otimes}\,\mathrm{d})
    \circ\sigma^\mathcal{W}_{23}
    \circ(\dst{}^M\,\hat{\otimes}\,\mathrm{id}_N) \label{SOC5} \\
    &\quad+(\mathrm{id}_{M\otimes_HN}\,\hat{\otimes}\,\mathrm{d})
    \circ(\mathrm{id}_M\,\hat{\otimes}\,\dst{}^N)~. \label{SOC6} 
\end{align}
In the rest of the proof we identify these six terms with the right hand side of
(\ref{eq63}).

\item[ii.)] Utilizing the associativity of $\,\hat{\otimes}\,$ (Proposition~\ref{prop06}),
Lemma~\ref{lemma05} and the fact that $\,\hat{\otimes}\,$ is a morphism of bicovariant
bimodules (again Proposition~\ref{prop06}) it follows that
\begin{align*}
    (\ref{SOC4})+(\ref{SOC6})
    &=\wedge_{34}
    \circ[\mathrm{id}_M\,\hat{\otimes}\,
    (\dst{}^N\,\hat{\otimes}\,\mathrm{id}_{\Omega^1})]
    \circ(\mathrm{id}_M\,\hat{\otimes}\,\dst{}^N)
    +(\mathrm{id}_{M\otimes_HN}\,\hat{\otimes}\,\mathrm{d})
    \circ(\mathrm{id}_M\,\hat{\otimes}\,\dst{}^N)\\
    &=\wedge_{34}
    \circ[(\mathrm{id}_M\circ{}_\alpha\mathrm{id}_M)\,\hat{\otimes}\,
    ({}^\alpha(\dst{}^N\,\hat{\otimes}\,\mathrm{id}_{\Omega^1})
    \circ\dst{}^N)]
    +[\mathrm{id}_{M}\,\hat{\otimes}\,(\mathrm{id}_N\,\hat{\otimes}\,\mathrm{d})]
    \circ(\mathrm{id}_M\,\hat{\otimes}\,\dst{}^N)\\
    &=\wedge_{34}
    \circ[[\mathrm{id}_M\circ(\dst{}^N_{-2}\cdot\mathrm{id}_M\cdot S(\dst{}^N_{-1}))]
    \,\hat{\otimes}\,
    [(\dst{}^N_0\,\hat{\otimes}\,\mathrm{id}_{\Omega^1})
    \circ\dst{}^N]]
    +(\mathrm{id}_{M}\circ{}_\alpha\mathrm{id}_M)
    \,\hat{\otimes}\,({}^\alpha(\mathrm{id}_N\,\hat{\otimes}\,\mathrm{d})\circ\dst{}^N)\\
    &=\wedge_{34}
    \circ[\mathrm{id}_M
    \,\hat{\otimes}\,
    [(\dst{}^N\,\hat{\otimes}\,\mathrm{id}_{\Omega^1})
    \circ\dst{}^N]]
    +\mathrm{id}_{M}
    \,\hat{\otimes}\,[(\mathrm{id}_N\,\hat{\otimes}\,\mathrm{d})\circ\dst{}^N]\\
    &=\mathrm{id}_M\,\hat{\otimes}\,[\wedge_{23}\circ
    (\dst{}^N\,\hat{\otimes}\,\mathrm{id}_{\Omega^1})
    \circ\dst{}^N
    +(\mathrm{id}_N\,\hat{\otimes}\,\mathrm{d})\circ\dst{}^N]\\
    &=\mathrm{id}_M\otimes_{\sigma^\mathcal{W}}\mathrm{R}^{\dst{}^N}~.
\end{align*}

\item[iii.)] 
  For $\phi\in\mathrm{HOM}(M,M\otimes_H\Omega^1)$  a rational morphism,
the left $H$-linearity of $\sigma^\mathcal{W}$ implies the identity
\begin{equation*}
    (\phi\,\hat{\otimes}\,\mathrm{id}_N\,\hat{\otimes}\,\mathrm{id}_{\Omega^1})
    \circ\sigma^\mathcal{W}_{23}
    =\sigma^\mathcal{W}_{34}
    \circ(\phi\,\hat{\otimes}\,\mathrm{id}_{\Omega^1}\,\hat{\otimes}\,\mathrm{id}_N)
  \end{equation*}
  of right $H$-linear maps
$M\otimes_H\Omega^1\otimes_HN\rightarrow
M\otimes\Omega^1\otimes_H\Omega^1\otimes_HN$.
Recalling that
$\wedge=\mathrm{id}-\overline\sigma^\mathcal{W}$ and the
braid relation $\sigma^\mathcal{W}_{12}\circ \sigma^\mathcal{W}_{22}\circ
\sigma^\mathcal{W}_{12}=\sigma^\mathcal{W}_{23}\circ\sigma^\mathcal{W}_{12}\circ
\sigma^\mathcal{W}_{23}$ it is also easy to see that 
\begin{equation*}
    \wedge_{23}\circ\sigma^\mathcal{W}_{\Omega^1\otimes_H\Omega^1,M}=\sigma^\mathcal{W}_{\Omega^2,M}\circ\wedge_{12}
\end{equation*}
as an equation of bicovariant bimodule morphisms
$\Omega^1\otimes_H\Omega^1\otimes_HM
\rightarrow M\otimes_H\Omega^2$. These
relations, with $\phi=\dst{}^M$, imply
\begin{align*}
    (\ref{SOC1})
    &=\wedge_{34}
    \circ[[\sigma^\mathcal{W}_{23}\circ(\dst{}^M\,\hat{\otimes}\,\mathrm{id}_N)]
    \otimes_{\sigma^\mathcal{W}}\mathrm{id}_{\Omega^1}]
    \circ\sigma^\mathcal{W}_{23}
    \circ(\dst{}^M\,\hat{\otimes}\,\mathrm{id}_N)\\
    &=\wedge_{34}\circ\sigma^\mathcal{W}_{23}
    \circ[(\dst{}^M\,\hat{\otimes}\,\mathrm{id}_N)
    \otimes_{\sigma^\mathcal{W}}\mathrm{id}_{\Omega^1}]
    \circ\sigma^\mathcal{W}_{23}
    \circ(\dst{}^M\,\hat{\otimes}\,\mathrm{id}_N)\\
    &=\wedge_{34}\circ\sigma^\mathcal{W}_{23}\circ\sigma^\mathcal{W}_{34}
    \circ[(\dst{}^M\,\hat{\otimes}\,\mathrm{id}_{\Omega^1})
    \otimes_{\sigma^\mathcal{W}}\mathrm{id}_N]
    \circ(\dst{}^M\,\hat{\otimes}\,\mathrm{id}_N)\\
    &=\wedge_{34}\circ\sigma^\mathcal{W}_{23,4}
    \circ[(\dst{}^M\,\hat{\otimes}\,\mathrm{id}_{\Omega^1})
    \otimes_{\sigma^\mathcal{W}}\mathrm{id}_N]
    \circ(\dst{}^M\,\hat{\otimes}\,\mathrm{id}_N)\\
    &=\sigma^\mathcal{W}_{23}\circ\wedge_{23}\circ
    [(\dst{}^M\,\hat{\otimes}\,\mathrm{id}_{\Omega^1})
    \,\hat{\otimes}\,\mathrm{id}_N]
    \circ(\dst{}^M\,\hat{\otimes}\,\mathrm{id}_N)\\
    &=\sigma^\mathcal{W}_{23}\circ\wedge_{23}\circ
    [[(\dst{}^M\,\hat{\otimes}\,\mathrm{id}_{\Omega^1})\circ\dst{}^M]
    \,\hat{\otimes}\,\mathrm{id}_N]\\
    &=\sigma^\mathcal{W}_{23}\circ
    [[\wedge_{23}\circ[(\dst{}^M\,\hat{\otimes}\,\mathrm{id}_{\Omega^1})\circ\dst{}^M]]
    \,\hat{\otimes}\,\mathrm{id}_N]~.
\end{align*}

\item[iv.)] Let $m\in M$ and $n\in N$. We introduce the short notation
$\dst{}^M(m_0S(m_1))\otimes m_2=:\mathcal{M}\otimes_H\Theta\otimes m_2$, where we omitted writing explicitly the dependence on $m_0S(m_1)$ in $\mathcal{M}\otimes_H\Theta$.
From
\begin{align*}
    [\sigma^\mathcal{W}_{23}\circ[((\mathrm{id}_M\,\hat{\otimes}\,\mathrm{d})
    \circ&\dst{}^M)\,\hat{\otimes}\,\mathrm{id}_N]](m\otimes_Hn)\\
    &=\sigma^\mathcal{W}_{23}[
    (\mathrm{id}_M\,\hat{\otimes}\,\mathrm{d})(\dst{}^M(m_0S(m_1)))\otimes_Hm_2n]\\
    &=\sigma^\mathcal{W}_{23}(
    \mathcal{M}_0S(\mathcal{M}_1)
    \otimes_H\mathrm{d}(\mathcal{M}_2\Theta)
    \otimes_Hm_2n)\\
    &=\mathcal{M}_0S(\mathcal{M}_1)
    \otimes_H\mathcal{M}_2\Theta_{-2}(m_2n)_0
    S(\mathcal{M}_3\Theta_{-1}(m_2n)_1)
    \otimes_H\mathrm{d}(\mathcal{M}_4\Theta_0)(m_2n)_2\\
    &=\mathcal{M}_0
    \otimes_H\Theta_{-2}(m_2n)_0
    S(\mathcal{M}_1\Theta_{-1}(m_2n)_1)
    \otimes_H\mathrm{d}(\mathcal{M}_2\Theta_0)(m_2n)_2
\end{align*}
and
\begin{align*}
    [\wedge_{34}\circ\sigma^\mathcal{W}_{23}
    \circ&(\dst{}^M\,\hat{\otimes}\,\mathrm{id}_{N\otimes_H\Omega^1})
    \circ(\mathrm{id}_M\,\hat{\otimes}\,\mathrm{d}^N)](m\otimes_Hn)\\
    &=[\wedge_{34}\circ\sigma^\mathcal{W}_{23}
    \circ(\dst{}^M\,\hat{\otimes}\,\mathrm{id}_{N\otimes_H\Omega^1})]
    (m_0S(m_1)\otimes_H\mathrm{d}^N(m_2n))\\
    &=[\wedge_{34}\circ\sigma^\mathcal{W}_{23}]
    (\dst{}^M(m_0S(m_1))\otimes_H(m_2n)_0S((m_2n)_1)\otimes_H\mathrm{d}((m_2n)_2))\\
    &=\mathcal{M}\otimes_H\Theta_{-2}(m_2n)_0S((m_2n)_1)
    S(\Theta_{-1})\otimes_H\Theta_0
    \wedge\mathrm{d}((m_2n)_2)
\end{align*}
it follows that
\allowdisplaybreaks
\begin{align*}
    (\ref{SOC5})
    &=[(\mathrm{id}_{M\otimes_HN}\,\hat{\otimes}\,\mathrm{d})
    \circ\sigma^\mathcal{W}_{23}
    \circ(\dst{}^M\,\hat{\otimes}\,\mathrm{id}_N)](m\otimes_Hn)\\
    &=(\mathrm{id}_{M\otimes_HN}\,\hat{\otimes}\,\mathrm{d})
    (\sigma^\mathcal{W}_{23}(\dst{}^M(m_0S(m_1))\otimes_Hm_2n))\\
    &=(\mathrm{id}_{M\otimes_HN}\,\hat{\otimes}\,\mathrm{d})[
    \mathcal{M}\otimes_H\Theta_{-2}(m_2n)_0
    S(\Theta_{-1}(m_2n)_1)
    \otimes_H\Theta_0(m_2n)_2]\\
    &=\mathcal{M}_0\otimes_H\Theta_{-2}(m_2n)_0
    S(\Theta_{-1}(m_2n)_1)S(\mathcal{M}_1)
    \otimes_H\mathrm{d}(\mathcal{M}_2\Theta_0(m_2n)_2)\\
    &=\mathcal{M}_0\otimes_H\Theta_{-2}(m_2n)_0
    S(\mathcal{M}_1\Theta_{-1}(m_2n)_1)
    \otimes_H\mathrm{d}(\mathcal{M}_2\Theta_0)(m_2n)_2\\
    &\quad-\mathcal{M}\otimes_H\Theta_{-2}(m_2n)_0
    S(\Theta_{-1}(m_2n)_1)
    \otimes_H\Theta_0\wedge\mathrm{d}((m_2n)_2)\\
    &=[\sigma^\mathcal{W}_{23}\circ[((\mathrm{id}_M\,\hat{\otimes}\,\mathrm{d})
    \circ\dst{}^M)\,\hat{\otimes}\,\mathrm{id}_N]](m\otimes_Hn)\\
    &\quad-[\wedge_{34}\circ\sigma^\mathcal{W}_{23}
    \circ(\dst{}^M\,\hat{\otimes}\,\mathrm{id}_{N\otimes_H\Omega^1})
    \circ(\mathrm{id}_M\,\hat{\otimes}\,\mathrm{d}^N)](m\otimes_Hn)\\
    &=[\sigma^\mathcal{W}_{23}\circ[((\mathrm{id}_M\,\hat{\otimes}\,\mathrm{d})
    \circ\dst{}^M)\,\hat{\otimes}\,\mathrm{id}_N]](m\otimes_Hn)\\
    &\quad-[\wedge_{34}\circ\sigma^\mathcal{W}_{23}
    \circ(\dst{}^M\,\hat{\otimes}\,\mathrm{d}^N)](m\otimes_Hn)~.
\end{align*}

\item[v.)] Adding the results of iii.) and iv.) gives
\begin{align*}
    \sigma^\mathcal{W}_{23}\circ(\mathrm{R}^{\dst{}^M}\,\hat{\otimes}\,\mathrm{id}_N)
    &=\sigma^\mathcal{W}_{23}\circ
    [[\wedge_{23}\circ(\dst{}^M\,\hat{\otimes}\,\mathrm{id}_{\Omega^1})\circ\dst{}^M
    +(\mathrm{id}_M\,\hat{\otimes}\,\mathrm{d})\circ\dst{}^M]
    \,\hat{\otimes}\,\mathrm{id}_N]\\
    &=(\ref{SOC1})+(\ref{SOC5})
    +\wedge_{34}\circ\sigma^\mathcal{W}_{23}
    \circ(\dst{}^M\,\hat{\otimes}\,\mathrm{d}^N).
\end{align*}

\item[vi.)] Since $\mathrm{d}^M\,\hat{\otimes}\,\mathrm{id}_N=0$ it follows that 
\begin{align*}
    \wedge_{34}\circ&\sigma^\mathcal{W}_{23}
    \circ((\dst{}^M-\mathrm{d}^M)\otimes_{\sigma^\mathcal{W}}(\dst{}^N-\mathrm{d}^N))\\
    &\quad+\wedge_{34}\circ(\mathrm{id}_M
    \otimes_{\sigma^\mathcal{W}}(\dst{}^N-\mathrm{d}^N)\otimes_{\sigma^\mathcal{W}}\mathrm{id}_{\Omega^1})
    \circ\sigma^\mathcal{W}_{23}
    \circ((\dst{}^M-\mathrm{d}^M)\otimes_{\sigma^\mathcal{W}}\mathrm{id}_N)\\
    &=\wedge_{34}\circ\sigma^\mathcal{W}_{23}
    \circ((\dst{}^M-\mathrm{d}^M)\,\hat{\otimes}\,(\dst{}^N-\mathrm{d}^N))\\
    &\quad+\wedge_{34}\circ(\mathrm{id}_M
    \,\hat{\otimes}\,(\dst{}^N-\mathrm{d}^N)
    \,\hat{\otimes}\,\mathrm{id}_{\Omega^1})
    \circ\sigma^\mathcal{W}_{23}
    \circ((\dst{}^M-\mathrm{d}^M)\,\hat{\otimes}\,\mathrm{id}_N)\\
    &=\wedge_{34}\circ\sigma^\mathcal{W}_{23}
    \circ(\dst{}^M\,\hat{\otimes}\,(\dst{}^N-\mathrm{d}^N))
    +\wedge_{34}\circ(\mathrm{id}_M
    \,\hat{\otimes}\,(\dst{}^N
    \,\hat{\otimes}\,\mathrm{id}_{\Omega^1}))
    \circ\sigma^\mathcal{W}_{23}
    \circ(\dst{}^M\,\hat{\otimes}\,\mathrm{id}_N)\\
    &=(\ref{SOC2})+(\ref{SOC3})
    -\wedge_{34}\circ\sigma^\mathcal{W}_{23}
    \circ(\dst{}^M\,\hat{\otimes}\,\mathrm{d}^N)~.
\end{align*}

\item[vii.)] Combining ii.), v.) and vi.), the claim (\ref{eq63}) follows.
\end{enumerate}
\end{proof}
\begin{remark}[Classical Sum of Curvatures]
For a commutative Hopf algebra $H$ and two right connections $\dst{}^M\in\mathrm{Con}_H(M)$,
$\dst{}^N\in\mathrm{Con}_H(N)$ on symmetric bicovariant bimodules
$M,N$ we verify that (\ref{eq63}) gives back the formulas
known from differential geometry. Let $m\in M$ and $n\in N$.
We first recall that right $H$-linearity is equivalent to left $H$-linearity in
this setting. Then $\sigma^\mathcal{W}=\sigma^{\mathrm{flip}}$ becomes
the tensor product of right $H$-linear maps $\phi$ and $\psi$ between symmetric
bicovariant bimodules:
$\phi\otimes_{\sigma^\mathcal{W}}\psi=\phi\otimes_{\sigma^\mathrm{flip}}\psi=\psi\otimes_H\phi$
that we denoted as usual
$\psi\otimes_H\phi$. Furthermore
\begin{align*}
    (\mathrm{id}_M\otimes_{\sigma^\mathcal{W}}
    ((\dst{}^N-\mathrm{d}^N)\otimes_H\mathrm{id}_{\Omega^1}))
    \circ\sigma^\mathcal{W}_{23}
    &=(\mathrm{id}_M\otimes_H
    (\dst{}^N-\mathrm{d}^N)\otimes_H\mathrm{id}_{\Omega^1})
    \circ\sigma^\mathrm{flip}_{23}\\
    &=\sigma^\mathrm{flip}_{34}\circ(\mathrm{id}_{M\otimes_H\Omega^1}\otimes_H
    (\dst{}^N-\mathrm{d}^N))~.
\end{align*}
Now consider the last term on the
right hand side of (\ref{eq63}). Since 
$\wedge\circ\sigma^{\mathrm{flip}}=-\wedge$ we obtain
\begin{align*}
    \wedge_{34}\circ(\mathrm{id}_M
    &\otimes_{\sigma^\mathcal{W}}
    (\dst{}^N-\mathrm{d}^N)\otimes_{\sigma^\mathcal{W}}\mathrm{id}_{\Omega^1})
    \circ\sigma^\mathcal{W}_{23}
    \circ((\dst{}^M-\mathrm{d}^M)\otimes_{\sigma^\mathcal{W}}\mathrm{id}_N)\\
    &=\wedge_{34}\circ(\mathrm{id}_M
    \otimes_H
    (\dst{}^N-\mathrm{d}^N)\otimes_H\mathrm{id}_{\Omega^1})
    \circ\sigma^\mathrm{flip}_{23}
    \circ((\dst{}^M-\mathrm{d}^M)\otimes_H\mathrm{id}_N)\\
    &=\wedge_{34}\circ\sigma^\mathrm{flip}_{34}
    \circ(\mathrm{id}_{M\otimes_H\Omega^1}
    \otimes_H
    (\dst{}^N-\mathrm{d}^N))
    \circ((\dst{}^M-\mathrm{d}^M)\otimes_H\mathrm{id}_N)\\
    &=-\wedge_{34}\circ((\dst{}^M-\mathrm{d}^M)\otimes_H(\dst{}^N-\mathrm{d}^N))
\end{align*}
for all $m\in M$ and $n\in N$,
i.e. the last two terms on the right hand side of (\ref{eq63}) cancel and
\begin{align*}
    \mathrm{R}^{\dst{}^M\oplus\dst{}^N}
    =\sigma^{\mathrm{flip}}_{23}\circ(\mathrm{R}^{\dst{}^M}\otimes_H\mathrm{id}_N)
    +\mathrm{id}_M\otimes_H\mathrm{R}^{\dst{}^N}
\end{align*}
remains. So the curvature of the sum of connections is the sum of the individual curvatures.
\end{remark}

\section{Riemannian Geometry on Quantum Groups}\label{Sec5}

In this section we fix a Woronowicz differential calculus 
$(\Omega^\bullet,\wedge,\mathrm{d})$
(cf. Proposition~\ref{propBEA}). We further fix
a module $M$ of ${}_H^H\mathcal{M}_H^H$ which is finitely
generated as a right $H$-module and
assume that $\overline{\sigma}^\mathcal{W}_{M,M}$
is diagonalisable with $\lambda=1$ among its eigenvalues. Following Lemma~\ref{lemma15} there is a direct sum decomposition
$M^2=M\otimes_HM=(M\vee M)\oplus(M\wedge M)$
of bicovariant bimodules, where $M\vee M:=M^{\vee2}:=V_1$ is the eigenspace
corresponding to the eigenvalue $\lambda=1$ and $M\wedge M
:=M^{\wedge 2}:=\bigoplus\nolimits_{\lambda\in\Lambda\setminus\{1\}}V_\lambda$ the direct sum
of all other eigenspaces, the braided exterior algebra in degree two. The projectors
$P_\vee\colon M^2\rightarrow M^2$
and $P_\wedge\colon M^2\rightarrow M^2$
are morphism of bicovariant bimodules.
Dually, there is the decomposition
$(M^*)^2=(M^*)^{\vee2}\oplus(M^*)^{\wedge 2}$
of bicovariant bimodules,
where $(M^*)^{\vee2}=V^*_1$, $(M^*)^{\wedge2}
=\bigoplus\nolimits_{\lambda\in\Lambda\setminus\{1\}}V^*_\lambda$ and the projectors
$P^*_\vee=(P_\vee)^*$, $P^*_\wedge=(P_\wedge)^*$, or more generally
$P^*_\lambda=(P_\lambda)^*$, are the transposed of the projectors $P_\lambda$ (see again Lemma~\ref{lemma15}). Let us denote the corresponding
bicovariant bimodule projections by
$\pi_\vee\colon M^2\rightarrow M\vee M$,
$\pi_\wedge\colon M^2\rightarrow M\wedge M$,
$\pi^*_\vee\colon(M^*)^2\rightarrow M^*\vee M^*$
and $\pi^*_\wedge\colon(M^*)^2\rightarrow M^*\wedge M^*$.

Consider now a right connection $\dst
\in\mathrm{Con}_H(M)$
and its extension $\dst\colon M^2
\rightarrow M^2\otimes_H\Omega^1$ via the sum of connections
(\ref{eq15}).
The following lemma shows that this sum of connections induces a right connection
on $M\vee M$, a right connection on $M\wedge M$ and
two right $H$-linear maps. These connections and $H$-linear maps
determine the sum of connections completely.
\begin{lemma}\label{lemma16}
A right connection $\dst\colon M^2\rightarrow M^2\otimes_H\Omega^1$
induces two right connections
\begin{equation}\label{eq85}
    \dst{}_\vee:=(\pi_\vee\otimes_H\mathrm{id}_{\Omega^1})\circ\dst|_{M^{\vee2}}
       \in\mathrm{Con}_H(M^{\vee2})
\end{equation}
and
\begin{equation}\label{eq86}
    \dst{}_\wedge:=(\pi_\wedge\otimes_H\mathrm{id}_{\Omega^1})\circ\dst|_{M^{\wedge2}}
       \in\mathrm{Con}_H(M^{\wedge2})
\end{equation}
on $M^{\vee2}$ and $M^{\wedge2}$, respectively.
Furthermore, it induces two right $H$-linear maps,
\begin{equation}\label{eq87}
    \dst{}_{12}:=(\pi_\vee\otimes_H\mathrm{id}_{\Omega^1})\circ\dst|_{M^{\wedge2}}
    \colon M^{\wedge2}\rightarrow M^{\vee2}\otimes_H\Omega^1
\end{equation}
and
\begin{equation}\label{eq88}
    \dst{}_{21}:=(\pi_\wedge\otimes_H\mathrm{id}_{\Omega^1})\circ\dst|_{M^{\vee2}}
    \colon M^{\vee2}\rightarrow M^{\wedge2}\otimes_H\Omega^1~.
\end{equation}

On the other hand, two right connections (\ref{eq85}) and (\ref{eq86}), together with
two right $H$-linear maps (\ref{eq87}) and (\ref{eq88}),
completely determine a right connection $\dst\colon M^2
\rightarrow M^2\otimes_H\Omega^1$
by
\begin{equation}\label{eq89}
    \dst
    =\dst{}_\vee\circ\pi_\vee
    +\dst{}_\wedge\circ\pi_\wedge
    +\dst{}_{12}\circ\pi_\wedge
    +\dst{}_{21}\circ\pi_\vee~,
\end{equation}
or, in matrix notation, $\dst=\begin{pmatrix}
\dst{}_\vee & \dst{}_{12} \\
\dst{}_{21} & \dst{}_\wedge
\end{pmatrix}$.

An analogous statement holds for left connections.
\end{lemma}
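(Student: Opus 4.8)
The plan is to leverage two structural facts established earlier: the decomposition $M^2=M^{\vee2}\oplus M^{\wedge2}$ into bicovariant bimodules (Lemma~\ref{lemma15}) and the additivity of $(-)\otimes_H\Omega^1$, which together give the identification $M^2\otimes_H\Omega^1=(M^{\vee2}\otimes_H\Omega^1)\oplus(M^{\wedge2}\otimes_H\Omega^1)$ of bicovariant bimodules. Writing $\iota_\vee\colon M^{\vee2}\hookrightarrow M^2$ and $\iota_\wedge\colon M^{\wedge2}\hookrightarrow M^2$ for the inclusions, and using that $P_\vee,P_\wedge$ — hence the corestrictions $\pi_\vee,\pi_\wedge$ — are morphisms of bicovariant bimodules, in particular right $H$-linear, every map occurring in the statement (the four pieces and the reconstruction formula \eqref{eq89}) becomes a composite of honest bicovariant-bimodule data with a single connection, and ``the component of $\dst$ in the $\vee$- or $\wedge$-summand'' is unambiguous.

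First I would check the Leibniz rules. For $m\in M^{\vee2}$ and $a\in H$, since $M^{\vee2}$ is a right subbimodule one has $\dst(ma)=\dst(m)a+m\otimes_H\mathrm{d}a$; applying $\pi_\vee\otimes_H\mathrm{id}_{\Omega^1}$, right $H$-linearity of $\pi_\vee$ handles the first term and $\pi_\vee(m)=m$ the second, giving $\dst{}_\vee(ma)=\dst{}_\vee(m)a+m\otimes_H\mathrm{d}a$; the same computation works for $\dst{}_\wedge$. For $\dst{}_{12}$ (and identically $\dst{}_{21}$) the same steps apply, but now $m\in M^{\wedge2}$, so $\pi_\vee(m)=0$ annihilates the inhomogeneous term and leaves $\dst{}_{12}(ma)=\dst{}_{12}(m)a$, i.e.\ right $H$-linearity. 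For the converse I would take the four maps, define $\dst$ by \eqref{eq89}, and verify the Leibniz rule by decomposing $m=\pi_\vee(m)+\pi_\wedge(m)$, using $\pi_\vee(ma)=\pi_\vee(m)a$, $\pi_\wedge(ma)=\pi_\wedge(m)a$, the Leibniz rules of $\dst{}_\vee,\dst{}_\wedge$ and the $H$-linearity of $\dst{}_{12},\dst{}_{21}$; the two surviving inhomogeneous terms $\pi_\vee(m)\otimes_H\mathrm{d}a$ and $\pi_\wedge(m)\otimes_H\mathrm{d}a$ then add up to $m\otimes_H\mathrm{d}a$, so $\dst\in\mathrm{Con}_H(M^2)$.

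Finally I would show the two constructions are mutually inverse. Going one way, for $\dst$ as in \eqref{eq89} and $m\in M^{\vee2}$ one has $\pi_\vee(m)=m$, $\pi_\wedge(m)=0$, so $\dst(m)=\dst{}_\vee(m)+\dst{}_{21}(m)$ with the two summands lying in complementary summands of $M^2\otimes_H\Omega^1$; applying $\pi_\vee\otimes_H\mathrm{id}$ resp.\ $\pi_\wedge\otimes_H\mathrm{id}$ therefore returns $\dst{}_\vee$ and $\dst{}_{21}$, and symmetrically on $M^{\wedge2}$ one recovers $\dst{}_\wedge$ and $\dst{}_{12}$. Going the other way, for an arbitrary $\dst\in\mathrm{Con}_H(M^2)$ and $m=\pi_\vee(m)+\pi_\wedge(m)$ one uses $(P_\vee\otimes_H\mathrm{id})+(P_\wedge\otimes_H\mathrm{id})=\mathrm{id}$ to split $\dst(\iota_\vee\pi_\vee(m))$ as $\dst{}_\vee(\pi_\vee(m))+\dst{}_{21}(\pi_\vee(m))$ and $\dst(\iota_\wedge\pi_\wedge(m))$ as $\dst{}_{12}(\pi_\wedge(m))+\dst{}_\wedge(\pi_\wedge(m))$, and summing reproduces \eqref{eq89}. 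The left-connection version goes through verbatim with $\Omega^1\otimes_H(-)$ in place of $(-)\otimes_H\Omega^1$ and the left projectors. I do not expect a genuine obstacle here: the whole argument is bookkeeping, and the only point that deserves a sentence of care is the direct-sum identification of $M^2\otimes_H\Omega^1$, which is precisely what makes the four pieces well defined and readable off $\dst$, together with the already-established right $H$-(co)linearity of $\pi_\vee$ and $\pi_\wedge$.
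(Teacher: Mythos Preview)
Your proposal is correct and follows essentially the same approach as the paper: both verify the Leibniz rule for $\dst{}_\vee,\dst{}_\wedge$ and the right $H$-linearity of $\dst{}_{12},\dst{}_{21}$ by applying $\pi_\vee\otimes_H\mathrm{id}$ or $\pi_\wedge\otimes_H\mathrm{id}$ to the Leibniz rule of $\dst$ and noting that the inhomogeneous term survives or dies according to whether the projection fixes $m$. For the converse the paper simply expands $\dst=((\pi_\vee+\pi_\wedge)\otimes_H\mathrm{id})\circ\dst\circ(\pi_\vee+\pi_\wedge)$ to recover \eqref{eq89}; your version is slightly more thorough in that you also verify directly that arbitrary data of the prescribed type assembles into a connection and that the two constructions are mutually inverse, but this is the same bookkeeping.
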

\begin{proof}
Let $m\in M^{\vee2}$ and $a\in H$. Since $\dst$ satisfies the Leibniz rule
and $\pi_\vee$ is right $H$-linear, 
$\dst{}_\vee(ma)
=(\pi_\vee\otimes_H\mathrm{id}_{\Omega^1})(\dst(m)a+m\otimes_H\mathrm{d}a)
=\dst{}_\vee(m)a+m\otimes_H\mathrm{d}a$, i.e. $\dst{}_\vee$ is a right connection on
$M^{\vee2}$. In complete analogy, one proves that
$\dst{}_\wedge$ is a right connection on $M^{\wedge2}$. Again for
$m\in M^{\vee2}$ and $a\in H$ we obtain
$\dst{}_{21}(ma)
=(\pi_\wedge\otimes_H\mathrm{id}_{\Omega^1})(\dst(m)a+m\otimes_H\mathrm{d}a)
=\dst{}_{21}(m)a$, i.e. $\dst{}_{21}$ is right $H$-linear, and similarly
for $\dst{}_{12}$.

On the other hand, the data (\ref{eq85})-(\ref{eq88}) determines a right connection
on $M^2$ via (\ref{eq89}), since
\begin{align*}
    \dst
    &=((\pi_\vee+\pi_\wedge)\otimes_H\mathrm{id}_{\Omega^1})
    \circ\dst\circ(\pi_\vee+\pi_\wedge)\\
    &=(\pi_\vee\otimes_H\mathrm{id}_{\Omega^1})\circ\dst\circ\pi_\vee
    +(\pi_\vee\otimes_H\mathrm{id}_{\Omega^1})\circ\dst\circ\pi_\wedge\\
    &\quad+(\pi_\wedge\circ\otimes_H\mathrm{id}_{\Omega^1})\dst\circ\pi_\vee
    +(\pi_\wedge\circ\otimes_H\mathrm{id}_{\Omega^1})\dst\circ\pi_\wedge\\
    &=\dst{}_\vee\circ\pi_\vee
    +\dst{}_{12}\circ\pi_\wedge
    +\dst{}_{21}\circ\pi_\vee
    +\dst{}_\wedge\circ\pi_\wedge~.
\end{align*}
\end{proof}

Classically, $\dst{}_{12}=0=\dst{}_{21}$ and thus $\dst\colon M^2
\rightarrow M^2\otimes_H\Omega^1$ simply restricts to connections
on $M^{\vee2}$ and $M^{\wedge2}$.
In the rest of this section we focus on $M=\Omega^1$. As before we
denote the dual bicovariant bimodule by $\mathfrak{X}:=\mathrm{Hom}_H(\Omega^1,H)$.

Let $\dst\in\mathrm{Con}_H(\Omega^1)$ be a right connection, consider the
corresponding sum 
$\dst\oplus\dst\in\mathrm{Con}_H(\Omega^1\otimes_H\Omega^1)$ 
defined in (\ref{eq15}) and denote the
right connections and right $H$-linear maps induced from $\dst\oplus\dst$ 
via Lemma~\ref{lemma16} by
\begin{equation}\label{IndCon1}
\begin{split}
    \dst{}_\vee\in\mathrm{Con}_H((\Omega^1)^{\vee2})~,&~~~~~
    \dst{}_{12}\in\mathrm{Hom}_H((\Omega^1)^{\wedge2},
    (\Omega^1)^{\vee2}\otimes_H\Omega^1)~,\\
    \dst{}_\wedge\in\mathrm{Con}_H((\Omega^1)^{\wedge2})~,&~~~~~
    \dst{}_{21}\in\mathrm{Hom}_H((\Omega^1)^{\vee2},
    (\Omega^1)^{\wedge2}\otimes_H\Omega^1)~.
\end{split}
\end{equation}
Following Proposition~\ref{prop06'}~$iv.)$ there is an induced left connection
$\std\oplus\std\in{}_H\mathrm{Con}(\mathfrak{X}^2)$ dual to $\dst\oplus\dst$.
Using the left linear analogue of Lemma~\ref{lemma16} we obtain left connections
and left $H$-linear maps 
\begin{equation}\label{IndCon2}
\begin{split}
    \std{}_\vee\in\mathrm{Con}_H(\mathfrak{X}^{\vee2})~,&~~~~~
    \std{}_{12}\in{}_H\mathrm{Hom}(\mathfrak{X}^{\wedge2},
    \Omega^1\otimes_H\mathfrak{X}^{\vee2})~,\\
    \std{}_\wedge\in\mathrm{Con}_H(\mathfrak{X}^{\wedge2})~,&~~~~~
    \std{}_{21}\in{}_H\mathrm{Hom}(\mathfrak{X}^{\vee2},
    \Omega^1\otimes_H\mathfrak{X}^{\wedge2})~.
\end{split}
\end{equation}
The following lemma proves the duality of (\ref{IndCon1}) and (\ref{IndCon2}).
\begin{lemma}
Let $\dst
\in\mathrm{Con}_H(\Omega^1)$ be a right connection
and consider the induced connections and $H$-linear maps in
(\ref{IndCon1}) and (\ref{IndCon2}).
Then $\std{}_\vee$ and $\std{}_\wedge$ are the dual connections of
$\dst{}_\vee$ and $\dst{}_\wedge$, respectively, and
\begin{equation*}
\begin{split}
    \langle X_\vee,\dst{}_{12}(\omega_\wedge)\rangle
    &=-\langle\std{}_{21}(X_\vee),\omega_\wedge\rangle~,\\
    \langle X_\wedge,\dst{}_{21}(\omega_\vee)\rangle
    &=-\langle\std{}_{12}(X_\wedge),\omega_\vee\rangle
\end{split}
\end{equation*}
for all $X_\vee\in\mathfrak{X}^{\vee2}$, $X_\wedge\in\mathfrak{X}^{\wedge2}$,
$\omega_\vee\in\Omega^{\vee2}$ and $\omega_\wedge\in\Omega^{\wedge2}$.
\end{lemma}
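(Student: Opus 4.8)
The plan is to reduce everything to the duality formula \eqref{eq47} between $\dst\oplus\dst$ and $\std\oplus\std$, which holds by construction (Proposition~\ref{prop06'}~$iv.$), together with the fact that the projectors $\pi_\vee,\pi_\wedge$ on $M^2$ and their transposes $\pi^*_\vee,\pi^*_\wedge$ on $(M^*)^2$ are adjoint with respect to the onion-like pairing. Concretely, I would first record that the pairing $\langle\cdot,\cdot\rangle\colon (\mathfrak{X}^2)\otimes_H(\Omega^1)^2\to H$ decomposes along the direct sums $\mathfrak{X}^2=\mathfrak{X}^{\vee2}\oplus\mathfrak{X}^{\wedge2}$ and $(\Omega^1)^2=(\Omega^1)^{\vee2}\oplus(\Omega^1)^{\wedge2}$ so that $\mathfrak{X}^{\vee2}$ pairs non-degenerately with $(\Omega^1)^{\vee2}$ (this is exactly \eqref{eq80}), $\mathfrak{X}^{\wedge2}$ pairs non-degenerately with $(\Omega^1)^{\wedge2}$, and the cross pairings vanish; equivalently $\langle\pi^*_\vee X,\omega\rangle=\langle X,\pi_\vee\omega\rangle$ and $\langle\pi^*_\wedge X,\omega\rangle=\langle X,\pi_\wedge\omega\rangle$, which is the defining property of the transpose morphism used already in Lemma~\ref{lemma15}.

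Next I would compute. Take $X_\vee\in\mathfrak{X}^{\vee2}$ and $\omega_\vee\in(\Omega^1)^{\vee2}$, view them as elements of $\mathfrak{X}^2$ and $(\Omega^1)^2$, and apply \eqref{eq47} for $\dst\oplus\dst$ and $\std\oplus\std$:
\begin{equation*}
    \mathrm{d}\langle X_\vee,\omega_\vee\rangle
    =\langle(\std\oplus\std)(X_\vee),\omega_\vee\rangle
    +\langle X_\vee,(\dst\oplus\dst)(\omega_\vee)\rangle~.
\end{equation*}
Now insert $\mathrm{id}=(\pi_\vee+\pi_\wedge)\otimes_H\mathrm{id}_{\Omega^1}$ on the right factor of $(\dst\oplus\dst)(\omega_\vee)\in(\Omega^1)^2\otimes_H\Omega^1$ and, dually, $\mathrm{id}=(\pi^*_\vee+\pi^*_\wedge)\otimes_H\mathrm{id}_{\Omega^1}$ on the appropriate factor of $(\std\oplus\std)(X_\vee)$. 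Using $\langle\pi^*_\wedge X_\vee,\,\cdot\,\rangle$-type vanishing together with the adjointness above, the $\wedge$-components of the right connection pair against $X_\vee$ as the $\vee$-component of the dual, and vice versa. Reading off the definitions \eqref{eq85}-\eqref{eq88} and their left-linear analogues \eqref{IndCon2}, the terms reorganize into $\langle\std{}_\vee(X_\vee),\omega_\vee\rangle+\langle X_\vee,\dst{}_\vee(\omega_\vee)\rangle$ plus a cross term $\langle X_\vee,\dst{}_{21}(\omega_\vee)\rangle+\langle\std{}_{12}(X_\vee),\omega_\vee\rangle$; but $\dst{}_{21}(\omega_\vee)$ lands in $(\Omega^1)^{\wedge2}\otimes_H\Omega^1$ while $X_\vee\in\mathfrak{X}^{\vee2}$, so by the vanishing of the cross pairing the only surviving contribution from that cross term is forced, yielding precisely $\langle X_\wedge,\dst{}_{21}(\omega_\vee)\rangle=-\langle\std{}_{12}(X_\wedge),\omega_\vee\rangle$ once one repeats the argument with $X_\wedge\in\mathfrak{X}^{\wedge2}$ in place of $X_\vee$. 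The duality $\mathrm{d}\langle X_\vee,\omega_\vee\rangle=\langle\std{}_\vee(X_\vee),\omega_\vee\rangle+\langle X_\vee,\dst{}_\vee(\omega_\vee)\rangle$ for all $X_\vee,\omega_\vee$ then says exactly that $\std{}_\vee$ is the connection dual to $\dst{}_\vee$ via the non-degenerate pairing \eqref{eq80}; uniqueness of the dual connection (Proposition~\ref{prop06'}) closes this case, and the $\wedge$-case is identical. The sign in the cross relations comes from isolating the off-diagonal block and has exactly the same origin as the sign bookkeeping in Lemma~\ref{lemma16}'s matrix decomposition $\dst=\begin{pmatrix}\dst{}_\vee&\dst{}_{12}\\\dst{}_{21}&\dst{}_\wedge\end{pmatrix}$, whose transpose is the left-connection matrix up to this sign.

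The main obstacle I anticipate is purely bookkeeping rather than conceptual: one must be careful that the left connection $\std\oplus\std$ is the dual of $\dst\oplus\dst$ built on $\mathfrak{X}^2=\mathrm{Hom}_H((\Omega^1)^2,H)$ with the onion-like pairing, and that restricting/projecting this global duality commutes correctly with the block decomposition of Lemma~\ref{lemma16} on both sides simultaneously. In particular one has to check that $\pi^*_\vee\colon(\mathfrak{X})^2\to\mathfrak{X}^{\vee2}$ really is the transpose of $\pi_\vee\colon(\Omega^1)^2\to(\Omega^1)^{\vee2}$ under the onion pairing — which follows from Lemma~\ref{lemma15}~$ii.$ and $iii.$ and the identification $(\Omega^1)^2{}^*\cong(\mathfrak{X})^2$ — and that the extension-to-$\Omega^\bullet$ operation $(\cdot)^\bullet$ used in $\dst^\bullet$ is irrelevant here since only the degree-one part of $\dst\oplus\dst$ enters \eqref{eq47}. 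Once these compatibilities are in place the computation is a few lines of Sweedler-free manipulation using only $H$-bilinearity and non-degeneracy of the three pairings involved.
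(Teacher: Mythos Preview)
Your approach is exactly the paper's: apply the global duality \eqref{eq47} for $\dst\oplus\dst$ and $\std\oplus\std$, use that the cross pairings $\langle\mathfrak{X}^{\vee2},(\Omega^1)^{\wedge2}\rangle$ and $\langle\mathfrak{X}^{\wedge2},(\Omega^1)^{\vee2}\rangle$ vanish, and read off the block decomposition of Lemma~\ref{lemma16}. The paper does this in two lines per relation.

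There is however a bookkeeping muddle in your write-up that you should fix. From the pairing of $X_\vee$ with $\omega_\vee$ you cannot extract the cross relations: when you decompose $\std(X_\vee)=\std{}_\vee(X_\vee)+\std{}_{21}(X_\vee)$ and $\dst(\omega_\vee)=\dst{}_\vee(\omega_\vee)+\dst{}_{21}(\omega_\vee)$, both cross terms $\langle\std{}_{21}(X_\vee),\omega_\vee\rangle$ and $\langle X_\vee,\dst{}_{21}(\omega_\vee)\rangle$ vanish by the very cross-pairing property you invoke, so that computation yields \emph{only} the diagonal duality $\mathrm{d}\langle X_\vee,\omega_\vee\rangle=\langle\std{}_\vee(X_\vee),\omega_\vee\rangle+\langle X_\vee,\dst{}_\vee(\omega_\vee)\rangle$. (Your expression $\std{}_{12}(X_\vee)$ is also ill-formed: $\std{}_{12}$ has domain $\mathfrak{X}^{\wedge2}$.) The cross relations come instead from the \emph{mixed} inputs: pair $X_\vee$ with $\omega_\wedge$, so that $\mathrm{d}\langle X_\vee,\omega_\wedge\rangle=0$, and now the surviving pieces of $\std(X_\vee)=\std{}_\vee(X_\vee)+\std{}_{21}(X_\vee)$ and $\dst(\omega_\wedge)=\dst{}_{12}(\omega_\wedge)+\dst{}_\wedge(\omega_\wedge)$ are precisely $\langle\std{}_{21}(X_\vee),\omega_\wedge\rangle+\langle X_\vee,\dst{}_{12}(\omega_\wedge)\rangle=0$. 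The second cross relation comes from $(X_\wedge,\omega_\vee)$ in the same way. With this correction your argument is complete and matches the paper's.
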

\begin{proof}
For $X_\vee\in\mathfrak{X}^{\vee2}$, 
$\omega_\vee\in\Omega^{\vee2}$  
we have that $\std{}_\vee$ is the dual connection to $\dst{}_\vee$,
since
\begin{align*}
    \langle X_\vee,\dst{}_\vee(\omega_\vee)\rangle
    =\langle X_\vee,\dst(\omega_\vee)\rangle
    =\mathrm{d}\langle X_\vee,\omega_\vee\rangle
    -\langle\std(X_\vee),\omega_\vee\rangle
    =\mathrm{d}\langle X_\vee,\omega_\vee\rangle
    -\langle\std{}_\vee(X_\vee),\omega_\vee\rangle
\end{align*}
and similarly one proves that $\std{}_\wedge$ and $\dst{}_\wedge$ are dual.
We further have 
\begin{align*}
    \langle\std{}_{21}(X_\vee),\omega_\wedge\rangle
    =\langle\std(X_\vee),\omega_\wedge\rangle
    =\mathrm{d}\underbrace{\langle X_\vee,\omega_\wedge\rangle}_{=0}
    -\langle X_\vee,\dst(\omega_\wedge)\rangle
    =-\langle X_\vee,\dst{}_{12}(\omega_\wedge)\rangle~,
\end{align*}
where $\omega_\wedge\in\Omega^{\wedge2}$
and similarly one proves that $\std{}_{12}$ and $\dst{}_{21}$ are dual.
\end{proof}
Extending the onion-like pairing \eqref{onion} and
  evaluating  it on
  $\mathbf{g}=\mathbf{g}^\mathrm{a}\otimes_H\mathbf{g}_\mathrm{a}\in\mathfrak{X}^2$
  (finite sum on the index $\mathrm{a}$ understood) we obtain the right $H$-linear maps
\begin{equation}\label{pairingG}
 \begin{split}
& \langle\mathbf{g},\cdot\otimes_H\cdot\rangle\colon\Omega^1\otimes_H\Omega^1\to
    H~,~~\\[.6em]
& \langle\mathbf{g},\cdot\otimes_H\cdot\otimes_H\cdot\rangle\colon\Omega^1\otimes_H\Omega^1\otimes_H\Omega^1\to \Omega^1~,~~
    \end{split}
  \begin{split}
  & \omega\otimes_H\eta
    \mapsto\langle\mathbf{g},\omega\otimes_H\eta\rangle=\langle\mathbf{g}^\mathrm{a}\langle\mathbf{g}_\mathrm{a},\omega\rangle,\eta\rangle~,~~\\[.6em]
  &  \omega\otimes_H\eta\otimes_H\rho
  \mapsto\langle\mathbf{g},\omega\otimes_H\eta\otimes\rho\rangle=\langle\mathbf{g}^\mathrm{a}\langle\mathbf{g}_\mathrm{a},\omega\rangle,\eta\rangle\rho
  ~.
\end{split}
\end{equation}
The restrictions to the bicovariant sub-bimodules
$\Omega^1\vee\Omega^1$ and  $(\Omega^1\vee\Omega^1)\otimes_H\Omega^1$
with slight abuse of notation are denoted by
$\langle\mathbf{g},\cdot\vee\cdot\rangle\colon\Omega^1\vee\Omega^1\to
H$ and  $\langle\mathbf{g},\cdot\vee\cdot\rangle\colon
(\Omega^1\vee\Omega^1)\otimes_H\Omega^1\to \Omega^1$.

\begin{definition}[Metric tensor and Levi-Civita connections]\label{def04}\phantom{a}
\begin{enumerate}
\item[i.)] An element $\mathbf{g}=\mathbf{g}^\mathrm{a}\otimes_H\mathbf{g}_\mathrm{a}
\in\mathfrak{X}^{2}$ (finite sum on the index $\mathrm{a}$ understood) is said to be a
(pseudo-Riemannian) metric if $\mathbf{g}\in\mathfrak{X}^{\vee2}$ (i.e. $\sigma^\mathcal{W}_{\mathfrak{X},\mathfrak{X}}(\mathbf{g})=\mathbf{g}$) and
$\mathbf{g}^\sharp:=\mathbf{g}^\mathrm{a}\langle\mathbf{g}_\mathrm{a},\cdot\rangle
\colon\Omega^1\rightarrow\mathfrak{X}$ is an isomorphism of right
$H$-modules.

\item[ii.)] A right connection $\dst
\in\mathrm{Con}_H(\Omega^1)$ is
said to be compatible with a pseudo-Riemannian metric
$\mathbf{g}\in\mathfrak{X}^{\vee2}$, if $\std{}_\vee(\mathbf{g})=0$, where
$\std{}_\vee\colon\mathfrak{X}^{\vee2}
\rightarrow\Omega^1\otimes_H\mathfrak{X}^{\vee2}$ is the induced
connection on $\mathfrak{X}^{\vee2}$, i.e., if 
$$
\langle\mathbf{g},\dst{}^\mathrm{LC}_\vee(\cdot\vee\cdot)\rangle=\mathrm{d}\circ\langle\mathbf{g},\cdot\vee\cdot\rangle
$$
as an equation of linear maps $\Omega\vee\Omega\to\Omega$.\\
If  in addition $\dst\in\mathrm{Con}_H(\Omega^1)$ is torsion-free we call it a
\textit{Levi-Civita connection} with respect to $\mathbf{g}$.
\end{enumerate}\end{definition}
\begin{remark}
We compare Definition~\ref{def04} to other notions of metric and
Levi-Civita connection in the literature.
\begin{enumerate}
\item[i.)] If $H$ is commutative and $\Omega^1$ a symmetric bicovariant
bimodule (in particular
$\sigma^\mathcal{W}_{\Omega^1,\Omega^1}
=\sigma^\mathrm{flip}_{\Omega^1,\Omega^1}$) 
Definition~\ref{def04} reproduces the classical notions of metric and
Levi-Civita connection. The metric axioms become 
$\sigma^\mathrm{flip}_{\mathfrak{X},\mathfrak{X}}(\mathbf{g})=\mathbf{g}$ and 
that contraction with $\mathbf{g}$ determines an isomorphism of the cotangent
and tangent bundle $\mathbf{g}^\#\colon\Omega^1\xrightarrow{\cong}\mathfrak{X}$. For the Levi-Civita
condition we first note that in the commutative case
the sum of connections is the canonical
tensor product extension of a connection $\dst$
according to Remark~\ref{rem01}.
Since $\std{}_{12}=\std{}_{21}=0$, the condition
$\std{}_\vee(\mathbf{g})=0$, $\mathbf{g}\in\mathfrak{X}^{\vee 2}$, given in Definition~\ref{def04},
is equivalent to the classical metric compatibility $\std(\mathbf{g})=0$.
In particular, the notions of pseudo-Riemannian metric and Levi-Civita
connection in differential geometry is covered.
More generally, Definition~\ref{def04} encompasses the Levi-Civita condition of braided symmetric
Riemannian geometry studied in \cite{AschieriCartanStructure,Weber}, as we will discuss in the next subsection.

\item[ii.)] The definition of metrics in Definition~\ref{def04} generalizes
the notion of metrics on Hopf algebras in \cite[Def. 1.15]{MajidBeggsBook}.
There, the isomorphism $\mathbf{g}^\#$ is assumed to be an $H$-bimodule map, 
which implies that $\mathbf{g}$ is a central element.
\end{enumerate}
\end{remark}

Given a pseudo-Riemannian metric $\mathbf{g}\in\mathfrak{X}^{\vee2}$
we define a linear map
\begin{equation}\label{eq57}\begin{split}
    \Phi_\mathbf{g}\colon\mathrm{Hom}_H(\Omega^1,\Omega^1\vee\Omega^1)
    &\rightarrow\mathrm{Hom}_H(\Omega^1\vee\Omega^1,\Omega^1)~,\\
    \phi&\mapsto\Phi_\mathbf{g}(\phi)
    :=\langle\mathbf{g},\cdot\otimes_H\cdot\rangle
    \circ\phi_\vee~,
  \end{split}
  \end{equation}
where $\phi_\vee$ is defined as the composition 
\begin{equation*}
\begin{tikzcd}
    \Omega^1\vee\Omega^1 \arrow[hookrightarrow]{r}{~} \arrow{drr}[swap]{\phi_\vee}
    & \Omega^1\otimes_H\Omega^1 \arrow{r}{\phi\oplus\phi}
    & (\Omega^1\otimes_H\Omega^1)\otimes_H\Omega^1 \arrow{d}{\pi_\vee\otimes_H\mathrm{id}}\\
    & & (\Omega^1\vee\Omega^1)\otimes_H\Omega^1
\end{tikzcd}
\end{equation*}
and $\phi\oplus\phi$ denotes
the sum of right $H$-linear maps
\begin{equation*}
    \phi\oplus\phi
    =\sigma^\mathcal{W}_{23}\circ(\phi\otimes_{\sigma^\mathcal{W}}\mathrm{id}_{\Omega^1})
    +\mathrm{id}_{\Omega^1}\otimes_{\sigma^\mathcal{W}}\phi
    \colon\Omega^1\otimes_H\Omega^1
    \rightarrow(\Omega^1\otimes_H\Omega^1)\otimes_H\Omega^1~,
\end{equation*}
cf. (\ref{eq90}).
\begin{lemma}\label{lemma17}
The map in (\ref{eq57}) is well-defined and right $H$-linear. Moreover, 
the identity
\begin{equation}\label{PhiShort}
    \Phi_\mathbf{g}(\phi)
    =2\langle\mathbf{g},\cdot\otimes_H\cdot\rangle
    \circ(\mathrm{id}_{\Omega^1}\otimes_{\sigma^\mathcal{W}}\phi)|_{\Omega^1\vee\Omega^1}
\end{equation}
holds as an equality of linear maps $\Omega^1\vee\Omega^1\to\Omega^1$. 
\end{lemma}
\begin{proof}
As a composition of right $H$-linear maps $\Phi_\mathbf{g}(\phi)$ is right $H$-linear: $\Phi_\mathbf{g}(\phi)(\omega^\mathrm{a}\otimes_H\omega_\mathrm{a}a)=\Phi_\mathbf{g}(\phi)(\omega^\mathrm{a}\otimes_H\omega_\mathrm{a})a$
for all $\omega^\mathrm{a}\otimes_H\omega_\mathrm{a}\in\Omega^1\vee\Omega^1$ (finite sum over the index $\mathrm{a}$ understood) and elements $a\in H$,
hence $\Phi$ is well-defined. Right $H$-linearity of $\Phi_\mathbf{g}$, i.e. 
$$
\Phi_\mathbf{g}(\phi\cdot a)(\omega^\mathrm{a}\otimes_H\omega_\mathrm{a})
=\Phi_\mathbf{g}(\phi)(a\cdot(\omega^\mathrm{a}\otimes_H\omega_\mathrm{a}))
$$
immediately follows from the identity (\ref{PhiShort}) that we now prove.
For all $\omega^\mathrm{a}\otimes_H\omega_\mathrm{a}\in\Omega^1\vee\Omega^1$ (finite sum over the index $\mathrm{a}$ understood), recalling that $\sigma^\mathcal{W}_{\mathfrak{X},\mathfrak{X}}(\mathbf{g})=\mathbf{g}$, we obtain
\begin{align*}
    \Phi_\mathbf{g}(\phi)(\omega^\mathrm{a}\otimes_H\omega_\mathrm{a})
    &=\langle\mathbf{g},
    \sigma^\mathcal{W}_{23}(\phi(\omega^\mathrm{a})\otimes_H\omega_\mathrm{a})
    +{}_\alpha\omega^\mathrm{a}\otimes_H({}^\alpha\phi)(\omega_\mathrm{a})\rangle\\
    &=\langle\sigma^\mathcal{W}_{\mathfrak{X},\mathfrak{X}}(\mathbf{g}),\sigma^\mathcal{W}_{23}(\phi(\omega^\mathrm{a})\otimes_H\omega_\mathrm{a})\rangle
    +\langle\mathbf{g},{}_\alpha\omega^\mathrm{a}\otimes_H({}^\alpha\phi)(\omega_\mathrm{a})\rangle\\
    &=\langle\mathbf{g},\sigma^\mathcal{W}_{12,3}(\phi(\omega^\mathrm{a})\otimes_H\omega_\mathrm{a})
    +{}_\alpha\omega^\mathrm{a}\otimes_H({}^\alpha\phi)(\omega_\mathrm{a})\rangle\\
    &=\langle\mathbf{g},{}_\alpha\omega_\mathrm{a}\otimes_H{}^\alpha(\phi(\omega^\mathrm{a}))
    +{}_\alpha\omega^\mathrm{a}\otimes_H({}^\alpha\phi)(\omega_\mathrm{a})\rangle\\
    &=\langle\mathbf{g},{}_{\alpha\beta}\omega_\mathrm{a}
    \otimes_H({}^\alpha\phi)({}^\beta\omega^\mathrm{a})
    +{}_\alpha\omega^\mathrm{a}\otimes_H({}^\alpha\phi)(\omega_\mathrm{a})\rangle\\
    &=2\langle\mathbf{g},{}_\alpha\omega^\mathrm{a}\otimes_H({}^\alpha\phi)(\omega_\mathrm{a})\rangle~,
\end{align*}
where we used Lemma~\ref{lemma03}
and ${}_\beta\omega_\mathrm{a}\otimes_H{}^\beta\omega^\mathrm{a}=\omega^\mathrm{a}\otimes_H\omega_\mathrm{a}$.
\end{proof}

\begin{remark}\begin{enumerate}
    \item[i.)]
The definition in equation \eqref{eq57} was inspired by
\cite[Def. 7.2]{BhowmickCov}. There, however, only bicoinvariant
metrics were considered, while 
\eqref{eq57} is
well-defined for arbitrary metrics.

    \item[ii.)]
Due to the
arbitrariness of the metric  $\mathbf{g}$ we can consider
conformally equivalent metrics  $\mathbf{g}'=f\mathbf{g}$ with $f\in
H$ invertible. We then observe that
$\Phi_\mathbf{g}$ is left $H$-linear in $\mathbf{g}$:
$\Phi_{f\mathbf{g}}=\ell_f\circ\Phi_{\mathbf{g}}$, where                  
$\ell_f\colon\mathrm{Hom}_H(\Omega^1\vee\Omega^1,\Omega^1)\rightarrow
\mathrm{Hom}_H(\Omega^1\vee\Omega^1,\Omega^1)$,
$\phi\mapsto(f\cdot\phi)$.

    \item[iii.)]
Using Lemma \ref{lemma15} the map $\Phi_\mathbf{g}$ in equation
\eqref{eq57} can be identified with the right $H$-linear map 
$$\Phi_\mathbf{g}:(\Omega^1\vee\Omega^1)\otimes_H\mathfrak{X}
    \rightarrow\Omega^1\otimes_H(\mathfrak{X}\vee\mathfrak{X})~.$$
Domain and codomain of this right $H$-linear map  are free finitely
generated modules, as follows recalling the discussion after
\eqref{Pnu} and Lemma \ref{lemma15} i.). Thus, upon choosing a basis,  the invertibility
of $\Phi_\mathbf{g}$ is the invertibility of a finite-dimensional matrix
with values in $H$.
\end{enumerate}
\end{remark}

It turns out that existence and uniqueness of the Levi-Civita
connection for a given arbitrary metric $\mathbf{g}$ is proven provided
$\Phi_\mathbf{g}$ is invertible. The connection is
explicitly described in terms of the structure constant
connection $\dst{}^c\in\mathrm{Con}_H(\Omega^1)$ discussed in
Proposition~\ref{prop04}
and the inverse of the $H$-linear map $\Phi_\mathbf{g}$.
\begin{theorem}\label{LCTheorem}
Consider a pseudo-Riemannian metric
$\mathbf{g}\in\mathfrak{X}^{\vee2}$. If $\Phi_\mathbf{g}$ is an
isomorphism of vector spaces then
\begin{equation}\label{eq24}
    \dst{}^\mathrm{LC}
    :=\dst{}^c
    +\Phi_\mathbf{g}^{-1}\bigg(\bigg(
    \mathrm{d}\circ\langle\mathbf{g},\cdot\otimes_H\cdot\rangle
    -\langle\mathbf{g},\dst{}^c(\cdot\otimes_H\cdot)\rangle
    \bigg)\bigg|_{\Omega^1\vee\Omega^1}\bigg)
\end{equation}
is the unique Levi-Civita connection for $\mathbf{g}$.
Surjectivity of $\Phi_\mathbf{g}$ provides existence while injectivity of 
$\Phi_\mathbf{g}$ ensures uniqueness of the Levi-Civita connection.
\end{theorem}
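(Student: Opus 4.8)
The plan is to use that $\mathrm{Con}_H(\Omega^1)$ is an affine space over $\mathrm{Hom}_H(\Omega^1,\Omega^1\otimes_H\Omega^1)$, with the structure constants connection $\dst{}^c$ of Proposition~\ref{prop04} as a distinguished base point. Every right connection then reads uniquely as $\dst=\dst{}^c+\phi$ with $\phi\in\mathrm{Hom}_H(\Omega^1,\Omega^1\otimes_H\Omega^1)$, and I will translate the torsion-free and the metric compatibility conditions into two independent equations on $\phi$ whose joint solution is controlled by the map $\Phi_\mathbf{g}$.

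For the torsion: since the assignment $\dst\mapsto\mathrm{Tor}^{\dst}=\wedge\circ\dst+\mathrm{d}$ is affine with linear part $\wedge\circ(\,\cdot\,)$, and $\mathrm{Tor}^{\dst{}^c}=0$ (see the torsion computations in Section~\ref{Sec4.2}), one gets $\mathrm{Tor}^{\dst{}^c+\phi}=\wedge\circ\phi$. In the braided exterior algebra $\Omega^2=\Omega^1\wedge\Omega^1=\mathrm{Im}\,A_2$, and the wedge map $\wedge\colon\Omega^1\otimes_H\Omega^1\to\Omega^2$ factors as $M^2\to M^2/S^2\xrightarrow{\ \cong\ }\mathrm{Im}\,A_2$ for $M=\Omega^1$; hence $\ker\wedge=S^2=\ker A_2=\Omega^1\vee\Omega^1$. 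Thus $\dst{}^c+\phi$ is torsion-free precisely when $\mathrm{Im}\,\phi\subseteq\Omega^1\vee\Omega^1$, i.e. $\phi\in\mathrm{Hom}_H(\Omega^1,\Omega^1\vee\Omega^1)$, which is exactly the domain of $\Phi_\mathbf{g}$.

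For metric compatibility: by Definition~\ref{def04} this is $\std{}_\vee(\mathbf{g})=0$, where $\std{}_\vee$ is the left connection on $\mathfrak{X}^{\vee2}$ induced from the dual of $\dst\oplus\dst$. By the duality between these induced connections (the lemma preceding Definition~\ref{def04}), $\std{}_\vee$ is dual to $\dst{}_\vee$, so by Proposition~\ref{prop06'}~i.), nondegeneracy of the pairing (\ref{eq80}) and freeness of $\Omega^1$ as a right $H$-module, the condition becomes $\langle\mathbf{g},\dst{}_\vee(\omega)\rangle=\mathrm{d}\langle\mathbf{g},\omega\rangle$ for all $\omega\in\Omega^1\vee\Omega^1$. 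Since the sum of connections is affine we have $\dst(\cdot\otimes_H\cdot)=\dst{}^c(\cdot\otimes_H\cdot)+\phi(\cdot\otimes_H\cdot)$ with $\phi(\cdot\otimes_H\cdot)$ the sum of $H$-linear maps~(\ref{eq90}); moreover $\mathbf{g}\in\mathfrak{X}^{\vee2}=V_1^*$ annihilates $\Omega^1\wedge\Omega^1$, so the projector $\pi_\vee$ implicit in $\dst{}_\vee$ may be freely inserted under $\langle\mathbf{g},\,\cdot\,\rangle$. Combining this with the definition (\ref{eq57}) of $\Phi_\mathbf{g}$ and with $\sigma^\mathcal{W}_{\mathfrak{X},\mathfrak{X}}(\mathbf{g})=\mathbf{g}$, metric compatibility of $\dst{}^c+\phi$ turns into the linear equation $\Phi_\mathbf{g}(\phi)=F$ with $F:=\big(\mathrm{d}\circ\langle\mathbf{g},\cdot\otimes_H\cdot\rangle-\langle\mathbf{g},\dst{}^c(\cdot\otimes_H\cdot)\rangle\big)\big|_{\Omega^1\vee\Omega^1}$, which is easily checked to be right $H$-linear, hence an element of the codomain of $\Phi_\mathbf{g}$.

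Putting the two reductions together, $\dst{}^c+\phi$ is a Levi-Civita connection for $\mathbf{g}$ if and only if $\phi\in\mathrm{Hom}_H(\Omega^1,\Omega^1\vee\Omega^1)$ and $\Phi_\mathbf{g}(\phi)=F$. If $\Phi_\mathbf{g}$ is a vector space isomorphism this has the unique solution $\phi=\Phi_\mathbf{g}^{-1}(F)$, so that $\dst{}^{\mathrm{LC}}=\dst{}^c+\Phi_\mathbf{g}^{-1}(F)$ is the unique Levi-Civita connection, which is exactly formula~(\ref{eq24}); surjectivity of $\Phi_\mathbf{g}$ supplies existence of a solution and injectivity its uniqueness. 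The main obstacle is the metric compatibility step: one must carefully unwind $\std{}_\vee(\mathbf{g})=0$ through the chain ``dual of $\dst\oplus\dst$ $\to$ left analogue of Lemma~\ref{lemma16} $\to$ duality $\std{}_\vee\leftrightarrow\dst{}_\vee$'', invoke affineness of the sum of connections, and then use $\mathbf{g}\in V_1^*$ to recognise the resulting expression as $\langle\mathbf{g},\phi_\vee(\,\cdot\,)\rangle=\Phi_\mathbf{g}(\phi)$; the torsion computation and the concluding affine-space argument are comparatively routine.
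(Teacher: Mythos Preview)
Your proposal is correct and follows essentially the same route as the paper's proof: reduce torsion-freeness of $\dst{}^c+\phi$ to $\mathrm{Im}\,\phi\subseteq\Omega^1\vee\Omega^1$, reduce metric compatibility to the linear equation $\Phi_\mathbf{g}(\phi)=F$ with $F=\big(\mathrm{d}\circ\langle\mathbf{g},\cdot\otimes_H\cdot\rangle-\langle\mathbf{g},\dst{}^c(\cdot\otimes_H\cdot)\rangle\big)|_{\Omega^1\vee\Omega^1}$, and conclude via invertibility of $\Phi_\mathbf{g}$. The only organizational difference is that you set up the ``if and only if'' characterization of Levi-Civita connections first and then solve, whereas the paper writes down the candidate $\dst{}^{\mathrm{LC}}$, verifies torsion-freeness and metric compatibility directly, and argues uniqueness afterwards; you also make explicit the passage from $\std{}_\vee(\mathbf{g})=0$ to $\langle\mathbf{g},\dst{}_\vee(\cdot)\rangle=\mathrm{d}\langle\mathbf{g},\cdot\rangle$ via duality and nondegeneracy of~(\ref{eq80}), which the paper uses implicitly.
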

\begin{proof}
We prove that (\ref{eq24}) is a Levi-Civita connection for $\mathbf{g}$.
First of all, the map
$$
\psi:=
\bigg(
\mathrm{d}\circ\langle\mathbf{g},\cdot\otimes_H\cdot\rangle
-\langle\mathbf{g},\dst{}^c(\cdot\otimes_H\cdot)\rangle
\bigg)\bigg|_{\Omega^1\vee\Omega^1}
\colon\Omega^1\vee\Omega^1\rightarrow\Omega^1
$$
is right $H$-linear since
\begin{align*}
\mathrm{d}\langle\mathbf{g},\omega^i\otimes_H\omega_ia\rangle
-\langle\mathbf{g},\dst{}^c(\omega^i\otimes_H\omega_ia)\rangle
&=(\mathrm{d}\langle\mathbf{g},\omega^i\otimes_H\omega_i\rangle)a
+\langle\mathbf{g},\omega^i\otimes_H\omega_i\rangle\mathrm{d}a\\
&\quad-\langle\mathbf{g},\dst{}^c(\omega^i\otimes_H\omega_i)\rangle a
-\langle\mathbf{g},\omega^i\otimes_H\omega_i\rangle\mathrm{d}a\\
&=(\mathrm{d}\langle\mathbf{g},\omega^i\otimes_H\omega_i\rangle)a
-\langle\mathbf{g},\dst{}^c(\omega^i\otimes_H\omega_i)\rangle a
\end{align*}
for all $a\in H$ and $\omega^i\otimes_H\omega_i\in\Omega^1\vee\Omega^1$.
By assumption $\Phi_\mathbf{g}$ is invertible and 
$$
\Phi_\mathbf{g}^{-1}(\psi)
\in\mathrm{Hom}_H(\Omega^1,\Omega^1\vee\Omega^1)
\subseteq\mathrm{Hom}_H(\Omega^1,\Omega^1\otimes_H\Omega^1)~.
$$
It follows that $\dst{}^\mathrm{LC}=\dst{}^c+\Phi_\mathbf{g}^{-1}(\psi)$ is a right connection on
$\Omega^1$.

Then 
$$
\wedge\circ\dst{}^\mathrm{LC}
=\wedge\circ\dst{}^c+\wedge\circ\Phi_\mathbf{g}^{-1}(\psi)
=\wedge\circ\dst{}^c
=-\mathrm{d}
$$
since $\mathrm{im}(\Phi_\mathbf{g}^{-1}(\psi))\subseteq\Omega^1\vee\Omega^1$ and $\dst{}^c$ is 
torsion-free. This implies that $\dst{}^\mathrm{LC}$ is a torsion-free right connection.

Furthermore, $\dst{}^\mathrm{LC}$ is compatible with $\mathbf{g}$, since
\begin{align*}
    \langle\mathbf{g},
    \dst{}_\vee^\mathrm{LC}(\cdot\vee\cdot)\rangle
    &=\langle\mathbf{g},
    \dst{}_\vee^c(\cdot\vee\cdot)\rangle
    +\langle\mathbf{g},
    \Phi_\mathbf{g}^{-1}(\psi)_\vee(\cdot\vee\cdot)\rangle\\
    &=\langle\mathbf{g},
    \dst{}_\vee^c(\cdot\vee\cdot)\rangle
    +\psi(\cdot\vee\cdot)\\
    &=\langle\mathbf{g},
    \dst{}_\vee^c(\cdot\vee\cdot)\rangle
    +\mathrm{d}\circ\langle\mathbf{g},\cdot\vee\cdot\rangle
    -\langle\mathbf{g},\dst{}^c(\cdot\vee\cdot)\rangle\\
    &=\langle\mathbf{g},
    \dst{}_\vee^c(\cdot\vee\cdot)\rangle
    +\mathrm{d}\circ\langle\mathbf{g},\cdot\vee\cdot\rangle
    -\langle\mathbf{g},\dst{}_\vee^c(\cdot\vee\cdot)\rangle\\
    &=\mathrm{d}\circ\langle\mathbf{g},\cdot\vee\cdot\rangle
\end{align*}
is an equation of linear maps $\Omega^1\vee\Omega^1\rightarrow\Omega^1$.
This proves the existence of a Levi-Civita connection for $\mathbf{g}$.

Assume that there is another Levi-Civita connection 
$\dst\colon\Omega^1\rightarrow\Omega^1\otimes_H\Omega^1$ for $\mathbf{g}$. Then
$\dst{}^\mathrm{LC}-\dst\colon\Omega^1\rightarrow\Omega^1\otimes_H\Omega^1$ is a
right $H$-linear map. Since $\wedge\circ(\dst{}^\mathrm{LC}-\dst)
=\wedge\circ\dst{}^\mathrm{LC}-\wedge\circ\dst
=-\mathrm{d}+\mathrm{d}=0$ we can view $\dst{}^\mathrm{LC}-\dst$ as a right $H$-linear map
$\Omega^1\rightarrow\Omega^1\vee\Omega^1$. Consequently, we can apply
$\Phi_\mathbf{g}$ which results in the zero map:
\begin{align*}
    \Phi_\mathbf{g}(\dst{}^\mathrm{LC}-\dst)
    &=\langle\mathbf{g},
    \dst{}_\vee^\mathrm{LC}(\cdot\vee\cdot)\rangle
    -\langle\mathbf{g},
    \dst{}_\vee(\cdot\vee\cdot)\rangle\\
    &=\mathrm{d}\circ\langle\mathbf{g},\cdot\vee\cdot\rangle
    -\mathrm{d}\circ\langle\mathbf{g},\cdot\vee\cdot\rangle\\
    &=0\colon\Omega^1\vee\Omega^1\rightarrow\Omega^1~,
\end{align*}
where we used that both $\dst{}^\mathrm{LC}$ and $\dst$ are 
compatible with $\mathbf{g}$.
Since $\Phi_\mathbf{g}$ is injective this implies that
$\dst{}^\mathrm{LC}-\dst=0\colon\Omega^1\rightarrow\Omega^1\vee\Omega^1$, i.e.
$\dst{}^\mathrm{LC}=\dst\colon\Omega^1\rightarrow\Omega^1\otimes_H\Omega^1$.
This proves uniqueness of the Levi-Civita connection.
\end{proof}
We next apply this theorem to a certain class of metrics that
we call \textit{$\sigma^\mathcal{W}$-central metrics} (named
$\sigma$-metrics on self-dual bicovariant calculi in \cite{Heck});
we also clarify their relation to central metrics.
\begin{definition}\label{DefSigmaCentral}
We say that a metric $\mathbf{g}\in\mathfrak{X}^{\vee2}$ is
\begin{enumerate}
\item[i.)] \textit{central} if $a\mathbf{g}=\mathbf{g}a$ for all $a\in H$,

\item[ii.)] \textit{$\sigma^\mathcal{W}$-central} if $\sigma^\mathcal{W}_{\mathfrak{X}^{\vee 2},\Omega^1}(\mathbf{g}\otimes_H\omega)
=\omega\otimes_H\mathbf{g}$ and $\sigma^\mathcal{W}_{\Omega^1,\mathfrak{X}^{\vee 2}}(\omega\otimes_H\mathbf{g})=\mathbf{g}
\otimes_H\omega$ for all $\omega\in\Omega^1$.

\end{enumerate}
\end{definition}
\begin{lemma}\label{LemmaCentral}
Every $\sigma^\mathcal{W}$-central metric is central and every central $H$-bicoinvariant
metric is $\sigma^\mathcal{W}$-central.
\end{lemma}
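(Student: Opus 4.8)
The statement asserts two implications, and I would handle them separately.

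\emph{First implication: $\sigma^{\mathcal{W}}$-central $\Rightarrow$ central.} Suppose $\mathbf{g}\in\mathfrak{X}^{\vee2}$ is $\sigma^{\mathcal{W}}$-central. The plan is to express $a\mathbf{g}$ and $\mathbf{g}a$ in terms of the braiding applied to $\mathrm{d}$ of a suitable element, exploiting that $\Omega^1$ is generated by $\mathrm{d}H$. Concretely, for $a\in H$ I would pick a presentation and use that the $H$-bimodule structure on $\mathfrak{X}^{\otimes_H2}$ interacts with $\sigma^{\mathcal{W}}$: from the $H$-bilinearity of $\sigma^{\mathcal{W}}_{\mathfrak{X},\Omega^1}$ and the defining relations $\sigma^{\mathcal{W}}_{\mathfrak{X},\Omega^1}(\mathbf{g}\otimes_H\omega)=\omega\otimes_H\mathbf{g}$, $\sigma^{\mathcal{W}}_{\Omega^1,\mathfrak{X}}(\omega\otimes_H\mathbf{g})=\mathbf{g}\otimes_H\omega$, one reads off that $\mathbf{g}$ commutes with elements of the form $a=a$ acting through $\mathrm{d}$. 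The cleanest route: write $a\mathrm{d}b - (\mathrm{d}b)a = [a,\mathrm{d}b]$ and use the two $\sigma^{\mathcal{W}}$-centrality conditions to show $[\,\cdot\,,\mathbf{g}]$ vanishes on $a\,\mathrm{d}b$ for all $a,b$; since such elements span $\Omega^1$ (surjectivity of the FODC) and centrality of $\mathbf{g}$ with respect to $H$ is detected by commuting past $\mathrm{d}H$, this gives $a\mathbf{g}=\mathbf{g}a$ for all $a\in H$. I would phrase this by applying $\sigma^{\mathcal{W}}_{\mathfrak{X},\Omega^1}$ (resp. $\sigma^{\mathcal{W}}_{\Omega^1,\mathfrak{X}}$) to $\mathbf{g}\otimes_H a\,\mathrm{d}b$ two ways — first moving $a$ out using bilinearity, then using the centrality relation — and comparing.

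\emph{Second implication: central $+$ $H$-bicoinvariant $\Rightarrow$ $\sigma^{\mathcal{W}}$-central.} Here I would invoke the explicit Woronowicz braiding formula \eqref{WorBraiding}: for $\mathbf{g}\in\mathfrak{X}^{\otimes_H2}$ with $\lambda_{\mathfrak{X}^{\otimes_H2}}(\mathbf{g})=1\otimes\mathbf{g}$ and $\delta_{\mathfrak{X}^{\otimes_H2}}(\mathbf{g})=\mathbf{g}\otimes1$, the formula $\sigma^{\mathcal{W}}_{\mathfrak{X},\Omega^1}(\mathbf{g}\otimes_H\omega)=\mathbf{g}_{-2}\,\omega_0 S(\omega_1)\otimes_H S(\mathbf{g}_{-1})\mathbf{g}_0\,\omega_2$ collapses, on the bicoinvariant $\mathbf{g}$, to $\mathbf{g}\,\omega_0 S(\omega_1)\otimes_H\mathbf{g}\,\omega_2$ — wait, more carefully: bicoinvariance gives $\mathbf{g}_{-2}\otimes\mathbf{g}_{-1}\otimes\mathbf{g}_0 = 1\otimes1\otimes\mathbf{g}$, so the expression becomes $\omega_0 S(\omega_1)\otimes_H\mathbf{g}\,\omega_2$. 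Now centrality $\mathbf{g}\,\omega_2=\omega_2\,\mathbf{g}$ lets me rewrite this as $\omega_0 S(\omega_1)\otimes_H\omega_2\,\mathbf{g} = \omega_0 S(\omega_1)\omega_2\otimes_H\mathbf{g} = \omega\otimes_H\mathbf{g}$, using the $\otimes_H$-balancing and the antipode axiom $S(\omega_1)\omega_2=\epsilon(\ldots)$ applied coaction-wise. The same computation with $\overline{\sigma}^{\mathcal{W}}$ or with the roles of $\Omega^1,\mathfrak{X}$ swapped gives the second centrality relation $\sigma^{\mathcal{W}}_{\Omega^1,\mathfrak{X}}(\omega\otimes_H\mathbf{g})=\mathbf{g}\otimes_H\omega$. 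This direction is essentially a direct substitution once the bicoinvariance is used to kill the $\mathbf{g}_{-i}$ legs.

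\textbf{Main obstacle.} I expect the first implication to be the delicate one: \emph{central} is a statement purely about the $H$-bimodule structure, whereas \emph{$\sigma^{\mathcal{W}}$-central} involves the braiding, which mixes the bimodule and bicomodule structures in a nontrivial way. The subtlety is that $\sigma^{\mathcal{W}}$-centrality only constrains $\mathbf{g}$ against $\Omega^1$ and $\mathfrak{X}$ under the braiding, and one must argue that this is \emph{enough} to force commutativity with all of $H$ — this uses crucially that $\Omega^1$ is generated by $\mathrm{d}H$ as a bimodule, and that the braiding on coinvariants is the flip, so that the bicomodule legs in \eqref{WorBraiding} are precisely what transport the $H$-action past $\mathbf{g}$. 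I would be careful to check that no finite-generation or invertibility hypothesis on $\mathbf{g}$ is secretly needed here; I believe it is not, since only the structural relations are used. The second implication, by contrast, should be a short computation and I do not anticipate difficulty beyond bookkeeping of Sweedler indices.
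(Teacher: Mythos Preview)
Your second implication is correct and essentially matches the paper's argument: substitute bicoinvariance of $\mathbf{g}$ into the explicit formula \eqref{WorBraiding}, use centrality to move the scalar leg past $\mathbf{g}$ through the balanced tensor, and collapse with the antipode axiom.

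Your first implication, however, is muddled. The statements ``$[\,\cdot\,,\mathbf{g}]$ vanishes on $a\,\mathrm{d}b$'' and ``centrality of $\mathbf{g}$ with respect to $H$ is detected by commuting past $\mathrm{d}H$'' do not parse: $\mathbf{g}$ lives in $\mathfrak{X}^{\otimes_H 2}$ and $a\,\mathrm{d}b$ in $\Omega^1$, so there is no commutator between them, and centrality $a\mathbf{g}=\mathbf{g}a$ is a pure $H$-bimodule statement having nothing to do with $\mathrm{d}H$. The surjectivity condition of the FODC is irrelevant here. What actually does the work is much simpler, and your last sentence almost lands on it: $\sigma^{\mathcal{W}}$ is $H$-\emph{bilinear} and \emph{invertible}. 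The paper computes, for a basis $\{\omega^i\}$ of ${}^{\mathrm{co}H}\Omega^1$,
\[
\sigma^{\mathcal{W}}_{\Omega^1,\mathfrak{X}^{\vee2}}(\omega^i\otimes_H\mathbf{g}a)
=\sigma^{\mathcal{W}}_{\Omega^1,\mathfrak{X}^{\vee2}}(\omega^i\otimes_H\mathbf{g})\,a
=\mathbf{g}\otimes_H\omega^ia
=\sigma^{\mathcal{W}}_{\Omega^1,\mathfrak{X}^{\vee2}}(\omega^ia\otimes_H\mathbf{g})
=\sigma^{\mathcal{W}}_{\Omega^1,\mathfrak{X}^{\vee2}}(\omega^i\otimes_H a\mathbf{g}),
\]
then applies $(\sigma^{\mathcal{W}})^{-1}$ and uses that $\{\omega^i\}$ is a \emph{free} basis (Fundamental Theorem of Hopf Modules) to conclude $\mathbf{g}a=a\mathbf{g}$. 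Your version with $\mathbf{g}\otimes_H\omega$ on the other side would work the same way once you drop the $\mathrm{d}H$ distraction: from $\sigma^{\mathcal{W}}(a\mathbf{g}\otimes_H\omega)=a\,\sigma^{\mathcal{W}}(\mathbf{g}\otimes_H\omega)=a\omega\otimes_H\mathbf{g}=\sigma^{\mathcal{W}}(\mathbf{g}\otimes_H a\omega)=\sigma^{\mathcal{W}}(\mathbf{g}a\otimes_H\omega)$ and invertibility you get $(a\mathbf{g}-\mathbf{g}a)\otimes_H\omega=0$ for all $\omega$, and then freeness of $\Omega^1$ as a left $H$-module finishes. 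So the ingredient you flagged as ``crucial'' (generation by $\mathrm{d}H$) is not used; what is crucial is freeness of $\Omega^1$, which you did not mention.
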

\begin{proof}
For the first claim consider a basis $\{\omega^i\}_{i\in I}$ of 
${}^{\mathrm{co}H}\Omega^1$ and an arbitrary element $a\in H$. By the $H$-linearity of
the braiding and the assumption that $\mathbf{g}$ is $\sigma^\mathcal{W}$-central
we obtain
$$
\sigma^\mathcal{W}_{\Omega^1,\mathfrak{X}^{\vee2}}(\omega^i\otimes_H\mathbf{g}a)
=\sigma^\mathcal{W}_{\Omega^1,\mathfrak{X}^{\vee2}}(\omega^i\otimes_H\mathbf{g})a
=\mathbf{g}\otimes_H\omega^ia
=\sigma^\mathcal{W}_{\Omega^1,\mathfrak{X}^{\vee2}}(\omega^ia\otimes_H\mathbf{g})
=\sigma^\mathcal{W}_{\Omega^1,\mathfrak{X}^{\vee2}}(\omega^i\otimes_Ha\mathbf{g})
$$
for any $i\in I$.
Applying the inverse of $\sigma^\mathcal{W}_{\Omega^1,\mathfrak{X}^{\vee2}}$ to the
above equation gives $\omega^i\otimes_H\mathbf{g}a=\omega^i\otimes_Ha\mathbf{g}$,
which implies $\mathbf{g}a=a\mathbf{g}$ since $\{\omega^i\}_{i\in I}$ is a basis.

Now we address the second claim.
Using the explicit expression (\ref{WorBraiding}) of Woronowicz's braiding,
right $H$-covariance and centrality of $\mathbf{g}$ imply that
$\sigma^\mathcal{W}_{\Omega^1,\mathfrak{X}^{\vee2}}(\omega\otimes_H\mathbf{g})
=\omega_{-2}\mathbf{g}\otimes_HS(\omega_{-1})\omega_0
=\omega_{-2}S(\omega_{-1})\mathbf{g}\otimes_H\omega_0
=\mathbf{g}\otimes_H\omega$
for all $\omega\in\Omega^1$. Similarly, left $H$-covariance and centrality of
$\mathbf{g}$ give 
$\sigma^\mathcal{W}_{\mathfrak{X}^{\vee2},\Omega^1}(\mathbf{g}\otimes_H\omega)
=\omega_0S(\omega_1)\otimes_H\mathbf{g}\omega_2
=\omega\otimes_H\mathbf{g}$. Thus, $\sigma^\mathcal{W}$-centrality of $\mathbf{g}$
is proven.
\end{proof}

For $\sigma^\mathcal{W}$-central metrics
we formulate an existence and uniqueness theorem which is independent from invertibility of
$\Phi_\mathbf{g}$ and hence from the specific metric.
It only relies on the properties of the projection $\pi_\vee\colon\Omega^1\otimes_H\Omega^1
\rightarrow\Omega^1\vee\Omega^1$.
\begin{theorem}\label{thm03}
Any $\sigma^\mathcal{W}$-central metric has a unique Levi-Civita connection if
\begin{equation}\label{eq200}
    \pi_\vee^{23}|_{(\Omega^1\vee\Omega^1)\otimes_H\Omega^1}\colon
    (\Omega^1\vee\Omega^1)\otimes_H\Omega^1\rightarrow
    \Omega^1\otimes_H(\Omega^1\vee\Omega^1)
\end{equation}
is invertible.
\end{theorem}
\begin{proof}
Consider the map $\mathbf{g}^{\#2}\colon\Omega^1\otimes_H\Omega^1
\rightarrow(\Omega^1\otimes_H\Omega^1)^*$ defined by
\begin{equation*}
    \mathbf{g}^{\#2}(\omega_1\otimes_H\omega_2)(\omega_3\otimes_H\omega_4)
    :=\langle\mathbf{g},\omega_1\langle\mathbf{g},\omega_2\otimes_H\omega_3\rangle
    \otimes_H\omega_4\rangle~.
\end{equation*}
It is well-defined since, for all $a,b\in H$,
$
\langle\mathbf{g},\omega_1a\langle\mathbf{g},\omega_2\otimes_H\omega_3\rangle
\otimes_H\omega_4\rangle
=\langle\mathbf{g},\omega_1\langle\mathbf{g},a\omega_2\otimes_H\omega_3\rangle
\otimes_H\omega_4\rangle
$
(note that $\mathbf{g}$ is central by Lemma~\ref{LemmaCentral}) and
$
\langle\mathbf{g},\omega_1\langle\mathbf{g},\omega_2\otimes_H\omega_3a\rangle
\otimes_H\omega_4b\rangle
=\langle\mathbf{g},\omega_1\langle\mathbf{g},\omega_2\otimes_H\omega_3\rangle
\otimes_Ha\omega_4\rangle b.
$
It is right $H$-linear since
$
\langle\mathbf{g},\omega_1\langle\mathbf{g},\omega_2a\otimes_H\omega_3\rangle
\otimes_H\omega_4\rangle
=\langle\mathbf{g},\omega_1\langle\mathbf{g},\omega_2\otimes_Ha\omega_3\rangle
\otimes_H\omega_4\rangle.
$
We prove that $\mathbf{g}^{\#2}(\sigma^\mathcal{W}_{\Omega^1,\Omega^1}(\omega_1\otimes_H\omega_2))
(\omega_3\otimes_H\omega_4)=\mathbf{g}^{\#2}(\omega_1\otimes_H\omega_2)
(\sigma^\mathcal{W}_{\Omega^1,\Omega^1}(\omega_3\otimes_H\omega_4))$. 
In fact,
\begin{align*}
    \mathbf{g}^{\#2}(\sigma^\mathcal{W}_{\Omega^1,\Omega^1}(\omega_1\otimes_H\omega_2))(\omega_3\otimes_H\omega_4)
    &=\mathbf{g}^{\#2}({}_\alpha\omega_2\otimes_H{}^\alpha\omega_1)
    (\omega_3\otimes_H\omega_4)\\
    &=\langle\mathbf{g},{}_\alpha\omega_2\langle
    \mathbf{g},{}^\alpha\omega_1\otimes_H\omega_3\rangle
    \otimes_H\omega_4\rangle\\
    &=\langle\mathbf{g},{}_\alpha\omega_2
    \otimes_H{}_{\beta\gamma}\omega_4\rangle
    \langle\mathbf{g},{}^{\beta\alpha}\omega_1\otimes_H{}^\gamma\omega_3\rangle\\
    &=\langle\mathbf{g},{}^{\overline{\delta}\overline{\rho}\beta\alpha}\omega_1
    \langle\mathbf{g},{}_{\overline{\delta}\alpha}\omega_2
    \otimes_H{}_{\overline{\rho}\beta\gamma}\omega_4\rangle
    \otimes_H{}^\gamma\omega_3\rangle\\
    &=\langle\mathbf{g},\omega_1
    \langle\mathbf{g},\omega_2
    \otimes_H{}_\gamma\omega_4\rangle
    \otimes_H{}^\gamma\omega_3\rangle\\
    &=\mathbf{g}^{\#2}(\omega_1\otimes_H\omega_2)(\sigma^\mathcal{W}_{\Omega^1,\Omega^1}(\omega_3\otimes_H\omega_4))~,
\end{align*}
for all $\omega_1,\omega_2,\omega_3,\omega_4\in\Omega^1$, where we first moved 
inner pairing outside via $\sigma^\mathcal{W}$ and then moved the pairing on the left inside the pairing on the right
via $\overline{\sigma}^\mathcal{W}$. From Lemma~\ref{lemma15} it then follows that $\mathbf{g}^{\#2}(\pi_\vee(\omega_1\otimes_H\omega_2))
(\omega_3\otimes_H\omega_4)=\mathbf{g}^{\#2}(\omega_1\otimes_H\omega_2)
(\pi_\vee(\omega_3\otimes_H\omega_4))$ and consequently that 
$\mathbf{g}^{\#2}$ restricts to a right $H$-linear map
$\mathbf{g}^{\#2}|_{\Omega^1\vee\Omega^1}\colon\Omega^1\vee\Omega^1
\rightarrow(\Omega^1\vee\Omega^1)^*$.

Under the identification
$(\Omega^1\vee\Omega^1)^*\cong(\Omega^1)^*\vee(\Omega^1)^*$ we have
$\mathbf{g}^{\#2}=\mathbf{g}^\#\otimes_{\sigma^\mathcal{W}}\mathbf{g}^\#$,
clearly an invertible map.

We next show that the diagram
\begin{equation}\label{eq91}
\begin{tikzcd}
\mathrm{Hom}_H(\Omega^1,\Omega^1\vee\Omega^1)
\cong(\Omega^1\vee\Omega^1)\otimes_H\mathfrak{X}
\arrow{d}[swap]{\frac{1}{2}\Phi_\mathbf{g}}
\arrow{rrr}{\mathrm{id}_{\Omega^1\vee\Omega^1}
\otimes_{\sigma^\mathcal{W}}\mathbf{g}^{\#-1}}
& & & (\Omega^1\vee\Omega^1)\otimes_H\Omega^1
\arrow{d}{\pi_\vee^{23}}\\
\mathrm{Hom}_H(\Omega^1\vee\Omega^1,\Omega^1)
\cong\Omega^1\otimes_H(\Omega^1\vee\Omega^1)^*
& & & \Omega^1\otimes_H(\Omega^1\vee\Omega^1)
\arrow{lll}[swap]{\mathrm{id}_{\Omega^1}
\otimes_{\sigma^\mathcal{W}}\mathbf{g}^{\#2}}
\end{tikzcd}
\end{equation}
commutes. Let $\phi\in\mathrm{Hom}_H(\Omega^1,\Omega^1\vee\Omega^1)$ be arbitrary.
We identify $\phi$ with the element $\omega^i\otimes_H\omega_i^j\otimes_H\chi_j
\in(\Omega^1\vee\Omega^1)\otimes_H\mathfrak{X}$, where
$\omega^i,\omega_i^j\in\Omega^1$ and $\chi_j\in\mathfrak{X}$ (sum over repeated indices understood).
Then,
\begin{align*}
    [(\mathrm{id}_{\Omega^1}\otimes_{\sigma^\mathcal{W}}\mathbf{g}^{\#2})
    \,\circ\,&\pi_\vee^{23}
    \circ(\mathrm{id}_{\Omega^1\vee\Omega^1}
    \otimes_{\sigma^\mathcal{W}}\mathbf{g}^{\#-1})
    ](\phi)\\
    &=[(\mathrm{id}_{\Omega^1}\otimes_{\sigma^\mathcal{W}}\mathbf{g}^{\#2})
    \circ\pi_\vee^{23}]
    (\omega^i\otimes_H\omega_i^j\otimes_H\mathbf{g}^{\#-1}(\chi_j))\\
    &=(\mathrm{id}_{\Omega^1}\otimes_{\sigma^\mathcal{W}}\mathbf{g}^{\#2})
    (\omega^i\otimes_H(\omega_i^j\vee\mathbf{g}^{\#-1}(\chi_j)))\\
    &=\omega^i\otimes_H\mathbf{g}^{\#2}(\omega_i^j\vee\mathbf{g}^{\#-1}(\chi_j))~,
\end{align*}
which on elements $\eta^k\otimes_H\eta_k\in\Omega^1\vee\Omega^1$ (sum over repeated indices understood) reads
\begin{align*}
    [\omega^i\otimes_H\mathbf{g}^{\#2}(\omega_i^j\vee\mathbf{g}^{\#-1}(\chi_j))]
    (\eta^k\otimes_H\eta_k)
    &=\omega^i\mathbf{g}^{\#2}(\omega_i^j\vee\mathbf{g}^{\#-1}(\chi_j))
    (\eta^k\otimes_H\eta_k)\\
    &=\omega^i\langle\mathbf{g},\omega^j_i
    \langle\mathbf{g},\mathbf{g}^{\#-1}(\chi_j)\otimes_H\eta^k\rangle
    \otimes_H\eta_k\rangle\\
    &=\omega^i\langle\mathbf{g},\omega^j_i
    \chi_j(\eta^k)\otimes_H\eta_k\rangle~.
\end{align*}
On the other hand, by Lemma~\ref{lemma17},
\begin{align*}
    \frac{1}{2}\Phi_\mathbf{g}(\phi)(\eta^k\otimes_H\eta_k)
    &=\langle\mathbf{g},{}_\alpha\eta^k\otimes_H({}^\alpha\phi)(\eta_k)\rangle\\
    &=\langle\mathbf{g},{}_{\alpha\beta\gamma}\eta^k
    \otimes_H{}^\alpha\omega^i\rangle~
    {}^\beta\omega^j_i\,({}^\gamma\chi_j)(\eta_k)\\
    &=\langle\mathbf{g},{}_{\alpha\beta\gamma}\eta^k
    \otimes_H{}^\alpha{}_\delta\omega^j_i\rangle~
    {}^{\beta\delta}\omega^i\,({}^\gamma\chi_j)(\eta_k)\\
    &=\langle\mathbf{g},{}_{\beta\alpha\gamma}\eta^k
    \otimes_H{}_\delta{}^\alpha\omega^j_i\rangle~
    {}^{\delta\beta}\omega^i\,({}^\gamma\chi_j)(\eta_k)\\
    &=\omega^i\langle\mathbf{g},{}_{\alpha\gamma}\eta^k
    \otimes_H{}^\alpha\omega^j_i\rangle
    ({}^\gamma\chi_j)(\eta_k)\\
    &=\omega^i\langle\mathbf{g},\omega^j_i\otimes_H{}_\gamma\eta^k\rangle
    ({}^\gamma\chi_j)(\eta_k)\\
    &=\omega^i\langle\mathbf{g},
    \omega^j_i\,({}^{\overline{\alpha}\gamma}\chi_j)({}^{\overline{\beta}}\eta_k)
    \otimes_H{}_{\overline{\beta}\overline{\alpha}\gamma}\eta^k\rangle\\
    &=\omega^i\langle\mathbf{g},
    \omega^j_i\chi_j\,({}^{\overline{\beta}}\eta_k)
    \otimes_H{}_{\overline{\beta}}\eta^k\rangle\\
    &=\omega^i\langle\mathbf{g},\omega^j_i\,
    \chi_j(\eta^k)\otimes_H\eta_k\rangle~,
\end{align*}
where we used that $\omega^i\otimes_H\omega_i^j={}_\delta\omega_i^j\otimes_H{}^\delta\omega^i$,
the quantum Yang-Baxter equation,
${}_\delta\omega^j_i\otimes_H{}^\delta\omega^i=\omega^i\otimes_H\omega^j_i$, that $\mathbf{g}$ is $\sigma^\mathcal{W}$-central, then that $\mathbf{g}$ is symmetric and that
 ${}^{\overline{\beta}}\eta_k\otimes_H{}_{\overline{\beta}}\eta^k
=\eta^k\otimes_H\eta_k$.
So the commutativity of (\ref{eq91}) follows.

Since the horizontal arrows in (\ref{eq91}) are isomorphisms and, by hypothesis, (\ref{eq200}) is an isomorphism, so is $\Phi_\mathbf{g}$. Then the claim of the theorem follows from Theorem~\ref{LCTheorem}.
\end{proof}

\begin{remark}\label{remBM}
Theorem \ref{thm03} has been inspired by \cite[Thm.7.9]{BhowmickCov}
which claims an analogous result for bicoinvariant and noncentral
metrics. However we have not been able to reproduce their claim since
the last step in the proof of their key Lemma 4.8 to our knowledge
does not hold in general for bicoinvariant and noncentral metrics.   
\end{remark}
We now relax the $\sigma^\mathcal{W}$-centrality condition of the metric. If $\mathbf{g}\in\mathfrak{X}^{\vee 2}$ so does $f\mathbf{g}$ with $f\in H$. If $\mathbf{g}$ is a metric and $f\in H$ is an invertible element then $f\mathbf{g}$ is a metric, by the inverse of $(f\mathbf{g})^\#$ being $(f\mathbf{g})^{\#-1}=\mathbf{g}^{\#-1}\circ\ell_{f^{-1}}$, where $\ell_{f^{-1}}\colon\Omega^1\to\Omega^1$, $\omega\mapsto f^{-1}\omega$, is the left multiplication with $f^{-1}$. We call $\mathbf{g}$ and $f\mathbf{g}$ conformally equivalent metrics. 
The Levi-Civita connections of conformally equivalent
metrics are easily related.
\begin{theorem}\label{thm04}
Let $\mathbf{g}$ be an arbitrary metric and $f\in H$ an invertible element.
If $\Phi_\mathbf{g}$ is invertible with corresponding Levi-Civita connection
$\dst{}^\mathrm{LC}$ then
$$
\dst{}^{'\mathrm{LC}}
=\dst{}^\mathrm{LC}+\Phi^{-1}_\mathbf{g}\bigg(f^{-1}\mathrm{d}f~
\langle\mathbf{g},\cdot\otimes_H\cdot\rangle\bigg|_{\Omega^1\vee\Omega^1}\bigg)
$$
is the unique Levi-Civita connection of $\mathbf{g}':=f\mathbf{g}$.
\end{theorem}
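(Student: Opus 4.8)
The plan is to obtain the statement as a direct corollary of Theorem~\ref{LCTheorem} applied to the conformally rescaled metric $\mathbf{g}'=f\mathbf{g}$, once the relation between $\Phi_{\mathbf{g}'}$ and $\Phi_{\mathbf{g}}$ is made explicit. First I would record that $\mathbf{g}'$ is again a metric (the observation preceding the theorem, using that $\mathfrak{X}^{\vee2}$ is a left $H$-module) and that, since the onion-like pairing is $H$-bilinear and $\mathbf{g}'=f\cdot\mathbf{g}$ refers to this left action, one has $\langle\mathbf{g}',z\rangle=f\langle\mathbf{g},z\rangle$ for $z\in(\Omega^1)^{\vee2}$ and, likewise, $\langle\mathbf{g}',w\rangle=f\cdot\langle\mathbf{g},w\rangle$ (left multiplication by $f$ on the remaining $\Omega^1$-leg) for $w$ in $(\Omega^1)^{\otimes_H2}\otimes_H\Omega^1$. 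Since the map $\phi_\vee$ entering \eqref{eq57} does not involve the metric, this gives at once
\begin{equation*}
    \Phi_{\mathbf{g}'}=L_f\circ\Phi_{\mathbf{g}},
\end{equation*}
where $L_f\colon\mathrm{Hom}_H(\Omega^1\vee\Omega^1,\Omega^1)\to\mathrm{Hom}_H(\Omega^1\vee\Omega^1,\Omega^1)$ is post-composition with left multiplication by $f$, i.e. $L_f(\psi):=f\cdot\psi(\cdot)$; this $L_f$ is invertible with inverse $L_{f^{-1}}$ because $f$ is invertible. Hence $\Phi_{\mathbf{g}'}$ is invertible, $\Phi_{\mathbf{g}'}^{-1}=\Phi_{\mathbf{g}}^{-1}\circ L_{f^{-1}}$, and Theorem~\ref{LCTheorem} supplies a unique Levi-Civita connection $\dst{}^{'\mathrm{LC}}$ for $\mathbf{g}'$, given by \eqref{eq24} with $\mathbf{g}$ replaced by $\mathbf{g}'$.

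The second step is to simplify that expression. Substituting $\Phi_{\mathbf{g}'}^{-1}=\Phi_{\mathbf{g}}^{-1}\circ L_{f^{-1}}$ into \eqref{eq24} for $\mathbf{g}'$ and evaluating on $x=\omega^a\otimes_H\omega_a\in\Omega^1\vee\Omega^1$, the Leibniz rule of $\mathrm{d}$ yields
\begin{equation*}
    f^{-1}\mathrm{d}\langle\mathbf{g}',x\rangle
    =f^{-1}\mathrm{d}\big(f\langle\mathbf{g},x\rangle\big)
    =f^{-1}\mathrm{d}f\,\langle\mathbf{g},x\rangle+\mathrm{d}\langle\mathbf{g},x\rangle,
\end{equation*}
while associativity of the $H$-action on $\Omega^1$ gives $f^{-1}\langle\mathbf{g}',\dst{}^c(x)\rangle=\langle\mathbf{g},\dst{}^c(x)\rangle$ (here $\dst{}^c$ is understood as the sum of connections \eqref{eq15} of $\dst{}^c$ with itself, as in \eqref{eq24}). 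Subtracting and applying the linear map $\Phi_{\mathbf{g}}^{-1}$, the terms not involving $\mathrm{d}f$ reassemble, by Theorem~\ref{LCTheorem} for $\mathbf{g}$, into $\dst{}^{\mathrm{LC}}-\dst{}^c$, and the remaining term is $\Phi_{\mathbf{g}}^{-1}\big(f^{-1}\mathrm{d}f\,\langle\mathbf{g},\cdot\otimes_H\cdot\rangle|_{\Omega^1\vee\Omega^1}\big)$, which is well defined since $x\mapsto f^{-1}\mathrm{d}f\cdot\langle\mathbf{g},x\rangle$ is right $H$-linear by right $H$-linearity of the pairing. Adding $\dst{}^c$ back gives exactly the claimed formula for $\dst{}^{'\mathrm{LC}}$, and its uniqueness is already part of the conclusion of Theorem~\ref{LCTheorem} for $\mathbf{g}'$.

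I do not anticipate a genuine difficulty: the whole argument is formal once Theorem~\ref{LCTheorem} is available. The only point requiring care is the bookkeeping of where the left multiplication by $f$ sits — that it can be pulled out of the onion-like pairing precisely because $\langle\cdot,\cdot\rangle$ is $H$-linear in the $\mathfrak{X}$-slot, and that it meets $\mathrm{d}$ only through the Leibniz rule, producing the single correction term $f^{-1}\mathrm{d}f$.
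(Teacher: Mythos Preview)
Your proposal is correct and follows essentially the same approach as the paper: both establish $\Phi_{\mathbf{g}'}=L_f\circ\Phi_{\mathbf{g}}$ (the paper writes $\ell_f$ for your $L_f$), invert to get $\Phi_{\mathbf{g}'}^{-1}=\Phi_{\mathbf{g}}^{-1}\circ L_{f^{-1}}$, apply Theorem~\ref{LCTheorem} to $\mathbf{g}'$, and then use the Leibniz rule on $\mathrm{d}(f\langle\mathbf{g},\cdot\rangle)$ to split off the $f^{-1}\mathrm{d}f$ correction from the expression defining $\dst{}^{\mathrm{LC}}$.
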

\begin{proof}
Assume that $\Phi_{\mathbf{g}}\colon
\mathrm{Hom}_H(\Omega^1,\Omega^1\vee\Omega^1)
\rightarrow\mathrm{Hom}_H(\Omega^1\vee\Omega^1,\Omega^1)$ is an isomorphism.
From $\Phi_{f\mathbf{g}}=\ell_f\circ\Phi_{\mathbf{g}}$, where
$\ell_f\colon\mathrm{Hom}_H(\Omega^1\vee\Omega^1,\Omega^1)\rightarrow
\mathrm{Hom}_H(\Omega^1\vee\Omega^1,\Omega^1)$,
$\phi\mapsto(f\cdot\phi)$, and invertibility of $\Phi_\mathbf{g}$,
we have that
$\Phi_{f\mathbf{g}}$ is invertible with inverse 
$\Phi_{f\mathbf{g}}^{-1}=\Phi_{\mathbf{g}}^{-1}\circ\ell_{f^{-1}}$.
By Theorem~\ref{LCTheorem} the unique Levi-Civita connection of $\mathbf{g}'$
is then
\begin{align*}
    \dst{}^{'\mathrm{LC}}
    &:=\dst{}^c
    +\Phi_{\mathbf{g}'}^{-1}\bigg(\bigg(
    \mathrm{d}\circ\langle\mathbf{g}',\cdot\otimes_H\cdot\rangle
    -\langle\mathbf{g}',\dst{}^c(\cdot\otimes_H\cdot)\rangle
    \bigg)\bigg|_{\Omega^1\vee\Omega^1}\bigg)\\
    &=\dst{}^c
    +(\Phi_\mathbf{g}^{-1}\circ\ell_{f^{-1}})\bigg(\bigg(
    \mathrm{d}\circ\langle f\mathbf{g},\cdot\otimes_H\cdot\rangle
    -\langle f\mathbf{g},\dst{}^c(\cdot\otimes_H\cdot)\rangle
    \bigg)\bigg|_{\Omega^1\vee\Omega^1}\bigg)\\
    &=\dst{}^c
    +\Phi_\mathbf{g}^{-1}\bigg(\bigg(
    \mathrm{d}\circ\langle\mathbf{g},\cdot\otimes_H\cdot\rangle
    -\langle\mathbf{g},\dst{}^c(\cdot\otimes_H\cdot)\rangle
    \bigg)\bigg|_{\Omega^1\vee\Omega^1}\bigg)
    +\Phi^{-1}_\mathbf{g}\bigg(f^{-1}\mathrm{d}f~
    \langle\mathbf{g},\cdot\otimes_H\cdot\rangle\bigg|_{\Omega^1\vee\Omega^1}\bigg)\\
    &=\dst{}^\mathrm{LC}
    +\Phi^{-1}_\mathbf{g}\bigg(f^{-1}\mathrm{d}f~
    \langle\mathbf{g},\cdot\otimes_H\cdot\rangle\bigg|_{\Omega^1\vee\Omega^1}\bigg)~,
\end{align*}
where we used the explicit expression (\ref{eq24}) of the Levi-Civita connection
$\dst{}^\mathrm{LC}$.
\end{proof}
Combining the last two theorems we immediately obtain 
\begin{theorem}\label{corlast}
Any metric $\mathbf{g}$ conformally equivalent to a 
$\sigma^\mathcal{W}$-central metric has a unique Levi-Civita connection if $\pi_\vee\colon\Omega^1\otimes_H\Omega^1\rightarrow\Omega^1\vee\Omega^1$ is invertible.
\end{theorem}
We conclude by discussing some explicit examples of Levi-Civita
connections.
\begin{example}\label{Ex:sigmasquare}
Let $(\Omega^1,\mathrm{d})$ be a bicovariant FODC such that
$(\sigma^\mathcal{W}_{\Omega^1,\Omega^1})^2=\mathrm{id}$, implying
that $\sigma^\mathcal{W}_{\Omega^1,\Omega^1}$ is diagonalisable with eigenvalues $\pm 1$, hence $\pi_\vee=\frac{1}{2}(\mathrm{id}+\sigma^\mathcal{W}_{\Omega^1,\Omega^1})$ and $\pi_\wedge=\frac{1}{2}(\mathrm{id}-\sigma^\mathcal{W}_{\Omega^1,\Omega^1})$. Similarly to \cite[Prop. 7.11]{BhowmickCov} we show that
$\pi_\vee^{23}|_{(\Omega^1\vee\Omega^1)\otimes_H\Omega^1}\colon
(\Omega^1\vee\Omega^1)\otimes_H\Omega^1\rightarrow
\Omega^1\otimes_H(\Omega^1\vee\Omega^1)$ is an isomorphism in this case.
Let $\vartheta\in(\Omega^1\vee\Omega^1)\otimes_H\Omega^1$ be such that
$\pi_\vee^{23}|_{(\Omega^1\vee\Omega^1)\otimes_H\Omega^1}(\vartheta)=0$.
Then $\sigma^\mathcal{W}_{12}(\vartheta)=\vartheta$ and
$\sigma^\mathcal{W}_{23}(\vartheta)=-\vartheta$. By the quantum Yang-Baxter 
equation 
$$
\vartheta
=(\sigma^\mathcal{W}_{23}\circ\sigma^\mathcal{W}_{12}
\circ\sigma^\mathcal{W}_{23})(\vartheta)
=(\sigma^\mathcal{W}_{12}\circ\sigma^\mathcal{W}_{23}
\circ\sigma^\mathcal{W}_{12})(\vartheta)
=-\vartheta~,
$$
hence $\vartheta=0$ and $\pi_\vee^{23}|_{(\Omega^1\vee\Omega^1)\otimes_H\Omega^1}$
is injective. Invertibility of the $H$-module map 
$\pi_\vee^{23}|_{(\Omega^1\vee\Omega^1)\otimes_H\Omega^1}$ follows observing that the $H$-modules $(\Omega^1\vee\Omega^1)\otimes_H\Omega^1$ and $\Omega^1\otimes_H(\Omega^1\vee\Omega^1)$ have the same dimension.
Then, by Theorem~\ref{thm03} every $\sigma^\mathcal{W}$-central metric
$\mathbf{g}$ on $\Omega^1$ admits a unique Levi-Civita connection.
In particular, the classical Levi-Civita theorem is recovered observing that
if $\sigma^\mathcal{W}$ is given by the flip every metric is trivially
$\sigma^\mathcal{W}$-central.
\end{example}
\begin{example}[Levi-Civita connections for conformally equivalent metrics on $SL_q(2)$]
Consider the quantum group $SL_q(2)$ and let  $(\Omega^1,\mathrm{d})$
be one of the $4D_{\pm}$ bicovariant FODCi.
According to \cite{Mukhopadhyay} the corresponding projection
$\pi^{23}_\vee|_{(\Omega^1\vee\Omega^1)\otimes_H\Omega^1}$ is invertible.
Then Theorem~\ref{thm03} implies that every $\sigma^\mathcal{W}$-central
metric on $\Omega^1$ admits a unique Levi-Civita connection.
Recalling Lemma~\ref{LemmaCentral} this applies
in particular to the central $H$-bicoinvariant metrics $\mathbf{g}$ on
the $4D_{\pm}$ calculi of $SL_q(2)$ constructed in \cite[Lem. 6.1 ]{Heck} and \cite[Prop. 2.60]{MajidBeggsBook}. 
More in general, from Theorem \ref{corlast}, there is a unique Levi-Civita connection corresponding to
$f\mathbf{g}$ for every  element $f$ that is invertible in a suitable completion of $SL_q(2)$.
For example, considering the compact form $SU_q(2)$ it is natural to consider $f^{-1}$ is in its $C^*$-algebra completion.
\end{example}

\subsection*{The cotriangular case}

There is a canonical Woronowicz calculus on a cotriangular Hopf algebra $H$, and Woronowicz's braiding on forms is involutive. Example~\ref{Ex:sigmasquare} then implies that there is a 
unique Levi-Civita connection for any metric which is conformally
equivalent to a bicoinvariant (and thus $\sigma^\mathcal{W}$-central) metric. The obtained Levi-Civita
connections coincide with the ones constructed via Koszul formula in the braided-derivation approach of \cite{AschieriCartanStructure,Weber}.

Recall that a cotriangular Hopf algebra $(H,\mathscr{R})$ is a Hopf
algebra $H$ with a linear map $\mathscr{R}\colon H\otimes H\to\Bbbk$
that is convolution invertible with inverse $\mathscr{R}_{21}$ and such that, for all $a,b,c\in H$,
$\mathscr{R}(a_1\otimes b_1)a_2b_2=b_1a_1\mathscr{R}(a_2\otimes b_2)$ and
\begin{equation*}
\begin{split}
    \mathscr{R}(ab\otimes c)=\mathscr{R}(a\otimes c_1)\mathscr{R}(b\otimes c_2)~,\qquad\qquad
    \mathscr{R}(a\otimes bc)=\mathscr{R}(a_1\otimes c)\mathscr{R}(a_2\otimes b)~.
\end{split}
\end{equation*}
Recall that the category of comodules $(\mathcal{M}^H,\otimes)$ of a cotriangular Hopf algebra is symmetric monoidal with symmetric braiding given by $\sigma^\mathscr{R}_{M,N}\colon M\otimes N\to N\otimes M$, $\sigma^\mathscr{R}_{M,N}(m\otimes n):=n_0\otimes m_0\mathscr{R}(m_1\otimes n_1)$, where $M,N$ are right $H$-comodules, see e.g. \cite{Ka95,Ma95}.

Given a cotriangular Hopf algebra $(H,\mathscr{R})$, the co-opposite Hopf algebra $H^\mathrm{cop}$ (i.e. $H$ endowed with the opposite coproduct and inverse antipode) is cotriangular with respect to $\mathscr{R}^{-1}$ and the tensor product $H^\mathrm{cop}\otimes H$ (with its obvious Hopf algebra structure) is cotriangular with respect to
$$
\mathcal{R}:=(\mathscr{R}^{-1}\otimes\mathscr{R})\circ(\mathrm{id}\otimes\tau_{H,H^\mathrm{cop}}\otimes\mathrm{id})\colon(H^\mathrm{cop}\otimes H)\otimes(H^\mathrm{cop}\otimes H)\to\Bbbk~.
$$
The reader easily verifies that the category of bicovariant bimodules
${}_H^H\mathcal{M}_H^H$ is equivalent to the category
${}_H\mathcal{M}_H^{H^\mathrm{cop}\otimes H}$ of right
$(H^\mathrm{cop}\otimes H)$-covariant $H$-bimodules, where the
 $(H^\mathrm{cop}\otimes H)$-coaction  on a bicovariant bimodule $M$ in
terms of the left and right $H$-coactions reads
\begin{align*} 
  M&\to M\otimes(H^\mathrm{cop}\otimes H)~,\\
  m&\mapsto m_{[0]}\otimes m_{[1]}:=m_0\otimes m_{-1}\otimes m_1~.
\end{align*}
A bicovariant bimodule $M$ is called $\mathrm{R}$-symmetric if $m\cdot
a=a_{[0]}\cdot m_{[0]}\mathcal{R}(m_{[1]}\otimes a_{[1]})$ for all
$m\in M$ and $a\in H$.
We denote the full subcategory of bicovariant
bimodules which are $\mathrm{R}$-symmetric by
${}_H^H\mathcal{M}_H^{H,\mathrm{sym}}$.
It is a symmetric (also termed braided symmetric)  monoidal category
$({}_H^H\mathcal{M}_H^{H,\mathrm{sym}},\otimes_H, \sigma^\mathcal{R})$  with
the balanced tensor product over $H$ and  braiding given by
$\sigma^\mathcal{R}_{M,N}\colon M\otimes_HN\to N\otimes_HM$, $\sigma^\mathcal{R}_{M,N}(m\otimes_Hn)=n_{[0]}\otimes_Hm_{[0]}\mathcal{R}(m_{[1]}\otimes n_{[1]})$. 
This braiding equals Woronowicz's braiding $\sigma^\mathcal{W}$ on
${}_H^H\mathcal{M}_H^{H,\mathrm{sym}}$: they are both
$H$-bilinear and  equal the flip on  the vector subspace ${}^{\mathrm{co}H\!}M\otimes N^{\mathrm{co}H}$.
As in \cite[Remark 3.11 and Example 3.9]{AschieriCartanStructure} one
constructs the $\mathrm{R}$-symmetric bicovariant bimodule of
$\mathrm{R}$-braided derivations
$$
\mathfrak{X}_\mathcal{R}:=\{X\in\mathrm{END}_\Bbbk(H)~|~X(ab)=X(a)b+a_{[0]}X_{[0]}(b)~\mathcal{R}(X_{[1]}\otimes a_{[1]})\text{ for all }a,b\in H\},
$$
where $\mathrm{END}_\Bbbk(H)$ denotes the vector space of
endomorphisms of $H$ which are (left and right) rational. We have the
canonical decomposition
$\mathfrak{X}_\mathcal{R}= {}^{\mathrm{co}H\!}\mathfrak{X}_\mathcal{R}\otimes H$ and
assume the linear space of left-invariant vector fields
${}^{\mathrm{co}H\!}\mathfrak{X}_\mathcal{R}$ to be finite-dimensional. 
The dual
space of left $H$-linear maps $\mathfrak{X}_\mathcal{R}\to H$ is the $\mathrm{R}$-symmetric 
bicovariant bimodule of one forms
$\Gamma:={}_H\mathrm{HOM}(\mathfrak{X},H)$. 
The differential is determined by the dual pairing
$\langle\cdot,\cdot\rangle\colon\mathfrak{X}_\mathcal{R}\otimes_H\Gamma\to
H$ via $\langle X,\mathrm{d}a\rangle=X(a)$ for all $a\in H$. From the equality
$\sigma^\mathcal{W}_{\Gamma,\Gamma}=\sigma^\mathcal{R}_{\Gamma,\Gamma}$
we see that Woronowicz's braiding in this cotriangular case is involutive, hence its
eigenvalues are $\pm 1$ and the projections
$\Gamma\otimes_H\Gamma\to\Gamma\wedge\Gamma$,
$\Gamma\otimes_H\Gamma\to\Gamma\vee\Gamma$ are the
(skew-)symmetrizers: $\frac{1}{2}(\mathrm{id}^{}\mp \sigma^\mathcal{R}_{\Gamma,\Gamma})$.

In this setting, given a bicoinvariant metric
$\mathbf{g}\in\mathfrak{X}_\mathcal{R}\vee\mathfrak{X}_\mathcal{R}$ we
have that $\mathbf{g}\cdot
a=a_{[0]}\cdot\mathbf{g}_{[0]}\mathcal{R}(\mathbf{g}_{[1]}\otimes
a_{[1]})=a\cdot\mathbf{g}$ for all $a\in H$, i.e., that $\mathbf{g}$
is central. Lemma~\ref{LemmaCentral} then implies that $\mathbf{g}$ is
$\sigma^\mathcal{W}$-central.
Thus, from Example \eqref{Ex:sigmasquare} in this cotriangular setting every metric which is conformally equivalent to a bicoinvariant one admits a unique Levi-Civita connection.

\medskip

In the remaining part of this section we show that the above
Levi-Civita connections coincide with the ones obtained in the
braided-derivation approach of \cite{Weber} for equivariant
metrics and \cite{AschieriCartanStructure} for arbitrary metrics. As a
first step we prove that in the current cotriangular setting the sum
of connections $\dst\oplus\dst$ introduced in Section~\ref{Sec4.4}
coincides with the one $\dst\oplus_\mathcal{R}\dst$ in
\cite[Thm. 4.8]{AschieriCartanStructure}. It is sufficient to prove
this for elements $\omega\otimes_H\eta$ with
$\omega\in\Gamma^{\mathrm{co}H}$ and $\eta\in\Gamma$ since
$\dst\oplus\dst$ and $\dst\oplus_\mathcal{R}\dst$ are both
well-defined on the balanced tensor product $\Gamma\otimes_H \Gamma$. Now,
\begin{align*}
    (\dst\oplus\dst)(\omega\otimes_H\eta)
    &=\sigma^\mathcal{W}_{23}(\dst(\omega)\otimes_H\eta)
    +\dst{}_{-2}\omega\otimes_HS(\dst{}_{-1})\dst{}_0(\eta)\\
    &=\sigma^\mathcal{R}_{23}(\dst(\omega)\otimes_H\eta)
    +{}_\alpha\omega\otimes_H({}^\alpha\dst)(\eta)\\
    &=(\dst\oplus_\mathcal{R}\dst)(\omega\otimes_H\eta)~,
\end{align*}
where we have used that $\sigma^\mathcal{W}$ and $\sigma^\mathcal{R}$
coincide in ${}_H^H\mathcal{M}_H^{H, \mathrm{sym}}$.
Then, we observe that the braiding commutes with the sum of connections, i.e., $\sigma^\mathcal{W}_{12}\circ(\dst\oplus\dst)=(\dst\oplus\dst)\circ\sigma^\mathcal{W}_{\Gamma,\Gamma}$.
This can be shown equivalently, and more easily, for
$\dst\oplus_\mathcal{R}\dst$ and $\sigma^\mathcal{R}$, using that for
$M,N$ in ${}_H^H\mathcal{M}_H^{H, \mathrm{sym}}$ and any rational
morphism $\phi\in\mathrm{HOM}_\Bbbk(M,N)$ we have ${}_\alpha
n\otimes({}^\alpha\phi)(m)=\sigma^\mathcal{R}_{N,N}((\phi\otimes\mathrm{id}_N)\overline{\sigma}_{M,N}^\mathcal{R}(n\otimes
m))$ for all $m\in M$, $n\in N$, where
$(\phi\otimes\mathrm{id}_N)(m\otimes n)=\phi(m)\otimes n$. Furthermore,
\begin{align*}
    \sigma^\mathcal{R}_{N,N}\circ(\phi\otimes\mathrm{id}_N+\sigma^\mathcal{R}_{N,N}\circ(\phi\otimes\mathrm{id}_N)\circ\overline{\sigma}^\mathcal{R}_{M,N})
    &=\sigma^\mathcal{R}_{N,N}\circ(\phi\otimes\mathrm{id}_N+\sigma^\mathcal{R}_{N,N}\circ(\phi\otimes\mathrm{id}_N)\circ\overline{\sigma}^\mathcal{R}_{M,N})\circ\sigma^\mathcal{R}_{M,N}\circ\sigma^\mathcal{R}_{N,M}\\
    &=(\phi\otimes\mathrm{id}_N+\sigma^\mathcal{R}_{N,N}\circ(\phi\otimes\mathrm{id}_N)\circ\overline{\sigma}^\mathcal{R}_{M,N})\circ\sigma^\mathcal{R}_{N,M}
\end{align*}
by the involutivity of $\sigma^\mathcal{R}$.
Hence the projections too commute
with the sum of connections so that the additional projection in
(\ref{eq85}) becomes superfluous. The notion of torsion-freeness
coincides in both approaches since it merely relies on the
differential and wedge product.
Thus, the Levi-Civita connections obtained in Theorem
\eqref{corlast} for metrics conformally equivalent to bicoinvariant
ones in this cotriangular case are among those obtained in \cite[Thm. 6.7]{AschieriCartanStructure}.

\section*{Acknowledgements}

We would like to thank Alessandro Ardizzoni, Jyotishman Bhowmick and Rita Fioresi for fruitful discussions. The work of P.A. and T.W. is partially
supported by INFN, CSN4, Iniziativa Specifica GSS, and by Universit\`a del
Piemonte Orientale. P.A. is also
affiliated to INdAM, GNFM (Istituto Nazionale di Alta Matematica,
Gruppo Nazionale di Fisica Matematica). This article is based upon work from COST Action CaLISTA CA21109 supported by COST (European Cooperation in Science and Technology).

\end{document}